\newcommand{\sign}{\mathrm{sign}}
\newcommand{\R}{\mathbb{R}}
\newcommand{\Z}{\mathbb{Z}}
\newcommand{\N}{\mathbb{N}}
\newcommand{\Imm}{\mathrm{Im}}
\newcommand{\Deg}{\mathrm{deg}}
\newtheorem{Thm}{\textbf{Theorem}}[section]
\newtheorem{Lemma}[Thm]{\textbf{Lemma}}
\newtheorem{Cor}[Thm]{\textbf{Corollary}}
\newtheorem{Prop}[Thm]{\textbf{Proposition}}
\theoremstyle{remark}
\numberwithin{equation}{section}
\newtheorem{Ex}[Thm]{\textbf{Example}}
\newtheorem{Def}[Thm]{\textbf{Definition}}
\newtheorem{Obs}[Thm]{\textbf{Observation}}
\newtheorem*{Notation}{\textbf{Notation}}
\newtheorem{Cnj}[Thm]{\textbf{Conjecture}}
\newtheorem*{Notations}{\textbf{Notations}}
\title[A categorification of the Alexander polynomial in ECH]{A categorification of the Alexander polynomial in embedded contact homology}
\date{}
\author{Gilberto Spano}
\begin{document}

\let\thefootnote\relax\footnote{Date: \today}
\let\thefootnote\relax\footnote{Key words: Knot theory, Heegaard Floer homology, embedded contact homology, Alexander polynomial.}
\let\thefootnote\relax\footnote{The author was partially supported by University of Nantes and ERC Geodycon.}

\begin{abstract}
 Given a transverse knot $K$ in a three dimensional contact manifold $(Y,\alpha)$, in \cite{CGHH} Colin, Ghiggini, Honda and Hutchings define a hat version
 of embedded contact homology for $K$, that we call $\widehat{ECK}(K,Y,\alpha)$, and conjecture that it is isomorphic to the knot Floer homology
 $\widehat{HFK}(K,Y)$.
 
 We define here a full version $ECK(K,Y,\alpha)$ and generalise the definitions to the case of links. We prove then that, if $Y = S^3$, $ECK$ 
 and $\widehat{ECK}$ categorify the (multivariable) Alexander polynomial of knots and links, obtaining expressions analogue to that for
 knot and link Floer homologies in the plus and, respectively, hat versions.
\end{abstract}
\maketitle

\tableofcontents

\section*{Introduction}

Given a 3-manifold $Y$ in \cite{OS1} Ozsv\'{a}th and Szab\'{o} defined topological invariants of $Y$, indicated
$HF^{\infty}(Y)$, $HF^{+}(Y)$, $HF^{-}(Y)$ and $\widehat{HF}(Y)$. These groups are the \emph{Heegaard Floer homologies of $Y$} in the respective
versions.

Moreover Ozsv\'{a}th and Szab\'{o} in \cite{OS3} and Rasmussen in \cite{Ra}
proved that any homologically trivial knot $K$ in $Y$ induces a ``knot filtration'' on the Heegaard Floer chain complexes. The first pages of
the associated spectral sequences (in each versions) result then to be topological invariants of $K$: these are bigraded homology groups
$HFK^{\infty}(K,Y)$, $HFK^{+}(K,Y)$, $HFK^{-}(K,Y)$ and $\widehat{HFK}(K,Y)$ called \emph{Heegaard Floer knot homologies}
(in the respective versions).

These homologies are powerful invariants
for the couple $(K,Y)$. For instance in \cite{OS3} and \cite{Ra}, it has been proved that $\widehat{HFK}(K,S^3)$ \emph{categorifies the
Alexander polynomial $\Delta_K$ of $K$}, i.e. 
$$\chi(\widehat{HFK}(K,S^3)) \doteq \Delta(K),$$
where $\doteq$ means that the two sides are equal up to change sign and multiply by a monic monomial and $\chi$ denotes the
\emph{graded Euler characteristic}.

This was the first categorification of the Alexander polynomial; a second one (in Seiberg-Witten-Floer homology) was
discovered later by Kronheimer and Mrowka (\cite{KM2}).

In \cite{OS5} Ozsv\'{a}th and Szab\'{o} developed a similar construction for any link $L$ in $S^3$ and got invariants
$HFL^{-}(L,S^3)$ and $\widehat{HFL}(L,S^3)$ for $L$, which they called \emph{Heegaard Floer link homologies}.
Now these homologies come with an additional $\Z^n$ degree, where $n$ is the
number of the connected components of $L$. Ozsv\'{a}th and Szab\'{o} proved moreover that \emph{$HFL^{-}(L,S^3)$ categorifies the
multivariable Alexander polynomial of $L$}, which is a generalization of the classic Alexander polynomial. They found in
particular that:
\begin{eqnarray} \label{EquationIntro: Euler characteristics of HFL^-}
  \chi\left(HFL^-(L,S^3)\right) \doteq \left\{ \begin{array}{cc}
                                                \Delta_L(t_1,\ldots,t_n) & \mbox{if } n > 1\\
                                                 & \\
                                                \Delta_L(t)/({1-t})  & \mbox{if } n = 1.
                                               \end{array}\right. 
\end{eqnarray}
 and 
 \begin{eqnarray} \label{EquationIntro: Euler characteristics of widehat HFL}
  \chi\left(\widehat{HFL}(L,S^3)\right) \doteq \left\{ \begin{array}{cc}
                                                \Delta_L \cdot \prod_{i=1}^n(t_i^{\frac{1}{2}} - t_i^{-\frac{1}{2}}) & \mbox{if } n > 1\\
                                                 & \\
                                                \Delta_L(t)  & \mbox{if } n = 1.
                                               \end{array}\right. 
 \end{eqnarray}

\vspace{0.3 cm}

In the series of papers \cite{CGH1}-\cite{CGH5}, Colin, Ghiggini and Honda prove the equivalence between Heegaard Floer
homology and \emph{embedded contact homology} for three manifolds. The last one is another Floer homology theory, first defined by Hutchings,
which associates to a contact manifold $(Y,\alpha)$ two graded modules $ECH(Y,\alpha)$ and  $\widehat{ECH}(Y,\alpha)$. 

\begin{Thm}[Colin, Ghiggini, Honda, \cite{CGH1}-\cite{CGH5}] \label{Theorem: HF and ECH in intro}
\begin{eqnarray*}
  HF^+(-Y) \cong ECH(Y,\alpha) \\
  \widehat{HF}(-Y) \cong \widehat{ECH}(Y,\alpha) \nonumber \label{Equation: HF iso to ECH in hat version in intro},
 \end{eqnarray*}
 where $-Y$ is the manifold $Y$ with the inverted orientation.
\end{Thm}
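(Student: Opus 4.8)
The plan is to prove the hat statement $\widehat{HF}(-Y)\cong\widehat{ECH}(Y,\alpha)$ first and then bootstrap to the full version, the unifying idea being that both sides can be read off a single open book decomposition of $Y$. By Giroux's correspondence, choose an open book $(\Sigma,\phi)$ supporting $\xi=\ker\alpha$ and, after finitely many positive stabilizations, arrange that the binding $B$ is connected, that the return dynamics of $\phi$ on $\mathrm{int}(\Sigma)$ is nondegenerate in the relevant range of periods, and that $\alpha$ is a contact form adapted to the open book, i.e.\ with Reeb field positively transverse to the interiors of the pages and tangent to $B$. For such an $\alpha$ the closed Reeb orbits are precisely the iterates of the (elliptic) binding $B$ together with the periodic points of the first return map $\phi$; hence the ECH chain complex, in each fixed homology class, is identified with a periodic-Floer-homology-type complex $PFH(\Sigma,\phi)$ generated by unordered tuples of periodic orbits of $\phi$, whose differential counts holomorphic currents in the symplectization of the mapping torus of $\phi$.

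On the Heegaard Floer side I would use a Heegaard diagram subordinate to the open book, in the sense of Honda--Kazez--Mati\'c: it presents $\widehat{HF}(-Y)$ as the Lagrangian intersection Floer homology of two tori $\mathbb{T}_a,\mathbb{T}_b$ in the symmetric product of a Heegaard surface built from the (stabilized) open book, and, after an isotopy, its generators become the same kind of combinatorial data (tuples of orbits of $\phi$), with differential counting holomorphic discs in the symmetric product, equivalently (via Lipshitz's cylindrical reformulation) holomorphic curves in the product of the Heegaard surface with a disc. The heart of the proof is then a chain-level comparison of the $PFH$-type complex with this symmetric-product complex: both are generated by matching data, so one must show the two curve counts coincide. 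I would do this by a neck-stretching/degeneration argument realizing the two moduli spaces as limits of holomorphic curves in a common family of symplectic cobordisms, together with the ECH index dictionary — matching the ECH index with the Maslov index, verifying additivity of the relative indices and the partition conditions at elliptic versus hyperbolic orbits, and invoking Hutchings--Taubes obstruction-bundle gluing to account for the multiply covered curves (over covers of $B$ and of other elliptic orbits) that the ECH differential weights but that admit no transverse representatives. Matching the coherent orientations on the two sides is part of this step.

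To go from the hat version to the full version I would retain the multiply covered binding orbits in the contact form: their powers generate the $U$-action on $ECH$, and the winding filtration they carry matches the one underlying $HF^+$. Rerunning the open book comparison while keeping this data produces a chain-level equivalence compatible with the two $U$-actions; since $HF^+(-Y)$ and $ECH(Y,\alpha)$ each sit in an exact triangle relating them to the corresponding hat group via $U$, a five-lemma and direct-limit argument promotes the hat isomorphism to $HF^+(-Y)\cong ECH(Y,\alpha)$. (Alternatively one may express both plus-theories as inverse limits over large integer surgeries on a knot transverse to the pages and match the two surgery exact triangles, reducing again to the hat case; or one may route the whole argument through Seiberg--Witten theory, using Taubes' isomorphism between ECH and monopole Floer homology together with the Heegaard Floer / monopole Floer equivalence.)

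The main obstacle is the holomorphic curve analysis in the comparison step: showing that a signed, multiplicity-weighted count of possibly disconnected, possibly multiply covered holomorphic currents in a symplectization — with obstruction-bundle contributions concentrated near covers of the binding — equals a count of embedded holomorphic discs in a symmetric product. This requires a uniform compactness and gluing theory for the degenerating families, very precise control of the Reeb dynamics and of the curves near $B$ (which is exactly what the stabilizations are for), and a complete reconciliation of the ECH index with the Maslov index and of the two sign conventions. This is the content distributed over the papers \cite{CGH1}--\cite{CGH5}.
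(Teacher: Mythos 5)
This statement is not proved in the paper at all: it is quoted verbatim as background, with the proof attributed entirely to the cited series \cite{CGH1}--\cite{CGH5}. So there is no internal argument to compare yours against. Your outline is a fair high-level summary of the strategy of those references --- pass to an open book supporting $\xi$ with an adapted contact form, identify the $ECH$/$\widehat{ECH}$ complexes with periodic-Floer-homology-type complexes on the mapping torus (this is the content of \cite{CGH2}, reflected in Section 1 of the present paper, where $\widehat{ECH}(Y,\alpha)\cong ECH^{h_+}(N,\alpha)$ and the stabilization at degree $\leq 2g$ are recalled), compare with a Heegaard diagram subordinate to the open book, and then pass from hat to plus (\cite{CGH5}). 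Two caveats on the details: Colin--Ghiggini--Honda do not realize the two moduli spaces as limits of a single neck-stretched family; they construct explicit chain maps in both directions from holomorphic curves in open-book cobordisms and show the compositions are chain homotopic to the identity by a degeneration argument, which is a related but different mechanism from the one you describe. Also, everything here is over $\mathbb{Z}/2$, so the orientation-matching step you mention does not arise. As a proof your proposal is of course not self-contained --- it defers all of the compactness, index, and gluing analysis to the cited papers --- but that is exactly the status the statement has in this paper as well.
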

In light of Theorem \ref{Theorem: HF and ECH in intro}, it is a natural problem to find an embedded contact counterpart of Heegard Floer knot homology.
In analogy with the \emph{sutured Heegaard Floer theory} developed by Juh\'{a}sz (\cite{Ju}), in \cite{CGHH} the authors define a sutured version of embedded contact
homology. This can be thought of as a version of embedded contact homology for manifolds with boundary. In particular, given a knot $K$ in
a contact three manifold $(Y,\xi)$, using sutures they define a \emph{hat version $\widehat{ECK}(K,Y,\alpha)$ of embedded contact knot homology}.

Roughly speaking, this is the hat version of $ECH$ homology for the contact manifold with boundary $(Y \setminus \mathcal{N}(K),\alpha)$,
where $\mathcal{N}(K)$ is a suitable thin tubular neighborhood of $K$ in $Y$ and $\alpha$ is a contact form satisfying specific
compatibility conditions with $K$. In \cite{CGHH} the following conjecture is stated:
\begin{Cnj} \label{ConjectureIntro: ECK iso to HFK hat}
  \begin{eqnarray*}
  \widehat{ECK}(K,Y,\alpha) & \cong & \widehat{HFK}(-K,-Y).
 \end{eqnarray*}
\end{Cnj}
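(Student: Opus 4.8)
The plan is to recognise both sides as sutured invariants of the knot exterior and to reduce the conjecture to a sutured analogue of Theorem~\ref{Theorem: HF and ECH in intro}. On the Heegaard Floer side, by Juh\'asz's work $\widehat{HFK}(-K,-Y)$ is isomorphic to the sutured Floer homology $SFH$ of the balanced sutured manifold $\bigl((-Y)\setminus\mathcal{N}(K),\gamma_\mu\bigr)$, where the suture $\gamma_\mu$ is a pair of oppositely oriented meridians of $K$. On the embedded contact side, $\widehat{ECK}(K,Y,\alpha)$ is by construction the sutured embedded contact homology of $\bigl(Y\setminus\mathcal{N}(K),\alpha\bigr)$ with the same meridional suture, for $\alpha$ adapted to $K$. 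So it suffices to prove that sutured ECH of $(M,\gamma,\alpha)$ is isomorphic to $SFH(-M,-\gamma)$ when $M=Y\setminus\mathcal{N}(K)$ and $\gamma=\gamma_\mu$, the orientation reversal being exactly the one already present in Theorem~\ref{Theorem: HF and ECH in intro}.

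To establish such a sutured equivalence I would follow the architecture of the Colin--Ghiggini--Honda program \cite{CGH1}-\cite{CGH5}. First, complete $(M,\gamma,\alpha)$ to a closed contact manifold $(\bar Y,\bar\alpha)$ by gluing a standard contact piece along the boundary, in the manner of \cite{CGHH}, arranging that the Reeb dynamics in the glued region are explicit and that $ECH(\bar Y,\bar\alpha)$ carries a splitting over relative homology classes compatible with the way the hat sutured theory is built, with one summand computing $\widehat{ECK}(K,Y,\alpha)$. Mirror this on the Heegaard side: $\widehat{HFK}$ is the associated graded of the Ozsv\'ath--Szab\'o--Rasmussen knot filtration on $\widehat{HF}$, equivalently $SFH$ of the filled exterior. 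Then apply Theorem~\ref{Theorem: HF and ECH in intro} to $\bar Y$ and show that the resulting isomorphism is compatible with the two decompositions, so that it restricts to the summands computing $\widehat{ECK}$ and $\widehat{HFK}$. Finally, match the bigradings: the Alexander grading on $\widehat{HFK}$ should correspond to the relative homology (linking-number) grading on $\widehat{ECK}$ given by intersection number with a Seifert surface of $K$, and the Maslov grading to a normalisation of the ECH index. A more self-contained variant avoids the sutured formalism altogether: take $\alpha$ with $K$ a Reeb orbit, Morse--Bott along $K$, so that the ECH chain complex of $(Y,\alpha)$ is filtered by total intersection number of orbit sets with a fixed Seifert surface, its associated graded being $ECK$ and $\widehat{ECK}$ a sub/quotient complex thereof; then one needs a \emph{filtered} refinement of the Colin--Ghiggini--Honda quasi-isomorphism and passes to associated graded objects.

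The main obstacle, in either route, is precisely this compatibility. The isomorphism of Theorem~\ref{Theorem: HF and ECH in intro} is constructed over five long papers through open books, Lefschetz fibrations and large families of holomorphic curves, and it is not \emph{a priori} local near $K$ nor visibly compatible with the knot filtration. Making it respect the filtration (equivalently, making the sutured gluing maps of \cite{CGHH} behave functorially) requires controlling holomorphic curves in the region near $K$ and near the sutures: one must rule out curves that cross the Seifert surface the wrong way, and verify that the count of ECH generators in each filtration level matches the count of intersection points in a corresponding sutured Heegaard diagram. This is where the genuinely new input beyond \cite{CGH1}-\cite{CGH5} and \cite{CGHH} would be needed; once the filtered equivalence is in hand, the grading identifications and the algebraic bookkeeping (taking associated graded pieces, then the relevant sub/quotient complexes, and finally reading off $\widehat{HFK}$) are comparatively routine.
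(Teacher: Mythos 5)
You are being asked to assess a statement that the paper itself presents as a \emph{conjecture} (originating in \cite{CGHH}); the paper contains no proof of it. The only result the paper obtains in its direction is the far weaker verification at the level of graded Euler characteristics, and that is reached by a completely different route: Fried's dynamical expression of the Alexander quotient as a twisted Lefschetz zeta function of the Reeb flow (Theorem \ref{Theorem: ALEX equals the twisted zeta function}), combined with an orbit-by-orbit computation of $\chi(\widehat{ECK})$ using the local zeta functions of elliptic and hyperbolic orbits. So your proposal cannot be measured against a proof in the paper, and --- as you yourself concede --- it is not a proof. Every route you sketch funnels into a single unproved step: upgrading the isomorphism of Theorem \ref{Theorem: HF and ECH in intro} to one compatible with the knot filtration, equivalently establishing a sutured $ECH \cong SFH$ equivalence for the knot exterior with meridional sutures induced by a filtered quasi-isomorphism. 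That step \emph{is} the content of Conjecture \ref{ConjectureIntro: ECK iso to HFK hat}; naming it as the place where genuinely new input is needed locates the gap but does not fill it. Nothing in your outline controls the holomorphic curves that cross the Seifert surface, matches generators filtration level by filtration level, or addresses the fact that the chain maps of \cite{CGH1}--\cite{CGH5} are built from open books and large families of curves that are in no way local near $K$.

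That said, your framing is consistent with the paper's setup: $\widehat{ECK}(K,Y,\alpha)$ is indeed the sutured contact homology of the exterior with meridional sutures (the paper recalls this, though it works instead with the knot filtration on $ECC^{h_+}(N,\alpha)$ following \cite{CGH2}); the identification of $\widehat{HFK}(-K,-Y)$ with the sutured Floer homology of the exterior is Juh\'asz's theorem \cite{Ju}; and the Alexander degree on the $ECH$ side is the intersection number with a Seifert surface, exactly as in the paper. If your goal is evidence rather than a proof, the paper's actual strategy is the instructive one: both sides categorify the same polynomial, since $\chi(\widehat{ECK}(L,S^3,\alpha))$ is computed via the Reeb dynamics to agree with $\chi(\widehat{HFL}(L,S^3))$ up to units; this confirms the conjecture at the level of Euler characteristics but says nothing about the homologies themselves, which is why the statement remains open.
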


\vspace{0.3 cm}

In this paper we first define a \emph{full version of embedded contact knot homology} 
$$ECK(K,Y,\alpha)$$
for knots $K$ in any contact three manifold $(Y,\xi)$ endowed with a (suitable) contact form $\alpha$ for $\xi$. Moreover we generalize the
definitions to the case of links $L$ with more then one components to obtain homologies
\begin{eqnarray*}
 ECK(L,Y,\alpha)\ &\ \mbox{ and }\ &\ \widehat{ECK}(L,Y,\alpha). 
\end{eqnarray*}
We state then the following:
\begin{Cnj} \label{ConjectureIntro: ECK iso to HFK} For any link $L$ in $Y$, there exist contact forms for which:
  \begin{eqnarray*}
%  ECK(K,Y,\alpha) & \cong & HFK^{-}(K,Y)\\
%                  & \mbox{and}    &                  \\
  \widehat{ECK}(L,Y,\alpha) & \cong & \widehat{HFK}(-L,-Y),\\
  ECK(L,Y,\alpha) & \cong & HFK^{-}(L,Y).
 \end{eqnarray*}
\end{Cnj}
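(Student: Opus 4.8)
\textbf{Proof strategy for Conjecture \ref{ConjectureIntro: ECK iso to HFK}.}
The plan is to adapt the strategy of Colin--Ghiggini--Honda used to prove Theorem \ref{Theorem: HF and ECH in intro}, carried out in the presence of the distinguished link $L$. First I would fix a contact form $\alpha$ for $\xi$ adapted to an open book decomposition $(P,\phi)$ of $Y$ whose binding contains $L$ (equivalently, work with the complement $Y\setminus\mathcal{N}(L)$ as a sutured contact manifold in the sense of \cite{CGHH}). One has to check that such adapted forms exist for \emph{every} link $L$, realising $L$ as a union of (elliptic) Reeb orbits whose linking numbers with a page are the ones encoding the Alexander grading, so that the filtration of the $ECK$ complex by intersection number with a page is well defined and induces the $\Z^{n}$ (resp.\ $\Z$) Alexander grading on both $ECK(L,Y,\alpha)$ and $\widehat{ECK}(L,Y,\alpha)$.

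Second, I would reinterpret $ECK(L,Y,\alpha)$ and $\widehat{ECK}(L,Y,\alpha)$ as (a completion of) the periodic Floer homology of the monodromy $\phi$ relative to $L$: the generators become orbit sets of the first return map on the interior of a page, and the differential counts holomorphic curves in the mapping torus. The hat version corresponds to the truncation in which the binding orbits carrying $L$ do not appear, while the full version carries the $\Z[U]$--module structure through the allowed multiplicities of those orbits. This step requires a direct-limit and action-filtration argument à la Hutchings, together with the compactness estimates needed to exclude holomorphic curves escaping into the region where the open book degenerates or into the binding; the genuinely new point, compared with \cite{CGH1}--\cite{CGH5}, is uniform control of the curves near the Reeb orbits constituting $L$, and, for the minus version, bookkeeping of the datum that records the $U$--action.

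Third, I would match this relative periodic Floer homology with $HFK^{-}(L,Y)$, respectively $\widehat{HFK}(-L,-Y)$, via the multi-pointed Heegaard diagram carried by the open book: a page provides a Heegaard surface with $\mathbf{w}$-- and $\mathbf{z}$--basepoints for $(L,Y)$, the fixed-point configurations of $\phi$ correspond to intersection points $\mathbf{x}\in\mathbb{T}_{\alpha}\cap\mathbb{T}_{\beta}$, and a neck-stretching argument using the Lefschetz-fibration description of $\phi$ identifies the count of $ECK$ curves in the mapping torus with the count of Whitney disks avoiding (respectively, allowed to cross) the $\mathbf{z}$-basepoints --- that is, with the $HFK^{-}$ (respectively $\widehat{HFK}$) differential. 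One also has to verify that the $ECH$ index becomes the Maslov grading and the page-intersection number becomes the Alexander grading; the categorification statement already proved in this paper (the graded Euler characteristic is $\Delta_{L}$ up to units, with the plus/hat shapes of \eqref{EquationIntro: Euler characteristics of HFL^-}--\eqref{EquationIntro: Euler characteristics of widehat HFL}) is precisely the Euler-characteristic shadow of this identification and serves as a consistency check that the gradings line up.

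The main obstacle is the same analytic core that makes \cite{CGH1}--\cite{CGH5} so long: producing the chain-level isomorphism demands a delicate neck-stretching argument, with compactness and gluing for holomorphic curves in a symplectization that degenerates along the open book, now performed in the sutured/relative setting and with estimates uniform near the binding orbits $L$. Secondary difficulties are handling the passage between the minus and the hat theories uniformly --- the $U$--action on the $ECK$ side has no transparent geometric avatar once one leaves the hat version --- and checking naturality of all the identifications with respect to the auxiliary choices (open book, contact form, almost complex structure), so that the resulting isomorphism is canonical rather than merely an abstract isomorphism of bigraded modules.
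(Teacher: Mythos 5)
The statement you are addressing is stated in the paper as a \emph{conjecture}, and the paper does not prove it: the only result established there is the Euler-characteristic shadow, namely Theorem \ref{TheoremIntro: Euler characteristic of ECK} together with the Corollary that Conjectures \ref{ConjectureIntro: ECK iso to HFK hat} and \ref{ConjectureIntro: ECK iso to HFK} hold in $S^3$ at the level of graded Euler characteristics. So there is no proof in the paper to compare yours against, and what you have written is not a proof either: it is a research programme. Your outline does identify the expected route (realise $L$ inside the binding of a compatible open book, reinterpret the $ECK$ complexes as a relative or sutured version of periodic Floer homology of the monodromy, then match with the multi-pointed Heegaard diagram carried by a page), and this is consistent with how the paper sets things up --- in particular Proposition \ref{Prop: Any L is part o a fibered L'} supplies the open book containing $L$ in its binding, Lemma \ref{Lemma: ECK iso to ECH of int N} identifies $ECK(L,Y,\alpha)$ with $ECH$ of the complement, and Proposition \ref{Proposition: ECK_1 iso to SH for open books} is exactly the degree-one case of the $PFH$ comparison you invoke. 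But every step of your second and third stages --- the direct-limit and action-filtration arguments, the compactness and gluing needed for the neck-stretching along the degenerating open book, the chain-level identification of curve counts with Whitney-disk counts, and the matching of the $U$-action --- is precisely the unproved analytic content, and you acknowledge as much in your final paragraph. A proof strategy that defers the entire analytic core is a gap, not a proof.

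Two smaller substantive points. First, your consistency check via Euler characteristics is weaker than you suggest: the paper's Theorem \ref{TheoremIntro: Euler characteristic of ECK} is proved only for $Y=S^3$ (more generally, homology spheres for the $\mathrm{ALEX}$ statement), whereas the conjecture is for arbitrary $Y$, so the Euler-characteristic evidence does not even cover the general case you would need. Second, the passage between the full and hat versions is more delicate than ``truncation at the binding orbits'': on the $ECK$ side the full complex is generated over the hat complex by the elliptic orbits $e_i^+$ (as in the computation $ECC(L,Y,\alpha)=\widehat{ECC}(L,Y,\alpha)\otimes_{i}\langle\emptyset,e_i^+,(e_i^+)^2,\ldots\rangle$ in the paper), and identifying multiplication by $e_i^+$ with the $U_i$-action on $HFK^-$ at chain level is an additional claim your outline asserts but does not argue.
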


Next we compute the graded Euler characteristics of the $ECK$ homologies for knots and links in homology three-spheres and we prove the following:
\begin{Thm} \label{TheoremIntro: Euler characteristics of ECK and ALEX}
 Let $L$ be an $n$-component link in a homology three-sphere $Y$.
 Then there exists a contact form $\alpha$ such that
 \begin{equation*}
  \chi(ECK(L,Y,\alpha)) \doteq \mathrm{ALEX}(Y \setminus L).
 \end{equation*}
\end{Thm}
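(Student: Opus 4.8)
The plan is to evaluate $\chi\bigl(ECK(L,Y,\alpha)\bigr)$ as an explicit signed generating function over the generators of the $ECK$ chain complex, and then to recognise that generating function as a Reidemeister--Turaev torsion of the exterior $Y\setminus L$. First I would fix $\alpha$ to be one of the forms allowed in the construction of $ECK$: the contact structure is carried by an open book of $Y$ whose binding contains $L$ as a sublink, and $\alpha$ is normalised so that the binding (in particular $L$) is a union of Morse--Bott families of Reeb orbits while the Reeb flow on the exterior is a small perturbation of the suspension flow of the return map $\phi$ of the open book. With such an $\alpha$ the generators of $ECK(L,Y,\alpha)$ are orbit sets contained in $Y\setminus L$, carrying the Alexander multigrading $A(\gamma)\in\Z^{n}$ that records the linking numbers of $\gamma$ with the components of $L$, so that
\[
\chi\bigl(ECK(L,Y,\alpha)\bigr)=\sum_{\gamma}(-1)^{I(\gamma)}\,t_1^{A_1(\gamma)}\cdots t_n^{A_n(\gamma)},
\]
$I(\gamma)\in\Z/2$ being the reduction of the ECH index relative to a chosen reference generator. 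In the full (non-hat) version this sum must be read in a suitable completion of $\Z[t_1^{\pm1},\dots,t_n^{\pm1}]$, the infinite tails coming from a distinguished Reeb orbit near the binding counted with every multiplicity; this is the source of the denominator in the knot case.

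Second, I would match the right-hand side with a Lefschetz-type zeta function of $\phi$ twisted by the abelianisation $\pi_1(Y\setminus L)\to\Z^{n}$. Interior orbit sets of the suspension flow correspond to finite configurations of periodic points of $\phi$, and a Hutchings--Lee style argument shows that, after summing over all orbit sets, the signs $(-1)^{I(\gamma)}$ combine with the local Lefschetz indices $\sign\det(1-d\phi^{k})$ so that the sum above equals the twisted zeta function $\zeta_\phi(t_1,\dots,t_n)$, while the binding tails contribute precisely the unit-normalised factor ($1-t$ in the knot case, trivial for $n\ge2$) predicted by the reducible part. By Fried's theorem $\zeta_\phi$ is the Reidemeister torsion of the mapping torus of $\phi$; when $L$ itself is fibered this mapping torus is $Y\setminus L$ and only the last step remains. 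For a general link I would augment $L$ to a fibered link $L\cup L'$, run the computation for the $(L\cup L')$-adapted form, and then push forward along $H_1(Y\setminus(L\cup L'))\to H_1(Y\setminus L)$ (equivalently, specialise the $L'$-variables to $1$), using the Torres-type behaviour of torsion under deleting a component to land on $\tau(Y\setminus L)$; here the freedom ``there exists a contact form'' in the statement is exactly what lets us work with the $(L\cup L')$-open book.

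Third, I would identify $\tau(Y\setminus L)$ with $\mathrm{ALEX}(Y\setminus L)$: by the theorems of Milnor and Turaev, the Reidemeister--Turaev torsion of a link exterior in a homology sphere is, up to the usual units (whence $\doteq$, and note that for a homology sphere there is no $\mathrm{Spin}^{c}$ ambiguity and $H_1(Y\setminus L)$ is torsion-free), the multivariable Alexander polynomial for $n\ge2$ and $\Delta_K(t)/(1-t)$ for $n=1$ --- that is, $\mathrm{ALEX}(Y\setminus L)$ --- which is consistent with the Heegaard Floer formulas for $\chi(HFL^{-})$ recalled in the introduction and hence with Conjecture~\ref{ConjectureIntro: ECK iso to HFK}. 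As an independent check one can instead route through Taubes' isomorphism $ECH\cong\widehat{HM}$ together with the Meng--Taubes/Turaev identification of the Seiberg--Witten Euler characteristic with torsion, the Alexander filtration on $ECK$ corresponding to the decomposition of the torsion of $Y\setminus L$ by $H_1$.

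The step I expect to be the real obstacle is the sign bookkeeping in the second paragraph: proving that the $\Z/2$ ECH grading on orbit sets matches the Lefschetz signs of periodic points of $\phi$ uniformly over \emph{all} orbit sets, including multiply covered and negative--hyperbolic orbits and the degenerate binding orbits (which force a careful Morse--Bott perturbation and a precise analysis of how the perturbed orbits and their parities contribute). Subsidiary difficulties are making $\chi$ of the full version a well-defined element of the appropriate completion so that the denominators are meaningful, and checking that the Alexander grading used to define $ECK$ agrees, under the identifications above, with the grading of the Turaev torsion of $Y\setminus L$ by $H_1(Y\setminus L)$.
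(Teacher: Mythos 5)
Your proposal is correct and follows essentially the same route as the paper: augment $L$ to a fibered link $L'$ via a Giroux open book supporting $\xi$, express $\chi(ECK)$ as a product of local Lefschetz zeta factors over simple Reeb orbits using the index parity formula, apply Fried's theorem to identify this with $\mathrm{ALEX}(Y\setminus L')$ specialised at $t_{n+1}=\dots=t_m=1$, and cancel the extra components via the Torres formula. The one issue you flag as the ``real obstacle'' (sign bookkeeping and the degenerate binding orbits) is resolved in the paper exactly as you anticipate, by an explicit non-contact perturbation of $R$ near $\mathcal{N}(L')$ that preserves the orbits $\bar e,\bar h,\bar e_+,\bar h_+$ and their Lefschetz signs while making the flow non-degenerate and outward transverse to the boundary.
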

Here $\mathrm{ALEX}(Y \setminus L)$ is the \emph{Alexander quotient} of the complement of $L$ in $Y$. The theorem is proved using Fried's
dynamic reformulation of $\mathrm{ALEX}$ (\cite{Fri}). Classical relations between $\mathrm{ALEX}(S^3 \setminus L)$ and $\Delta_L$ imply the
following result:
\begin{Thm} \label{TheoremIntro: Euler characteristic of ECK}
 Let $L$ be any $n$-component link in $S^3$. Then there exists a contact form $\alpha$ for which:
 \begin{eqnarray*} 
  \chi\left(ECK(L,S^3,\alpha)\right) \doteq \left\{ \begin{array}{cc}
                                                \Delta_L(t_1,\ldots,t_n) & \mbox{if } n > 1\\
                                                 & \\
                                                \Delta_L(t)/({1-t})  & \mbox{if } n = 1
                                               \end{array}\right.
 \end{eqnarray*}
  and 
 \begin{eqnarray*} 
  \chi\left(\widehat{ECK}(L,S^3,\alpha)\right) \doteq \left\{ \begin{array}{cc}
                                                \Delta_L(t_1,\ldots,t_n) \cdot \prod_{i=1}^n(1 - t_i) & \mbox{if } n > 1\\
                                                 & \\
                                                \Delta_L(t)  & \mbox{if } n = 1.
                                               \end{array}\right. 
 \end{eqnarray*}
\end{Thm}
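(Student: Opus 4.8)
The plan is to deduce both formulas from Theorem~\ref{TheoremIntro: Euler characteristics of ECK and ALEX}, combining it on one hand with the classical description of the Alexander quotient of a link complement in $S^3$, and on the other hand with the comparison between the hat version $\widehat{ECK}$ and the full version $ECK$.

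First I would treat $ECK$. Specialising Theorem~\ref{TheoremIntro: Euler characteristics of ECK and ALEX} to $Y=S^3$, which is a homology three-sphere, produces a contact form $\alpha$ with $\chi(ECK(L,S^3,\alpha))\doteq\mathrm{ALEX}(S^3\setminus L)$, so that it only remains to recall how $\mathrm{ALEX}$ of a link complement in the sphere is expressed in terms of $\Delta_L$. Using the identification of $\mathrm{ALEX}$ with Reidemeister--Turaev torsion and Milnor's torsion--Alexander formula for link complements, one has $\mathrm{ALEX}(S^3\setminus L)\doteq\Delta_L(t_1,\dots,t_n)$ when $n\ge 2$, while for a knot $\mathrm{ALEX}(S^3\setminus K)\doteq\Delta_K(t)/(t-1)\doteq\Delta_K(t)/(1-t)$, the extra denominator reflecting that $b_1(S^3\setminus K)=1$. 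Substituting these two values into the formula for $\chi(ECK(L,S^3,\alpha))$ gives the first assertion.

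Next I would handle $\widehat{ECK}$ using the relation between $\widehat{ECK}$ and $ECK$ coming from the construction of the two homologies: for each component $L_i$ the full complex carries an extra $U_i$-tower which is truncated (equivalently, coned off) in the hat version, $U_i$ shifting the $i$-th entry of the $\mathbb{Z}^n$ Alexander grading by a fixed amount. On graded Euler characteristics this truncation multiplies by one factor $\doteq(1-t_i)$ per component, so that $\chi(\widehat{ECK}(L,S^3,\alpha))\doteq\prod_{i=1}^n(1-t_i)\cdot\chi(ECK(L,S^3,\alpha))$. Combining with the previous paragraph: for $n\ge 2$ this is $\doteq\Delta_L(t_1,\dots,t_n)\cdot\prod_{i=1}^n(1-t_i)$, which is the stated formula, and agrees up to $\doteq$ with~\eqref{EquationIntro: Euler characteristics of widehat HFL} since $\prod_i(1-t_i)\doteq\prod_i(t_i^{1/2}-t_i^{-1/2})$; for $n=1$ the single factor $(1-t)$ cancels the denominator in $\mathrm{ALEX}(S^3\setminus K)$ and leaves $\chi(\widehat{ECK}(K,S^3,\alpha))\doteq\Delta_K(t)$.

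The substantive input is Theorem~\ref{TheoremIntro: Euler characteristics of ECK and ALEX} itself, hence Fried's dynamical reading of $\mathrm{ALEX}$ as a weighted count of periodic Reeb orbits; granting that, what is left are bookkeeping matters, and I expect the main subtlety to be the case $n=1$. Indeed, for a knot $\chi(ECK)\doteq\Delta_K(t)/(1-t)$ is not a polynomial, so one must be careful about the sense in which its graded Euler characteristic is defined and verify that coning off the single $U$-map removes the factor $(1-t)$ \emph{exactly}, recovering $\Delta_K(t)$ rather than $\Delta_K(t)\cdot(1-t)$ or $\Delta_K(t)/(1-t)$; this is precisely where the distinction $b_1=1$ versus $b_1\ge 2$ must be respected. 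A secondary point is to pin down the Alexander-grading shift of each $U_i$ accurately enough that $\mathrm{Cone}(U_i)$ contributes exactly $(1-t_i)$ up to $\doteq$, and to make sure the classical identity $\mathrm{ALEX}(S^3\setminus L)\doteq\Delta_L$ is quoted in enough generality to include links with vanishing linking numbers or split links --- both of which are harmless once one works throughout up to $\doteq$, which absorbs the standard Torres-type sign and monomial corrections.
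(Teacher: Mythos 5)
Your proposal is correct and follows essentially the same route as the paper: specialise Theorem \ref{TheoremIntro: Euler characteristics of ECK and ALEX} to $S^3$, invoke the classical identity $\mathrm{ALEX}(S^3\setminus L)\doteq\Delta_L$ for $n>1$ and $\doteq\Delta_L(t)/(1-t)$ for $n=1$ (Equation \ref{Equation: Relation between ALEX and Delta}), and then pass from the full to the hat version by a factor of $\prod_{i=1}^n(1-t_i)$. The only cosmetic difference is in that last step: where you describe coning off $U_i$-towers, the paper works directly at the chain level, writing $ECC(L,Y,\alpha)=\widehat{ECC}(L,Y,\alpha)\otimes\bigotimes_{i=1}^n\langle\emptyset,e_i^+,(e_i^+)^2,\ldots\rangle$ and using that each elliptic orbit $e_i^+$, with $[e_i^+]=t_i$, contributes the geometric series $\zeta_{e_i^+}(t_i)=(1-t_i)^{-1}$ to the graded Euler characteristic — which in particular settles exactly your worry about the $n=1$ cancellation.
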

This implies that the homology \emph{$ECK$ is a categorification of the multivariable Alexander polynomial.}
%Moreover we get similar equivalences also for the hat version of $ECK$.

A straightforward application of last theorem is obtained by comparing it with equations
\ref{EquationIntro: Euler characteristics of HFL^-} and \ref{EquationIntro: Euler characteristics of widehat HFL}:
\begin{Cor}
 In $S^3$, Conjectures \ref{ConjectureIntro: ECK iso to HFK hat} and \ref{ConjectureIntro: ECK iso to HFK} hold at level of Euler characteristics.
\end{Cor}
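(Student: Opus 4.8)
The plan is to deduce the corollary directly from the Euler-characteristic computations already established. Saying that Conjectures \ref{ConjectureIntro: ECK iso to HFK hat} and \ref{ConjectureIntro: ECK iso to HFK} hold at the level of Euler characteristics means precisely that $\chi(ECK(L,S^3,\alpha)) \doteq \chi(HFK^{-}(L,S^3))$ and $\chi(\widehat{ECK}(L,S^3,\alpha)) \doteq \chi(\widehat{HFK}(-L,-S^3))$, for the contact form $\alpha$ provided by Theorem \ref{TheoremIntro: Euler characteristic of ECK}; so it suffices to establish these two identities, where for an $n$-component link with $n>1$ the groups $HFK^{-}$ and $\widehat{HFK}$ are to be read as the link Floer homologies $HFL^{-}$ and $\widehat{HFL}$ of \cite{OS5}.

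First I would record the two sides. On the $ECK$ side, Theorem \ref{TheoremIntro: Euler characteristic of ECK} gives, for $n>1$, $\chi(ECK(L,S^3,\alpha)) \doteq \Delta_L(t_1,\dots,t_n)$ and $\chi(\widehat{ECK}(L,S^3,\alpha)) \doteq \Delta_L(t_1,\dots,t_n)\cdot\prod_{i=1}^n(1-t_i)$, and for $n=1$, $\chi(ECK) \doteq \Delta_L(t)/(1-t)$ and $\chi(\widehat{ECK}) \doteq \Delta_L(t)$. On the Heegaard--Floer side, equations \ref{EquationIntro: Euler characteristics of HFL^-} and \ref{EquationIntro: Euler characteristics of widehat HFL} give, for $n>1$, $\chi(HFL^{-}(L,S^3)) \doteq \Delta_L(t_1,\dots,t_n)$ and $\chi(\widehat{HFL}(L,S^3)) \doteq \Delta_L\cdot\prod_{i=1}^n(t_i^{1/2}-t_i^{-1/2})$, and for $n=1$, $\chi(HFL^{-}) \doteq \Delta_L(t)/(1-t)$ and $\chi(\widehat{HFL}) \doteq \Delta_L(t)$.

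The corollary then follows once the ambiguities hidden in $\doteq$ are normalised. For the minus version and every $n$, the $ECK$ and $HFL^{-}$ expressions already coincide on the nose, so that identity is immediate. For the hat version and $n>1$, I would use $t_i^{1/2}-t_i^{-1/2} = -t_i^{-1/2}(1-t_i)$, so that $\prod_{i=1}^n(t_i^{1/2}-t_i^{-1/2})$ and $\prod_{i=1}^n(1-t_i)$ differ only by a sign and a monic monomial; hence $\chi(\widehat{ECK}) \doteq \chi(\widehat{HFL})$, and for $n=1$ both sides equal $\Delta_L(t)$. It remains to absorb the orientation reversal in Conjecture \ref{ConjectureIntro: ECK iso to HFK hat}: since $-S^3 \cong S^3$ and, by the standard symmetry of the (multivariable) Alexander polynomial under mirroring and under reversal of the orientations of the components, $\Delta_{-L} \doteq \Delta_L$, passing from $L$ to $-L$ leaves all of the above unchanged up to $\doteq$. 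Assembling these identifications gives both required identities.

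I do not expect a real obstacle here: the statement is essentially a bookkeeping consequence of Theorem \ref{TheoremIntro: Euler characteristic of ECK} together with formulas \ref{EquationIntro: Euler characteristics of HFL^-}--\ref{EquationIntro: Euler characteristics of widehat HFL}. The only points that need attention are the careful handling of the ``$\doteq$'' relation --- in particular that $\prod_i(1-t_i)$ and $\prod_i(t_i^{1/2}-t_i^{-1/2})$ agree up to a sign and a monic monomial --- and the appeal to the Alexander polynomial's symmetry in order to dispose of the $(-L,-S^3)$ appearing on the Heegaard--Floer side of the hat conjecture.
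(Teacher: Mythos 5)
Your argument is correct and is essentially the paper's own: the corollary is obtained by directly comparing the formulas of Theorem \ref{TheoremIntro: Euler characteristic of ECK} with equations \ref{EquationIntro: Euler characteristics of HFL^-} and \ref{EquationIntro: Euler characteristics of widehat HFL}, the only normalisations needed being that $\prod_i(t_i^{1/2}-t_i^{-1/2})$ and $\prod_i(1-t_i)$ agree up to sign and a monomial, and that the orientation reversal is absorbed by the symmetry of $\Delta_L$. The paper treats these points as immediate and does not spell them out, but your bookkeeping is exactly what is implicitly required.
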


\addtocontents{toc}{\protect\setcounter{tocdepth}{1}}
\subsubsection*{Acknowledgements}
\addtocontents{toc}{\protect\setcounter{tocdepth}{2}}
%\addcontentsline{toc}{subsection}{Acknowledgments}
%\markboth{Acknowledgments}{Acknowledgments}

I first thank my advisors Paolo Ghiggini and Vincent Colin for the patient and constant help and the trust they accorded to me during the years
I spent in Nantes, where most of this work has been done. I also thank the referees for having reported my Ph.D. thesis, 
containing essentially all the results presented here. I finally thank Vinicius Gripp, Thomas Guyard, Christine Lescop, Paolo Lisca,
Margherita Sandon and Vera Vértesi for all the advice, support or stimulating conversations we had.

\section{Review of embedded contact homology}

\subsection{Preliminaries}

This subsection is devoted to remind some basic notions about contact geometry, holomorphic curves, Morse-Bott theory and open books.

\subsubsection{Contact geometry} \label{Subsection: Contact geometry}

A (co-oriented) \emph{contact form} on a three dimensional oriented manifold $Y$ is a $\alpha \in \Omega^1(Y)$ such that
$\alpha \wedge d\alpha$ is a positive volume form. A \emph{contact structure} is a smooth plane field $\xi$ on $Y$ such that there exists a
contact form $\alpha$ for which $\xi = \ker \alpha$. The \emph{Reeb vector field} of $\alpha$ is the (unique) vector field $R_{\alpha}$
determined by the equations $d\alpha(R_{\alpha},\cdot) = 0$ and $\alpha(R_{\alpha})=1$.  
A \emph{simple Reeb orbit} is a closed oriented orbit of $R = R_{\alpha}$, i.e. it is the image $\delta$ of an embedding $S^1 \hookrightarrow Y$ such that
$R_{P}$ is positively tangent to $\delta$ in any $P \in \delta$. A \emph{Reeb orbit} is an $m$-fold cover of a simple Reeb orbit, with $m \geq 1$.\\
The form $\alpha$ determines an \emph{action} $\mathcal{A}$ on the set of its Reeb orbits defined by $\mathcal{A}(\gamma) = \int_{\gamma}\alpha$.
By definition $\mathcal{A}(\gamma) > 0$ for any non empty orbit $\gamma$.

\vspace{0.3 cm}

A basic result in contact geometry asserts that the flow of the Reeb vector field (abbreviated Reeb flow) $\phi=\phi_R$ preserves $\xi$, that is
$(\phi_t)_*(\xi_P) = \xi_{\phi_t(P)}$ for any $t \in \R$ (see \cite[Chapter 1]{Ge}). Given a Reeb orbit $\delta$, there exists $T \in \R^+$ such that ${(\phi_T)}_*(\xi_P) = \xi_P$ for any
$P \in \delta$; if $T$ is the smallest possible, the isomorphism $\mathfrak{L}_{\delta}:={(\phi_T)}_*: \xi_P \rightarrow \xi_P$ is called the (\emph{symplectic})
\emph{linearized first return map} of $R$ in $P$.

The orbit $\delta$ is called \emph{non-degenerate} if $1$ is not an eigenvalue $\mathfrak{L}_{\delta}$.
There are two types of non-degenerate Reeb orbits: \emph{elliptic} and \emph{hyperbolic}. $\delta$ is elliptic if the eigenvalues of
$\mathfrak{L}_{\delta}$ are on the unit circle and is hyperbolic if they are real. In the last case we can make a further distinction:
$\delta$ is called \emph{positive} (\emph{negative}) hyperbolic if the eigenvalues are both positive (resp. negative).
\begin{Def}
 The \emph{Lefschetz sign} of a non-degenerate Reeb orbit $\delta$ is
 $$\epsilon(\delta) := \sign(\det(\mathbbm{1} - \mathfrak{L}_{\delta})) \in \{+1,-1\}.$$
\end{Def}

\begin{Obs}
 It is easy to check that $\epsilon(\delta) = +1$ if $\delta$ is elliptic or negative hyperbolic and $\epsilon(\delta) = -1$ if $\delta$ is positive hyperbolic.
\end{Obs}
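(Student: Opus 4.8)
The statement is elementary linear algebra once the correct normalization of the return map is in place, so the plan has only two ingredients: first check that $\mathfrak{L}_{\delta}$ is area-preserving, hence $\det\mathfrak{L}_{\delta}=1$; then evaluate $\det(\mathbbm{1}-\mathfrak{L}_{\delta})$ directly via the characteristic polynomial (equivalently, the eigenvalues) of $\mathfrak{L}_{\delta}$ and read off the sign in each of the three cases. For the first ingredient, recall $\iota_{R_{\alpha}}d\alpha=0$ by definition of the Reeb field, so Cartan's formula gives $\mathcal{L}_{R_{\alpha}}d\alpha=d(\iota_{R_{\alpha}}d\alpha)+\iota_{R_{\alpha}}(dd\alpha)=0$; hence the Reeb flow $\phi_{t}$ preserves $d\alpha$. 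Restricting to the contact plane $\xi=\ker\alpha$, the $2$-form $\omega:=d\alpha|_{\xi}$ is an area form on each plane $\xi_{P}$, and by construction $\mathfrak{L}_{\delta}=(\phi_{T})_{*}\colon\xi_{P}\to\xi_{P}$ preserves $\omega$. Choosing a symplectic basis of $(\xi_{P},\omega)$ therefore identifies $\mathfrak{L}_{\delta}$ with an element of $SL(2,\R)$: in particular $\det\mathfrak{L}_{\delta}=1$, its characteristic polynomial is $t^{2}-(\mathrm{tr}\,\mathfrak{L}_{\delta})\,t+1$, and its two (a priori complex) eigenvalues $\lambda,\lambda^{-1}$ multiply to $1$.

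For the second ingredient, evaluating the characteristic polynomial at $t=1$ gives the identity
$$\det(\mathbbm{1}-\mathfrak{L}_{\delta})=2-\mathrm{tr}\,\mathfrak{L}_{\delta}.$$
Now I distinguish the three cases. If $\delta$ is elliptic then $\mathrm{tr}\,\mathfrak{L}_{\delta}=2\cos\theta$ with $\theta\notin 2\pi\Z$ (non-degeneracy forbids the eigenvalue $1$), so $\mathrm{tr}\,\mathfrak{L}_{\delta}<2$ and $\det(\mathbbm{1}-\mathfrak{L}_{\delta})>0$, i.e. $\epsilon(\delta)=+1$. If $\delta$ is positive hyperbolic, the eigenvalues are $\lambda,\lambda^{-1}$ with $\lambda>0$ and $\lambda\neq 1$, whence $\mathrm{tr}\,\mathfrak{L}_{\delta}=\lambda+\lambda^{-1}>2$ and $\det(\mathbbm{1}-\mathfrak{L}_{\delta})<0$, i.e. $\epsilon(\delta)=-1$. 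If $\delta$ is negative hyperbolic, the eigenvalues are $\lambda,\lambda^{-1}$ with $\lambda<0$, so $\mathrm{tr}\,\mathfrak{L}_{\delta}=\lambda+\lambda^{-1}\leq-2$ and $\det(\mathbbm{1}-\mathfrak{L}_{\delta})\geq 4>0$, i.e. $\epsilon(\delta)=+1$. (Equivalently, one may factor $\det(\mathbbm{1}-\mathfrak{L}_{\delta})=(1-\lambda)(1-\lambda^{-1})$ and inspect the sign of each factor; this makes the elliptic case visibly equal to $|1-\lambda|^{2}>0$.)

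There is essentially no obstacle here. The only point that deserves a word is the assertion in the first paragraph that $\mathfrak{L}_{\delta}$ is symplectic on $\xi_{P}$ — this is the invariance of $\xi$ under the Reeb flow quoted just before the Definition, upgraded to the (immediate) fact that $d\alpha|_{\xi}$ is preserved as well — and once that is granted, the Observation reduces to the elementary trace computation above.
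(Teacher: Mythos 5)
Your proof is correct and is exactly the routine verification the paper leaves to the reader with ``easy to check'': the paper gives no argument, and the intended one is precisely the observation that $\mathfrak{L}_{\delta}$ is symplectic, so $\det(\mathbbm{1}-\mathfrak{L}_{\delta})=(1-\lambda)(1-\lambda^{-1})=2-\mathrm{tr}\,\mathfrak{L}_{\delta}$, whose sign you determine correctly in each of the three cases. No gaps.
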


To any non-degenerate orbit $\delta$ and a trivialization $\tau$ of $\xi|_{\delta}$ we can associate also the \emph{Conley-Zehnder index}
$\mu_{\tau}(\delta) \in \mathbb{Z}$ of $\delta$ with respect to $\tau$. Even if we do not give a precise definition (that can be found for
example in \cite{ET} or \cite{Gutt}) we will provide an explicit description of this index (see \cite[Section 3.2]{Hu3}).

Given $P \in \delta$, using the basis of $\xi|_{\delta}$ determined by $\tau$ we can regard the
differentials ${\phi_t}_*:\xi_P \rightarrow \xi_{\phi_t(P)}$ of the Reeb flow as a path in $t \in [0,T]$ of $2 \times 2$ symplectic matrices.
In particular ${\phi_0}_* : \xi_P \rightarrow \xi_P$ is the identity matrix and, if $T$ is as above, ${\phi_T}_* : \xi_P \rightarrow \xi_P$ is a matrix representation 
for $\mathfrak{L}_{\delta}$. 

If $\delta$ is elliptic, following this path for $t \in [0,T]$, ${\phi_T}_*$ will represent a rotation
by some angle $2\pi \theta$ with $\theta \in \R \setminus \mathbb{Z}$ (since $\delta$ is non degenerate).
Then $\mu_{\tau}(\delta) = 2\lfloor \theta \rfloor + 1$, where $\lfloor \theta \rfloor$ is the highest integer smaller then $\theta$. 

Otherwise, if $\delta$ is hyperbolic, then the symplectic matrix of ${\phi_T}_*$ rotates the eigenvectors
of $\mathfrak{L}_{\delta}$ by an angle $k\pi$ with $k \in 2\mathbb{Z}$ if $\delta$ is positive hyperbolic and $k \in 2\mathbb{Z}+1$ if $\delta$ is negative
hyperbolic. Then $\mu_{\tau}(\delta) = k$.

\begin{Obs}
 Even if $\mu_{\tau}(\delta)$ depends on $\tau$, its parity depends only on $\delta$. Indeed, if $\delta$ is elliptic, then
 $\mu_{\tau}(\delta) \equiv 1 \mod 2$. Moreover
 suppose that $\delta$ is hyperbolic and $\mu_{\tau}(\delta) = k$; if $\tau'$ differs from $\tau$ by a twist of an angle $2n\pi$ with $n \in \mathbb{Z}$, the
 rotation by $k\pi$ on the eigenvectors will be composed with a rotation by $2n\pi$. Then $\mu_{\tau'}(\delta) = k + 2n \equiv k \mod 2$. 
\end{Obs}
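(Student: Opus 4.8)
The plan is to read everything off the explicit description of $\mu_{\tau}(\delta)$ recalled just above, handling the elliptic and hyperbolic cases separately, and to reduce the $\tau$-dependence to the classification of trivializations of a symplectic rank-two bundle over the circle.

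First, suppose $\delta$ is elliptic. Whatever trivialization $\tau$ we choose, the path $\{\phi_t\}_*$, $t \in [0,T]$, ends at a rotation by $2\pi\theta$ with $\theta \in \R \setminus \Z$, and $\mu_{\tau}(\delta) = 2\lfloor \theta \rfloor + 1$. This is an odd integer regardless of $\theta$, so $\mu_{\tau}(\delta) \equiv 1 \mod 2$ independently of $\tau$; concretely, changing $\tau$ only shifts $\theta$ by an integer, hence shifts $\mu_{\tau}(\delta)$ by an even integer.

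Next, suppose $\delta$ is hyperbolic. Two trivializations $\tau, \tau'$ of $\xi|_{\delta}$ — an oriented (equivalently symplectic) rank-two bundle over $S^1 \cong \delta$ — differ by a loop $S^1 \to \mathrm{Sp}(2,\R)$, and such loops are classified up to homotopy by their class in $\pi_1(\mathrm{Sp}(2,\R)) \cong \Z$; call it $n$. Passing from $\tau$ to $\tau'$ replaces the path of symplectic matrices representing $\{\phi_t\}_*$ by its pointwise product with this loop, so the total rotation performed on the real eigendirections of $\mathfrak{L}_{\delta}$ changes by $2n\pi$. Since with respect to $\tau$ this rotation is by $k\pi$ with $\mu_{\tau}(\delta) = k$, with respect to $\tau'$ it is by $(k + 2n)\pi$, i.e. $\mu_{\tau'}(\delta) = k + 2n \equiv k \mod 2$. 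Hence the parity is again independent of $\tau$, and equals that of $k$, which is even in the positive hyperbolic case and odd in the negative hyperbolic case.

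Finally, I would note that the two cases unify: the above shows $(-1)^{\mu_{\tau}(\delta)}$ is a well-defined function of $\delta$ alone, and comparing with the preceding Observation on Lefschetz signs yields $(-1)^{\mu_{\tau}(\delta)} = -\epsilon(\delta) = -\sign(\det(\mathbbm{1} - \mathfrak{L}_{\delta}))$, a manifestly $\tau$-free expression. The only real point, and the step I expect to require the most care, is the behaviour of the index under change of trivialization — that catenating the defining path with a loop of winding number $n$ in $\mathrm{Sp}(2,\R)$ adds exactly $2n$ to the Conley–Zehnder index; this is standard but is where all the content sits, the elliptic case and the final repackaging being immediate.
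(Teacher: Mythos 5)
Your argument is correct and follows essentially the same route as the paper: the elliptic case is immediate from the formula $\mu_{\tau}(\delta)=2\lfloor\theta\rfloor+1$ being odd, and the hyperbolic case reduces to the fact that a change of trivialization twists the defining path by a loop of rotations through $2n\pi$, shifting $k$ by the even integer $2n$. Your framing of the twist via $\pi_1(\mathrm{Sp}(2,\R))\cong\Z$ is just a more formal phrasing of the paper's ``twist of an angle $2n\pi$'', and your closing remark recovers the paper's subsequent Corollary relating $(-1)^{\mu_{\tau}(\delta)}$ to the Lefschetz sign.
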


\begin{Cor} If $\delta$ is non-degenerate then for any $\tau$
 $$(-1)^{\mu_{\tau}(\delta)} = -\epsilon(\delta).$$
\end{Cor}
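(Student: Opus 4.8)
The plan is to argue by cases according to the three types of a non-degenerate Reeb orbit $\delta$ — elliptic, positive hyperbolic, negative hyperbolic — and in each case simply to compare the parity of $\mu_{\tau}(\delta)$, read off from the explicit description recalled above, with the value of $\epsilon(\delta)$ given by the Observation preceding the Definition of the Lefschetz sign. Since $(-1)^{\mu_{\tau}(\delta)}$ and $-\epsilon(\delta)$ both take values in $\{+1,-1\}$, it suffices to check they agree in each of the three cases, and the identity follows.

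First I would treat the elliptic case. Here the explicit formula $\mu_{\tau}(\delta) = 2\lfloor\theta\rfloor + 1$ shows $\mu_{\tau}(\delta)$ is odd, so $(-1)^{\mu_{\tau}(\delta)} = -1$; on the other hand $\epsilon(\delta) = +1$ by the Observation, so $-\epsilon(\delta) = -1$, and the two sides match. Next, for $\delta$ negative hyperbolic, the description gives $\mu_{\tau}(\delta) = k$ with $k$ odd, hence $(-1)^{\mu_{\tau}(\delta)} = -1$, while again $\epsilon(\delta) = +1$, so $-\epsilon(\delta) = -1$. Finally, for $\delta$ positive hyperbolic, $\mu_{\tau}(\delta) = k$ with $k$ even, so $(-1)^{\mu_{\tau}(\delta)} = +1$, and since $\epsilon(\delta) = -1$ we get $-\epsilon(\delta) = +1$; the two sides match once more.

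The only point requiring a word of justification is that the statement is claimed for \emph{any} trivialization $\tau$, whereas $\mu_{\tau}(\delta)$ itself depends on $\tau$. This is already handled by the Observation on the parity of the Conley–Zehnder index: changing $\tau$ alters $\mu_{\tau}(\delta)$ by an even integer, so the parity — and hence $(-1)^{\mu_{\tau}(\delta)}$ — is independent of $\tau$, exactly as $\epsilon(\delta)$ is. I do not expect any genuine obstacle here: the argument is pure bookkeeping, and the only thing to be careful about is keeping the sign conventions for $\epsilon$ and for $\mu_{\tau}$ consistent with those fixed earlier in the section.
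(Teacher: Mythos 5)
Your proof is correct and follows exactly the route the paper intends: the Corollary is stated as an immediate consequence of the two preceding Observations (the case-by-case values of $\epsilon(\delta)$ and the parity of $\mu_{\tau}(\delta)$), and your three-case check plus the remark on $\tau$-independence is precisely that argument.
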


\begin{Def} \label{Definition: Orbit sets}
 Given $X\subseteq Y$, we will indicate by $\mathcal{P}(X)$ the set of simple Reeb orbits of $\alpha$ contained in $X$. An \emph{orbit set}
 (or \emph{multiorbit}) in $X$ is a formal finite product $\gamma = \prod_i \gamma_i^{k_i}$, where $\gamma_i \in \mathcal{P}(X)$ and
 $k_i \in \mathbb{N}$ is the \emph{multiplicity} of  $\gamma_i$ in $\gamma$, with $k_i \in \{0,1\}$ whenever $\gamma_i$ is hyperbolic.
 The set of multiorbits in $X$ will be denoted by $\mathcal{O}(X)$. 
\end{Def}

Note that the empty set is considered as an orbit, called \emph{empty orbit} and it is indicated by $\emptyset$. 

An orbit set $\gamma = \prod_i \gamma_i^{k_i}$ belongs to the homology class
$[\gamma] = \sum_i k_i [\gamma_i] \in H_1(Y)$ (unless stated otherwise, all homology groups will be taken with integer coefficients).
Moreover the action of $\gamma$ is defined by $\mathcal{A}(\gamma) = \sum_i k_i \int_{\gamma_i}\alpha$.

\subsubsection{Holomorphic curves} \label{Subsection: Holomorphic Curves}

We recall here some definitions and properties about holomorphic curves in dimension $4$. We refer the reader to \cite{McDuff} and
\cite{MS} for the general theory and \cite{Hu3} and \cite{CGH2}-\cite{CGH5} for an approach more specialized to our context. 

Let $X$ be an oriented even dimensional manifold. An \emph{almost complex structure} on $X$ is an isomorphism $J : TX \rightarrow TX$ such that
$J(T_P X) = T_P X$ and $J^2 = -id$. If $(X_1,J_1)$ and $(X_2,J_2)$ are two even dimensional manifolds endowed with an almost complex structure,
a map $u : (X_1,J_1) \rightarrow (X_2,J_2)$ is \emph{pseudo-holomorphic} if it satisfies the \emph{Cauchy-Riemann equation}
$$du \circ J_1 = J_2 \circ du.$$
\begin{Def}
 A \emph{pseudo-holomorphic curve} in a four-dimensional manifold $(X,J)$ is a pseudo-holomorphic map $u : (F,j) \rightarrow (X,J)$, where $(F,j)$
 is a Riemann surface.
\end{Def}
Note that here we do not require that $F$ is connected. 

In this paper we will be particularly interested in pseudo-holomorphic curves (that sometimes we will call simply holomorphic curves) in
``symplectizations'' of contact three manifolds. Let $(Y,\alpha)$ be a contact three-manifold and consider the four-manifold $\R \times Y$. Call
$s$ the $\R$-coordinate and let $R = R_{\alpha}$ be the Reeb vector field of $\alpha$. The almost complex structure $J$ on $\R \times Y$
is \emph{adapted to $\alpha$} if
\begin{enumerate}
 \item $J$ is $s$-invariant;
 \item $J({\xi}) = \xi$ and $J(\partial_s) = R$ at any point of $\R \times Y$;
 \item $J|_{\xi}$ is compatible with $d\alpha$, i.e. $d\alpha(\cdot,J\cdot)$ is a Riemannian metric. %$d\alpha(v,Jv)>0$ for any $v \in \xi$
\end{enumerate}

For us, a holomorphic curve $u$ in the symplectization of $(Y,\alpha)$ is a holomorphic curve $u : (\dot{F},j) \rightarrow (\R \times Y,J)$,
where:
\begin{enumerate}
 \item[i.]    $J$ is adapted to $\alpha$;
 \item[ii.]   $(\dot{F},j)$ is a Riemann surface obtained from a closed surface $F$ by removing a finite number of points (called \emph{punctures});
 \item[iii.]  for any puncture $x$ there exists a neighborhood $U(x) \subset F$ such that $U(x) \setminus \{x\}$  is mapped by $u$ asymptotically to a cover of a cylinder
 $\R \times \delta$ over an orbit $\delta$ of $R$ in a way that $\lim_{y \rightarrow x} \pi_{\R}(u(y)) = \pm \infty$, where
 $\pi_{\R}$ is the projection on the $\R$-factor of $\R \times Y$.
\end{enumerate}
We say that $x$ is a \emph{positive puncture} of $u$ if in the last condition above the limit is $+ \infty$: in this case the orbit $\delta$ 
is a \emph{positive end} of $u$. If otherwise the limit is $-\infty$ then $x$ is a \emph{negative puncture} and $\delta$ is a \emph{negative end}
of $u$.

If $\delta$ is the Reeb orbit associated to the puncture $x$, then $u$ near $x$ determines a cover of $\delta$: the number of sheets of this cover
is the \emph{local $x$-multiplicity of $\delta$ in $u$}. The sum of the $x$-multiplicities over all the punctures $x$ associated to $\delta$
is the \emph{(total) multiplicity} of $\delta$ in $u$. 

If $\gamma$ ($\gamma'$) is the orbit set determined by the set of all the positive (negative) ends of $u$ counted with multiplicity, then
we say that $u$ is a \emph{holomorphic curve from $\gamma$ to $\gamma'$}.

\begin{Ex}
 A \emph{cylinder over an orbit set $\gamma$} of $Y$ is the holomorphic curve $\R \times \gamma \subset \R \times Y$. 
\end{Ex}

\begin{Obs}
 Note that if there exists a holomorphic curve $u$ from $\gamma$ to $\gamma'$, then
 $[\gamma] = [\gamma'] \in H_1(Y,\Z)$. 
\end{Obs}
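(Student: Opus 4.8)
The plan is to exhibit $[\gamma]-[\gamma']$ as the homology class of the boundary of a singular $2$-chain in $Y$, so that it is forced to vanish in $H_1(Y,\Z)$. First I would invoke condition iii.\ above: since the domain $\dot F$ is a closed surface $F$ with finitely many punctures removed and $u$ is asymptotic near each puncture to a cover of a trivial cylinder $\R\times\delta$, the surface $\dot F$ compactifies to a compact surface $\bar F$ with boundary by adjoining one circle at each puncture, and $u$ extends continuously to a map $\bar u\colon \bar F\to [-\infty,+\infty]\times Y$ into the obvious compactification of the symplectization, which sends the boundary circle at a positive (resp.\ negative) puncture onto $\{+\infty\}\times\delta$ (resp.\ $\{-\infty\}\times\delta$), wrapping around $\delta$ as many times as the local multiplicity of $\delta$ at that puncture.

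Next I would compose with the projection $p\colon[-\infty,+\infty]\times Y\to Y$ to obtain a continuous map $f:=p\circ\bar u\colon\bar F\to Y$ from a compact oriented surface with boundary, orienting $\bar F$ by $j$ and $\partial\bar F$ by the induced boundary orientation. If $c$ is a relative fundamental cycle of $(\bar F,\partial\bar F)$, then $\partial c$ represents $[\partial\bar F]$ and $f_\#(c)$ is a singular $2$-chain in $Y$ with $\partial(f_\#(c))=f_\#(\partial c)$; hence the class $f_*[\partial\bar F]\in H_1(Y;\Z)$ is a boundary, so it is zero. It then remains to identify $f_*[\partial\bar F]$ with $[\gamma]-[\gamma']$: the circles over the positive punctures are sent by $f$, with their multiplicities, onto the simple orbits making up $\gamma$, with the boundary orientation agreeing with the Reeb orientation; the circles over the negative punctures are sent, with multiplicities, onto the simple orbits of $\gamma'$, with boundary orientation opposite to the Reeb one. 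Summing these contributions gives $f_*[\partial\bar F]=[\gamma]-[\gamma']$, whence $[\gamma]=[\gamma']$.

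I do not expect a genuine obstacle here, since this is a routine observation; the two points deserving some care are both standard facts about holomorphic curves in symplectizations: that the asymptotic convergence in condition iii.\ is strong enough (exponential, since the orbits are non-degenerate) to produce the continuous --- up to reparametrisation, smooth --- extension $\bar u$, and the sign bookkeeping distinguishing positive from negative ends. As an alternative avoiding the compactification, one can instead intersect $u$ with two regular slices $\{s_0\}\times Y$ and $\{s_1\}\times Y$ with $s_0\gg 0\gg s_1$, check that the resulting $1$-cycles in $Y$ are homologous to $\gamma$ and to $\gamma'$ respectively, and use the portion of $u$ between the slices as the desired $2$-chain realising the homology between them.
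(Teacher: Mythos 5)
Your argument is correct and is exactly the reasoning the paper leaves implicit: the Observation is stated without proof, and later the paper notes that $u$ determines a relative class $[\Imm(u)]\in H_2(\R\times Y;\gamma,\delta)$ whose existence forces $[\gamma]=[\delta]$ in $H_1(Y)$, which is precisely your compactified $2$-chain with boundary $\gamma-\gamma'$. Both your main argument and the alternative via regular slices are standard and sound.
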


We state now some result about holomorphic curves that will be useful later.

\begin{Lemma}[see for example \cite{V}] \label{Lemma: Holomorphic curves low the actions of the orbits}
 If $u$ is a holomorphic curve in the symplectization of $(Y,\alpha)$ from $\gamma$ to $\gamma'$, then $\mathcal{A}(\gamma) \geq \mathcal{A}(\gamma')$
 with equality if and only if $\gamma = \gamma'$ and $u$ is a union of covers of a cylinder over $\gamma$.
\end{Lemma}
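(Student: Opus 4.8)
The plan is to run a Stokes-type argument: the $2$-form $d\alpha$ (pulled back to $\R \times Y$ via the projection) restricts to a \emph{non-negative} area form on any $J$-holomorphic curve, and its total integral over $u$ equals the action defect $\mathcal{A}(\gamma) - \mathcal{A}(\gamma')$.

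First I would establish pointwise positivity of $u^*(d\alpha)$. Decomposing $TY = \xi \oplus \langle R \rangle$ and using $d\alpha(R,\cdot) = 0$, the form $d\alpha$ on $\R \times Y$ is the pullback of $d\alpha|_\xi$ under the bundle projection onto $\xi$. Given a complex line $\ell \subset T\dot{F}$, the subspace $du(\ell)$ is $J$-invariant; writing its $\xi$-component and invoking condition (3) (that $d\alpha(\cdot, J\cdot)$ is a metric on $\xi$), one gets $u^*(d\alpha)(\ell) \geq 0$, with equality precisely when $du(\ell) \subset \langle \partial_s, R\rangle$. Hence $u^*(d\alpha)$ is a non-negative multiple of any $j$-compatible area form on $\dot{F}$.

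Next I would apply Stokes' theorem to the truncation $\dot{F}_T := u^{-1}([-T,T]\times Y)$ for large $T$. Its boundary circles map near the positive ends at level $s=T$ and the negative ends at level $s=-T$; by the asymptotic convergence of $u$ near each puncture to a (cover of a) trivial cylinder over a non-degenerate Reeb orbit, the integral of $u^*\alpha$ over the boundary circle at a positive (resp. negative) puncture converges to the action of the corresponding orbit counted with its local multiplicity. Therefore
$$\int_{\dot{F}_T} u^*(d\alpha) \;=\; \int_{\partial \dot{F}_T} u^*\alpha \;\xrightarrow[T\to\infty]{}\; \mathcal{A}(\gamma) - \mathcal{A}(\gamma'),$$
and combining with $u^*(d\alpha)\geq 0$ gives $\mathcal{A}(\gamma)\geq\mathcal{A}(\gamma')$. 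For the equality case, $\mathcal{A}(\gamma)=\mathcal{A}(\gamma')$ forces $\int_{\dot F} u^*(d\alpha)=0$, hence $u^*(d\alpha)\equiv 0$; by the pointwise analysis the $\xi$-component of $du$ vanishes identically, so the image of $u$ lies in $\R\times(\text{union of Reeb orbits})$ and each component of $u$ is a cover of a cylinder $\R\times\delta$. Matching positive and negative ends (with multiplicities) then yields $\gamma=\gamma'$ and $u$ a union of covers of the cylinder over $\gamma$.

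The main obstacle is the asymptotic analysis underlying the boundary-term limit in Stokes: one needs that near each puncture $u$ converges — exponentially, using non-degeneracy of the orbit — to a trivial cylinder, so that $\int_{\partial\dot{F}_T} u^*\alpha$ converges to the sum of the relevant orbit actions. I would invoke the standard asymptotic results for punctured holomorphic curves in symplectizations (as in the references cited in the excerpt); the remaining ingredients are the elementary linear algebra of step one and Stokes' theorem.
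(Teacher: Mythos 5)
Your proposal is correct: the paper offers no proof of this lemma at all (it simply cites Vaugon's thesis \cite{V}), and the argument you give --- pointwise non-negativity of $u^*(d\alpha)$ from the adaptedness of $J$, Stokes' theorem on truncations with the boundary terms converging to the actions of the asymptotic orbits, and the equality case forcing $du$ into $\langle \partial_s, R\rangle$ so that each component covers a trivial cylinder --- is exactly the standard proof supplied by that reference. Nothing is missing beyond the asymptotic convergence you already flag, which is part of the definition of a holomorphic curve in a symplectization used here.
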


\begin{Thm}[\cite{MS}, Lemma 2.4.1] \label{Theorem: A holomorphic has only isolated critical points}
 Let $u : (F,j) \rightarrow (\R \times Y,J)$ be a non-constant holomorphic curve in $(X,J)$, then the critical points of
 $\pi_{\R} \circ u$ are isolated. In particular, if $\pi_Y$ denotes the projection $\R \times Y \rightarrow Y$, $\pi_Y \circ u$ is transverse
 to $R_{\alpha}$ away from a set of isolated points.
\end{Thm}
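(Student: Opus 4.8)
The plan is to isolate the component of $du$ that is responsible for the two assertions and then apply the similarity principle twice. First note that if $u$ is a branched cover of a trivial cylinder $\R\times\gamma$ over a simple Reeb orbit, then $\pi_{\R}\circ u$ factors as a branched covering over the projection $\R\times\gamma\to\R$, so its critical points are the (isolated) critical points of $u$; hence I may assume $u$ is not such a cover. For a general non-constant $u$ the standard theory of $J$-holomorphic curves (\cite{MS}) gives that $Z_{0}:=\{p\in F:du_{p}=0\}$ is discrete, and off $Z_{0}$ the image $\Pi_{p}:=du_{p}(T_{p}F)$ is a $J$-complex line in $T_{u(p)}(\R\times Y)$.

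Next I would record the linear algebra forced by the adaptedness of $J$. Since $J$ preserves the splitting $T(\R\times Y)=(\R\partial_{s}\oplus\R R_{\alpha})\oplus\xi$ and $J\partial_{s}=R_{\alpha}$, a pointwise computation yields $u^{*}(ds)\circ j=-u^{*}\alpha$; equivalently $\lambda:=u^{*}(ds)+i\,u^{*}\alpha$ is a $(1,0)$-form on $F$. Consequently $p$ is a critical point of $\pi_{\R}\circ u$ if and only if $(u^{*}\alpha)_{p}=0$ if and only if $\lambda_{p}=0$, and for such $p\notin Z_{0}$ one gets $\Pi_{p}=\xi_{u(p)}$; moreover, for $p\notin Z_{0}$, the map $\pi_{Y}\circ u$ is an immersion transverse to $R_{\alpha}$ at $p$ unless $\Pi_{p}=\R\partial_{s}\oplus\R R_{\alpha}$, and this last case occurs exactly when the $\xi$-part $\pi_{\xi}\circ du$ of $du$ vanishes at $p$, where $\pi_{\xi}\colon TY\to\xi$ is the projection along $R_{\alpha}$. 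Thus both assertions follow once one shows that the zero sets of $\lambda$ and of $\pi_{\xi}\circ du$ are discrete.

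For $\pi_{\xi}\circ du$: regarding it as a section of the complex line bundle $\Lambda^{1,0}T^{*}F\otimes_{\mathbb{C}}u^{*}\xi$ and differentiating $du\circ j=J\circ du$ — using the $s$-invariance of $J$ and the $d\alpha$-compatibility of $J|_{\xi}$ — one checks that it solves a homogeneous linear Cauchy--Riemann type equation $\bar\partial(\pi_{\xi}\circ du)+A\cdot(\pi_{\xi}\circ du)=0$. By the Carleman similarity principle it then either vanishes identically or has only isolated zeros; the first alternative would put $du$ inside the involutive distribution $\R\partial_{s}\oplus\R R_{\alpha}$, whose leaves are the trivial cylinders, and hence (unique continuation) make $u$ a branched cover of a trivial cylinder, contrary to assumption. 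For $\lambda$: from $d(ds+i\,\pi_{Y}^{*}\alpha)=i\,\pi_{Y}^{*}d\alpha$ and the $(1,0)$-type of $\lambda$ one gets, in a local holomorphic coordinate $z$ with $\lambda=g\,dz$, the identity $\partial_{\bar z}g=\tfrac12\lvert\pi_{\xi}\circ du\rvert^{2}\ge 0$ (norm with respect to $d\alpha(\cdot,J\cdot)$). Its right-hand side vanishes only on the now-discrete zero set of $\pi_{\xi}\circ du$, so near any other point an infinite-order zero of $g$ would force $u$ to be trivial there; since $\partial_{\bar z}g$ is real and of one sign (and immediately so if $J$ is taken real-analytic), a finite-order zero of $g$ is isolated, whence the critical set of $\pi_{\R}\circ u$ is discrete.

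I expect the main obstacle to be the verification that $\pi_{\xi}\circ du$ genuinely satisfies a \emph{homogeneous} perturbed Cauchy--Riemann equation: this is the local computation with an adapted $J$ showing that the $\partial_{s}$- and $R_{\alpha}$-components of $du\circ j=J\circ du$ couple into the $\xi$-component only through zeroth-order terms proportional to $\pi_{\xi}\circ du$. The rest is the quoted properties of $J$-holomorphic curves in \cite{MS} together with the fibrewise linear algebra above.
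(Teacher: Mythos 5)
You should first be aware that the paper contains no proof of this statement: it is quoted with the citation [MS, Lemma 2.4.1], and that lemma only asserts that the set $Z_{0}=\{p:du_{p}=0\}$ of critical points of $u$ itself is discrete, not the stronger claim about $\pi_{\R}\circ u$. So your proposal attempts to supply an argument the paper never gives. Your treatment of the second assertion is correct and is the standard Hofer--Wysocki--Zehnder argument: $\pi_{\xi}\circ du$ is complex linear, satisfies a perturbed Cauchy--Riemann equation whose homogeneous zeroth-order term you rightly single out as the only real computation, and the similarity principle yields the dichotomy ``identically zero, hence a cover of a trivial cylinder, or isolated zeros''. Note only that for covers of trivial cylinders the transversality conclusion is simply false (the image of $\pi_{Y}\circ u$ is a Reeb orbit), so that case has to be excluded from the second assertion as well, not just set aside for the first.

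The argument for the first assertion, however, breaks at the last step: from $\partial_{\bar z}g=\tfrac12\lvert\pi_{\xi}\circ du\rvert^{2}\ge 0$ one cannot conclude that the zeros of $g$ are isolated. The function $g(z)=\mathrm{Re}(z)$ satisfies $\partial_{\bar z}g=\tfrac12>0$ and vanishes on a line; ``real and of one sign'' gives no control on the zero set, and $g$ is not holomorphic, so ``finite-order zero'' has no a priori meaning. In fact the first assertion is false as a local statement: in the symplectization of $(\R^{3},\alpha=dz-y\,dx)$ with the adapted $J$ determined by $J\partial_{s}=\partial_{z}$ and $J(\partial_{x}+y\partial_{z})=\partial_{y}$, the map $\sigma+i\tau\mapsto(\sigma^{2}/2,\,\sigma,\,\tau,\,\sigma\tau)$ is an embedded $J$-holomorphic plane for which $\pi_{\R}\circ u=\sigma^{2}/2$ has critical set the entire line $\{\sigma=0\}$; here $\lambda=\sigma\,d\zeta$, i.e.\ $g=\mathrm{Re}(\zeta)$, which is exactly the model above. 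So no local argument can close this gap. Only the second assertion (discreteness of the zeros of $\pi_{\xi}\circ du$ for curves that are not covers of trivial cylinders) is true, and it is the only part of the theorem the paper actually uses, e.g.\ to define the slopes $s_{T_{y}}(u)$ and in the Blocking Lemma.
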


From now on if $u$ is a map with image in $\R \times Y$, we will set $u_{\R} := \pi_{\R} \circ u$ and $u_Y := \pi_Y \circ u$.

Holomorphic curves also enjoy the following property, which will be essential for us: see for example \cite{Gromov}.
\begin{Thm}[Positivity of intersection; Gromov, McDuff, Micallef-White] \label{Theorem: Positivity of intersection in dimension 4}
 Let $u$ and $v$ be two distinct holomorphic curves in a four manifold $(W,J)$. Then $\#(\Imm(u) \cap \Imm(v)) < \infty$. Moreover,
 if $P$ is an intersection point between $\Imm(u)$ and $\Imm(v)$, then its contribution $m_P$ to the algebraic intersection number
 $\langle \Imm(u),\Imm(v) \rangle$ is strictly positive, and $m_P = 1$ if and only
 if $u$ and $v$ are embeddings near $P$ that intersect transversely in $P$.
\end{Thm}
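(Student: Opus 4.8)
The plan is to localize the problem: both the finiteness of $\Imm(u)\cap\Imm(v)$ and the local nature of each intersection point follow from the structure theory of $J$-holomorphic curves near a point, while the quantitative statement about $m_P$ is extracted by comparison with honest holomorphic curves in $(\mathbb{C}^2,i)$.

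First I would show the intersection points are isolated. The key input is the Carleman similarity principle (unique continuation for solutions of a Cauchy--Riemann type equation): near any point $P$ of its image a non-constant $J$-holomorphic curve has a Puiseux-type parametrization, so its image is a finite branched multisection over a disc; moreover two $J$-holomorphic curves whose images agree on a subset accumulating at $P$ must share that parametrization at $P$ and hence, by analytic continuation along the domain, coincide on a whole connected component. Since $u$ and $v$ are distinct, $\Imm(u)\cap\Imm(v)$ has no accumulation point; together with properness of the curves (in the symplectization setting this is the finiteness of the punctures plus asymptotic convergence to Reeb cylinders) this gives $\#(\Imm(u)\cap\Imm(v))<\infty$.

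Second, and this is the heart, I would study a fixed intersection point $P$. Pick local coordinates on $W$ centered at $P$ in which $J$ agrees with the standard complex structure $i$ to first order. The Micallef--White theorem then asserts that, after shrinking the neighborhood, there is a $C^1$ (indeed $C^{1,\beta}$) diffeomorphism of the neighborhood, tangent to the identity at $P$, carrying the pair $(\Imm(u),\Imm(v))$ onto a pair of genuine holomorphic curves $(C_1,C_2)$ in $(\mathbb{C}^2,i)$ through the origin. Its proof is the delicate elliptic-regularity step: one extracts the leading Puiseux term of each branch via the similarity principle, bootstraps to control the remainder, and then straightens the picture. I would not reproduce this but cite \cite{Gromov} (and McDuff, Micallef--White) for it; this is the one genuine obstacle in the whole argument.

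Finally, since such a diffeomorphism is orientation preserving, the topological local intersection number $m_P$ equals that of the holomorphic model $(C_1,C_2)$. For two distinct holomorphic curves in $\mathbb{C}^2$ meeting at the origin the local intersection multiplicity is a strictly positive integer --- equal, e.g., to $\dim_{\mathbb{C}}\mathcal{O}_0/(f,g)$ for local defining functions $f,g$, or to the linking number of the transverse links cut on a small $3$-sphere --- and it is $1$ exactly when $C_1$ and $C_2$ are both smooth at the origin and meet transversally. Pulling this back through the diffeomorphism gives $m_P>0$, with $m_P=1$ if and only if $u$ and $v$ are embeddings near $P$ that meet transversally there. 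Everything outside the Micallef--White local model is either soft topology or classical complex analytic geometry.
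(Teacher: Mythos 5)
The paper does not actually prove this statement: it is quoted as a classical black-box result with the pointer ``see for example \cite{Gromov}'', so there is no internal argument to compare yours against. Your outline is nonetheless the standard and correct architecture of the proof (it is essentially the one in \cite{MS} and in McDuff's paper \cite{McDuff}): the Carleman similarity principle gives unique continuation and hence isolation of intersection points between curves with no common image component; the Micallef--White local model (a $C^{1}$ straightening of the pair of images onto genuine holomorphic curves in $(\mathbb{C}^2,i)$) reduces the local intersection number to the classical complex-analytic multiplicity $\dim_{\mathbb{C}}\mathcal{O}_0/(f,g)$, which is positive and equals $1$ exactly in the transverse embedded case. Two points deserve the care you already gave them: first, ``distinct'' must be read as ``no common irreducible component of the images'', since otherwise unique continuation only yields coincidence on a component rather than a contradiction; second, finiteness of $\Imm(u)\cap\Imm(v)$ does not follow from isolation alone in a noncompact $W$ --- one needs properness, which in the paper's setting comes from the asymptotically cylindrical behaviour of the curves near the punctures. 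Since you correctly isolate the one genuinely hard analytic ingredient (the Micallef--White theorem) and cite it rather than reprove it, your proposal is at least as complete as the paper's treatment.
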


When the almost complex structure does not play an important role or is understood it will be omitted from the notations.

%A holomorphic curve $u$ is called \emph{connector} if it has ECH index $I(u)=0$: it turns out that such a curve is some branched cover of a trivial
%cylinder $\mathbb{R} \times \gamma$ over a simple orbit $\gamma$.  

\subsubsection{Morse-Bott theory} \label{Subsection: Morse-Bott Theory}

The Morse-Bott theory in contact geometry has been first developed by Bourgeois in \cite{Bourg}. We present in this subsection some basic
notions and applications, mostly as presented in \cite{CGH2}.

\begin{Def} 
 A \emph{Morse-Bott torus} (briefly M-B torus) in a $3$-dimensional contact manifold $(Y,\alpha)$ is an embedded torus
 $T$ in $Y$ foliated by a family $\gamma_t,\ t\in S^1$, of Reeb orbits, all in the same class in $H_1(T)$, that
 are non-degenerate in the Morse-Bott sense. Here this means the following. Given any $P \in T$ and a positive basis $(v_1,v_2)$ of
 $\xi_P$ where $v_2 \in T_P(T)$ (so that $v_1$ is transverse to $T_P(T)$), then the differential of the first return map of the Reeb
 flow on $\xi_P$ is of the form 
 $$\left(
  \begin{array}{cc}
   1 & 0 \\ a & 1
  \end{array}
  \right)
 $$
 for some $a \neq 0$. If $a > 0$ (resp. $a < 0$) then $T$ is a \emph{positive} (resp. \emph{negative}) M-B torus. 
\end{Def}

We say that $\alpha$ is a \emph{Morse-Bott contact form} if all the Reeb orbits of $\alpha$ are either isolated and non-degenerate or come in
$S^1$-families foliating M-B tori.  

As explained in \cite{Bourg} and \cite[Section 4]{CGH2} it is possible to modify the Reeb vector field in a small neighborhood
of a M-B torus $T$ preserving only two orbits, say $e$ and $h$, of the $S^1$-family of Reeb orbits associated to $T$. 

Moreover, for any fixed $L >0$, the perturbation can be done in a way that $e$ and $h$ are the only orbits in a neighborhood of $T$
with action less then $L$.

If $T$ is a positive (resp. negative) M-B torus and $\tau$ is the trivialization of $\xi$ along the orbits given pointwise by the
basis $(v_1, v_2)$ above, then one can make the M-B perturbation in a way that $h$ is positive hyperbolic with $\mu_{\tau}(h)=0$
and $e$ is elliptic with $\mu_{\tau}(e) = 1$ (resp. $\mu_{\tau}(e) = -1$). 

The orbits $e$ and $h$ can be
seen as the only two critical points of a Morse function $f_T:S^1 \rightarrow \mathbb{R}$ defined on the $S^1$-family of Reeb orbits foliating
$T$ and with maximum corresponding to the orbit with higher C-Z index. Often M-B tori will be implicitly given with such a function.

\begin{Obs} \label{Observation: A M-B perturbation creates non isolated and degenerate orbits}
It is important to remark that, before the perturbation, $T$ is foliated by Reeb 
orbits of $\alpha$ and so these are non-isolated. Moreover the form of the differential of the first return map
of the flow of $\xi$ implies that these orbits are also degenerate.

After the perturbation, $T$ contains only two isolated and non degenerate orbits, but other orbits are created in a neighborhood of $T$ and these
orbits can be non-isolated and degenerate. See Figure \ref{Figure: Dynamic of the M-B perturbation near K} later for an example of M-B perturbations. 
\end{Obs}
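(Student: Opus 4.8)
The plan is to verify the three assertions of the Observation separately; each follows directly from the definitions recalled in Subsection~\ref{Subsection: Morse-Bott Theory} together with the normal form of the linearized first return map along a Morse-Bott torus.

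First I would treat the picture before the perturbation. By definition $T$ is foliated by an $S^{1}$-family $\{\gamma_{t}\}_{t\in S^{1}}$ of Reeb orbits of $\alpha$, so each $\gamma_{t}$ is a limit of the nearby orbits $\gamma_{t'}$, $t'\to t$, and hence is non-isolated. For degeneracy, fix $P\in\gamma_{t}\subset T$ and the positive basis $(v_{1},v_{2})$ of $\xi_{P}$ with $v_{2}\in T_{P}T$. By the definition of a Morse-Bott torus, in this basis the linearized first return map $\mathfrak{L}_{\gamma_{t}}$ is the $2\times 2$ matrix with first column $(1,a)$ and second column $(0,1)$, where $a\neq 0$; in particular $v_{2}$ is an eigenvector of $\mathfrak{L}_{\gamma_{t}}$ with eigenvalue $1$, equivalently $\det(\mathbbm{1}-\mathfrak{L}_{\gamma_{t}})=0$, so $\gamma_{t}$ is degenerate in the sense of Subsection~\ref{Subsection: Contact geometry}. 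This holds for every $t\in S^{1}$ and is independent of the auxiliary choice of $v_{1}$, since the second column of the matrix — the image of $v_{2}$ — is always $(0,1)$.

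Then I would address the picture after the perturbation. Here I would invoke the Bourgeois construction recalled just above the Observation (\cite{Bourg}, \cite[Section~4]{CGH2}): the Reeb vector field is modified only inside an arbitrarily small tubular neighbourhood $\mathcal{N}(T)$ of $T$, and, for the prescribed action bound $L$, in such a way that $e$ and $h$ are the only Reeb orbits of action less than $L$ inside $\mathcal{N}(T)$ — in particular the remaining members of $\{\gamma_{t}\}$ are destroyed on $T$, and $e$, $h$ are non-degenerate and isolated. However, the construction controls neither the Reeb orbits of action at least $L$ inside $\mathcal{N}(T)$ nor the dynamics in the collar where the perturbed form is interpolated back to $\alpha$; in the explicit local models used later in the paper (see Figure~\ref{Figure: Dynamic of the M-B perturbation near K}) the perturbed torus $T$ lies in a family of parallel tori still foliated by Reeb orbits, and the computation of the previous paragraph shows that these orbits remain degenerate and non-isolated. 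This gives the last assertion.

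I do not expect a genuine obstacle: the Observation is a bookkeeping remark stating that being ``Morse-Bott'' is a property of a single torus which the ambient contact form does not inherit after a one-torus perturbation. The only point that merits a line of care is the degeneracy claim — reading off the eigenvalue $1$ from the normal form of $\mathfrak{L}_{\gamma_{t}}$ — and this is immediate, as noted above.
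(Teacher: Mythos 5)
Your proposal is correct and follows exactly the reasoning the paper intends: the Observation is stated as a remark whose justification is precisely the one you spell out, namely non-isolation from the $S^1$-family, degeneracy from the unipotent normal form $\left(\begin{smallmatrix}1&0\\a&1\end{smallmatrix}\right)$ of $\mathfrak{L}_{\gamma_t}$ (eigenvalue $1$, so $\det(\mathbbm{1}-\mathfrak{L}_{\gamma_t})=0$), and the post-perturbation picture from the Bourgeois construction with its action threshold $L$. Nothing is missing; you have simply made explicit the one-line verification the paper leaves implicit.
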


\begin{Prop}[\cite{Bourg}, Section 3] For any M-B torus $T$ and any $L \in \R$ there exists a M-B perturbation of $T$ such that, with the exception
of $e$ and $h$, all the periodic orbits in a neighborhood of $T$ have action greater then $L$.
\end{Prop}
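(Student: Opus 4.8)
The plan is to run the standard Morse–Bott perturbation (\cite{Bourg}, \cite[Section 4]{CGH2}) and to read the quantitative action bound off it. First I would invoke the Morse–Bott normal form: there is a tubular neighborhood $N \cong S^1 \times S^1 \times (-\delta,\delta)$ of $T$, with coordinates $(\theta,t,z)$ and $T = \{z = 0\}$, in which the orbits foliating $T$ are the circles $\{t = \mathrm{const},\ z = 0\}$ run through in the $\theta$-direction, and in which the first return map to a transverse disk has, in the basis $(v_1,v_2)$ of the definition above, the shear $\left(\begin{smallmatrix} 1 & 0 \\ a(t) & 1 \end{smallmatrix}\right)$ with $a$ nowhere zero and of constant sign. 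The elementary but decisive consequence is that the closed Reeb orbits of the unperturbed $\alpha$ lying in $N$ but \emph{off} $T$ are long: the return map preserves $z$ up to a factor $1 + O(\delta)$ and shifts $t$ by $\asymp a(t)z$ per turn, all shifts having the same sign; hence an orbit through a point with $0 < |z| < \delta$ can close up only after winding in $\theta$ at least $\sim 1/(|a|\delta)$ times, and — since $\mathcal{A}(\gamma) = \int_0^{T_\gamma}\alpha(R_\alpha)\,ds = T_\gamma$ — it then has action at least $c_0/\delta$. Running this on a slightly larger neighborhood and shrinking $\delta$ below $c_0/L$, we may thus assume every closed orbit of $\alpha$ contained in $\overline{N}$ and not contained in $T$ has action $> L$.

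Next I would carry out the perturbation itself. Fix a Morse function $f_T : S^1 \to \R$ on the circle of orbits with exactly two critical points, a maximum $t_e$ and a minimum $t_h$, and, following \cite{Bourg} and \cite[Section 4]{CGH2}, modify $\alpha$ inside $N$ to a contact form $\alpha_\varepsilon$, equal to $\alpha$ outside $N$, whose Reeb field near $T$ is $R_{\alpha_\varepsilon} = R_\alpha - \varepsilon\,\mathrm{grad}\,f_T + O(\varepsilon z) + O(\varepsilon^2)$, converting $t_h$ into a positive hyperbolic orbit $h$ with $\mu_\tau(h) = 0$ and $t_e$ into an elliptic orbit $e$ with $\mu_\tau(e) = \pm 1$ according to the sign of $T$; in addition one arranges, generically in the perturbation, that the rotation number of $e$ has no resonance of order $\le L/\mathcal{A}(e)$. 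I take from the cited references both that this is possible and that $\alpha_\varepsilon \to \alpha$ in $C^\infty$ as $\varepsilon \to 0$.

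Then comes the orbit count, for which I would use a compactness argument rather than grinding out error estimates. Suppose the conclusion fails: along some $\varepsilon_k \to 0$ there are simple closed orbits $\gamma_k \subset N$ of $R_{\alpha_{\varepsilon_k}}$, distinct from $e$ and $h$, with $\mathcal{A}(\gamma_k) = T_{\gamma_k} \le L$. The periods are also bounded below (no short orbits are created by the perturbation), so after reparametrising to $[0,1]$ the $\gamma_k$ are equicontinuous maps into the compact $\overline{N}$, and Arzelà–Ascoli gives a subsequence converging to a closed orbit $\gamma_\infty \subset \overline{N}$ of $\alpha$ of period $\le L$. By the first paragraph $\gamma_\infty$ lies on $T$, so it is $\gamma_{t_0}$ (or a cover of it) for some $t_0$. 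Now the Morse–Bott correspondence — the content of the construction in \cite{Bourg} — says that for $\varepsilon$ small the closed orbits of $R_{\alpha_\varepsilon}$ sitting in a fixed small neighborhood of $\gamma_{t_0}$ are in bijection with the critical points of $f_T$ near $t_0$: none when $t_0 \notin \{t_e, t_h\}$, and only $e$ (resp.\ $h$) and its covers when $t_0 = t_e$ (resp.\ $t_h$), the would-be short satellites of $e$ excluded by the non-resonance condition. In every case this contradicts $\gamma_k \neq e, h$ for $k$ large; hence for $\varepsilon$ small enough the only simple orbits of $\alpha_\varepsilon$ in $N$ with action $\le L$ are $e$ and $h$.

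I expect the main obstacle to be the uniform interface between steps two and three: one must fix how small $\varepsilon$ has to be in terms of $L$, $\delta$ and the $C^2$-size of $f_T$, and in particular control the $O(\varepsilon z)$ and $O(\varepsilon^2)$ drift terms so that a closed orbit of $\alpha_\varepsilon$ of period $\le L$ genuinely stays within $O(\delta+\varepsilon)$ of a critical point of $f_T$ — the compactness argument above is meant precisely to package these estimates — as well as citing, rather than reproving, the portion of Bourgeois's construction that pins down the types and Conley–Zehnder indices of $e$ and $h$. The first paragraph, by contrast, is a self-contained computation with the first-return map.
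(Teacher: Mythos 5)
The paper does not prove this Proposition at all: it is stated as a bare citation to \cite{Bourg}, Section 3, so there is no in-paper argument to measure yours against. Your three-step sketch (shrink the neighbourhood so the unperturbed form has no closed orbits of period $\le L$ off $T$; perturb by $-\varepsilon\,\mathrm{grad}\,f_T$; exclude spurious short orbits of $\alpha_\varepsilon$ by Arzel\`a--Ascoli plus a local orbit count near the limit) is the standard route, and the compactness reduction is a clean way to package the uniformity in $\varepsilon$ without grinding out the drift estimates.

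Two soft spots. First, and mainly: the decisive step is the one you cite rather than prove. The ``Morse--Bott correspondence'' you invoke --- that for small $\varepsilon$ the closed orbits of $R_{\alpha_\varepsilon}$ of period $\le L$ near a fixed $\gamma_{t_0}$ are exactly the critical-point orbits and their non-resonant covers --- is, locally, the Proposition itself. Your compactness argument only localizes the problem to a neighbourhood of a single $\gamma_{t_0}$ (or of one of its multiple covers, a case you rightly allow for); all the content --- analysing the iterates of the perturbed return map $(t,z)\mapsto\bigl(t+az-\varepsilon f_T'(t)+\dots,\ z+O(\varepsilon)\bigr)$ near regular and critical points of $f_T$, where fixed points of the $k$-th iterate must be ruled out and where the resonance condition on $e$ actually gets used --- still lives entirely inside the citation. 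As written this is a correct reduction of the Proposition to Bourgeois's local computation, not an independent proof; since the Proposition is itself attributed to Bourgeois, that is defensible, but you should say so plainly. Second, your opening estimate tacitly assumes the neighbourhood of $T$ is foliated by invariant tori (``the return map preserves $z$ up to a factor $1+O(\delta)$''), whereas the definition of M-B torus used here only constrains the linearized return map at $z=0$. The fix is cheap: an orbit of period $\le L$ makes a bounded number of returns, so the linearization $(t,z)\mapsto(t+az+o(z),\,z+o(z))$ with $a\neq 0$ already forbids closed orbits with $0<|z|<\delta(L)$; phrased this way the estimate needs no normal form, and it is the one place where the Morse--Bott nondegeneracy $a\neq 0$ of the unperturbed form genuinely enters. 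Finally, note that multiple covers of $e$ and $h$ may also have action $\le L$, so the Proposition's ``with the exception of $e$ and $h$'' must be read as excepting their covers as well --- your restriction to simple orbits handles this correctly.
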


A torus $T$ foliated by Reeb orbits all in the same class of $H_1(T)$ (like for example a Morse-Bott torus) can be used to obtain
constraints about the behaviour of a holomorphic curve near $T$.

Following \cite[Section 5]{CGH2}, if $\gamma$ is any of the Reeb orbits in $T$, we can define the \emph{slope of $T$} as the equivalence
class $s(T)$ of $[\gamma] \in H_1(T,\R) - \{0\}$ up to multiplication by positive real numbers. 

Let $T \times [-\epsilon,\epsilon]$ be a neighborhood of $T = T\times \{0\}$ in $Y$ with coordinates $(\vartheta,t,y)$ such that
$(\partial_{\vartheta},\partial_t)$ is a positive basis for $T(T)$ and $\partial_y$ is directed as a positive normal vector to $T$. 

Suppose that $u : (F,j) \rightarrow (\R \times Y,J)$ is a holomorphic curve in the symplectization of $(Y,\alpha)$; by Theorem
\ref{Theorem: A holomorphic has only isolated critical points},
there exist at most finitely many points in $T \times [-\epsilon,\epsilon]$ where $u_Y(F)$ is not transverse to $R_{\alpha}$. Then,
if $T_{y} := T \times \{y\}$ and $u(F)$ intersects $\R \times T_y$, we can associate a slope $s_{T_y}(u)$ to $u_Y(F) \cap T_{y}$,
for any $y \in [-\epsilon,\epsilon]$: this
is defined exactly like $s(T)$, where $u_Y(F) \cap T_{y}$ is considered with the orientation induced by
$\partial \left( u_Y(F) \cap (T \times [-\epsilon,y])\right)$.
\begin{Obs}
 Note that if $u$ has no ends in $T \times [y,y']$, then 
 $$\partial(u_Y(F) \cap T \times [y,y']) = u_Y(F) \cap T_{y'} - u_Y(F) \cap T_{y}$$
 and $s_{T_y}(u) = s_{T_y'}(u)$.
\end{Obs}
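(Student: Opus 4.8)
The plan is to argue in the domain rather than with images, and to extract from the hypothesis ``$u$ has no ends in $T \times [y,y']$'' the fact that the relevant piece of $\dot F$ is a \emph{compact} subsurface with boundary, whose boundary can then only consist of the two lateral slices over $T_y$ and $T_{y'}$. First I would fix $y < y'$ to be regular values of $\pi_y \circ u_Y : \dot F \to [-\epsilon,\epsilon]$, where $\pi_y$ denotes the $y$-coordinate on $T \times [-\epsilon,\epsilon]$; by Theorem \ref{Theorem: A holomorphic has only isolated critical points} the map $u_Y$ fails to be transverse to the foliation $\{T_z\}$ only at isolated points, so all but finitely many levels $z$ are regular, and the finitely many exceptional levels are handled afterwards by a limiting argument (both sides of the asserted slope identity being locally constant in $y$ on the set of regular levels). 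For such $y,y'$ set $W := u_Y^{-1}\bigl(T \times [y,y']\bigr) \subset \dot F$.

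The key point is that $W$ is compact. The only way a subset of $\dot F = F \setminus \{\text{punctures}\}$ can fail to be compact is by accumulating onto a puncture, and by the asymptotic description of $u$ near a puncture $x$ (condition (iii) in the definition of a holomorphic curve in a symplectization) a deleted neighbourhood of $x$ is mapped arbitrarily close to $\R \times \delta_x$, where $\delta_x$ is the Reeb orbit associated to $x$. Hence $u_Y^{-1}(T\times[y,y'])$ can accumulate onto $x$ only if $\delta_x \subset T\times[y,y']$, i.e.\ only if $x$ is an end of $u$ lying in the region $T\times[y,y']$; by hypothesis there is no such end. Therefore $W$ is a compact surface with boundary, and, orienting $W$ as a subsurface of $\dot F$, one has $\partial W = u_Y^{-1}(T_{y'}) - u_Y^{-1}(T_y)$, where the slice over $T_{y'}$ (resp.\ $T_y$) carries the orientation ``outward at the top'' (resp.\ ``minus outward at the bottom''). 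Pushing the fundamental cycle $[W]$ forward by $u_Y$ gives a $2$-chain in $T\times[y,y']$ with $\partial (u_Y)_*[W] = (u_Y)_*[\partial W] = [u_Y(F)\cap T_{y'}] - [u_Y(F)\cap T_y]$; and since the slice over $T_{y'}$ sits at the top of both intervals $[y,y']$ and $[-\epsilon,y']$, while the $T_y$–slice cancels when one excises the common piece over $T\times[-\epsilon,y]$, these are exactly the orientations used to define $s_{T_{y'}}(u)$ and $s_{T_y}(u)$. This is the first displayed identity. It exhibits $u_Y(F)\cap T_{y'}$ and $u_Y(F)\cap T_y$ as homologous $1$-cycles in $T\times[y,y']$, hence, under $H_1(T\times[y,y'];\R)\cong H_1(T;\R)$, as the same class; being non-zero (the slope being assumed defined), they span the same ray in $H_1(T;\R)\setminus\{0\}$, i.e.\ $s_{T_y}(u) = s_{T_{y'}}(u)$.

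The main obstacle is the compactness/finiteness input: one must be certain that ``no ends in $T\times[y,y']$'' genuinely prevents $W$ from escaping to a puncture, and, simultaneously, that $u_Y$ may be taken transverse to $T_y$ and $T_{y'}$ so that $W$ is an honest compact surface with boundary rather than one with uncontrolled ends or corners. Both rest on Theorem \ref{Theorem: A holomorphic has only isolated critical points} together with the asymptotic behaviour (iii) near punctures; once these are in place the remainder is the standard ``homologous boundary slices'' argument together with the minor bookkeeping of orientations carried out above.
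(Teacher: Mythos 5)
Your argument is correct, and since the paper states this as an unproved Observation, the reasoning you supply --- the no-ends hypothesis forces $W=u_Y^{-1}(T\times[y,y'])$ to be compact, so its boundary consists only of the two lateral slices, which are therefore homologous $1$-cycles in $T\times[y,y']\simeq T$ and hence determine the same ray in $H_1(T;\R)\setminus\{0\}$ --- is exactly the standard justification the paper is implicitly relying on. Your extra care with regular levels and orientation bookkeeping is consistent with the paper's conventions and introduces no gap.
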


The following Lemma is a consequence of the positivity of intersection in dimension four (see \cite[Lemma 5.2.3]{CGH2}).
\begin{Lemma}[Blocking Lemma]  \label{Lemma: Blocking lemma}
 Let $T$ be linearly foliated by Reeb trajectories with slope $s = s(T)$ and $u$ a holomorphic curve be as above.
 \begin{enumerate}
  \item If $u$ is homotopic, by a compactly supported homotopy, to a map whose image is disjoint from $\R \times T$, then
   $u_Y(F) \cap T = \emptyset$.
  \item Let $T \times [-\epsilon,\epsilon]$ be a neighborhood of $T = T \times \{0\}$. Suppose that, for some $y \in [-\epsilon,\epsilon] \setminus \{0\}$,
   $u$ has no ends in $T \times (0,y]$ if $y \in (0,\epsilon]$ or in $T \times [y,0)$ if $y  \in [-\epsilon,0)$. If
   $s_{T_y}(u) = \pm s(T)$ then $u$ has an end which is a Reeb orbit in $T$. 
 \end{enumerate}
\end{Lemma}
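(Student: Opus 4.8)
The plan is to deduce both statements from Theorem \ref{Theorem: Positivity of intersection in dimension 4} (positivity of intersection) applied to the pair consisting of the holomorphic curve $u$ and the cylinder $\R \times \gamma_t$ over one of the Reeb orbits $\gamma_t$ foliating $T$ (this cylinder is holomorphic because $\gamma_t$ is a Reeb orbit and $J$ is adapted to $\alpha$). First I would set up the intersection-number bookkeeping: for a fixed $t \in S^1$, consider the closed (or properly embedded, after compactification) curves $\Imm(u)$ and $\R \times \gamma_t$ inside $\R \times Y$. One observes that the algebraic intersection number $\langle \Imm(u), \R\times\gamma_t\rangle$ can be computed homologically from the slope data: each point of $u_Y(F) \cap T_y$ at which the oriented intersection has slope transverse to $s(T)$ contributes to $\langle \Imm(u), \R\times\gamma_{t}\rangle$, and the count of such contributions over the whole $S^1$-family of orbits in $T$ is, up to sign, governed by the "winding" of $u_Y(F)\cap T_y$ relative to the linear foliation of $T$ by slope $s(T)$.

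For part (1): if $u$ is homotopic through a compactly supported homotopy to a map missing $\R\times T$, then the homological intersection number $\langle \Imm(u), \R \times \gamma_t\rangle$ vanishes for every $t$ (the cylinder $\R\times\gamma_t$ lies in $\R\times T$, which the homotoped representative avoids, and the intersection number is a homotopy invariant). By Theorem \ref{Theorem: Positivity of intersection in dimension 4} every geometric intersection point of the two distinct holomorphic curves contributes a strictly positive integer $m_P \geq 1$; since the total is zero, there can be no intersection points, i.e. $\Imm(u) \cap (\R\times\gamma_t) = \emptyset$ for all $t$, hence $u_Y(F)\cap T = \emptyset$. (One must first rule out the case $\Imm(u) = \R\times\gamma_t$, which is excluded because $u$ is assumed homotopic to something disjoint from $\R\times T$.)

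For part (2): suppose $s_{T_y}(u) = \pm s(T)$ and, towards a contradiction, that $u$ has no end which is a Reeb orbit in $T$. Using the Observation preceding the Lemma, the hypothesis that $u$ has no ends in the collar between $T_0$ and $T_y$ gives $\partial(u_Y(F)\cap (T\times[0,y])) = u_Y(F)\cap T_y - u_Y(F)\cap T_0$, so $s_{T_0}(u)$ is also defined and the slopes along the collar are controlled. The condition $s_{T_y}(u) = \pm s(T)$ means the curve $u_Y(F)\cap T_y$ runs parallel to the foliating orbits; then for a generic orbit $\gamma_t$ in $T_y$ one computes $\langle \Imm(u), \R\times\gamma_t\rangle = 0$ from the slope being proportional to $s(T)$ (parallel curves on the torus have zero algebraic intersection), while positivity of intersection forces any actual intersection point to contribute positively. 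Hence $u_Y(F)$ is disjoint from $\R\times\gamma_t$ for generic $t$; pushing this across all $y' \in (0,y]$ (or $[y,0)$) via the no-ends hypothesis and the Observation, one concludes $u_Y(F)$ is in fact disjoint from $T$ near $T_y$, but this contradicts $u(F)$ intersecting $\R\times T_y$, which was implicit in $s_{T_y}(u)$ being defined — unless the "parallel" intersection is absorbed into an asymptotic end lying in $T$, which is exactly the conclusion we want. Making this last dichotomy rigorous — distinguishing a genuine transverse crossing (forbidden by the intersection count) from an asymptotic end along an orbit of $T$ — is the main obstacle, and it is handled by the asymptotic analysis of holomorphic curves near Reeb orbits together with Theorem \ref{Theorem: A holomorphic has only isolated critical points}, which guarantees $u_Y$ is transverse to $R_\alpha$ away from finitely many points so that the slope $s_{T_y}(u)$ is well-defined and behaves continuously in $y$ between ends.

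I expect the bulk of the technical work to be in this asymptotic/transversality argument for part (2); once the intersection-number vanishing is established from the slope condition, positivity of intersection closes the argument quickly, as in \cite[Lemma 5.2.3]{CGH2}.
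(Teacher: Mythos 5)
The paper does not actually prove this lemma: it is imported verbatim from \cite{CGH2} (Lemma 5.2.3) with only the remark that it follows from positivity of intersection in dimension four, and your sketch is exactly that argument --- intersecting $u$ with the holomorphic cylinders $\R\times\gamma_t$ over the foliating Reeb orbits and invoking Theorem \ref{Theorem: Positivity of intersection in dimension 4}. Two small points to tighten: the cylinders you are allowed to use are only those over genuine Reeb orbits, which live in $T=T_0$ and not in $T_y$ (so the slope must first be transported from $T_y$ to $T_0$ through the end-free collar, as you do), and the cleanest contradiction in part (2) is homological rather than the one you state --- once positivity forces $u_Y(F)\cap T_0=\emptyset$, the compact piece $u_Y(F)\cap(T\times[0,y])$ has boundary $u_Y(F)\cap T_y$ alone, which cannot be null-homologous in $T\times[0,y]$ because its slope $\pm s(T)$ is nonzero.
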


Let now $x$ be a puncture of $F$ whose associated end is an orbit $\gamma$ in $T$; if there exists a neighborhood $U(x)$ of $x$ in $F$
such that $u_Y(U(x) \setminus \{x\}) \cap T = \emptyset$ then $\gamma$ is a \emph{one sided end} of $u$ in $x$. This is equivalent to requiring
that $u_Y(U(x))$ is contained either in $T \times (-\epsilon,0)$ or in $T \times (0,\epsilon)$.

The following is proved in \cite{CGH2}
(Lemma 5.3.2).

\begin{Lemma}[Trapping Lemma]  \label{Lemma: Trapping lemma}
 If $T$ is a positive (resp. negative) M-B torus and $\gamma \subset T$ is a one sided end of $u$ associated to the puncture $x$, then $x$ is
 positive (resp. negative).
\end{Lemma}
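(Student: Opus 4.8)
The plan is to set up local coordinates near the Morse-Bott torus $T$ and compare the holomorphic curve $u$ with the family of pseudo-holomorphic cylinders $\R \times \gamma_t$ over the Reeb orbits foliating $T$, using positivity of intersection (Theorem \ref{Theorem: Positivity of intersection in dimension 4}). First I would take the neighborhood $T \times [-\epsilon,\epsilon]$ with coordinates $(\vartheta,t,y)$ as in the Blocking Lemma, so that $T_y = T \times \{y\}$ and each $T_0 = T$ carries the Reeb orbits $\gamma_t$. After possibly shrinking $\epsilon$ and using Theorem \ref{Theorem: A holomorphic has only isolated critical points}, I may assume $u_Y$ meets each $T_y$ transversally away from finitely many points, and (by the one-sidedness hypothesis) that near the puncture $x$ the image $u_Y(U(x)\setminus\{x\})$ lies entirely in, say, $T \times (0,\epsilon)$ — the case $T \times (-\epsilon,0)$ being symmetric and accounting for the sign in the statement.

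The heart of the argument is a local intersection count. Shrinking $U(x)$, for a small regular value $y_0 \in (0,\epsilon)$ the slice $u_Y(U(x)) \cap T_{y_0}$ is a closed $1$-manifold whose class in $H_1(T_{y_0})$ I want to identify: since $\gamma \subset T$ is the asymptotic end at $x$ and the orbit $\gamma_t$ has slope $s(T)$, the asymptotic behavior of $u$ near the puncture forces $u_Y(U(x)) \cap T_{y_0}$ to have slope $\pm s(T)$ as well (its homology class is a positive or negative multiple of $[\gamma]$ depending on whether $x$ is positive or negative). Now I compare with the cylinder $C_t := \R \times \gamma_t$ for a generic $t$: $u$ and $C_t$ are distinct holomorphic curves (since $u$ is not a cover of a cylinder over $T$ near $x$ — its image is in $T\times(0,\epsilon)$, not on $T$), so by positivity of intersection every intersection point of $\Imm(u)$ with $\Imm(C_t)$ contributes positively to $\langle \Imm(u), \Imm(C_t)\rangle$. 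On the other hand, this algebraic intersection number can be computed from the asymptotics as the signed count of how $u_Y$ near $x$ winds around $\gamma$ relative to $\gamma_t$ inside the solid-torus-like region, and the sign of this winding is governed precisely by whether $x$ is a positive or negative puncture together with the sign ($a>0$ versus $a<0$) of the Morse-Bott torus: for a positive M-B torus a one-sided positive puncture gives a positive contribution, while a one-sided negative puncture would force a negative algebraic intersection number, contradicting positivity of intersection. Hence $x$ must be positive; for a negative M-B torus the inequality reverses and $x$ must be negative.

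The main obstacle, and the step requiring the most care, is the asymptotic analysis: making precise the claim that the local intersection number of $u$ with the nearby cylinders $C_t$ is computed by the winding number of the asymptotic eigenfunction of the Cauchy-Riemann operator at the puncture, and that the sign of this winding is pinned down by the Morse-Bott sign of $T$. This is where the distinction between a positive and a negative M-B torus enters — it controls the relevant asymptotic operator's spectrum and hence the direction of the spiraling of $u_Y$ around $\gamma$ — and one has to check that combining this with the sign of the puncture yields a contradiction in exactly one of the two cases. Everything else (choosing generic slices, invoking Theorem \ref{Theorem: A holomorphic has only isolated critical points} for transversality, and quoting Theorem \ref{Theorem: Positivity of intersection in dimension 4}) is routine; the asymptotic/winding computation is the crux, and I would lean on the standard asymptotic results for punctured holomorphic curves (as in \cite{CGH2}) to supply it.
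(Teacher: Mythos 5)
The paper does not actually prove this lemma: it only states it and refers to \cite{CGH2} (Lemma 5.3.2), so your proposal can only be measured against that external argument, which runs along different lines from yours.

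There is a genuine gap at the step you yourself identify as the heart of the argument. By the one-sidedness hypothesis, $u_Y(U(x)\setminus\{x\})$ lies in $T\times(0,\epsilon)$ (say), hence $\Imm(u)$ near $x$ is disjoint from $\R\times T$, which contains all of your comparison cylinders $C_t=\R\times\gamma_t$. Theorem \ref{Theorem: Positivity of intersection in dimension 4} only constrains actual intersection points, and locally there are none: the local algebraic intersection number with $C_t$ is $0$ for either sign of the puncture, so no contradiction can be extracted from it. What you would need is an asymptotic intersection/winding theory at a Morse--Bott-degenerate orbit, and that is exactly the content you defer to ``standard asymptotic results'' --- the crux is assumed rather than proved. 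Moreover, the sign of the Morse--Bott torus never enters your argument in a usable way: the cylinders $\R\times\gamma_t$ and the limit orbit $\gamma$ all sit inside $T$ itself and cannot detect the sign of $a$. The datum that distinguishes a positive from a negative M--B torus is the direction in which the slope of the Reeb foliation of the nearby tori $T_y$ rotates as $y$ moves away from $0$ (this is what the sign of $a$ in the definition encodes), and the argument in \cite{CGH2} compares this rotation with the slope of the slices $u_Y(U(x))\cap T_y$, using that $u_Y$ is positively transverse to $R_{\alpha}$ away from isolated points (Theorem \ref{Theorem: A holomorphic has only isolated critical points}). Your plan never brings the foliations of the tori $T_y$, $y\neq 0$, into play, so even granting an asymptotic winding formula you have not explained where the positive/negative dichotomy in the conclusion would come from.
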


\subsubsection{Open books} \label{Subsection: Open Books}

\begin{Def}
 Given a surface $S$ and a diffeomorphism $\phi \colon S \rightarrow S$, the \emph{mapping torus} of $(S,\phi)$ is the three dimensional manifold
 $$N(S,\phi) := \frac{S \times [0,2]}{(x,2)\sim (\phi(x),0)}.$$
\end{Def}

In this paper we use the following definition of open book decomposition of a 3-manifold $Y$. This is not the original definition but a more specific
version based on \cite{CGH2}.

\begin{Def}
 An \emph{open book decomposition for $Y$} is a triple $(L,S,\phi)$ such that
 \begin{itemize}
  \item $L = K_1 \sqcup\ldots \sqcup K_n$ is an $n$-component link in $Y$;
  \item $S$ is a smooth, compact, connected, oriented surface with an $n$-components boundary;
  \item $\phi:S \rightarrow S$ is an orientation preserving diffeomorphism such that on a small neighborhood
   $\{1,\ldots,n\} \times [0,1] \times S^1$ of $\partial S =\{1,\ldots,n\} \times \{1\} \times S^1$, with coordinates $(y,\vartheta)$ near each component, it acts by
   \begin{equation}
    (y,\vartheta) \stackrel{\phi}{\longmapsto} (y,\vartheta - y + 1)
    \label{Equation: phi near the binding in CGH}
   \end{equation}
   (and in particular $\phi|_{\partial S} = id_{\partial S}$); 
  \item for each $K_i$ there exists a tubular neighborhood $\mathcal{N}(K_i) \subset Y$ of $K_i$ such that $Y$ is
   diffeomorphic to
   $N(S,\phi) \sqcup_{i=1}^n \mathcal{N}(K_i)$ where the union symbol means that for any $i$, $\{i\} \times \{1\} \times S^1 \times \frac{[0,2]}{0\sim 2}$
   is glued to $\mathcal{N}(K_i)$ in a way that, for any $\vartheta \in S^1$,
   $\{i\} \times \{1\} \times \{\vartheta\} \times \frac{[0,2]}{(0\sim 2)}$ is identified with a meridian of $K_i$ in $\partial \mathcal{N}(K_i)$. 
\end{itemize}

The link $L$ is called the \emph{binding}, the surfaces $S \times \{t\}$ are the \emph{pages} and the diffeomorphism $\phi$ is the
\emph{monodromy} of the open book.
\end{Def}

When we are interested mostly in the mapping torus part of an open book decomposition we will use a notation of the form $(S,\phi)$,
omitting the reference to its binding. Sometimes we will call $(S,\phi)$ an \emph{abstract open book}.

Following \cite{CGH2}, we will often consider each $\mathcal{N}(K_i)$ as a union of a copy of $\frac{[0,2]}{(0\sim 2)}\times [1,2] \times S^1$,
endowed with the extension of the coordinates $(t,y,\vartheta)$, glued along $\{y=2\}$ to a smaller neighborhood $V(K_i)$ of $K_i$.
The gluing is done in a way that the sets $\{\vartheta = \mathrm{const.}\}$ are identified with meridians for $K$ and the sets
$\{t = \mathrm{const.}\}$ are identified to longitudes.

\vspace{0.2 cm}

By the Giroux's work in \cite{Gi} there is a one to one correspondence between contact structures (up to isotopy) and open book decompositions
(up to \emph{Giroux stabilizations}) of $Y$. In order to simplify the notations, we consider here open books with connected binding.

Given $(K,S,\phi)$ we can follow the Thurston-Wilkenkemper construction (\cite{TW}) to associate to it an
\emph{adapted contact form} $\alpha$ on $Y$ as explained in \cite[Section 2]{CGH2}. In $N$ the resulting Reeb vector field $R=R_{\alpha}$ enjoys
the following properties:
\begin{itemize}
 \item $R$ is transverse to the pages $S\times \{t\}$ $\forall t \in [0,2]$;
 \item the first return map of $R$ is isotopic to $\phi$;
 \item each torus $T_y = S^1 \times \frac{[0,2]}{(0\sim 2)} \times \{y\}$, for $y \in [0,1]$, is linearly foliated by Reeb orbits and the first
  return map of $R$ on $T_y$ is
  $$(y,\vartheta) \mapsto (y,\vartheta - y + 1).$$
\end{itemize}

The last implies that when the set of orbits foliating $T_y$ comes in an $S^1$-family, $T$ is Morse-Bott.

To explain the behaviour of $R$ on $\mathcal{N}(K)$, let us extend the coordinates $(\vartheta,t,y)$ to
$V\setminus K \cong \frac{[0,2]}{(0\sim 2)}\times [2,3) \times S^1$, where $K = \{y = 3\}$. For $y \in [0,3)$ set
$T_y =  \frac{[0,2]}{(0\sim 2)}\times \{y\} \times S^1$. Given a curve $\gamma(x) = (\gamma_t(x),y,\gamma_{\vartheta}(x))$
in $T_y$ we can define the \emph{slope} of $\gamma$ in $x_0$ by 
$$s_{T_y}(\gamma,x_0) = \frac{\gamma'_t(x_0)}{\gamma'_{\vartheta}(x_0)} \in \R \cup \{\pm \infty\}.$$
In particular if a meridian has constant slope, this must be $+ \infty$ and $\partial S$ has slope $0$.
Note that the slope of $T_y$ as given by
$$s(T_y) = \frac{\gamma'_t(x)}{\gamma'_{\vartheta}(x)} \in \R \cup \{\pm\infty\},$$
where now $\gamma$ is a parametrization of a Reeb trajectory in $T_y$ and $x \in \Imm(\gamma)$. Note in particular that if $s(T_y)$ is
irrational then $T_y$ does not contain Reeb orbits, and if $T_y$ is foliated by meridians (like $T_1$) then $s(T_y) =+ \infty$.

On $\frac{[0,2]}{(0\sim 2)}\times [1,2] \times S^1$ the contact form will depend on a small real constant $\delta > 0$: call $\alpha_{\delta}$ the contact form
on all $Y$. Let $f_{\delta}:[1,3)\rightarrow \mathbb{R}$ be a smooth function such that:
\begin{itemize}
 %\item $f_{\delta} \geq  - \delta$
 \item $f_{\delta}$ has minimum in $y=1.5$ of value $-\delta$;
 \item $f_{\delta}(1)=f_{\delta}(2)=0$;
 \item $f_{\delta}(y)=-y+1$ near $\{y=1\}$;
 \item $f'_{\delta}(y)<0$ for $y \in [1,1.5)$ and $f'_{\delta}(y)>0$ for $y\in (1.5,3)$.
\end{itemize}

%\begin{figure} [h] 
%  \begin{center}
%   \includegraphics[scale = .5]{Immagini/Funzione_delta_in_Y.jpg}
%  \end{center}
%  \caption{The graph of the function $f_{\delta}:[1,3) \rightarrow \mathbb{R}$ extended to $[0,1]$ by $f_{\delta}(y)=-y+1$.}
%  \label{Figure: The function f delta}
%\end{figure}

Then the Reeb vector field $R$ of $\alpha_{\delta}$ in $\mathcal{N}(K) \setminus int(V)$ is such that:
\begin{itemize}
 \item $R$ is transverse to the annuli $\{t\} \times [1,2] \times S^1$ $\forall t \in \frac{[0,2]}{0\sim 2}$;
 \item the tori $T_y$, $y \in [1,2]$ are foliated by Reeb orbits with constant slope and first return map given by
  $(y,\vartheta) \mapsto (y,\vartheta + f_{\delta}(y))$.
\end{itemize}

Finally in $V$ each torus $T_y$ is linearly foliated by Reeb orbits whose slope vary in $(C,+\infty]$ for $y$ going from $3$
(not included) to $2$ and, where $C$ is a positive real number. Moreover $K$ is also a Reeb orbit.

Note that for every $\delta$, $T_1$ is a negative M-B torus foliated by orbits with constant slope $+\infty$. As explained in
\ref{Subsection: Morse-Bott Theory} we can perturb the associated $S^1$-family of orbits into a pair of simple Reeb orbits $(e,h)$,
where $e$ is an elliptic orbit with C-Z index $-1$ and $h$ is positive hyperbolic with C-Z index $0$ (the indexes are computed with
respect to the trivialization given by the torus).

Similarly the positive M-B torus $T_2$ is also foliated by orbits with constant slope $+\infty$ and a M-B perturbation gives a pair of simple
Reeb orbits $(e_+,h_+)$ in $T_2$, where $e_+$ is elliptic of index $1$ and $h_+$ is hyperbolic of index $0$ (in the papers
\cite{CGH2}-\cite{CGH5} the orbits $e_+$ and $h_+$ are called $e'$ and $h'$ respectively).

\begin{figure} [h] 
  \begin{center}
   \includegraphics[scale = .251]{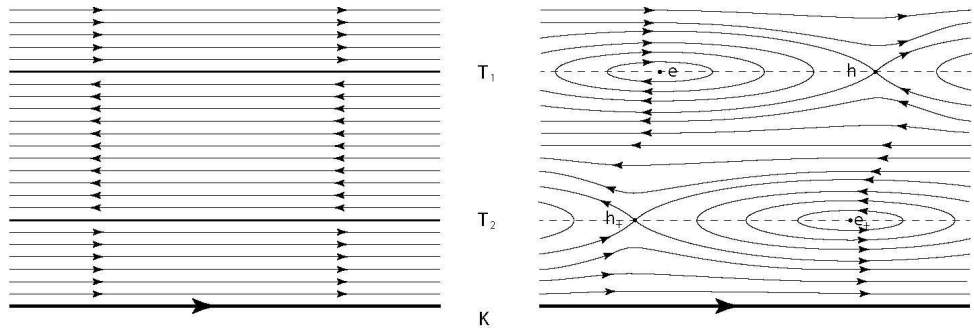}
  \end{center}
  \caption{Reeb dynamic before and after a M-B perturbation of the tori $T_1$ and $T_2$. Both pictures take place in a page of the open book.
   Each flow line represents an invariant subset of $S$ under the Reeb flow near $K$; the orientation gives the direction in which any point is
   mapped under the first return map of the flow.}
  \label{Figure: Dynamic of the M-B perturbation near K}
\end{figure}

In the rest of the paper, if not stated otherwise, when we talk about contact forms and their Reeb vector fields adapted to an open book we
will always refer to them assuming the notations and the properties explained in this subsection. In particular the M-B tori $T_1$ and $T_2$
will be always assumed to be perturbed into the respective pairs of simple orbits. 

\begin{Obs}
 In the case of open books with non-connected binding $L$, the Reeb vector field of an adapted contact form satisfies the same properties above near
 each component of $L$.
\end{Obs}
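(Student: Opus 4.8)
The plan is to reduce the non-connected case to the connected one treated above, working component by component and exploiting the fact that the whole construction is local near the binding.

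First I would recall that, for $L = K_1 \sqcup \cdots \sqcup K_n$, the open book still yields a decomposition $Y \cong N(S,\phi)\sqcup_{i=1}^n \mathcal{N}(K_i)$ in which the tubular neighborhoods $\mathcal{N}(K_i)$ are pairwise disjoint and in which $\partial S$ has $n$ components; by the definition of open book used here, near each component of $\partial S$ the monodromy $\phi$ acts in coordinates $(y,\vartheta)$ by the same local formula $(y,\vartheta)\mapsto(y,\vartheta-y+1)$, and $\phi|_{\partial S}=\mathrm{id}$. Consequently, when one runs the Thurston--Winkelnkemper procedure of \cite{CGH2} to produce $\alpha_\delta$, the data used to pin down $R$ near $\mathcal{N}(K_i)$ --- a collar of the $i$-th boundary component of $S$ together with $\mathcal{N}(K_i)$ itself --- is disjoint from the analogous data attached to $K_j$ for $j \neq i$. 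Since the contact condition $\alpha_\delta \wedge d\alpha_\delta > 0$ is pointwise, checking it reduces to the connected model near each $K_i$ and to the (unchanged) mapping-torus model on $N(S,\phi)$.

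Next, fixing $i$, I would run verbatim the sequence of steps carried out above for connected binding: extend the coordinates $(\vartheta,t,y)$ across $V(K_i)\setminus K_i$ with $K_i=\{y=3\}$; choose the auxiliary function $f_\delta$ with the stated profile on $[1,3)$ (one may use the same $\delta$ for every $i$); conclude that $R$ is transverse to the pages and to the annuli $\{t\}\times[1,2]\times S^1$, that each torus $T_y^{(i)}$ in this neighborhood is linearly foliated by Reeb orbits with first return map $(y,\vartheta)\mapsto(y,\vartheta+f_\delta(y))$ on $y\in[1,2]$ and with slopes in $(C,+\infty]$ on $y\in[2,3)$, and that $K_i$ is itself a Reeb orbit; finally perform the Morse--Bott perturbations of the negative torus $T_1^{(i)}$ and of the positive torus $T_2^{(i)}$, producing the simple orbit pairs $(e_i,h_i)$ and $(e_{+,i},h_{+,i})$ with the prescribed Conley--Zehnder indices. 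Because the perturbation attached to each $T_1^{(i)}$ (resp. $T_2^{(i)}$) is compactly supported in an arbitrarily small neighborhood of that torus, and these neighborhoods can be taken pairwise disjoint, none of these operations interferes with those performed near $K_j$ for $j\neq i$, nor with the dynamics on $N(S,\phi)$, whose first return map stays isotopic to $\phi$.

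The only point that requires a little care --- and it is not really an obstacle --- is global consistency: one must make sure that gluing together $n$ copies of the connected local model along the single, fixed mapping-torus model still yields a genuine contact form whose Reeb field has the claimed global features. This is immediate once the supports are arranged to be disjoint as above, since every property in the list is either pointwise (the contact condition, transversality to the pages) or localized near a single $K_i$ (the tori $T_y^{(i)}$, the orbits $K_i$, $e_i$, $h_i$, $e_{+,i}$, $h_{+,i}$), so nothing new occurs relative to the connected case beyond bookkeeping over the index $i$.
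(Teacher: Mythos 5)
Your argument is correct and is exactly the (implicit) justification the paper relies on: the Observation is stated without proof precisely because the Thurston--Winkelnkemper construction and the Morse--Bott perturbations of $T_1$ and $T_2$ are supported in pairwise disjoint neighborhoods of the binding components, so the connected-binding analysis applies verbatim near each $K_i$. Your write-up simply makes this locality bookkeeping explicit, which is consistent with the paper.
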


\vspace{0.3 cm}

We saw that to any open book decomposition $(L,S,\phi)$ of $Y$ it is possible to associate an adapted contact form. Let us now say something
about the inverse map of the Giroux correspondence.
\begin{Thm}[Giroux] \label{Theorem: Construction of an open book adapted to a contact structure}
 Given a contact three-manifold $(Y,\xi)$, there exists an open book decomposition $(L,S,\phi)$ of $Y$
 and an adapted contact form $\alpha$ such that $\ker(\alpha) = \xi$.
\end{Thm}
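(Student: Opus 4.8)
The plan is to follow Giroux's original argument via \emph{contact cell decompositions}, reducing the statement to two ingredients: the existence of a CW-decomposition of $Y$ adapted to $\xi$, and the extraction of an open book from such a decomposition through a Heegaard-type splitting of $Y$ into tight contact handlebodies.

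First I would construct a contact cell decomposition of $(Y,\xi)$. Starting from any finite triangulation, pass to a sufficiently fine subdivision so that each closed $3$-cell lies in a Darboux ball (hence $\xi$ is tight near every $3$-cell). Then $C^0$-perturb the $1$-skeleton to a Legendrian graph $G$, which is possible since any embedded arc admits a $C^0$-close Legendrian approximation. Next, perturb each $2$-cell $D$ rel its Legendrian boundary to be convex; a further isotopy, together with Legendrian stabilizations of $\partial D$ and, if needed, a subdivision of $D$, arranges $\mathrm{tb}(\partial D) = -1$ for every $2$-cell, so that the dividing set of each $D$ is a single $\partial$-parallel arc. This produces the desired decomposition.

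Second, set $H := \mathcal{N}(G)$, a standard contact neighborhood of the Legendrian $1$-skeleton. Then $H$ is a tight contact handlebody with convex boundary, and $\partial H$ is cut by its dividing set $\Gamma$ into subsurfaces $R_+$ and $R_-$. The key step is to show that $H' := \overline{Y \setminus H}$ is \emph{also} a standard neighborhood of a Legendrian graph --- the ``dual'' graph obtained by joining a point in each $3$-cell through each $2$-cell --- using the normalization $\mathrm{tb}=-1$ on the $2$-cells together with Giroux's flexibility theorem for convex surfaces. Granting this, $Y = H \cup_{\partial} H'$ is a Heegaard splitting into two tight contact handlebodies. One then declares the binding to be $B := \Gamma \subset \partial H$, a link in $Y$, and builds the pages by pushing $R_+$ into the interior of $H$ on one side and the corresponding positive region $R'_+$ into the interior of $H'$ on the other, so that the translates sweep out all of $Y \setminus B$; the monodromy $\phi$ is read off from this sweep-out, and after a small isotopy it acquires the normal form near $\partial S$ used in Subsection~\ref{Subsection: Open Books}.

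Finally I would check that the open book $(B,S,\phi)$ supports $\xi$: there is a contact form $\alpha$ with $\ker\alpha=\xi$ that is positive on $B$ (suitably oriented) and with $d\alpha>0$ on the interior of every page. The positivity of $d\alpha$ on the pages is exactly where tightness of $\xi$ on $H$ and $H'$ and the identification of $R_\pm$ with the positive/negative regions of the convex boundary are used. Once such an $\alpha$ is in hand, one may apply the Thurston--Winkelnkemper construction recalled in Subsection~\ref{Subsection: Open Books} to replace it, rel $\ker\alpha=\xi$, by a contact form adapted to $(B,S,\phi)$ with the Reeb dynamics described there. I expect the second step --- showing that the complement of the $1$-skeleton neighborhood is again a model Legendrian-graph neighborhood --- to be the main obstacle, since this is precisely where convex surface theory and the $\mathrm{tb}=-1$ condition enter essentially; the alternative route, via Donaldson's approximately holomorphic sections of $L^{\otimes k}$, bypasses cell decompositions but trades this difficulty for quantitative transversality estimates.
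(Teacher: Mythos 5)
Your proposal follows essentially the same route as the paper, which itself only sketches Giroux's argument via contact cell decompositions (following Colin's Bourbaki notes): a cellular decomposition compatible with $\xi$ with $3$-cells in Darboux balls, the page obtained as a ribbon of the Legendrian $1$-skeleton, and the binding meeting each $2$-cell twice --- your $\mathrm{tb}(\partial D)=-1$ normalization is exactly that last condition restated in convex-surface language. The extra detail you supply on the dual handlebody and the supporting contact form is consistent with, and somewhat more explicit than, the paper's three-step sketch.
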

\proof[Sketch of the proof.]
 Given any contact structure $\xi$ on $Y$, in \cite{Gi} Giroux explicitly constructs an open book decomposition $(L,S,\phi)$ of $Y$ for
 which there
 exists a compatible contact form $\alpha$ such that $\ker(\alpha) = \xi$. Following \cite[Section 3]{Co}, the proof can be carried on in three
 main steps.
 
 The first step consists in providing a cellular decomposition $\mathcal{D}$ of $Y$ that is, in a precise sense, ``\emph{compatible with $\xi$}''.
 It is important to remark that, up to take a refinement (in a way that each $3$-cell is contained in a Darboux ball) any cellular
 decomposition of $Y$ can be isotoped to make it compatible with $\xi$.
 %in particular it is required that the $1$-skeleton $\mathcal{D}^1$ of the decomposition is Legendrian (i.e. tangent to $\xi$).

 In the second step, $\mathcal{D}$ is used to explicitly build $(L,S,\phi)$. We describe now some of the properties of $S$, seen as the embedded $0$-page
 of the open book.\\
 Let $\mathcal{D}^i$ be the $i$-skeleton of $\mathcal{D}$ and let $\mathcal{N}(\mathcal{D}^1)$ be a tubular neighborhood of $\mathcal{D}^1$. 
 Suppose that $\mathcal{N}(\mathcal{D}^0) \subset \mathcal{N}(\mathcal{D}^1)$ is a tubular neighborhood of $\mathcal{D}^0$ such that 
 $\mathcal{N}(\mathcal{D}^1) \setminus \mathcal{N}(\mathcal{D}^0)$ is homeomorphic to a tubular neighborhood of
 $\mathcal{D}^1 \setminus \mathcal{N}(\mathcal{D}^0)$. Then:
 \begin{enumerate}
  \item $S \subset \mathcal{N}(\mathcal{D}^1)$, $L :=\partial S \subset \partial \mathcal{N}(\mathcal{D}^1)$ and $\mathcal{D}^1 \subset int(S)$;
  \item $S \cap (\mathcal{N}(\mathcal{D}^1) \setminus \mathcal{N}(\mathcal{D}^0))$ is a disjoint union of strips which are diffeomorphic to
   $(\mathcal{D}^1 \setminus \mathcal{N}(\mathcal{D}^0)) \times [-1,1]$ with
   $\mathcal{D}^1 \setminus \mathcal{N}(\mathcal{D}^0)$ corresponding to $(\mathcal{D}^1 \setminus \mathcal{N}(\mathcal{D}^0)) \times \{0\}$;
 \end{enumerate}
 The fact that $\mathcal{D}$ is compatible with $\xi$ implies that $L$ intersects each $2$-simplex exactly twice and it is possible to use this
 fact to prove that the complement of $L$ in $Y$ fibers in circles over $S$, which implies that $L$ is the binding of an open book with $0$-page
 the complement in $S$ of a small neighborhood of $S$.
  
 The third step consists finally in defining the contact form $\alpha$ with the required properties.
\endproof

\begin{Thm}[Giroux correspondence] \label{Theorem: Giroux correspondance}
 Let $\alpha$ and $\alpha'$ be contact structures on $Y$ that are adapted to the open books $(L,S,\phi)$ and, respectively, $(L',S',\phi')$.
 Then $\alpha$ and
 $\alpha'$ are isotopic if and only if $(L',S',\phi')$ can be obtained from $(L,S,\phi)$ by a sequence of Giroux stabilizations and destabilizations.
\end{Thm}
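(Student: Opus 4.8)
The plan is to prove the two implications separately, relying throughout on Giroux's construction in \cite{Gi} (expository account in \cite{Co}) and on the cellular-decomposition picture recalled in the sketch of Theorem \ref{Theorem: Construction of an open book adapted to a contact structure}.

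First I would treat the easy implication: a single Giroux stabilization does not change the supported contact structure up to isotopy. By definition a positive stabilization replaces $(S,\phi)$ by $(S',\phi\circ\tau_c)$, where $S'$ is $S$ with an embedded $1$-handle attached and $\tau_c$ is a right-handed Dehn twist along a curve $c\subset S'$ meeting the cocore of the handle exactly once. Working inside a Darboux ball with the adapted local model of \S\ref{Subsection: Open Books}, one recognizes this move as a Murasugi (plumbing) sum of the open book with a positive Hopf band; since the open book of the positive Hopf band supports the standard tight contact structure on $S^3$ and plumbing realizes the contact connected sum, the resulting form is isotopic to $\alpha$. Iterating, any finite sequence of stabilizations and destabilizations preserves the isotopy class, which gives one direction.

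For the converse --- the substantive half --- assume $\ker\alpha$ and $\ker\alpha'$ are isotopic and, after an ambient isotopy, equal to a common $\xi$. Each adapted open book comes, as in the sketch above, from a cellular decomposition of $Y$ compatible with $\xi$; call these $\mathcal{D}$ and $\mathcal{D}'$. I would then invoke the two structural facts from \cite{Gi}: (i) any two cellular decompositions of $Y$ possess a common refinement which, after an isotopy of $Y$, can be made compatible with $\xi$; and (ii) a single elementary subdivision of a $\xi$-compatible cellular decomposition changes the associated adapted open book precisely by a Giroux stabilization. Combining (i) and (ii), both $(L,S,\phi)$ and $(L',S',\phi')$ are carried by finitely many stabilizations to the open book attached to the common compatible refinement; following one sequence by the inverse of the other exhibits $(L',S',\phi')$ as the result of stabilizations and destabilizations applied to $(L,S,\phi)$.

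The main obstacle is exactly the pair (i)--(ii): one must keep the refinements compatible with $\xi$ at every stage, so that each intermediate triple is a bona fide adapted open book, and one must check that an elementary subdivision corresponds under Giroux's dictionary to a Hopf-band plumbing and nothing more. These verifications --- convex-surface theory to arrange compatibility, and a local handle computation for the plumbing --- are the technical heart of \cite{Gi}, and I would cite them rather than reproduce them here, exactly as was done for Theorem \ref{Theorem: Construction of an open book adapted to a contact structure}.
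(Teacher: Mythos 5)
The paper offers no proof of this statement: it is quoted as Giroux's theorem, and the entire burden is carried by the citation to \cite{Gi} (see also \cite{Co}), just as the existence statement before it is only sketched. Your outline --- stabilization invariance via positive Hopf-band plumbing for the easy direction, and common $\xi$-compatible refinements of cellular decompositions, with elementary subdivisions matching stabilizations, for the converse --- is precisely the announced strategy behind that citation, and since you likewise defer the two key technical facts to \cite{Gi}, your treatment is essentially the same as the paper's.
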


A \emph{Giroux stabilization of an open book} is an operation that associates to an
open book decomposition $(L,S,\phi)$ of $Y$ another open book decomposition $(L',S',\phi')$ of $Y$, obtained as follows. Choose two points $P_1$ and $P_2$ in $\partial S$
(not necessarily in the same connected
component) and let $\gamma$ be an oriented embedded path in $S$ from $P_1$ to $P_2$. Let now $S'$ be the oriented surface obtained by attaching
a $1$-handle to $S$ along the attaching sphere $(P_1,P_2)$. Consider the closed oriented loop $\bar{\gamma} \subset S'$ defined by 
$\bar{\gamma} := \gamma \sqcup c$, where $c$ is the core curve of the $1$-handle, oriented from $P_2$ to $P_1$, and the gluing is done along
the common boundary ${(P_1,P_2)}$ of the two paths.

By the definition of monodromy of open book that we gave, the $\phi$ is the identity along $\partial S$. So $\phi$ extends to the identity map on the handle:
we keep calling $\phi$ the resulting diffeomorphism on $S'$. If $\tau_{\bar{\gamma}}$ is a positive Dehn twist along $\bar{\gamma}$, define
$\phi' = \tau_{\bar{\gamma}} \circ \phi$. 

It results that $N' := N(S',\phi')$ embeds in $Y$ and that $Y \setminus N'$ is a disjoint union of solid tori. Then, if $L'$ is the set of
the core curves of these tori, $(L',S',\phi')$ is an open book decomposition of $Y$, which is said to be obtained by
\emph{Giroux stabilization} of $(L,S,\phi)$ along $\gamma$.

There is an obvious inverse operation of the stabilization: with the notations above, we say that
$(L,S,\phi)$ is obtained by \emph{Giroux destabilization} of $(L',S',\phi')$ along $\gamma'$.

Note that a Giroux stabilization does not change the components of $L$ that do not intersect the attaching sphere. 
Moreover it is not difficult to see that the number of connected components of $L$ and $L'$ differs by $1$: if $P_1$ and $P_2$ are chosen
in the same component then $L'$ has one component more than $L$; otherwise $L'$ has one component less then $L$.

%\section{Review of embedded contact homology} \label{Section: Embedded contact homology}

\subsection{$ECH$ for closed three-manifolds} \label{Subsection: ECH for three-manifolds}

We briefly remind here the Hutchings' original definition of $ECH(Y,\alpha)$ and $\widehat{ECH}(Y,\alpha)$ for a closed contact three-manifold
$(Y,\alpha)$.

%In Subsection \ref{Subsection: ECH for manifolds with boundary} we summarize some definitions and results given in \cite{CGH2}. 
%We present in particular the definition of the embedded contact homology groups $ECH(N,\alpha)$ and $\widehat{ECH}(N,\alpha)$ for
%contact three-manifolds $(N,\alpha)$ with torus boundary.
%Moreover, if $N$ is the complement of a neighborhood of a knot $K$ in a closed three manifold $Y$, we recall the definition of the relative
%versions $ECH(N,\partial N,\alpha)$ and $\widehat{ECH}(N,\partial N,\alpha)$, which are proved (still in \cite{CGH2}) to be isomorphic to
%the homologies $ECH(Y,\alpha)$ and $\widehat{ECH}(Y,\alpha)$ respectively.

%In Subsection \ref{Subsection: Periodic Floer homology for open books} we remind the definition of the periodic Floer homology groups for open books.
%As we will see their definition is closely related to that of $ECH$.

%Finally in Subsection \ref{Subsection: widehat ECH for knots} we remind the definition of the version of $\widehat{ECH}$ for homologically trivial
%knots. We will not give the original definition in terms of sutures as appears in \cite{CGHH} but the reinterpretation given in the end of Section 9 of \cite{CGH1}. 

Let $(Y,\alpha)$ be a closed contact three-manifold and assume that $\alpha$ is non-degenerate (i.e., that any Reeb orbit of $\alpha$
is non-degenerate).

For a fixed $\Gamma \in H_1(Y)$, define $ECC(Y,\alpha,\Gamma)$ to be the free $\mathbb{Z}_2$-module generated by the orbit sets of $Y$ (recall
Definition \ref{Definition: Orbit sets}) in the homology class $\Gamma$ and pose 
$$ECC(Y,\alpha) = \bigoplus_{\Gamma \in H_1(Y)}ECC(Y,\alpha,\Gamma).$$
This is the \emph{$ECH$ chain group of $(Y,\alpha)$}.

The \emph{$ECH$-differential} $\partial^{ECH}$ (called simply $\partial$ when no risk of confusion occurs) is defined in \cite{Hu2} in
terms of holomorphic curves in the symplectization $(\R \times Y,d\alpha, J)$ of $(Y,\alpha)$ as follows.
%, where $\omega$ is a symplectic form adapted to $\alpha$ and $J$ is an almost complex structure compatible with $\omega$.

Given $\gamma, \delta \in \mathcal{O}(Y)$, let $\mathcal{M}(\gamma, \delta)$ be the set of (possibly disconnected) holomorphic curves
$u : (\dot{F},j) \rightarrow (\R \times Y, J)$ from $\gamma$ to $\delta$, where $(\dot{F},j)$ is a punctured compact Rieamannian surface.
It is clear that $u$ determines a relative homology class $[\Imm(u)] \in H_2(\R \times Y;\gamma,\delta)$ and that if such a curve exists then
$[\gamma] = [\delta] \in H_1(Y)$.

If $\xi = \ker(\alpha)$ and a trivialization $\tau$ of $\xi|_{\gamma \cup \delta}$ is given, to any surface $C \subset \R \times Y$ with
$\partial C = \gamma - \delta$ it is possible to associate an \emph{$ECH$-index}
$$I(C) := c_{\tau}(C) + Q_{\tau}(C) + \mu_{\tau}^I(\gamma,\delta),$$
which depends only on the relative homology class of $C$. Here 
\begin{itemize}
 \item $c_{\tau}(C) := c_1(\xi|_C,\tau)$ is the \emph{first relative Chern class} of $C$;
 \item $Q_{\tau}(C)$ is the \emph{$\tau$-relative intersection paring} of $\R \times Y$ applied to $C$;
 \item $\mu_{\tau}^I(\gamma,\delta) := \sum_i \sum_{j=1}^{k_i}\mu_{\tau}(\gamma_i^{j}) - \sum_i \sum_{j=1}^{k_i}\mu_{\tau}(\delta_i^{j})$, where
  $\mu_{\tau}$ is the Conley-Zehnder index defined in Section \ref{Subsection: Contact geometry}.
\end{itemize}
We refer the reader to \cite{Hu3} for the details about these quantities. If $u$ is a holomorphic curve from $\gamma$ to $\delta$ set
$I(u) = I(\Imm(u))$ (well defined up to approximating $\Imm(u)$ with a surface in the same homology class).

Define $\mathcal{M}_k(\gamma, \delta) := \{u \in \mathcal{M}(\gamma, \delta)\ |\ I(u) = k\}$. The $ECH$-differential is then defined on the generators of
$ECC(Y,\alpha)$ by
\begin{equation} \label{Equation: ECH differential}
 \partial^{ECH}(\gamma) = \sum_{\delta \in \mathcal{O}(Y)}\sharp \left(\frac{\mathcal{M}_1(\gamma, \delta)}{\R}\right)\cdot \delta
\end{equation}
where the fraction means that we quotient $\mathcal{M}_1(\gamma, \delta)$ by the $\R$-action on the curves given by the translation in the $\R$-direction in
$\R \times Y$. In \cite[Section 5]{Hu3} Hutchings proves that $\frac{\mathcal{M}_1(\gamma, \delta)}{\R}$ is a compact $0$-dimensional manifold, so that 
$\partial^{ECH}(\gamma)$ is well defined.

The \emph{(full) embedded contact homology of $(Y,\alpha)$} is
$$ECH_*(Y,\alpha) := H_*(ECC(Y,\alpha),\partial^{ECH}).$$
It turns out that these groups do not depend either on the choices $J$ in the symplectization or the contact form for $\xi$.

%The index $*$ denotes the relative index induced on the generators by $I$. On the other hand i
It is possible to endow $ECH(Y,\xi)$ with a
canonical absolute $\Z/2$-grading as follows. If $\gamma = \prod_i \gamma_i^{k_i}$ set
$$\epsilon(\gamma) = \prod_i \epsilon (\gamma_i)^{k_i},$$
where $\epsilon (\gamma_i)$ is the Lefschetz sign of the simple orbit $\gamma_i$. Note that $\epsilon(\gamma)$ is given by the parity of the number of positive
hyperbolic simple orbits in $\gamma$.

If $u$ is a holomorphic curve from $\gamma$ to $\delta$, by simple computations it is possible to prove the following
\emph{index parity formula} (see for example Section 3.4 in \cite{Hu3}):
\begin{equation} \label{Equation: Index parity formula of the ECH index}
 (-1)^{I(u)} = \epsilon(\gamma)\epsilon(\delta).
\end{equation}

It follows then that the Lefschetz sign endows embedded contact homology with a well defined absolute grading.

\vspace{0.3 cm}

Fix now a generic point $(0,z) \in \R \times Y$. Given two orbit sets $\gamma$ and $\delta$, let
\begin{equation*}
 \begin{array}{cccc}
  U_z : & ECC_*(Y,\alpha) & \longrightarrow & ECC_{*-2}(Y,\alpha) 
 \end{array}
\end{equation*}
be the map defined on the generators by
\begin{equation*} 
  U_z(\gamma) =  \sum_{\delta \in \mathcal{O}(Y)}\# \left\{u \in \mathcal{M}_2(\gamma, \delta)\ |\ (0,z) \in \Imm (u) \right\}\cdot \delta.
\end{equation*}
Hutchings proves that $U_z$ is a chain map that counts only a finite number of holomorphic curves and that this count does not depend on the
choice of $z$. So it makes sense to define the map $U := U_z$ for any $z$ as above. This is called the \emph{U-map}.

The \emph{hat version of embedded contact homology of $(Y,\alpha)$} is defined as the homology $\widehat{ECH}(Y,\alpha)$ of the mapping cone of
the U-map. By this we mean that $\widehat{ECH}(Y,\alpha)$ is defined to be the homology of the chain complex
$${ECC}_{*-1}(Y,\alpha) \oplus {ECC}_*(Y,\alpha)$$
with differential defined by the matrix
\begin{equation} \nonumber
 \left(\begin{array}{cc}
        -\partial_{*-1} & 0 \\
        U               & \partial_*
   \end{array}\right)
\end{equation}
where the element of the complex are thought as columns. Also $\widehat{ECH}(Y,\alpha)$ has the relative and the absolute gradings above.

\begin{Obs} \label{Observation: ECH groups split in direct sum over H_1 Y}
Note that $\partial^{ECH}$ and $U$ respect the homology class of the generators of $ECC_*(Y,\alpha)$. This implies that there are natural splits:
\begin{equation} \label{Equation: splits of ECH in terms of homology classes}
 \begin{array}{ccc}
   ECH(Y,\xi) & = & \bigoplus_{\Gamma \in H_1(Y)}ECC(Y,\xi,\Gamma);\\
   \widehat{ECH}(Y,\xi) & = & \bigoplus_{\Gamma \in H_1(Y)}\widehat{ECH}(Y,\xi,\Gamma).
 \end{array}
\end{equation}
\end{Obs}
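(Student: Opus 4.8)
\proof[Sketch of the proof.]
The whole statement reduces to the single fact, recorded in the Observation following the definition of holomorphic curves (and again in the discussion of the $ECH$ index in Subsection \ref{Subsection: ECH for three-manifolds}), that the existence of a holomorphic curve from $\gamma$ to $\delta$ forces $[\gamma]=[\delta]$ in $H_1(Y)$. First I would recall that, by Definition \ref{Definition: Orbit sets}, every generator $\gamma=\prod_i\gamma_i^{k_i}$ of $ECC(Y,\alpha)$ carries the well-defined class $[\gamma]=\sum_i k_i[\gamma_i]\in H_1(Y)$; hence, letting $ECC(Y,\alpha,\Gamma)$ be the subgroup spanned by the orbit sets in class $\Gamma$, one has the tautological decomposition of $\mathbb{Z}_2$-modules $ECC(Y,\alpha)=\bigoplus_{\Gamma\in H_1(Y)}ECC(Y,\alpha,\Gamma)$.

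Next I would check that $\partial^{ECH}$ is homogeneous of degree $0$ for this $H_1(Y)$-grading. In formula \eqref{Equation: ECH differential} the coefficient of $\delta$ in $\partial^{ECH}(\gamma)$ counts elements of $\mathcal{M}_1(\gamma,\delta)/\R$, which is empty unless there is a holomorphic curve from $\gamma$ to $\delta$, i.e.\ unless $[\delta]=[\gamma]$. Thus $\partial^{ECH}$ restricts to an endomorphism of each summand $ECC(Y,\alpha,\Gamma)$. Setting $ECH(Y,\alpha,\Gamma):=H_*\big(ECC(Y,\alpha,\Gamma),\partial^{ECH}\big)$ and using that homology commutes with (arbitrary) direct sums of chain complexes yields the first identity in \eqref{Equation: splits of ECH in terms of homology classes}.

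The same argument applies verbatim to the $U$-map: the curves counted in $U_z(\gamma)$ lie in $\mathcal{M}_2(\gamma,\delta)$, so again $[\delta]=[\gamma]$ and $U$ preserves the $H_1(Y)$-grading. Consequently the mapping cone complex $ECC_{*-1}(Y,\alpha)\oplus ECC_*(Y,\alpha)$, with the differential displayed just above the Observation, decomposes as the direct sum over $\Gamma\in H_1(Y)$ of the analogous mapping cones built from $ECC(Y,\alpha,\Gamma)$; taking homology and again invoking that $H_*$ commutes with direct sums gives the second identity in \eqref{Equation: splits of ECH in terms of homology classes}.

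There is no genuine obstacle here; the only points deserving a word of care are the purely algebraic remark that $H_*$ commutes with direct sums (so that splitting the complex splits the homology), and, if one wants the splitting to be "natural" in the sense of independent of the auxiliary choices, the observation that the chain maps implementing invariance of $ECH$ with respect to $J$ and to the contact form are themselves defined by counting holomorphic curves in a cobordism and hence also respect $H_1$.
\endproof
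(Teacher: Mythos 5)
Your argument is exactly the one the paper intends: the chain group splits tautologically over $H_1(Y)$ by the class of an orbit set, the differential and the $U$-map preserve this class because any holomorphic curve from $\gamma$ to $\delta$ forces $[\gamma]=[\delta]$, and homology commutes with direct sums (the same applying to the mapping cone for the hat version). This is correct and is essentially the same (one-line) justification the paper gives for this Observation, so there is nothing to add.
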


We end this section by stating the following result (see for example \cite{Hu3}).
\begin{Thm}
 Let $\emptyset$ be the empty orbit. Then $[\emptyset] \in ECH(Y,\xi)$ is an invariant of the contact structure $\xi$.
\end{Thm}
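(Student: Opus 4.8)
The plan is to show that the empty orbit $\emptyset$ always represents a cycle, that its homology class is non-trivial, and that this class is sent to itself (up to the canonical identifications) under the cobordism maps/continuation maps relating $ECH$ for different choices of contact form and almost complex structure. I would organize the argument in three stages.

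\emph{Step 1: $\emptyset$ is a cycle.} By Lemma \ref{Lemma: Holomorphic curves low the actions of the orbits}, any holomorphic curve from $\gamma$ to $\delta$ satisfies $\mathcal{A}(\gamma) \geq \mathcal{A}(\delta)$, and $\mathcal{A}(\emptyset) = 0$ while $\mathcal{A}(\gamma) > 0$ for any non-empty orbit set. Hence there is no holomorphic curve with positive end $\emptyset$ and non-empty negative end (a constant curve is excluded since it has $I = 0$, not $I = 1$, and carries no ends at all in the relevant sense). Therefore $\partial^{ECH}(\emptyset) = 0$, so $[\emptyset] \in ECH(Y,\alpha)$ is well defined. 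The same action argument also shows $\emptyset$ does not appear in the image of $\partial^{ECH}$ from any orbit set of strictly positive action via an $I=1$ curve only if that curve is a trivial cylinder, which cannot connect distinct orbit sets; so one already sees that $[\emptyset]$ has a chance of being non-zero, but non-triviality is a statement one typically imports from the known computation of $ECH$ (equivalently, from $\widehat{HF}$ via Theorem \ref{Theorem: HF and ECH in intro}), so I would not belabor it — the substance of the theorem is invariance.

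\emph{Step 2: Reduce to invariance of $ECH$.} By Hutchings' theorem (quoted in the excerpt: ``these groups do not depend either on the choice of $J$ in the symplectization or the contact form for $\xi$''), for any two admissible pairs $(\alpha, J)$ and $(\alpha', J')$ with $\ker \alpha = \ker \alpha' = \xi$ there is a canonical isomorphism $\Phi \colon ECH(Y,\alpha,J) \xrightarrow{\ \sim\ } ECH(Y,\alpha',J')$. The content to be proved is that $\Phi([\emptyset]) = [\emptyset]$. I would prove this first for the special case where $\alpha' = f\alpha$ with $f$ a positive function and the two contact forms are $C^0$-close (so that the exact symplectic cobordism interpolating between them is ``small''), and then chain these together, using that $\Phi$ is defined as a composition of such elementary cobordism maps together with the naturality of the construction.

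\emph{Step 3: The cobordism map fixes $\emptyset$.} This is the main obstacle. One has an exact symplectic cobordism $(X,\omega)$ from $(Y,\alpha')$ to $(Y,\alpha)$ (say with $\alpha' $ the form of smaller action after rescaling), and the induced map on $ECH$ counts $I=0$ holomorphic currents in the completion $\hat X$. The key point is an action/energy estimate: a holomorphic current in $\hat X$ asymptotic to $\emptyset$ at the positive end must be asymptotic to $\emptyset$ at the negative end as well, because the symplectic energy $\int \omega \ge 0$ forces $\mathcal{A}_{\alpha'}(\text{negative end}) \le \mathcal{A}_\alpha(\emptyset) = 0$, and actions of non-empty orbit sets are strictly positive; moreover the only such current is the empty current, which is rigid and contributes $+1$ (over $\mathbb{Z}_2$). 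Hence the chain-level cobordism map sends $\emptyset \mapsto \emptyset + (\text{terms with positive action})$, but in fact those extra terms are absent by the same estimate, so $\emptyset \mapsto \emptyset$ on the nose; passing to homology gives $\Phi([\emptyset]) = [\emptyset]$. The subtlety I expect to have to handle carefully is (a) checking that the relevant moduli space of empty-to-empty currents is cut out transversally and is genuinely a single point, and (b) making sure the identification $\Phi$ used in Hutchings' invariance statement really is the composition of cobordism maps to which this argument applies (as opposed to some more indirect isomorphism); both are addressed in Hutchings–Taubes, so I would cite that and keep the argument at the level of the action estimate, which is the genuinely conceptual ingredient. Combining Steps 1–3, $[\emptyset]$ is a well-defined class independent of all auxiliary choices, i.e., an invariant of $\xi$. $\hfill\square$
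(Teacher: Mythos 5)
The paper does not actually prove this statement: it is quoted as a known result with only a pointer to Hutchings' lecture notes (``see for example \cite{Hu3}''), so there is no in-paper argument to compare yours against. Judged on its own terms, your Step 1 is correct and is precisely the observation one finds in the cited source: Lemma \ref{Lemma: Holomorphic curves low the actions of the orbits} together with $\mathcal{A}(\emptyset)=0$ and $\mathcal{A}(\delta)>0$ for every non-empty $\delta$ rules out any $I=1$ curve out of $\emptyset$ (the only curve from $\emptyset$ to $\emptyset$ being the empty one, of index $0$), so $\partial^{ECH}(\emptyset)=0$ and the class is defined. Non-triviality is not part of the statement, so you are right not to pursue it.

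For Steps 2--3 the conceptual ingredient --- the energy/action estimate showing that a holomorphic current in an exact symplectic cobordism with empty positive end must also have empty negative end --- is the right one, but your framing of the cobordism maps is not how the theory actually works, and this is more than a technicality. ECH cobordism maps are not defined by counting $I=0$ holomorphic currents (transversality fails for multiply covered curves); they are constructed via Taubes' isomorphism with Seiberg--Witten Floer cohomology and satisfy only a ``holomorphic curves axiom'': a nonzero matrix coefficient implies the existence of a possibly broken holomorphic current. Your action estimate then shows $\Phi(\emptyset)=c\cdot\emptyset$ for some $c\in\mathbb{Z}/2$, but the fact that $c=1$ (the empty current ``is counted'') is itself one of the axioms established by Hutchings--Taubes, not something you can extract from a transversality claim about a one-point moduli space. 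Likewise the ``canonical isomorphism'' in Hutchings' invariance statement is itself defined through Seiberg--Witten theory, so your worry (b) is the genuine crux: the standard proof identifies $[\emptyset]$ with the Seiberg--Witten contact invariant and imports its invariance from there. Since you explicitly flag both points and defer to Hutchings--Taubes, your outline is acceptable as a sketch, but as written Step 3 does not close without that external input.
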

The class $[\emptyset]$ is called $ECH$ \emph{contact invariant} of $\xi$.

\subsection{$ECH$ for manifolds with torus boundary}
\label{Subsection: ECH for manifolds with boundary}

In order to define $ECH$ for contact three-manifolds $(N,\alpha)$ with nonempty boundary, some compatibility between $\alpha$ and $\partial N$
should be assumed. In this paper we are particularly interested in three-manifolds whose boundary is a collection of disjoint tori.

In \cite[Section 7]{CGH2} Colin, Ghiggini and Honda analyze this situation when $\partial N$ is connected. 
If $\mathcal{T} = \partial N$ is homeomorphic to a torus, then they prove that the $ECH$-complex and the
differential can be defined almost as in the closed case, provided that $R = R_{\alpha}$ is tangent to $\mathcal{T}$ and that $\alpha$
is non-degenerate in $int(N)$. 

If the flow of $R|_{\mathcal{T}}$ is irrational they define $ECH(N,\alpha) = ECH(int(N),\alpha)$
while, if it is rational, they consider the case of $\mathcal{T}$ Morse-Bott and do a M-B perturbation of $\alpha$ near
$\mathcal{T}$;
this gives two Reeb orbits $h$ and $e$ on $\mathcal{T}$ and, since $\alpha$ is now a M-B contact form, the
$ECH$-differential counts special holomorphic curves, called \emph{M-B buildings}. 

\begin{Def}
 Let $\alpha$ be a Morse-Bott contact form on the three manifold $Y$ and $J$ a regular almost complex structure on $\mathbb{R} \times Y$.
 Suppose that any
 M-B torus $T$ in $(Y,\alpha)$ comes with a fixed a Morse function $f_T$. Let $\mathcal{P}(Y)$ be the set of simple Reeb orbits in
 $Y$ minus the set of the orbits which correspond to some regular point of some $f_T$.  
 
 A \emph{Morse-Bott building in $(Y,\alpha)$} is a disjoint union of objects $u$ of one of the following two types:

 \begin{enumerate}
  \item $u$ is the submanifold of a M-B torus $T$ corresponding to a gradient flow line of $f_T$: in this case the positive and negative end of $u$ are the
   positive and, respectively, the negative end of the flow line;
  
  \item $u$ is a union of curves $\widetilde{u} \cup u_1 \cup \ldots \cup u_n$ of the following kind. $\widetilde{u}$ is a $J$-holomorphic curve
   in $\mathbb{R} \times Y$ with $n$ ends $\{\delta_1,\ldots,\delta_n\}$ corresponding to regular values of some $\{f_{T_1},\ldots,f_{T_n}\}$.
   Then, for each $i$, $\widetilde{u}$ is augmented by a gradient flow trajectory $u_i$
   of $f_{T_i}$: $u_i$ goes from the maximum $\epsilon^+_i$ of $f_{T_i}$ to $\delta_i$ if $\delta_i$ is a positive end and goes from $\delta_i$ to the minimum
   $\epsilon^-_i$ of $f_{T_i}$ if $\delta_i$ is a negative end. The ends of $u$ are obtained from the ends of $\widetilde{u}$ by substituting
   each $\delta_i$ with the respective $\epsilon^+_i$ or $\epsilon^-_i$.
 \end{enumerate}
 
 A Morse-Bott building is \emph{nice} the $\widetilde{u}$ above has at most one connected component which is not a cover of a trivial cylinder.
\end{Def}

%These arguments are used to redefine $ECH(Y,\alpha)$ and $\widehat{ECH}(Y,\alpha)$ in a slightly different way, where now $Y$ is closed. 

Suppose now that $Y$ is closed and $\mathcal{N} \cong D^2 \times S^1$ is a solid torus embedded in $Y$. If $N = Y \setminus int(\mathcal{N})$, under some assumption
on the behaviour of $\alpha$ in a neighborhood of $\mathcal{N}$, in \cite{CGH2} the authors define relative versions $ECH(N,\partial N,\alpha)$ and
$\widehat{ECH}(N,\partial N,\alpha)$ of embedded contact homology groups and prove that
\begin{eqnarray}
 ECH(N,\partial N,\alpha) \cong ECH(Y,\alpha); \label{Equation: ECH in terms of the relative version}\\
 \widehat{ECH}(N,\partial N,\alpha) \cong \widehat{ECH}(Y,\alpha). \label{Equation: ECH widehat in terms of the relative version}
\end{eqnarray}
The notation suggests that these new homology groups are obtained by counting only orbits in $N$ and quotienting by orbits on $\partial N$.
Let us see the definition of these homologies in more details.

As mentioned, to define these versions of embedded contact homology and prove the isomorphisms above, some compatibility between $\alpha$ and $\mathcal{N}$ is required.
We refer the
reader to \cite[Section 6]{CGH2} for the details. Essentially two conditions are required. The first one fixes $\alpha$ near
$\mathcal{N}$ in a way that $R$ behaves similarly to the Reeb vector field defined in Section \ref{Subsection: Open Books} near $\mathcal{N}(K)$,
where $K$ was the binding of an open book decomposition of $Y$.\\

Briefly, this means that there exists a smaller closed solid torus $V \subset \mathcal{N}$ and a neighborhood $T^2 \times [0,2]$ of
$\partial \mathcal{N} = T^2 \times \{1\}$ in $Y$ such that:
\begin{enumerate}
 \item $T^2 \times [0,1] \subset N$, $\mathcal{N} = (T^2 \times [1,2]) \cup V$ and $\partial V = T^2 \times \{2\}$; 
 \item $T^2 \times \{y\}$ is foliated by Reeb trajectories for any $y \in [0,2]$;
 \item if $K = \{0\} \times S^1 \subset \mathcal{N}$, then $K$ is a Reeb orbit and $\mathrm{int}(V) \setminus K$ is foliated by concentric tori,
  which in turn are linearly foliated by Reeb trajectories that intersect positively a meridian disk for $K$ in $V$.
 \item $T_1 := T^2 \times \{1\}$ and $T_2 := T^2 \times \{2\}$ are negative and, respectively, positive M-B tori foliated by Reeb
  orbits which are meridians of $K$.  
\end{enumerate}
As in Subsection \ref{Subsection: Open Books}, the families of Reeb orbits in $T_1$ and $T_2$ are perturbed into two pairs of Reeb orbits $(e,h)$ and, respectively,
$(e_+,h_+)$: here $e$ and $e_+$ are elliptic and $h$ and $h_+$ are positive hyperbolic (see figure \ref{Figure: Dynamic of the M-B perturbation near K}).
If $\alpha$ satisfies the conditions above we say that \emph{$\alpha$ is adapted to $K$}.

The second condition of compatibility is that there must exist a Seifert surface $S \subset Y$ for $K$ such that $R$
is positively transverse to $int(S)$. In this case we say that $\alpha$ is \emph{adapted to $S$}.
\begin{Lemma}(see Theorem 10.3.2 in \cite{CGH2}) \label{Lemma: There exists alpha copatible with K and S}
 Given a null-homologous knot $K$ and a contact structure $\xi$ on $Y$ there exists a contact form $\alpha$ for $\xi$ and a genus minimizing
 Seifert surface $S$ for $K$ such that:
 \begin{enumerate}
  \item $\alpha$ is adapted to $K$;
  \item $\alpha$ is adapted to $S$.
 \end{enumerate}
\end{Lemma}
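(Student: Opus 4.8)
\proof[Plan of proof.]
The strategy is to produce $\alpha$ (and $S$) from an open book decomposition of $Y$ in which $K$ is one component of the binding, by running the Giroux-type construction sketched in the proof of Theorem~\ref{Theorem: Construction of an open book adapted to a contact structure}, but carried out \emph{relative} to $K$ and to a chosen genus-minimizing Seifert surface. Once $K$ sits in the binding and $S$ is carried by the page fibration, both conditions in the statement become readable off the local model of an adapted contact form near a binding component described in Subsection~\ref{Subsection: Open Books}.

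First I would fix a genus-minimizing Seifert surface $S_0$ for $K$ and then build a cellular decomposition $\mathcal{D}$ of $Y$ that is compatible with $\xi$ (in the sense of that proof) and in which $K$ appears inside $\mathcal{D}^1$ and $S_0$ appears as a union of $2$-cells. This requires a relative version of the statement ``any cellular decomposition can be isotoped to be compatible with $\xi$'': one starts from a fine decomposition refining a cellulation adapted to the pair $(K,S_0)$, and performs the compatibility isotopy as an ambient isotopy that drags $K$ and $S_0$ along, so that at the end $K$ and (an ambient-isotopic copy of) $S_0$ are still subcomplexes; since genus is an invariant of the isotopy class, the transported surface $S$ is still genus-minimizing. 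Running the Giroux construction on $\mathcal{D}$ then yields an open book $(L,\Sigma,\phi)$ of $Y$ together with an adapted contact form $\alpha$ with $\ker\alpha=\xi$, such that $K$ is a binding component and $S$ meets each page transversally outside $\mathcal{N}(K)$ while having the model form inside $\mathcal{N}(K)$.

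Condition~(1), that $\alpha$ is adapted to $K$, is then automatic from the description of adapted contact forms near the binding in Subsection~\ref{Subsection: Open Books} (and its counterpart for non-connected bindings): near $K$ the Reeb field $R_\alpha$ is transverse to the pages, $\partial\mathcal{N}(K)$ and a parallel torus are a negative and a positive Morse--Bott torus giving, after perturbation, the pairs $(e,h)$ and $(e_+,h_+)$ with the prescribed Conley--Zehnder indices, $K$ itself is a Reeb orbit, and $\mathrm{int}(V)\setminus K$ is foliated by concentric tori linearly foliated by Reeb trajectories positively transverse to a meridian disk --- precisely the list of conditions defining ``adapted to $K$''. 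For condition~(2) I would check positive transversality of $R_\alpha$ to $\mathrm{int}(S)$ on the two pieces separately: on $\mathcal{N}(K)$, $S$ is (isotopic to) a longitudinal annulus, which is positively transverse to the concentric linearly foliated tori, hence to $R_\alpha$; on the mapping-torus part, $R_\alpha$ is positively transverse to every page $\Sigma\times\{t\}$, and $S$ was arranged to be carried by this fibration, so $R_\alpha$ is positively transverse to $S$ there too, after fixing the orientation of $S$ so that its coorientation agrees with $R_\alpha$.

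The main obstacle is the construction of $\mathcal{D}$: making the auxiliary structure compatible with $\xi$ while simultaneously keeping $K$ in the binding \emph{and} keeping a genus-minimizing Seifert surface in a position from which it can be made transverse to the Reeb flow. The delicate point is that the compatibility isotopy must not be allowed to increase the genus of the surface or to push it off the page structure; controlling this is where one needs a taut/sutured input in the spirit of Gabai's theorem (a genus-minimizing Seifert surface can be realized as a leaf of a taut foliation, hence placed in ``fibration-like'' position relative to an open book), which is what ultimately guarantees that $S_0$ itself --- and not merely some higher-genus surface --- can be taken positively transverse to $R_\alpha$.
\endproof
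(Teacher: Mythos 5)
Your proposal is considerably more ambitious than what the paper actually does. The paper's proof establishes only condition (1), by a purely local argument: isotope $K$ to be transverse to $\xi$, isotope the contact form so that $K$ becomes a Reeb orbit, and then use the Darboux--Weinstein neighbourhood theorem to install the model form of Subsection~\ref{Subsection: ECH for manifolds with boundary} in a tubular neighbourhood of $K$; condition (2) is deferred entirely to Theorem 10.3.2 of \cite{CGH2}. For condition (1) your open-book route would also work, but it invokes the whole Giroux construction where a neighbourhood argument suffices.

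The genuine gap is in your treatment of condition (2), and it sits exactly where you yourself locate ``the main obstacle''. If $K$ is only one component of the binding $L$ of $(L,\Sigma,\phi)$, a Seifert surface $S$ for $K$ alone cannot be carried by the page fibration: each page has boundary the whole of $L$, while $\partial S$ is a single longitude of $K$, so $S$ is not a union of pieces of pages. Hence transversality of $R_\alpha$ to the pages gives no control on $R_\alpha$ along $S$; away from $\mathcal{N}(K)$ the surface $S$ must contain directions with nonzero component along $R_\alpha$, and nothing in your construction (making $S$ a subcomplex of $\mathcal{D}$, or dragging it along the compatibility isotopy) rules out tangencies or negative intersections with the Reeb flow. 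Arranging a genus-minimizing Seifert surface to be \emph{positively} transverse to $R_\alpha$ is precisely the nontrivial content of \cite{CGH2}, Theorem 10.3.2, and the closing appeal to Gabai-type results does not fill the hole: realizing $S$ as a leaf of a taut foliation says nothing about its position relative to a \emph{given} Reeb vector field. (Your local check near $K$ --- that the longitudinal annulus is positively transverse to $R_\alpha$ because the Reeb slopes on the tori in $V(K)$ lie in $(C,+\infty]$ with $C>0$ --- is correct, but it is the easy part.)
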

\proof
 We give here only the proof of 1), referring the reader to \cite{CGH2} for 2). 
 
 Up to isotopy, we can assume that $K$ is transverse to $\xi$ and let $\alpha'$ be any contact form for $\xi$. Up to isotopy of $\alpha'$ we can
 suppose that $K$ is a Reeb orbit. Since the compatibility condition with $K$ can be arranged on a neighborhood of $K$, by the Darboux-Weinstein
 neighborhood theorem (see for example \cite{Ge}) there exists a contact form $\alpha$ which is compatible with $K$ and contactomorphic to $\alpha'$.
\endproof

\begin{Ex} If $(K,S,\phi)$ is an open book decomposition of $Y$ and $\alpha$ is a contact form adapted to $(K,S,\phi)$, then it is
 adapted also to $K$ and to any page of $(K,S,\phi)$.
\end{Ex}

In \cite{CGH2} the authors prove that it is possible to define the $ECH$-chain groups without taking into
account the orbits in $int(V)$ and in $T^2 \times (1,2)$, so that the only interesting orbits in $\mathcal{N}(K)$ are the four orbits above
(plus, obviously, the empty orbit). Moreover the only curves counted by the (restriction of the) $ECH$-differential
$\partial$ have projection on $Y$ as depicted in figure \ref{Figure: Dynamic near binding in W}. These curves give the following relations:
\begin{equation}
 \begin{array}{ccc}
  \partial(e) & = & 0 \\
  \partial(h) & = & 0 \\
  \partial(h_+) & = & e + \emptyset \\
  \partial(e_+) & = & h.
 \end{array}
 \label{Equation: ECH boundary near the binding}
\end{equation}
Note that the two holomorphic curves from $h$ to $e$, as well as the two from $e_+$ to $h_+$, cancel one each other since we work with
coefficients in $\Z/2$.
\begin{Obs}
 The compactification of the projection of the holomorphic curve that limits to the empty orbit is topologically a disk with boundary $h_+$,
 which should be seen as
 a cylinder closing on some point of $K$. This curve contribute to the ``$\emptyset$ part'' of the third of the equations above, which gives
 $[e] = [\emptyset]$ in $ECH$-homology.
 In the rest of this manuscript the fact that this disk is the only $ECH$ index $1$ connected holomorphic curve that crosses $K$ will be
 essential.
 %In the rest of this manuscript the fact that this disk is the only $ECH$ index $1$ connected holomorphic curve having positive end in some
 %$\gamma = \prod_i \gamma^{k_i}$ with $\gamma_i \pitchfork S_0 \neq 0\ \forall i$ and a negative end in $\emptyset$ will be essential.
\end{Obs}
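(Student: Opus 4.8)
The plan is to treat the two assertions of the Observation separately: (i) the holomorphic curve contributing the $\emptyset$-summand to $\partial(h_+)$ projects onto an embedded meridian disk of $K$ capped off at a single point of $K$, so that $[e]=[\emptyset]$ in $ECH$; and (ii) this disk is, up to $\R$-translation, the only connected $J$-holomorphic curve of $ECH$-index $1$ whose image meets $K$.

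For (i) I would work in the local model of the Reeb flow near $K$ recalled in Subsections \ref{Subsection: Open Books} and \ref{Subsection: ECH for manifolds with boundary}: $K=\{0\}\times S^1\subset V$, the boundary $T_2=\partial V$ is a positive Morse--Bott torus foliated by meridians of $K$ and perturbed into the elliptic orbit $e_+$ (Conley--Zehnder index $1$ for the torus trivialization $\tau$) and the positive hyperbolic orbit $h_+$ (index $0$), while $\mathrm{int}(V)\setminus K$ is foliated by concentric tori linearly foliated by Reeb trajectories whose slopes tend to the meridional slope as one approaches $K$. Exactly as in \cite[Section~6]{CGH2} one produces an embedded $J$-holomorphic curve $u$ with a single positive puncture asymptotic to $h_+$, no negative punctures, and with $\pi_Y(\mathrm{Im}(u))$, suitably compactified, equal to a meridian disk $D\subset V$ with $\partial D=h_+$ and $D\cap K$ a single transverse point; thus $\mathrm{Im}(u)$ is topologically a disk, i.e.\ a cylinder over $h_+$ closed off at a point of $K$. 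The index parity formula \eqref{Equation: Index parity formula of the ECH index} gives $(-1)^{I(u)}=\epsilon(h_+)\epsilon(\emptyset)=-1$, so $I(u)$ is odd, and the explicit computation of $I(u)=c_{\tau}(D)+Q_{\tau}(D)+\mu_{\tau}^I(h_+,\emptyset)$ for a meridian disk (carried out in \cite{CGH2}) gives $I(u)=1$; hence $u$ is counted by $\partial$ and produces the $+\emptyset$ term of \eqref{Equation: ECH boundary near the binding}. Since $\partial e=0$ and $\partial\emptyset=0$, the chain $e+\emptyset=\partial(h_+)$ is a cycle, so $[e]=[\emptyset]$ in $ECH$.

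For (ii), let $v$ be a connected $J$-holomorphic curve with $I(v)=1$ whose image meets $\R\times K$. Being connected of $ECH$-index $1$, $v$ is embedded (by Hutchings' index inequality, \cite{Hu3}), hence not a multiple cover, and in particular $v$ is not contained in $\R\times K$, so Positivity of intersection (Theorem \ref{Theorem: Positivity of intersection in dimension 4}) applies to $v$ against every other holomorphic curve. First I would exclude $K$ from the ends of $v$: by Lemma \ref{Lemma: Holomorphic curves low the actions of the orbits} a curve with $K$ as an end has a positive orbit set of action at least $\mathcal{A}(K)$, and the construction allows $\mathcal{A}(K)$ to be taken larger than the action of every orbit set that can occur as an end of an $ECH$-index-$1$ holomorphic curve, while alternatively such an end together with the trivial cylinder $\R\times K$ violates the relative adjunction and writhe bounds underlying the $ECH$-index inequality. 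With $K$ not an end of $v$, the algebraic intersection $v_Y\cdot K$ is a positive integer $m\geq 1$ — every local intersection contributes positively, and the sign is pinned down because $R$ is positively transverse to the Seifert surface $S$ — so $v$ must cap off $m$ copies of $K$ inside the small solid tori of the local model near $K$; the classification of low-action holomorphic curves near the binding in \cite[Section~6]{CGH2}, combined with Positivity of intersection of $v$ against $u$ and against the concentric tori, then forces an open piece of $\mathrm{Im}(v)$ near $K$ to coincide with a piece of an $\R$-translate of $u$, whence $v$ equals that translate of $u$.

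The main obstacle is the quantitative heart of (ii): ruling out that a connected curve crosses $K$ ``incidentally'' as a component of a more complicated configuration — concretely, that the part of the $ECH$-index localized near $K$ of any connected curve with $v_Y\cdot K=m\geq 1$ is at least the index $1$ of the meridian disk, with equality only when $m=1$ and the curve is a single unbranched meridian sheet. This requires a localized form of Hutchings' $ECH$-index inequality near $K$ (relative adjunction plus the writhe and linking bounds for the braid that $v_Y$ cuts out on the tori $T_y$) together with the \cite{CGH2} analysis near the binding; the remaining dynamical bookkeeping, via the Blocking Lemma \ref{Lemma: Blocking lemma} and the Trapping Lemma \ref{Lemma: Trapping lemma}, is comparatively routine.
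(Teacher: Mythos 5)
Your part (i) is essentially the paper's reasoning: the meridian disk over a point of $K$ with single positive end $h_+$, index pinned down by the parity formula plus the explicit computation in \cite{CGH2}, yields the $\emptyset$-summand of $\partial(h_+)=e+\emptyset$, and since $e$ and $\emptyset$ are cycles this gives $[e]=[\emptyset]$. This is consistent with how the paper (by citation to \cite[Sections 8--9]{CGH2}) justifies the relations \ref{Equation: ECH boundary near the binding}.

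The genuine gap is in part (ii), and you have named it yourself: you do not establish that the meridian disk is the \emph{only} connected $ECH$-index-$1$ curve crossing $K$, deferring the key step to an unproved ``localized $ECH$-index inequality near $K$.'' This uniqueness is precisely the content that matters downstream (it is what lets the paper define $\partial^{ECK}$ by deleting a single curve from $\partial^{ECH}$), so leaving it as an announced obstacle means the Observation is not proved. Moreover, the route you sketch is heavier than, and different from, the one the paper relies on: in the paper's setting the generators of the complex are orbit sets in $\mathcal{O}(N\sqcup\{e,h,e_+,h_+\})$, so $K$ and the orbits of $\mathrm{int}(V)\setminus K$ are simply not available as ends -- there is no need for your action-based exclusion of $K$ as an end, which as stated is dubious (there is no a priori action bound on ends of index-$1$ curves). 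Given that the ends lie in $N\sqcup\{e,h,e_+,h_+\}$, the Blocking Lemma \ref{Lemma: Blocking lemma} forbids a curve from crossing the foliated tori $T_y$ unless it has an end on them, the Trapping Lemma \ref{Lemma: Trapping lemma} fixes the sign of such punctures, and the explicit classification of the remaining low-index curves in $\mathcal{N}(K)$ carried out in \cite[Sections 8--9]{CGH2} shows the only configuration entering $V$ is the disk from $h_+$. If you want a self-contained argument you should run that classification rather than invoke a localized index inequality you have not formulated; as written, the uniqueness claim remains unsupported.
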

 \begin{figure} [h] 
  \begin{center}
   \includegraphics[scale = .35]{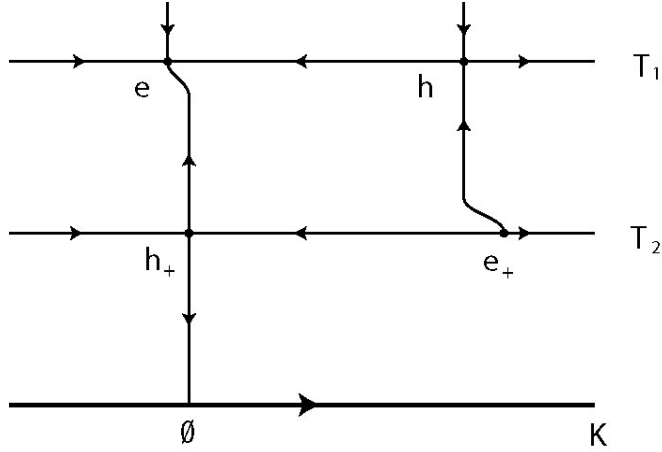}
  \end{center}
  \caption{Orbits and holomorphic curves near $K$. Here the marked points denote the simple Reeb orbits and the flow lines represent
   projections of the holomorphic curves counted by $\partial^{ECH}$. The two flow lines arriving from the top on $e$ and $h$ are depicted
   only to remember that, by the Trapping Lemma, holomorphic curves can only arrive to $T_1$.}
  \label{Figure: Dynamic near binding in W}
\end{figure}

\begin{Notation}
 From now on we will use the following notation. If $(Y,\alpha)$ is understood, given a submanifold $X \subset Y$ and a set of Reeb
 orbits $\{\gamma_1,\ldots,\gamma_n\} \subset \mathcal{P}(Y\setminus X)$, we will denote $ECC^{\gamma_1,\ldots,\gamma_n}(X,\alpha)$ the free $\mathbb{Z}/2$-module
 generated by orbit sets in $\mathcal{O}(X \sqcup \{\gamma_1,\ldots,\gamma_n\})$.
 
 Unless stated otherwise, the group $ECC^{\gamma_1,\ldots,\gamma_n}(X,\alpha)$ will
 come with the natural restriction, still denoted $\partial^{ECH}$, of the $ECH$-differential of $ECC(Y,\alpha)$: if this restriction
 is still a differential the associated homology is
 $$ECH^{\gamma_1,\ldots,\gamma_n}(X,\alpha) := H_*(ECC^{\gamma_1,\ldots,\gamma_n}(X,\alpha),\partial^{ECH}).$$
 
 This notation is not used in \cite{CGH2}, where the authors introduced a specific notation for each relevant $ECH$-group. In particular with their notation:
 \begin{eqnarray*}
  ECC^{\flat}(N,\alpha) & = & ECC^{e}(int(N),\alpha);\\
  ECC^{\sharp}(N,\alpha) & = & ECC^{h}(int(N),\alpha);\\
  ECC^{\natural}(N,\alpha) & = & ECC^{h_+}(N,\alpha).
 \end{eqnarray*}
\end{Notation}

As mentioned before, even if in $\mathcal{N}$ there are other Reeb orbits, it is possible to define chain complexes for the $ECH$ homology 
of $(Y,\alpha)$ only taking into account the orbits $\{e,h,e_+,h_+\}$. 

The Blocking and Trapping lemmas and the relations above imply that the restriction of the full $ECH$-differential of $Y$ to 
the chain group $ECH^{e_+,h_+}(N,\alpha)$ is given by:
\begin{equation} \label{Equation: The complete ECH differential with all the orbits}
 \partial(e_+^a h_+^b \gamma) =  e_+^{a-1} h_+^b h \gamma + e_+^a h_+^{b-1} (1+e) \gamma + e_+^a h_+^b \partial \gamma,
\end{equation}
where $\gamma \in \mathcal{O}(N)$ and a term in the sum is meant to be zero if it contains some elliptic orbit with negative total
multiplicity or a hyperbolic orbit with total multiplicity not
in $\{0,1\}$ (see \cite[Section 9.5]{CGH2}). We remark that the Blocking Lemma implies also that
$\partial \gamma \in \mathcal{O}(N)$.

The further restriction of the differential to $ECH^{h_+}(N,\alpha)$ is then given by
\begin{equation} \label{Equation: The hat ECH differential with all the orbits}
 \partial(h_+^b \gamma) = h_+^{b-1} (1+e) \gamma + h_+^b \partial \gamma.
\end{equation}

Combining the computations of sections 8 and 9 of \cite{CGH2} the authors get the following result.
\begin{Thm} \label{Theorem: ECH of Y iso to the relative versions}
 Suppose that $\alpha$ is adapted to $K$ and there exists a Seifert surface $S$ for $K$ such that $\alpha$ is adapted to $S$. Then
 \begin{eqnarray}
  ECH(Y,\alpha) & \cong & ECH^{e_+,h_+}(N,\alpha) \label{Equation: ECH Y in terms of N e+ and h+};\\
  \widehat{ECH}(Y,\alpha) & \cong & ECH^{h_+}(N,\alpha). \label{Equation: widehat ECH Y in terms of N and h+}
 \end{eqnarray}
\end{Thm}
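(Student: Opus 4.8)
As the text indicates, the statement is obtained by combining the computations of Sections~8 and~9 of \cite{CGH2}; I sketch how the argument is organized. There are three ingredients: an action-filtration reduction to the orbits near $K$, the local analysis of Reeb orbits and holomorphic curves near $K$ already recorded above, and, for the hat version, a purely algebraic mapping-cone computation.

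\emph{Reduction near $K$.} First I would fix the data. By Lemma~\ref{Lemma: There exists alpha copatible with K and S} we may assume $\alpha$ is adapted to $K$ and to a genus-minimizing Seifert surface $S$. Fix a large action threshold $L$. Using the Morse--Bott perturbation of Subsection~\ref{Subsection: Morse-Bott Theory} with a small parameter $\delta$, and choosing the form within its class so that $K$ itself has action exceeding $L$, one arranges that the only Reeb orbits in $\mathcal{N}(K)$ of action less than $L$ are $e,h$ on $T_1$ and $e_+,h_+$ on $T_2$: a closed orbit lying in a torus of $T^2\times(1,2)$ must wind many times since the return displacement $f_\delta$ is small and nowhere zero there, while a closed orbit in $\operatorname{int}(V)\setminus K$ must wind many times meridionally since its slope exceeds the constant $C$, which may be taken large. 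Consequently $ECC^{<L}(Y,\alpha)$ is freely generated by the orbit sets $e^ah^be_+^ch_+^d\gamma$ with $\gamma\in\mathcal{O}(N)$ and $b,d\in\{0,1\}$, so it coincides with $ECC^{e_+,h_+}(N,\alpha)^{<L}$. Letting $L\to\infty$ and using that $ECH$ is the direct limit of its action-filtered subgroups and is independent of the auxiliary choices, one gets an identification of complexes $ECC(Y,\alpha)\cong ECC^{e_+,h_+}(N,\alpha)$, once the two differentials are shown to agree.

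\emph{The differential near $K$.} Establishing that agreement is the geometric heart, and I expect it to be the main obstacle. Combining Theorem~\ref{Theorem: A holomorphic has only isolated critical points} with the Blocking Lemma~\ref{Lemma: Blocking lemma} and the Trapping Lemma~\ref{Lemma: Trapping lemma}, one shows that a curve counted by $\partial^{ECH}$ cannot meet $T^2\times(1,2)$ or $\operatorname{int}(V)$ except along the four distinguished orbits, and that the only connected $ECH$-index-$1$ curve crossing $K$ is the disk bounded by $h_+$ that closes on a point of $K$ (Figure~\ref{Figure: Dynamic near binding in W}). This gives the local relations~\eqref{Equation: ECH boundary near the binding} and then, after attaching trivial cylinders over $e_+$, $h_+$ and over the orbits of $N$, the formula~\eqref{Equation: The complete ECH differential with all the orbits}; in particular $\partial^{ECH}$ restricts to a differential on $ECC^{e_+,h_+}(N,\alpha)$ and carries orbit sets of $N$ to combinations of orbit sets of $N$, which proves~\eqref{Equation: ECH Y in terms of N e+ and h+}.

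\emph{The hat version.} For this I would argue algebraically from~\eqref{Equation: The complete ECH differential with all the orbits}. The generators free of $e_+$ span a subcomplex, which is exactly $ECC^{h_+}(N,\alpha)$ with the differential~\eqref{Equation: The hat ECH differential with all the orbits}; and the endomorphism $p$ of $ECC^{e_+,h_+}(N,\alpha)$ sending $e_+^ch_+^d\gamma$ to $e_+^{c-1}h_+^d\gamma$ (and to $0$ when $c=0$) is a surjective chain map with $\ker p=ECC^{h_+}(N,\alpha)$, as a short check from~\eqref{Equation: The complete ECH differential with all the orbits} shows. It then remains to recognise $p$, up to chain homotopy, as the $ECH$ $U$-map: taking the generic base point $z$ on (or near) $K$, the $ECH$-index-$2$ curves through $z$ are, up to trivial cylinders, the $e_+$-analogue of the index-$1$ disk crossing $K$, so that $U_z$ lowers the multiplicity of $e_+$ by one. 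Granting this, the mapping cone of $U$ on $ECC(Y,\alpha)\simeq ECC^{e_+,h_+}(N,\alpha)$ is quasi-isomorphic, up to the usual grading shift, to $\ker p = ECC^{h_+}(N,\alpha)$, which is~\eqref{Equation: widehat ECH Y in terms of N and h+}. The delicate inputs throughout are the transversality and gluing statements for Morse--Bott buildings near $K$, and the precise identification of $U_z$; these are carried out in detail in \cite{CGH2}.
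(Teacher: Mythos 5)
Your outline is essentially the argument the paper is invoking: the statement is not proved here at all but is quoted from Sections 8 and 9 of \cite{CGH2}, and your three-step architecture (localisation near $K$, the local relations \ref{Equation: ECH boundary near the binding} and the differential \ref{Equation: The complete ECH differential with all the orbits} via the Blocking and Trapping Lemmas, then the mapping-cone algebra identifying $\mathrm{cone}(U)$ with $\ker p = ECC^{h_+}(N,\alpha)$ after recognising $U$ as the ``remove one $e_+$'' map for a base point near the binding) is a faithful summary of that reference, deferring the same analytic inputs to the same place. The one spot where your mechanism is not the one that actually works is the disposal of the orbits inside $V$: the orbits in $\mathrm{int}(V)\setminus K$ do have large action once $C$ is large (they wind at least $C$ times meridionally), but $K$ itself winds zero times meridionally and its action cannot be pushed above $L$ without changing the contact form with $L$, so your direct limit would run over a sequence of distinct forms and would need continuation maps to compare them. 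In \cite{CGH2} the solid torus is instead handled by an explicit computation of its contribution (their Section 8) together with a direct limit over the degree $\langle\cdot,S\rangle$ rather than over the action -- this is exactly why the hypothesis that $\alpha$ is adapted to $S$ appears in the statement (and why the present paper needs Lemma \ref{Lemma: No need of alpha adapted to S} to remove it later). Also, minor point: the base point $z$ for $U$ must be generic, hence near but not on the Reeb orbit $K$.
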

\begin{Obs} 
 It is important to remark that the empty orbit is always taken into account as a generator of the groups above. This implies that if
 orbit sets with $h_+$ are
 considered, $\partial^{ECH}$ counts also the holomorphic plane that contributes to the third of relations
 \ref{Equation: ECH boundary near the binding}. Later we will give the definition of another
 differential, that we will call $\partial^{ECK}$, which is obtained from $\partial^{ECH}$ by simply deleting that disk. 
\end{Obs}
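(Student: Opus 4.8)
The plan is to establish the two logically linked assertions of the Observation in turn: first that the empty orbit $\emptyset$ survives as an honest generator of the relative complexes appearing in Theorem \ref{Theorem: ECH of Y iso to the relative versions}, and then that, \emph{because} of this, the differential $\partial^{ECH}$ records the holomorphic plane that is asymptotic to $h_+$ at its positive end and to $\emptyset$ at the bottom. The Observation is really a matter of assembling facts already in place, so the ``proof'' is an organized justification rather than a new argument.

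For the first assertion I would argue directly from Definition \ref{Definition: Orbit sets}: the empty product is an orbit set lying in the class $0 \in H_1(Y)$ with $\mathcal{A}(\emptyset)=0$, and it belongs to $\mathcal{O}(X)$ for \emph{every} $X \subseteq Y$. Unwinding the Notation, the groups $ECC^{e_+,h_+}(N,\alpha)$ and $ECC^{h_+}(N,\alpha)$ are the free $\Z/2$-modules on $\mathcal{O}(int(N)\sqcup\{e_+,h_+\})$ and $\mathcal{O}(int(N)\sqcup\{h_+\})$, so $\emptyset$ is a generator of each. I would then note that $\emptyset$ is not one of the local orbits altered by the Morse--Bott perturbation near $T_1$ and $T_2$, and that it is never removed in passing to the relative complexes, so it genuinely appears on both sides of the isomorphisms of Theorem \ref{Theorem: ECH of Y iso to the relative versions}.

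For the second assertion I would isolate, among the curves of relations \ref{Equation: ECH boundary near the binding} pictured in Figure \ref{Figure: Dynamic near binding in W}, the unique one with single positive end $h_+$ and empty negative end: its domain is a once-punctured sphere, and its projection $u_Y$ compactifies to an embedded disk bounded by $h_+$ that closes on a point of $K$ and crosses $K$ transversally once. The content here is that this plane has $ECH$-index $I=1$, so that it is counted by $\partial^{ECH}$; I would quote the index computation $I=c_\tau(D)+Q_\tau(D)+\mu_\tau(h_+)$ from \cite{CGH2} rather than redo it, and invoke Theorem \ref{Theorem: Positivity of intersection in dimension 4} to conclude that it is the \emph{only} connected index-$1$ curve meeting $K$. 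Since $\emptyset$ is a genuine generator, this disk contributes a nonzero entry from $h_+$ to $\emptyset$, which is exactly the $\emptyset$-summand of $\partial(h_+)=e+\emptyset$ and, for a general generator $h_+^b\gamma$, the ``$1$'' in the factor $(1+e)$ of formulas \ref{Equation: The complete ECH differential with all the orbits} and \ref{Equation: The hat ECH differential with all the orbits}. This pins down precisely the term that the forthcoming differential $\partial^{ECK}$ will suppress, namely $\partial^{ECK}(h_+^b\gamma)=h_+^{b-1}e\gamma+h_+^b\partial\gamma$.

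The main point requiring care — more a verification than a true obstacle — is the uniqueness and index-$1$ property of the disk through $K$: it is what guarantees both that $\emptyset$ receives a contribution in the first place and that ``deleting that disk'' is an unambiguous operation yielding a well-defined $\partial^{ECK}$. Both facts are imported from the local analysis near the binding in \cite{CGH2}, so no new transversality or compactness work is needed here.
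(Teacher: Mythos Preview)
Your unpacking is correct, but note that in the paper this statement is an \emph{Observation} in the remark theorem-style and carries no proof at all: it is simply a parenthetical comment reminding the reader that $\emptyset \in \mathcal{O}(X)$ for every $X$, so the plane from $h_+$ to $\emptyset$ appearing in relations~\ref{Equation: ECH boundary near the binding} is counted by $\partial^{ECH}$ on $ECC^{h_+}(N,\alpha)$, and announcing that $\partial^{ECK}$ will later be defined by dropping it. Your expansion---tracing $\emptyset$ through Definition~\ref{Definition: Orbit sets} and the Notation, and citing the index computation and uniqueness of the disk from \cite{CGH2}---is a faithful and correct elaboration of exactly the facts the Observation is pointing to, so there is nothing to correct; you have simply written out what the paper leaves implicit.
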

% make a distinction between the homologies $H_*\left(ECC(N,\alpha) \otimes \langle e_+,h_+ \rangle,\partial^{ECH}\right)$ and 
% $H_*\left(ECC(N \cup T^2 \times [0,1],\alpha),\partial^{ECH}\right) \cong ECH_*(Y \setminus V,\alpha)$. Indeed
% the differential of the second should count by definition only M-B buildings contained in $N \cup T^2 \times [0,1]$. This implies that the holomorphic
% disc from $h_+$ to $\emptyset$ does not contribute to $\partial$ and we should remove the corresponding term in equation
% \ref{Equation: The complete ECH differential with all the orbits}.

\vspace{0.3 cm}

Define now the \emph{relative embedded contact homology groups of $(N, \partial N)$} by
\begin{eqnarray*}
 ECH(N,\partial N,\alpha) & = & \frac{ECH^{e}(int(N),\alpha)}{[e \gamma] \sim [\gamma]}\\
 \widehat{ECH}(N,\partial N,\alpha) & = & \frac{ECH(N,\alpha)}{[e \gamma] \sim [\gamma]}.
\end{eqnarray*}
Since $h_+$ does not belong to the complexes $ECC^{e}(int(N),\alpha)$ and $ECC(N,\alpha)$, the Blocking Lemma implies that the
$ECH$-differentials count only holomorphic curves in $N$. This ``lack'' is balanced by the quotient by the equivalence relation
\begin{equation} \label{Equation: equivalence relation on ECC}
 [e \gamma] \sim [\gamma].
\end{equation}
The reason behind this claim lie in the third of the relations \ref{Equation: ECH boundary near the binding}. Indeed we can prove the following:
\begin{Lemma} \label{Lemma: How kill h+ and get the quotient e gamma sim gamma}
 $$ECH^{e_+,h_+}(N,\alpha) \cong \frac{ECH^{e_+}(N,\alpha)}{[e\gamma] \sim [\gamma]}.$$
\end{Lemma}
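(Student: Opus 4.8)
The plan is to realize the isomorphism via a filtered/acyclic-subcomplex argument, using relation \eqref{Equation: The complete ECH differential with all the orbits} as the only input about how $h_+$ and $e_+$ interact with the rest of the complex. First I would introduce on $ECC^{e_+,h_+}(N,\alpha)$ the filtration by the total multiplicity $b$ of $h_+$: the formula $\partial(e_+^a h_+^b \gamma) = e_+^{a-1} h_+^b h \gamma + e_+^a h_+^{b-1}(1+e)\gamma + e_+^a h_+^b \partial\gamma$ shows that $\partial$ decreases $b$ by at most one, so this is a genuine filtration of chain complexes and there is an associated spectral sequence. Since $h_+$ is positive hyperbolic its multiplicity is only $0$ or $1$, so the filtration has just two levels and the spectral sequence degenerates quickly; concretely, $ECC^{e_+,h_+}(N,\alpha) = ECC^{e_+}(N,\alpha)\ \oplus\ h_+\cdot ECC^{e_+}(N,\alpha)$ as a $\Z/2$-module, and with respect to the column decomposition the differential has the mapping-cone shape
\[
\left(\begin{array}{cc} \partial^{e_+} & 0 \\ (1+e) & \partial^{e_+}\end{array}\right),
\]
where the off-diagonal map is $\gamma \mapsto (1+e)\gamma$, i.e. multiplication by $(1+e)$ on $ECC^{e_+}(N,\alpha)$ (the term $e_+^{a-1}h\gamma$ stays inside $ECC^{e_+}$ and is part of $\partial^{e_+}$). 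Hence $ECC^{e_+,h_+}(N,\alpha)$ is exactly the mapping cone of the chain map $(1+e)\colon ECC^{e_+}(N,\alpha)\to ECC^{e_+}(N,\alpha)$.

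Next I would invoke the standard long exact sequence of a mapping cone: there is an exact triangle relating $ECH^{e_+,h_+}(N,\alpha)$, $ECH^{e_+}(N,\alpha)$, and the map induced by $(1+e)$ on $ECH^{e_+}(N,\alpha)$. The key algebraic point is that multiplication by $(1+e)$ is, on homology, the map whose cokernel and kernel assemble into the quotient $ECH^{e_+}(N,\alpha)/([e\gamma]\sim[\gamma])$. More precisely, the equivalence relation $[e\gamma]\sim[\gamma]$ is by definition the quotient of $ECC^{e_+}(N,\alpha)$ by the image of $(1+e)$; so I would argue that $(1+e)$ is injective on homology (this is where the geometry enters: since $e$ is elliptic, $1+e$ acts without kernel on the chain level up to the differential — I would check that $e$ acts "locally nilpotently" or at least that $1+e$ is a quasi-isomorphism onto its image, using that no holomorphic curve counted by $\partial^{ECH}$ in $N$ can create a relation forcing $(1+e)\gamma$ to be a boundary unless $\gamma$ itself already is, which follows from the Blocking Lemma constraints on curves in $N$). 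Granting injectivity on homology, the mapping cone long exact sequence collapses to the short exact sequence $0 \to ECH^{e_+}(N,\alpha)\xrightarrow{1+e} ECH^{e_+}(N,\alpha)\to ECH^{e_+,h_+}(N,\alpha)\to 0$, which gives precisely $ECH^{e_+,h_+}(N,\alpha)\cong ECH^{e_+}(N,\alpha)/([e\gamma]\sim[\gamma])$.

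The main obstacle I anticipate is justifying that $(1+e)$ is injective on homology — equivalently, that the connecting homomorphism in the mapping-cone sequence vanishes. On the chain level $1+e$ is clearly injective (distinct orbit sets are linearly independent over $\Z/2$), but passing to homology one must rule out the possibility that some cycle $\gamma$ becomes a boundary only after multiplying by $(1+e)$. I would handle this by an action/energy filtration argument: order generators by the action $\mathcal{A}$, note that $\mathcal{A}(e\gamma) > \mathcal{A}(\gamma)$ strictly (since $\mathcal{A}(e)>0$), and combine this with Lemma~\ref{Lemma: Holomorphic curves low the actions of the orbits} to show that the $e$-tower decomposition $ECC^{e_+}(N,\alpha) = \bigoplus_{k\ge 0} e^k\cdot(\text{orbit sets with no }e)$ interacts with $\partial^{ECH}$ in a way making $1+e = 1\cdot(\text{id}) + e\cdot(\text{strictly action-increasing})$, hence invertible as a "formal power series" operator on each action-bounded piece; this forces injectivity (indeed bijectivity would fail only because the ambient module is not action-complete, but injectivity survives). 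The remaining steps — verifying the mapping-cone identification and the bookkeeping of the column differential — are routine given relations \eqref{Equation: The complete ECH differential with all the orbits} and \eqref{Equation: ECH boundary near the binding}, so I would keep those brief.
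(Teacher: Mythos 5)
Your overall strategy coincides with the paper's: the paper runs the long exact sequence of the pair $\left(ECC^{e_+}(N,\alpha),\,ECC^{e_+,h_+}(N,\alpha)\right)$, which is precisely your mapping-cone triangle for $1+e$, identifies the connecting map as $d([h_+\gamma])=[(1+e)\gamma]$, observes that $\mathrm{coker}(d)$ is the desired quotient, and concludes from $\ker(d)=\{0\}$. So the reduction to injectivity of $1+e$ on $ECH^{e_+}(N,\alpha)$ is common to both arguments (the paper in fact asserts $\ker(d)=\{0\}$ without elaboration), and you are right to single this out as the crux.

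The gap is in your proposed justification of that step. The action only gives a filtration that $\partial^{ECH}$ strictly decreases on nontrivial curves, and no argument using the action alone can rule out $(1+e)$-torsion in homology. Consider the model complex over $\Z/2[e]$ with two towers $e^k x$, $e^k y$, actions satisfying $\mathcal{A}(x)>\mathcal{A}(ey)>\mathcal{A}(y)$, and $\partial x=y+ey$: every action constraint is met, yet $[y]\neq 0$ while $(1+e)[y]=[\partial x]=0$, so $1+e$ fails to be injective on homology. Your ``$e$-tower'' decomposition does not repair this, because $\partial$ need not preserve the power of $e$ (curves in $N$ can acquire negative ends on $e$ by the Trapping Lemma). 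What actually excludes such behaviour is not the action but the Alexander degree $\langle\cdot\,,S\rangle$, equivalently the class in $H_1(Y\setminus K)\cong H_1(Y)\times\Z$: by Proposition \ref{Proposition: Holomorphic curves preserve the filtration in general} and Observation \ref{Observation: The only lowing-degree curve is the disk from h_+ in the hat case}, the only curve counted by $\partial^{ECH}$ that drops this degree is the plane from $h_+$ to $\emptyset$, which cannot appear in $ECC^{e_+}(N,\alpha)$; hence that chain complex splits as a direct sum over the degree $d$, while multiplication by $e$ (a meridian) raises $d$ by exactly one. For $0\neq[\gamma]=\sum_d[\gamma_d]$ the lowest-degree component of $(1+e)[\gamma]$ equals the lowest nonzero $[\gamma_{d_0}]$, so $1+e$ is injective on homology and $\ker(d)=\{0\}$. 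With this grading argument substituted for the action argument, your proof closes; as written, the injectivity step does not follow.
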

\proof
Using the fact that $h_+$ can have multiplicity at most $1$, it is not difficult to see that the long
exact homology sequence associated to the pair
$$\left(ECC^{e_+}(N,\alpha),ECC^{e_+,h_+}(N,\alpha)\right)$$
is
\begin{equation*}
 \begin{array}{c}
  \ldots \longrightarrow  ECH^{e_+}(N,\alpha) \stackrel{i_*}{\longrightarrow } ECH^{e_+,h_+}(N,\alpha) \stackrel{\pi_*}{\longrightarrow } \\
   \stackrel{\pi_*}{\longrightarrow } H(h_+ECC^{e_+}(N,\alpha),\partial) \stackrel{d}{\longrightarrow }ECH^{e_+}(N,\alpha) \stackrel{i_*}{\longrightarrow } \ldots
 \end{array}
\end{equation*}
where:
\begin{itemize}
 \item $i : ECC^{e_+}(N,\alpha) \hookrightarrow ECC^{e_+,h_+}(N,\alpha)$ is the inclusion map;
 \item $h_+ECC^{e_+}(N)$ is the module generated by orbit sets of the form $h_+ \gamma$ with $\gamma \in \mathcal{O}(N \sqcup e_+)$;
 \item $\pi : ECC^{e_+,h_+}(N,\alpha) \twoheadrightarrow h_+ECC^{e_+}(N)$ is the quotient map sending to $0$ all generators having
  no contributions of $h_+$;
 \item $d$ is the standard connecting morphism, that in this case is defined by
 $$d([h_+ \gamma]) = [\gamma + e\gamma].$$
\end{itemize}

We can then extract the short exact sequence 
$$0 \longrightarrow \mathrm{coker}(d) \stackrel{i_*}{\longrightarrow} ECH^{e_+,h_+}(N,\alpha) \stackrel{\pi_*}{\longrightarrow} \ker(d) \longrightarrow 0$$
where
$$\mathrm{coker}(d) = \frac{ECH^{e_+}(N,\alpha)}{[e\gamma] \sim [\gamma]}.$$
Since $\ker(d)=\{0\}$, the map $i_*$ is an isomorphism.

%On the other hand the Blocking Lemma (\ref{Lemma: Blocking lemma}) implies that no M-B buildings having positive end entirely contained in $N$ 
%can have $\emptyset$ at the negative end. Then, given $\gamma \in \mathcal{O}_{\leq 2g}(N)$, there can not exist an element $x \in ECC_{\leq 2g}(N)$
%such that $\partial(x) = \emptyset\gamma + e\gamma = \gamma + e\gamma$. It follows that the class $[\gamma + e\gamma]$ is never $0$ in $ECH_{\leq 2g}(N)$: then
%$\ker(d)=0$ and the $i_*$ in the short exact sequence is an isomorphism. 
\endproof

Similarly, the fourth line of Equation \ref{Equation: ECH boundary near the binding} ``explains'' why we can
avoid considering $h$ in the full $ECH(Y,\alpha)$. In fact with similar arguments of the proof of last lemma, one can prove:
\begin{Lemma}[\cite{CGH2}, Section 9]
 $ECH^{e_+}(N,\alpha) \cong ECH^{e}(int(N),\alpha)$.
\end{Lemma}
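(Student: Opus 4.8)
\proof[Plan of proof.]
The plan is to mirror the proof of Lemma~\ref{Lemma: How kill h+ and get the quotient e gamma sim gamma}, exploiting this time the relation $\partial(e_+)=h$ of \ref{Equation: ECH boundary near the binding} to cancel $e_+$ against $h$. Throughout set $X:=ECC^{e_+}(N,\alpha)$. By the Blocking Lemma~\ref{Lemma: Blocking lemma} and \ref{Equation: The complete ECH differential with all the orbits} (specialised to the case without $h_+$), $X$ is a genuine subcomplex of $ECC(Y,\alpha)$, with $\partial(e_+^{a}\gamma)=e_+^{a-1}h\gamma+e_+^{a}\partial\gamma$ for $\gamma\in\mathcal{O}(N)$ and $\partial\gamma\in\mathcal{O}(N)$; I would write its generators as $e_+^{a}e^{c}h^{d}\delta$ with $\delta\in\mathcal{O}(int(N))$, $a,c\ge 0$, $d\in\{0,1\}$ (the orbit $e$ on the perturbed torus $T_1$ being elliptic).

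First I would produce a short exact sequence of complexes. Let $A\subseteq X$ be the submodule spanned by the generators $e_+^{a}e^{c}h^{d}\delta$ with $a\ge 1$ or $d=1$. Using $\partial e=\partial h=0$, $h^{2}=0$ and $\partial\gamma\in\mathcal{O}(N)$, one checks that $A$ is a subcomplex and that the quotient $X/A$, spanned by the $e^{c}\delta$, is exactly $ECC^{e}(int(N),\alpha)$ with its (restricted) differential — the latter being $\partial^{ECH}(e^{c}\delta)$ with the $h$-containing outputs deleted, which is square-zero by the computations of \cite[\S9]{CGH2}. Hence there is a short exact sequence
\[
 0\longrightarrow A\longrightarrow ECC^{e_+}(N,\alpha)\longrightarrow ECC^{e}(int(N),\alpha)\longrightarrow 0,
\]
and it is enough to show that $A$ is acyclic; the associated long exact sequence then gives $ECH^{e_+}(N,\alpha)\cong ECH^{e}(int(N),\alpha)$.

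To prove $H_*(A)=0$, I would filter $A$ by the modified action $\mathcal{A}':=\mathcal{A}+\bigl(\mathcal{A}(h)-\mathcal{A}(e_+)\bigr)\cdot(\text{multiplicity of }e_+)$. Since the curve realising $\partial(e_+)=h$ is not a trivial cylinder, $\mathcal{A}(e_+)>\mathcal{A}(h)$ by Lemma~\ref{Lemma: Holomorphic curves low the actions of the orbits}, so $\mathcal{A}'$ is bounded below on the generators of $A$ and, in a fixed homology class, takes only finitely many values below any given bound; thus the filtration is exhaustive, bounded below and has finite-dimensional subquotients. Every term of $\partial$ other than the $e_+^{a-1}h\gamma$ term strictly lowers $\mathcal{A}$ without changing the multiplicity of $e_+$, hence strictly lowers $\mathcal{A}'$, whereas the term $e_+^{a}e^{c}\delta\mapsto e_+^{a-1}he^{c}\delta$ preserves $\mathcal{A}'$. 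So the associated graded differential is exactly $e_+^{a}e^{c}\delta\mapsto e_+^{a-1}he^{c}\delta$, which is an isomorphism from the span of the generators of $A$ with $d=0$ onto the span of those with $d=1$; therefore $\mathrm{gr}\,A$ is acyclic, and the spectral sequence forces $H_*(A)=0$.

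The step I expect to be the genuine obstacle is the identification, in the second paragraph, of the differential induced on $X/A$ with that of $ECC^{e}(int(N),\alpha)$ — equivalently, that deleting the $h$-containing outputs of $\partial^{ECH}$ on orbit sets of $int(N)\sqcup\{e\}$ yields a differential at all. This is not formal: it is precisely the content of the curve analysis of \cite[\S8--9]{CGH2} that the statement cites. Everything else is the same ``cancellation against $\partial(e_+)=h$'' bookkeeping as in Lemma~\ref{Lemma: How kill h+ and get the quotient e gamma sim gamma}, the only new feature being the unbounded multiplicity of $e_+$, which is handled by the modified action filtration above.
\endproof
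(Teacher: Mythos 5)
Your argument is correct and is essentially the cancellation the paper has in mind: the paper gives no proof of this lemma, deferring to \cite[Section 9]{CGH2} and to ``similar arguments'' to the preceding lemma, and your short exact sequence $0\to A\to ECC^{e_+}(N,\alpha)\to ECC^{e}(int(N),\alpha)\to 0$ with $A$ killed by pairing $e_+^{a}$ against $e_+^{a-1}h$ via $\partial e_+=h$ is a faithful realization of that. The one genuinely new ingredient you add is the modified action filtration, which is exactly what is needed to replace the multiplicity-$\le 1$ bookkeeping used for $h_+$ in Lemma \ref{Lemma: How kill h+ and get the quotient e gamma sim gamma} by an argument that works for the unbounded multiplicity of $e_+$; and you correctly isolate the only non-formal input (that $A$ is a subcomplex and that the projected differential on $ECC^{e}(int(N),\alpha)$ squares to zero) as coming from the curve analysis of \cite[Sections 8--9]{CGH2}, which is precisely what the paper itself relies on.
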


Observe that since $\partial(e\gamma) = e\partial(\gamma)$, the differential is compatible with the equivalence relation. So, instead
of take the quotient by $[e \gamma] \sim [\gamma]$ of the homology, we could take the homology of the quotient of the chain groups
under the relation $e\gamma \sim \gamma$, and we would obtain the same homology groups. We will use this fact later. Note moreover
that for every $k$, $[e^k] = [\emptyset]$.

Equations \ref{Equation: ECH in terms of the relative version} and \ref{Equation: ECH widehat in terms of the relative version} follow then
from last two lemmas and Theorem \ref{Theorem: ECH of Y iso to the relative versions}.

\subsubsection{$ECH$ and $\widehat{ECH}$ from open books}
\label{Subsubsection: ECH from open books}

%\section{$ECH$ and $\widehat{ECH}$ from open books} \label{Section: ECH from open books}
An important example of the situation depicted above is when $K$ is the binding of an open book decomposition $(K,S,\phi)$ of a closed 
three manifold $Y$, and $N$ is the associated mapping torus considered in Subsection \ref{Subsection: Open Books}. Using the same notations,
define the \emph{extended pages} of $(S,\phi)$ to be the surfaces
$$S' \times \{t\} := (S \times \{t\}) \sqcup_{\partial S \times \{t\}} (S^1 \times \{t\} \times [1,3)),\ t \in \frac{[0,2]}{0\sim 2}.$$
Let $\alpha$ be a contact form on $Y$ compatible with $(K,S,\phi)$. In particular $\alpha$ is adapted to both $K$ and any page of $(K,S,\phi)$.
\begin{Def} \label{Definition: Degree of multi orbits}
 If $\gamma$ is a Reeb orbit in $Y \setminus K$, define the \emph{degree} of $\gamma$ by 
 $$\Deg(\gamma) = \langle \gamma, S' \times \{0\}\rangle$$
 %where $lk(\cdot,\cdot)$ denotes the linking number. 
 If $\gamma=\prod_i\gamma_i^{k_i}$ is some orbit set, we define $\Deg(\gamma)= \sum_i k_i \Deg(\gamma_i)$. If $X \subset (Y \setminus K)$,
 we indicate by $\mathcal{O}_{i}(X)$ (resp. $\mathcal{O}_{\leq i}(X)$) the set of multiorbits in $X$ with degree equal (resp. less or equal) to $i$. 
\end{Def}
Note that $\Deg(\gamma)$ depends only on the homology class of $\gamma$ in $Y \setminus K$. In this context the relative embedded contact
homology groups can also be defined in terms of limits as follows.

Define $ECC_j^{e}(int(N),\alpha)$ to be the free $\mathbb{Z}_2$-module generated by orbit sets in $\mathcal{O}_j(int(N) \cup \{e\})$. Similarly
let $ECC_j(N,\alpha)$ be generated by orbit sets in $\mathcal{O}_j(N)$. Define the inclusions
\begin{eqnarray*}
 \mathcal{I}_j^{e} :& ECC_j^{e}(int(N),\alpha) \rightarrow ECC_{j+1}^{e}(int(N),\alpha)\\
 \mathcal{I}_j :& ECC_j(N,\alpha) \rightarrow ECC_{j+1}(N,\alpha)
\end{eqnarray*}
given by the map $\gamma \mapsto e\gamma.$ Each of these chain groups can be endowed with (the restriction of) the $ECH$-differential, which
counts M-B buildings in $N$.
Let $ECH_j^{e}(N,\alpha)$
and $ECH_j(N,\alpha)$ be the associated homology groups. Then the relative embedded contact homology groups above can be defined also by
\begin{eqnarray*}
 ECH(N,\partial N,\alpha) & = & \lim_{j \rightarrow \infty} ECH_j^{e}(int(N),\alpha);\\
 \widehat{ECH}(N,\partial N,\alpha) & = & \lim_{j \rightarrow \infty} ECH_j(N,\alpha).
\end{eqnarray*}

\begin{Obs}
 If $ECC_{\leq k}(N,\alpha) := \bigoplus_{j=0}^{k} ECC_j(N,\alpha)$, let $ECH_{\leq k}(N,\alpha)$ be the homology of $ECC_{\leq k}(N,\alpha)$
 with the $ECH$-boundary map. The ``stabilization'' Theorem 1.0.2 of \cite{CGH4} implies that for the definition of $\widehat{ECH}(N,\partial N,\alpha)$ it is sufficient
 to take into account just orbit sets in $\mathcal{O}_{\leq 2g}(N)$. Then:
\begin{equation}
 \widehat{ECH}(N,\partial N,\alpha) \cong \frac{ECH_{\leq 2g}(N,\alpha)}{[e\gamma] \sim [\gamma]}.
 \label{Equation: Equivalence ECH N,partial N,alpha and ECH_2g N,partial N}
\end{equation}
 
\end{Obs}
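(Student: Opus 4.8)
The plan is to identify both sides of the claimed isomorphism with the same direct limit of the groups $ECH_j(N,\alpha)$ and to import from \cite{CGH4} the single non-formal ingredient, namely that this directed system has already stabilized by level $2g$. First I would record the bookkeeping. The restriction of $\partial^{ECH}$ to $ECC(N,\alpha)$ counts only M-B buildings contained in $N$ (Blocking Lemma), and such a building joins orbit sets that are homologous in $N$; since the degree of an orbit set is determined by its homology class (in $Y\setminus K$, a fortiori in $N$), $\partial^{ECH}$ preserves $\Deg$. Hence $ECC_{\leq 2g}(N,\alpha)=\bigoplus_{j=0}^{2g}ECC_j(N,\alpha)$ is a direct sum of subcomplexes, so $ECH_{\leq 2g}(N,\alpha)=\bigoplus_{j=0}^{2g}ECH_j(N,\alpha)$; and the maps $\mathcal{I}_j$, which are multiplication by $e$, raise $\Deg$ by $\Deg(e)=1$, commute with $\partial^{ECH}$ since $\partial^{ECH}(e\gamma)=e\,\partial^{ECH}(\gamma)$, and are precisely the structure maps of the directed system whose colimit is $\widehat{ECH}(N,\partial N,\alpha)=\varinjlim_j ECH_j(N,\alpha)$.

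Next comes the purely homological reduction. By construction the quotient complex $ECC_{\leq 2g}(N,\alpha)/(e\gamma\sim\gamma)$ is the colimit of the finite diagram $ECC_0\xrightarrow{\,\cdot e\,}\cdots\xrightarrow{\,\cdot e\,}ECC_{2g}$; the colimit of a finite chain is its last term, and since direct limits of abelian groups are exact they commute with passing to homology, whence $ECH_{\leq 2g}(N,\alpha)/(e\gamma\sim\gamma)\cong ECH_{2g}(N,\alpha)$. (Concretely, $[\gamma]\mapsto e^{\,2g-\Deg(\gamma)}\gamma$ is a chain isomorphism of $ECC_{\leq 2g}(N,\alpha)/(e\gamma\sim\gamma)$ onto $ECC_{2g}(N,\alpha)$, once more using that $\partial^{ECH}$ preserves $\Deg$.) So the theorem comes down to the assertion that $\varinjlim_j ECH_j(N,\alpha)\cong ECH_{2g}(N,\alpha)$ — equivalently, that $\mathcal{I}_j\colon ECH_j(N,\alpha)\to ECH_{j+1}(N,\alpha)$ is an isomorphism for every $j\geq 2g$.

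That last assertion is exactly what I would quote from the ``stabilization'' Theorem 1.0.2 of \cite{CGH4}: it compares the sutured $ECH$ of $(N,\alpha)$ read off at successive winding levels and shows that the comparison maps, which in the present normalization are realized by multiplication by $e$, become isomorphisms once the level exceeds the genus bound $2g$ coming from the page $S$. Granting this, $\varinjlim_j ECH_j(N,\alpha)\cong ECH_{2g}(N,\alpha)\cong ECH_{\leq 2g}(N,\alpha)/(e\gamma\sim\gamma)$, and since both colimits are formed along the same maps $\mathcal{I}_j$ the resulting isomorphism is the tautological one. I expect the main obstacle to be precisely this translation step: one has to check that the page-count filtration used in \cite{CGH4} corresponds to the $\Deg$-filtration here and that the stabilization maps there are indeed realized by multiplication by $e$, after which only routine manipulation of exact directed colimits remains.
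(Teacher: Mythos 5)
Your argument is correct and follows the same route the paper intends: the Observation is justified there solely by citing the stabilization Theorem 1.0.2 of \cite{CGH4}, and your proof simply makes explicit the two routine ingredients — that the quotient of the finite telescope $ECC_0\xrightarrow{\cdot e}\cdots\xrightarrow{\cdot e}ECC_{2g}$ by $e\gamma\sim\gamma$ is $ECC_{2g}$ (using that $\partial^{ECH}$ preserves $\Deg$ on $N$ and commutes with multiplication by $e$), and that the stabilization theorem makes the induced maps $\mathcal{I}_j$ isomorphisms for $j\geq 2g$, so the direct limit defining $\widehat{ECH}(N,\partial N,\alpha)$ equals $ECH_{2g}(N,\alpha)$. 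The translation step you flag (that the filtration and comparison maps of \cite{CGH4} are the $\Deg$-filtration and multiplication by $e$ used here) is exactly what the paper also takes for granted, so there is no gap relative to the paper's own level of detail.
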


\subsection{$\widehat{ECH}$ for knots} \label{Subsection: widehat ECH for knots}

Let $K$ be a homologically trivial knot in a contact three-manifold $(Y,\alpha)$. In this subsection we recall the definition 
of a hat version of contact homology for the triple $(K,Y,\alpha)$. This was first defined in \cite[Section 7]{CGHH} as a particular
case of \emph{sutured} contact homology. On the other hand, following \cite[Section 10]{CGH2}, it is possible to proceed without dealing
directly with sutures: we follow here this approach.

Let $S$ be a Seifert surface for $K$. By standard arguments in homology, it is easy to compute that 
\begin{equation} \label{Equation: Isomorphism of H_1 Y minus K to H_1 Y times Z}
 \begin{array}{ccc}
   H_1(Y \setminus K) & \longrightarrow & H_1(Y) \times \mathbb{Z} \\
      \ [a]     &   \longmapsto    &  (i_*[a]\ ,\langle a , S \rangle)
 \end{array}
\end{equation}
is an isomorphism. Here $i : Y \setminus K \rightarrow Y$ is the inclusion and $\langle a , S \rangle$ denotes the intersection number
between $a$ and $S$: this is a homological invariant of the pair $(a,S)$ and is well defined up to a slight perturbation of $S$ (to make it transverse
to $a$). Note that a preferred
generator of $\Z$ is given by the homology class of a meridian for $K$, positively oriented with respect to the orientations of $S$ and $Y$.  

\begin{Ex} If $Y$ is a homology three-sphere, the number $\langle a , S \rangle$ depends only on $a$ and $K$. This is the
\emph{linking number between $a$ and $K$} and it is usually denoted by $lk(a,K)$. 
\end{Ex}

If $\gamma =\prod_i \gamma_i^{k_i}$ is a finite formal product of closed curves in $Y \setminus K$, then
$\langle \gamma , S \rangle = \sum_i k_i \langle \gamma_i , S \rangle$. 

\begin{Ex} If $(K,S,\phi)$ is an open book decomposition of $Y$ and $\alpha$ is an adapted contact form, then
 $\langle \gamma, S \rangle = \Deg(\gamma)$ for any orbit set $\gamma \in \mathcal{O}(Y \setminus K)$, where $\Deg$ is given in Definition
 \ref{Definition: Degree of multi orbits}.
\end{Ex}

\begin{Prop}[See Proposition 7.1 in \cite{CGHH}] \label{Proposition: Holomorphic curves preserve the filtration in general}
 Suppose that $K$ is an orbit of $R_{\alpha}$ and let $S$ be any Seifert surface for $K$. If
 $\gamma$ and $\delta$ are two orbit sets in $Y \setminus K$ and $u : (F,j) \rightarrow (\R \times Y, J)$ is a holomorphic curve from $\gamma$ to $\delta$, then
 $$\langle \gamma , S \rangle \geq \langle \delta , S \rangle.$$ 
\end{Prop}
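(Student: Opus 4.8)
The plan is to exploit the fact that the Seifert surface $S$ for $K$ can be capped off against a holomorphic curve to produce a closed (relative) $2$-cycle, and then read off the inequality from an intersection-positivity count. First I would observe that $\langle\gamma,S\rangle$ is by definition (and by formula \eqref{Equation: Isomorphism of H_1 Y minus K to H_1 Y times Z}) nothing but the $\Z$-component of the homology class $[\gamma]\in H_1(Y\setminus K)$, so it depends only on that homology class; the same is true for $\delta$. Hence it suffices to understand how the relative homology class $[\Imm(u)]\in H_2(\R\times Y;\gamma,\delta)$ interacts with $\R\times S$. The key point is that, since $K$ is itself a Reeb orbit and $\gamma,\delta$ are orbit sets disjoint from $K$, the curve $u$ has no ends over $K$, so the algebraic intersection number $\langle \Imm(u),\R\times S\rangle$ is well defined once we push $\partial S$ slightly off $K$ (equivalently, take a pushed-off copy of $S$ in the complement).

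The main step is then: the boundary of $u$ is $\gamma-\delta$, and so in $\R\times Y$ we have the ``balancing'' identity
\begin{equation*}
 \langle \Imm(u),\ \R\times S\rangle \;=\; \langle \gamma,S\rangle - \langle \delta,S\rangle,
\end{equation*}
coming from the fact that $\Imm(u)$ is asymptotic at $+\infty$ to the cylinders over $\gamma$ and at $-\infty$ to the cylinders over $\delta$, each of which meets $\R\times S$ in exactly $\langle\gamma_i,S\rangle$ (resp.\ $\langle\delta_i,S\rangle$) points counted with multiplicity and sign. Here I would be a little careful about the asymptotics: one chooses $S$ (or rather its completion inside $N$, using the extended pages / collar coordinates of Subsection~\ref{Subsection: Open Books}) so that near $K$ the surface is a union of meridian-normal slices, and the Reeb flow is positively transverse to $\mathrm{int}(S)$; transversality of $u_Y$ to $R_\alpha$ away from isolated points (Theorem~\ref{Theorem: A holomorphic has only isolated critical points}) guarantees the intersection $\Imm(u)\cap(\R\times S)$ is finite and that all intersections are genuine (not tangential along a curve).

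Finally, I would invoke positivity of intersection in dimension four (Theorem~\ref{Theorem: Positivity of intersection in dimension 4}): $\R\times S$ is (an open piece of) a holomorphic-like surface—more precisely, since $R_\alpha$ is positively transverse to $S$ and $J$ is adapted to $\alpha$ (so $J\partial_s=R_\alpha$), the surface $\R\times S$ can be perturbed to, or directly compared with, a $J$-holomorphic surface, and in any case each intersection point of $\Imm(u)$ with $\R\times S$ counts with a strictly positive local multiplicity. Therefore $\langle \Imm(u),\R\times S\rangle\ge 0$, and combining with the balancing identity gives $\langle\gamma,S\rangle\ge\langle\delta,S\rangle$. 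The hard part is the intersection-positivity input: one must argue that $\R\times S$, which is not literally $J$-holomorphic (only its "binding-normal" behaviour is controlled by the Reeb transversality), still forces nonnegative local intersection numbers with the genuinely $J$-holomorphic $u$; this is exactly the technical content of Proposition~7.1 of \cite{CGHH}, and it rests on the local model near $S$ where positivity of intersections applies after a $C^0$-small perturbation making $S$ holomorphic, together with the fact that $u$ has no ends on $K=\partial S$ so no boundary intersections escape to infinity.
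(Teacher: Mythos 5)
Your overall strategy (cap off against a $2$-cycle, then invoke positivity of intersection) is the right one, but the central step is applied to the wrong object and does not parse dimensionally. The set $\R\times S$ is a \emph{three}-dimensional hypersurface of the four-manifold $\R\times Y$, so its intersection with the two-dimensional $\Imm(u)$ is generically one-dimensional: there is no algebraic intersection number $\langle \Imm(u),\R\times S\rangle$, and your ``balancing identity'' cannot be stated as written (note that a cylinder $\R\times\gamma_i$ meets $\R\times S$ in $\langle\gamma_i,S\rangle$ \emph{lines}, not points). The correct identity, which is what the paper proves, is that the difference of degrees is measured by intersections with the cylinder over the \emph{binding}, not over the Seifert surface: compactifying $u$ to $\widehat u\subset[-1,1]\times Y$ and pairing with the null-homologous closed cycle $L=(\{-1\}\times S)\cup([-1,1]\times K)\cup(\{1\}\times -S)$ gives
$$\langle\gamma,S\rangle-\langle\delta,S\rangle=\langle\widehat u,[-1,1]\times K\rangle .$$
Positivity of intersection is then applied to the pair $u$ and $\R\times K$, which is a genuine $J$-holomorphic cylinder because $K$ is a Reeb orbit and $J$ is adapted; since $u$ has no ends on $K$, the intersection is a finite set of interior points, each counting positively, whence the right-hand side is $\ge 0$.

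By contrast, your attempt to feed $\R\times S$ (or a slice $\{s\}\times S$) into positivity of intersections cannot be repaired: $\{s\}\times S$ is never $J$-holomorphic for an adapted $J$ (which sends $\partial_s$ to $R_\alpha$), and there is no $C^0$-small perturbation making a Seifert surface holomorphic; the ``technical content of Proposition 7.1 of \cite{CGHH}'' is not a positivity statement for $\R\times S$ but exactly the intersection count with $\R\times K$ above. A telling sanity check: the proposition holds for \emph{any} Seifert surface $S$, with no hypothesis that $R_\alpha$ is transverse to $\mathrm{int}(S)$, so any argument leaning on that transversality (as yours does) is already using more than is available. The ingredients you correctly identified --- that $u$ has no ends on $K$, and that the inequality ultimately comes from positivity of intersections --- are the right ones; they just need to be deployed against $\R\times K$ rather than $\R\times S$.
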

\proof
Let $\widehat{u}$ be the compactification of $u$ in $[-1,1] \times Y$. Since $u$ has no limits in $K$, then
\begin{equation} 
 \langle \widehat{u} , [-1,1] \times K  \rangle = \langle u , \mathbb{R} \times K  \rangle \geq 0,
\end{equation}
where the inequality follows by the positivity of intersection in dimension 4 (since $K$ is a Reeb orbit, $\mathbb{R} \times K$ is holomorphic).
Consider the two surfaces
$$L_{-1} = \{-1\} \times S\ \mbox{ and }\ L_{1} = \{1\} \times -S$$
and define the closed surface
\begin{equation} \nonumber
 \begin{array}{ccc}
  L  & = & L_{-1} \cup ([-1,1] \times K) \cup L_{1}
 \end{array}
\end{equation}
where the first gluing is made along ${\{-1\} \times K }$ and the second along ${\{1\} \times -K }$. 
Since $0 = [L] \in H_2([-1,1] \times Y)$, then:
\begin{equation*}
 \begin{array}{cclc}
  0 & = & \langle \widehat{u} , L \rangle & = \\
    & = & \langle \widehat{u} , L_{-1} \rangle + \langle \widehat{u}, [-1,1] \times K \rangle + \langle \widehat{u}, L_{1} \rangle & = \\
    & = & \langle \delta , S \rangle + \langle \widehat{u}, [-1,1] \times K \rangle - \langle \gamma , S \rangle. & 
 \end{array}
\end{equation*}
The result then follows by observing that the last equation implies that
\begin{equation} \label{Equation: Difference of linking numbers equals intersections of K with u}
 \langle \gamma , S \rangle - \langle \delta , S \rangle = \langle \widehat{u}, [-1,1] \times K \rangle \geq 0.
\end{equation}
\endproof

Suppose that $\alpha$ is adapted to $K$ in the sense of Subsection \ref{Subsection: ECH for manifolds with boundary}. A choice of
(a homology class for) the Seifert surface $S$ for the orbit $K$ defines a \emph{knot filtration} on the chain complex
$(ECC^{h_+}(N,\alpha),\partial^{ECH})$ for $\widehat{ECH}(Y,\alpha)$, where, recall, 
%Suppose that $\alpha$ is adapted to $K$ in the sense of Section \ref{Section: ECH for manifolds with boundary}, it is possible to
%define this filtration as follows.
%Consider the chain complex $(ECC^{h_+}(N,\alpha),\partial^{ECH})$, whose homology is $ECH^{h_+}(N,\alpha) \cong \widehat{ECH}(Y,\alpha)$.
%Here, recall,
$N$ is the complement of a neighborhood $\mathcal{N}(K)$ of $K$ in which the only ``interesting'' orbits and holomorphic curves
are the ones represented in Figure \ref{Figure: Dynamic near binding in W}.

Let $ECC^{h_+}_{d}(N,\alpha)$ be the free sub-module of $ECC^{h_+}(N,\alpha)$ generated
by orbit sets $\gamma$ in $\mathcal{O}(N \sqcup \{h_+\})$ such that $\langle \gamma, S \rangle = d$. Define moreover
$$ECC^{h_+}_{\leq d}(N,\alpha) := \bigoplus_{j \leq d} ECC^{h_+}_{j}(N,\alpha).$$
\begin{Obs}
 The direct sum above is not in general finite. On the other hand if $\alpha$ is adapted to $S$ then $\langle \gamma, S \rangle \geq 0$
 for any $\gamma$ and the sum is finite for any $d$.
\end{Obs}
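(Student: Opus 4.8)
The plan is to treat the Observation as two separate assertions, the quantitative one ($\langle\gamma,S\rangle\ge 0$ whenever $\alpha$ is adapted to $S$) being the one that drives everything. First I would reduce it to simple orbits: writing $\gamma=\prod_i\gamma_i^{k_i}$ with $k_i\ge 0$, one has $\langle\gamma,S\rangle=\sum_i k_i\langle\gamma_i,S\rangle$, so it suffices to prove $\langle\gamma_i,S\rangle\ge 0$ for each simple Reeb orbit $\gamma_i$; since every generator of $ECC^{h_+}(N,\alpha)$ is an orbit set in $\mathcal{O}(N\sqcup\{h_+\})$ and all the simple orbits in $N\sqcup\{h_+\}$ lie in $Y\setminus K$, this covers that case as well. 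Now unpack the hypothesis: $\alpha$ adapted to $S$ means exactly that $R=R_\alpha$ is positively transverse to $\mathrm{int}(S)$. A simple Reeb orbit $\gamma_i\subset Y\setminus K=Y\setminus\partial S$ is an embedded closed integral curve of $R$ disjoint from $\partial S$; being everywhere tangent to $R$, it meets $S$ transversally and only along $\mathrm{int}(S)$, and by compactness $\gamma_i\cap S$ is a finite set of points. Positive transversality of $R$ to $\mathrm{int}(S)$ forces each of these points to contribute $+1$ to the algebraic intersection number, so $\langle\gamma_i,S\rangle=\#(\gamma_i\cap S)\ge 0$.

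Granting this, the rest is formal bookkeeping. Every generator $\gamma$ of $ECC^{h_+}_{j}(N,\alpha)$ satisfies $j=\langle\gamma,S\rangle\ge 0$, so $ECC^{h_+}_{j}(N,\alpha)=0$ for $j<0$ and hence
\[
 ECC^{h_+}_{\le d}(N,\alpha)=\bigoplus_{j\le d}ECC^{h_+}_{j}(N,\alpha)=\bigoplus_{j=0}^{d}ECC^{h_+}_{j}(N,\alpha),
\]
a direct sum with only $d+1$ nonzero summands, which is what ``the sum is finite for any $d$'' means (if one prefers to read finiteness at the module level, one additionally invokes that in the open-book model every orbit meeting $N$ crosses each page, so $\langle\gamma_i,S\rangle\ge 1$ and any generator of degree $\le d$ has total multiplicity $\le d$). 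For the converse half, I would simply point out that the index set $\{j\in\Z:j\le d\}$ is infinite and that, without adaptedness to $S$, there is no mechanism forcing cofinitely many summands to vanish: one can take a contact form adapted to $K$ but not to $S$ and carrying an elliptic orbit $\gamma_0$ with $\langle\gamma_0,S\rangle<0$, and then the generators $\gamma_0^{k}$, $k\ge 1$ (elliptic orbits are allowed arbitrary multiplicity), lie in infinitely many distinct summands $ECC^{h_+}_{j}(N,\alpha)$ with $j\le d$, so the direct sum is genuinely infinite in general.

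There is essentially no hard step: the whole content is the unwinding of the definition of ``$\alpha$ adapted to $S$''. The one place deserving a moment's care is the identification of the homological intersection number $\langle\gamma_i,S\rangle$ with the signed geometric count $\#(\gamma_i\cap S)$, together with the claim that positive transversality makes every local sign $+1$: the point is that $\gamma_i$, being disjoint from $\partial S=K$, already meets $S$ transversally in its interior, so the intersection number can be read directly off the orbit and the surface as they stand, with no perturbation of $S$ — which might spoil transversality of $R$ — needed.
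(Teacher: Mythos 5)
Your argument is correct and is exactly the intended justification: the paper states this Observation without proof, and the content is precisely the unwinding you give --- positive transversality of $R_\alpha$ to $\mathrm{int}(S)$ forces every simple orbit in $Y\setminus K$ to meet $S$ with only positive signs, hence $\langle\gamma,S\rangle\ge 0$ for all orbit sets and only the summands with $0\le j\le d$ survive. Your care about not needing to perturb $S$ (since the orbits are already transverse to $\mathrm{int}(S)$ and disjoint from $\partial S=K$) is the right point to flag.
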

Even if $\alpha$ is not adapted to $S$, the intersection number induces an exhaustive filtration
$$\ldots \subseteq ECC^{h_+}_{\leq d-1}(N,\alpha) \subseteq ECC^{h_+}_{\leq d}(N,\alpha) \subseteq  ECC^{h_+}_{\leq d+1}(N,\alpha) \subseteq \ldots$$
on $ECC^{h_+}(N,\alpha)$.
\begin{Def}
 The filtration above is the \emph{knot filtration} induced by $K$. If $\gamma$ is a generator of $ECC^{h_+}_{d}(N,\alpha)$, the integer $d$ is
 the \emph{filtration degree} of $\gamma$.
\end{Def}
\begin{Cor} The differential $\partial^{ECH}$ of $ECC^{h_+}(N,\alpha)$ respects the knot filtration.
\end{Cor}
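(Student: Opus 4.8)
The plan is to reduce the statement to Proposition~\ref{Proposition: Holomorphic curves preserve the filtration in general}. By construction the filtration degree of a generator $\gamma$ of $ECC^{h_+}(N,\alpha)$ is $\langle\gamma,S\rangle$, so it suffices to prove that whenever $\delta$ is a generator occurring with nonzero coefficient in $\partial^{ECH}\gamma$ one has $\langle\delta,S\rangle\leq\langle\gamma,S\rangle$; this is exactly the assertion $\partial^{ECH}\bigl(ECC^{h_+}_{\leq d}(N,\alpha)\bigr)\subseteq ECC^{h_+}_{\leq d}(N,\alpha)$ for every $d$. First I would record two structural observations. Every generator of $ECC^{h_+}(N,\alpha)$ is an orbit set in $\mathcal{O}(N\sqcup\{h_+\})$, hence lies in $Y\setminus K$ (since $K\notin N$ and $K\neq h_+$), so $\langle\cdot,S\rangle$ makes sense on the whole complex, after the usual small perturbation of $S$ making it transverse to the relevant orbits. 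Moreover, by the explicit description~\ref{Equation: The hat ECH differential with all the orbits} of $\partial^{ECH}$ on this complex, any M-B building $u$ contributing to $\langle\partial^{ECH}\gamma,\delta\rangle$ has positive orbit set $\gamma$ and negative orbit set $\delta$, both orbit sets in $\mathcal{O}(N\sqcup\{h_+\})$; in particular $u$ has no asymptotic end on $K$.

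Next I would rerun the argument in the proof of Proposition~\ref{Proposition: Holomorphic curves preserve the filtration in general} on such a building. The key point is that that argument never uses that the curve is disjoint from $\mathbb{R}\times K$, only that it has no asymptotic end on $K$: compactifying the holomorphic part $\widetilde u$ of $u$ to $\widehat u$ in $[-1,1]\times Y$, positivity of intersection with the holomorphic surface $\mathbb{R}\times K$ gives $\langle\widehat u,[-1,1]\times K\rangle\geq 0$, and capping off with the null-homologous surface $(\{-1\}\times S)\cup([-1,1]\times K)\cup(\{1\}\times(-S))$ yields
\begin{equation*}
 \langle\gamma,S\rangle-\langle\delta,S\rangle=\langle\widehat u,[-1,1]\times K\rangle\geq 0 .
\end{equation*}
The gradient-flow pieces of an M-B building lie on M-B tori $T_y\subset\mathrm{int}(N)$, which are disjoint from $K$, and connect Reeb orbits lying in the same class of $H_1(T_y)$, hence in the same class of $H_1(Y\setminus K)$; so they do not affect $\langle\cdot,S\rangle$, and the inequality above holds for the whole building. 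This gives $\langle\delta,S\rangle\leq\langle\gamma,S\rangle$ for every $\delta$ occurring in $\partial^{ECH}\gamma$, which is the corollary.

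The one genuinely non-routine point — and the step I expect to be the main obstacle — is exactly this robustness check. Among the buildings counted by $\partial^{ECH}$ on $ECC^{h_+}(N,\alpha)$ there is the holomorphic plane asymptotic to $h_+$ responsible for the coefficient-$1$ term in $\partial^{ECH}(h_+\gamma)=(1+e)\gamma+h_+\partial\gamma$, and this plane genuinely crosses $\mathbb{R}\times K$ (``it closes on a point of $K$''); one must be careful that Proposition~\ref{Proposition: Holomorphic curves preserve the filtration in general} still applies to it. It does, precisely because its proof only needed the absence of asymptotic ends on $K$, and in fact for that building the displayed formula gives $\langle h_+\gamma,S\rangle-\langle\gamma,S\rangle=1>0$, consistently with the filtration decreasing weakly. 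All remaining terms — $h_+\partial\gamma$ and, when no $h_+$ is present, $\partial\gamma$ — are covered directly by Proposition~\ref{Proposition: Holomorphic curves preserve the filtration in general} applied to curves in $N\subset Y\setminus K$.
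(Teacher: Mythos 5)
Your proof is correct and follows essentially the same route as the paper, which simply applies Proposition~\ref{Proposition: Holomorphic curves preserve the filtration in general} to the M-B buildings counted by $\partial^{ECH}$. Your additional checks — that the argument only needs the absence of asymptotic ends on $K$ (so it covers the plane from $h_+$ that crosses $\R \times K$) and that the gradient-flow pieces of a building do not change $\langle\cdot,S\rangle$ — are exactly the implicit content of the paper's one-line proof, spelled out.
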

\proof
 Proposition \ref{Proposition: Holomorphic curves preserve the filtration in general} applied to the M-B buildings counted by
 $\partial^{ECH}$ implies immediately that
 $$\partial^{ECH}\left(ECC^{h_+}_{\leq d}(N,\alpha)\right) \subseteq ECC^{h_+}_{\leq d}(N,\alpha)$$
 for any $d$ and the result follows.
\endproof

Suppose now that $\alpha$ is adapted to $S$. By standard arguments in algebra, the filtration above induces a spectral sequence whose
page $\infty$ is isomorphic to
$ECH^{h_+}(N,\alpha) \cong \widehat{ECH}(Y,\alpha)$ and whose page $0$ is the chain complex
\begin{equation} \label{Equation: ECK-hat chain complex}
 \bigoplus_d \left(ECC^{h_+}_{d}(N,\alpha) ,\partial_d^{ECK}\right)
\end{equation}
where $ECC^{h_+}_{d}(N,\alpha)$ should be seen as $\frac{ECC^{h_+}_{\leq d}(N,\alpha)}{ECC^{h_+}_{\leq d-1}(N,\alpha)}$ and
$$\partial_d^{ECK} : ECC^{h_+}_{d}(N,\alpha) \rightarrow ECC^{h_+}_{d}(N,\alpha)$$
is the map induced by $\partial^{ECH}$ on the quotient, i.e, it is the part of $\partial^{ECH}|_{ECC^{h_+}_{d}(N,\alpha)}$ that strictly 
preserves the filtration degree.
\begin{Obs} \label{Observation: The only lowing-degree curve is the disk from h_+ in the hat case}
 The proof of Proposition \ref{Proposition: Holomorphic curves preserve the filtration in general} implies that 
 the holomorphic curves counted by $\partial^{ECH}$ that strictly decrease the degree are exactly the curves that intersect $K$.
 So we can interpret $\partial^{ECK}$ as the restriction of $\partial^{ECH}$ (given by Equation \ref{Equation: The complete ECH differential with all the orbits})
 to the count of curves that do not cross a thin neighborhood of $K$. This is indeed the proper
 $ECH$-differential of the manifold $Y \setminus int(V(K))$ (and not the restriction of the $ECH$-differential of $Y$ to the orbit sets in $Y \setminus int(V(K))$).
 
 Note that, by definition of $ECC^{h_+}(N,\alpha)$, all the holomorphic curves contained in $\R \times N$ strictly preserve the filtration
 degree. In fact the only holomorphic curve that contributes to $\partial^{ECH}|_{ECC^{h_+}(N,\alpha)}$ and decreases the degree (by $1$) is the plane from
 $h_+$ to $\emptyset$. Equation \ref{Equation: The hat ECH differential with all the orbits} gives then
 \begin{equation}\label{Equation: The hat ECK differential with all the orbits}
  \partial(h_+^d \gamma) = h_+^{d-1} e \gamma + h_+^d \partial \gamma.
 \end{equation}
 where $\gamma \in \mathcal{O}(N)$ and any term is meant to be zero if it contains some orbit with total multiplicity that is negative or not
 in $\{0,1\}$ if the orbit is hyperbolic
\end{Obs}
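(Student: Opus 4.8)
First I would pin down which curves counted by $\partial^{ECH}$ change the filtration degree. The key input is the identity \ref{Equation: Difference of linking numbers equals intersections of K with u} obtained inside the proof of Proposition \ref{Proposition: Holomorphic curves preserve the filtration in general}: for a holomorphic curve $u$ from $\gamma$ to $\delta$, with $\gamma,\delta$ orbit sets in $Y\setminus K$ (possibly involving $h_+$), one has $\langle\gamma,S\rangle-\langle\delta,S\rangle=\langle\widehat{u},[-1,1]\times K\rangle$. Since $K$ is a Reeb orbit, $\R\times K$ is holomorphic, so Theorem \ref{Theorem: Positivity of intersection in dimension 4} forces $\langle\widehat{u},[-1,1]\times K\rangle\geq 0$, with equality exactly when $\Imm(u)$ is disjoint from $\R\times K$. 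Hence a curve contributing to $\partial^{ECH}$ strictly decreases the degree if and only if it meets $K$, and it is degree-preserving if and only if it avoids $\R\times K$; in particular any curve whose image lies in $\R\times N$ preserves the degree exactly. This gives the first two assertions of the Observation.

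Next, to see that $\partial^{ECK}$ is the genuine $ECH$-differential of $Y\setminus\mathrm{int}(V(K))$ — and not merely the restriction of $\partial^{ECH}_Y$ to orbit sets supported there — I would show that a degree-preserving curve, being disjoint from $\R\times K$, can be taken disjoint from $\R\times\mathrm{int}(V(K))$ altogether. Here one uses the dynamics recalled in Subsection \ref{Subsection: Open Books}: $\mathrm{int}(V(K))\setminus K$ is foliated by concentric tori $T_y$ whose slopes $s(T_y)$ tend to $+\infty$ as $y\to 3$, with $K=\{y=3\}$. A curve with no ends in this region and not meeting $K$ cannot, by the Blocking Lemma \ref{Lemma: Blocking lemma} together with the Trapping Lemma \ref{Lemma: Trapping lemma} and the divergence of the slopes, reach tori arbitrarily close to $K$, so its image is confined to $Y\setminus\mathrm{int}(V(K))$. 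Consequently $\partial^{ECK}$ counts precisely the holomorphic curves of the symplectization of the manifold with torus boundary $(Y\setminus\mathrm{int}(V(K)),\alpha)$, which is strictly more restrictive than restricting $\partial^{ECH}_Y$ to orbit sets in $Y\setminus\mathrm{int}(V(K))$, since the latter would still count curves that dip into $\mathrm{int}(V(K))$ or cross $K$.

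Finally I would extract \ref{Equation: The hat ECK differential with all the orbits} from the differential \ref{Equation: The hat ECH differential with all the orbits}, which reads $\partial^{ECH}(h_+^b\gamma)=h_+^{b-1}(1+e)\gamma+h_+^b\partial^{ECH}\gamma$ for $\gamma\in\mathcal{O}(N)$. By the remark following Figure \ref{Figure: Dynamic near binding in W}, the only connected $ECH$-index-$1$ curve that crosses $K$ is the plane from $h_+$ to $\emptyset$; since no end of $\gamma$, nor of $\partial\gamma$ or $e\gamma$, equals $h_+$, the curves producing the summands $h_+^b\partial\gamma$ and $h_+^{b-1}e\gamma$ (the latter being the curve from $h_+$ to $e$ near $T_2$, cf.\ relations \ref{Equation: ECH boundary near the binding}) are disjoint from $\R\times K$ and hence strictly preserve the degree by the first step, whereas the summand $h_+^{b-1}\cdot 1\cdot\gamma=h_+^{b-1}\gamma$ is produced by that plane together with trivial cylinders and lowers the degree by exactly $1$. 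Passing to the page-$0$ differential — the part of $\partial^{ECH}$ that strictly preserves the knot filtration — therefore deletes precisely this summand and yields $\partial^{ECK}(h_+^d\gamma)=h_+^{d-1}e\gamma+h_+^d\partial\gamma$, with the usual convention that a monomial vanishes whenever some elliptic orbit occurs with negative multiplicity or a hyperbolic orbit with multiplicity outside $\{0,1\}$.

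The first and third steps are essentially bookkeeping once \ref{Equation: Difference of linking numbers equals intersections of K with u} is in hand; the main obstacle is the second step, namely rigorously confining a degree-preserving curve to $Y\setminus\mathrm{int}(V(K))$ so that $\partial^{ECK}$ is genuinely the $ECH$-differential of the knot complement rather than a mere restriction. This is exactly the point where the divergence of the slopes $s(T_y)$ near $K$ and a careful, possibly iterated, application of the Blocking and Trapping Lemmas are needed, and it is the only place where I would lean directly on the detailed curve analysis of \cite{CGH2}.
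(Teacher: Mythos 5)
Your argument is correct and follows essentially the same route as the paper: the identity $\langle\gamma,S\rangle-\langle\delta,S\rangle=\langle\widehat{u},[-1,1]\times K\rangle$ from the proof of Proposition \ref{Proposition: Holomorphic curves preserve the filtration in general} plus positivity of intersection identifies the degree-dropping curves as exactly those meeting $K$, and deleting the plane from $h_+$ to $\emptyset$ (the ``$1$'' in the $(1+e)$ term of Equation \ref{Equation: The hat ECH differential with all the orbits}) yields Equation \ref{Equation: The hat ECK differential with all the orbits}. Your second step merely fleshes out, via the Blocking and Trapping Lemmas, the confinement claim that the paper asserts by reference to the curve analysis of \cite{CGH2}, which is a reasonable and correct elaboration.
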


\begin{Def} The \emph{hat version of embedded contact (knot) homology} of the triple $(K,Y,\alpha)$ is
 $$\widehat{ECK}_*(K,Y,\alpha) := H_*\left(ECC^{h_+}(N,\alpha),\partial^{ECK}\right).$$
\end{Def}
\begin{Obs}
 In \cite{CGH2} $\widehat{ECK}(K,Y,\alpha)$ is called $ECH(M(K),\alpha)$ and in Theorem 10.3.2 it is proved that 
 $$\widehat{ECK}(K,Y,\alpha) = ECH^{\sharp}(N,\alpha)$$
 where, recall, with our notation $ECH^{\sharp}(N,\alpha) = ECH^h(int(N),\alpha)$. On the other hand, by using exactly the same arguments of 
 Lemma \ref{Lemma: How kill h+ and get the quotient e gamma sim gamma}, it is easy to see that
 $$ECH^h(int(N),\alpha) \cong H_*\left(ECC^{h_+}(N,\alpha),\partial^{ECK}\right).$$
\end{Obs}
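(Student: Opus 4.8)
The plan is to run, for the complex $(ECC^{h_+}(N,\alpha),\partial^{ECK})$, the long exact sequence argument of Lemma \ref{Lemma: How kill h+ and get the quotient e gamma sim gamma} almost verbatim; the only difference is that $\partial^{ECK}$ is $\partial^{ECH}$ with the holomorphic plane from $h_+$ to $\emptyset$ deleted, so the relevant connecting homomorphism becomes ``multiplication by $e$'' rather than ``multiplication by $1+e$''. Since $h_+$ is positive hyperbolic it has multiplicity at most $1$, so each generator of $ECC^{h_+}(N,\alpha)$ is $\gamma$ or $h_+\gamma$ with $\gamma\in\mathcal{O}(N)$, and $ECC^{h_+}(N,\alpha)=ECC(N,\alpha)\oplus h_+ECC(N,\alpha)$. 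By the Blocking Lemma $\partial^{ECH}\gamma\in\mathcal{O}(N)$ whenever $\gamma\in\mathcal{O}(N)$, and Equation \ref{Equation: The hat ECK differential with all the orbits} reads $\partial^{ECK}\gamma=\partial^{ECH}\gamma$ and $\partial^{ECK}(h_+\gamma)=e\gamma+h_+\partial^{ECH}\gamma$; hence $ECC(N,\alpha)$ is a subcomplex of $(ECC^{h_+}(N,\alpha),\partial^{ECK})$, with quotient complex $h_+ECC(N,\alpha)$ carrying the differential $h_+\gamma\mapsto h_+\partial^{ECH}\gamma$.

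Next I would write the long exact homology sequence of the pair $\big(ECC(N,\alpha),ECC^{h_+}(N,\alpha)\big)$. Via $h_+\gamma\leftrightarrow\gamma$ the quotient term has homology $ECH(N,\alpha):=H_*(ECC(N,\alpha),\partial^{ECH})$ (up to a degree shift), and the connecting homomorphism $d\colon ECH(N,\alpha)\to ECH(N,\alpha)$ sends $[h_+\gamma]$ to the class of the $h_+$-free part of $\partial^{ECK}(h_+\gamma)$, i.e. $d([h_+\gamma])=[e\gamma]$; by the relation $\partial^{ECH}(e\gamma)=e\partial^{ECH}\gamma$ already used in Subsection \ref{Subsection: ECH for manifolds with boundary}, $d$ is just multiplication by the class $[e]$. (Were we using $\partial^{ECH}$ the third line of \ref{Equation: ECH boundary near the binding} would add the plane from $h_+$ to $\emptyset$ and give $d([h_+\gamma])=[(1+e)\gamma]$, which is precisely why $\widehat{ECH}(Y,\alpha)$ carries the relation $[e\gamma]\sim[\gamma]$; here we instead obtain only $[e\gamma]\sim 0$.) Exactly as in the proof of Lemma \ref{Lemma: How kill h+ and get the quotient e gamma sim gamma}, from exactness one extracts
$$0\longrightarrow \mathrm{coker}(d)\longrightarrow H_*(ECC^{h_+}(N,\alpha),\partial^{ECK})\longrightarrow \ker(d)\longrightarrow 0 .$$

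It remains to identify $\ker(d)$ and $\mathrm{coker}(d)$. Every multiorbit of $N$ is uniquely $e^k\gamma$ with $k\ge 0$ and $\gamma$ a multiorbit not involving $e$, that is, a generator of $ECC^h(int(N),\alpha)$; together with $\partial^{ECH}e=0$, the relation $\partial^{ECH}(e\gamma)=e\partial^{ECH}\gamma$, and the well-definedness of $ECH^h(int(N),\alpha)$ (i.e. the fact, from \cite{CGH2}, that $\partial^{ECH}$ does not create the orbit $e$ out of $e$-free orbit sets), this identifies $ECC(N,\alpha)$ with $\bigoplus_{k\ge 0}e^k ECC^h(int(N),\alpha)$ as chain complexes, hence $ECH(N,\alpha)\cong\bigoplus_{k\ge 0}e^k ECH^h(int(N),\alpha)$. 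On this group multiplication by $e$ is injective with cokernel the $k=0$ summand $ECH^h(int(N),\alpha)$, so $\ker(d)=0$ and $\mathrm{coker}(d)\cong ECH^h(int(N),\alpha)$; the displayed short exact sequence then gives $H_*(ECC^{h_+}(N,\alpha),\partial^{ECK})\cong ECH^h(int(N),\alpha)$, as claimed.

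The whole argument is the homological bookkeeping of Lemma \ref{Lemma: How kill h+ and get the quotient e gamma sim gamma}; the only point requiring more than formal manipulation is the structure of $\partial^{ECH}$ on the orbit sets of $N$ relative to the elliptic orbit $e$ (its $e$-linearity, and the fact that it does not produce $e$ from $e$-free orbit sets), which I would cite from \cite{CGH2} rather than reprove. This is the only place the argument rests on external input.
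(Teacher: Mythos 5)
Your argument is correct and is precisely the adaptation the paper intends when it says to reuse ``exactly the same arguments'' of Lemma \ref{Lemma: How kill h+ and get the quotient e gamma sim gamma}: the long exact sequence of the pair $\left(ECC(N,\alpha),ECC^{h_+}(N,\alpha)\right)$, with the connecting map becoming multiplication by $[e]$ (rather than $[1+e]$) because $\partial^{ECK}$ discards the plane from $h_+$ to $\emptyset$, and the identification of its kernel and cokernel via the decomposition $ECC(N,\alpha)\cong\bigoplus_{k\geq 0}e^k\,ECC^{h}(int(N),\alpha)$. The one non-formal input you isolate (the $e$-linearity of $\partial^{ECH}$ and the fact that it does not create $e$ from $e$-free orbit sets) is exactly the structural fact from \cite{CGH2} that the paper itself relies on, e.g.\ when it asserts $\partial(e\gamma)=e\partial(\gamma)$ and $\ker(d)=\{0\}$ in the proof of that lemma.
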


\begin{Obs} \label{Observation: Alpha compatible or not with S to define ECK}
 Note that in order to define $\widehat{ECK}(K,Y,\alpha)$, we supposed that $\alpha$ is compatible with $S$. This hypothesis is not present
 in the original definition (via sutures) in \cite{CGHH}. Indeed, without this condition we can still apply all the arguments above and
 define the knot filtration on $ECC^{h_+}(N,\alpha)$ exactly in the same way. The page $1$ of the spectral sequence is again the well defined
 homology in the definition above, and the page $\infty$ is still isomorphic to $ECH^{h_+}(N,\alpha)$.
 
 The only difference (a priori, see Lemma \ref{Lemma: No need of alpha adapted to S} below) is that now we do not know that
 $ECH^{h_+}(N,\alpha) \cong \widehat{ECH}(Y,\alpha)$, since in Theorem \ref{Theorem: ECH of Y iso to the relative versions} the hypothesis 
 that $\alpha$ is adapted to $S$ is assumed.
\end{Obs}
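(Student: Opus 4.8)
The plan is to verify the three assertions of the Observation in turn — that the knot filtration is built exactly as before, that page $1$ of the resulting spectral sequence is $\widehat{ECK}(K,Y,\alpha)$, and that page $\infty$ computes $ECH^{h_+}(N,\alpha)$ — while pinning down the unique place where adaptedness to $S$ was actually used. The key point is that Proposition \ref{Proposition: Holomorphic curves preserve the filtration in general}, which underlies the whole construction, assumes only that $K$ is a Reeb orbit and that $S$ is \emph{any} Seifert surface; it never requires $R_\alpha$ to be positively transverse to $\mathrm{int}(S)$. Hence as soon as $\alpha$ is adapted to $K$, the assignment $\gamma \mapsto \langle \gamma, S\rangle$ is defined on $\mathcal{O}(N \sqcup \{h_+\})$, and applying the proposition to the M-B buildings counted by $\partial^{ECH}$ (exactly as in the Corollary to it) shows that the submodules $ECC^{h_+}_{\leq d}(N,\alpha)$ form an exhaustive, separated filtration preserved by $\partial^{ECH}$, verbatim as in the adapted case.

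I would then check that the page-$0$ differential is independent of $S$. By Observation \ref{Observation: The only lowing-degree curve is the disk from h_+ in the hat case}, the only curve contributing to $\partial^{ECH}|_{ECC^{h_+}(N,\alpha)}$ that strictly lowers the degree is the plane from $h_+$ to $\emptyset$, i.e. the unique holomorphic curve of $ECH$ index $1$ crossing $K$; this characterization is intrinsic and makes no reference to $S$. The degree-preserving part of $\partial^{ECH}$ is therefore exactly $\partial^{ECK}$ of Equation \ref{Equation: The hat ECK differential with all the orbits}, for every choice of Seifert surface. Consequently the page-$0$ complex is $\bigoplus_d (ECC^{h_+}_d(N,\alpha), \partial^{ECK})$, whose homology is, by definition, $\widehat{ECK}(K,Y,\alpha)$ — a group defined without ever mentioning adaptedness to $S$.

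The delicate point, which I expect to be the main obstacle, is the convergence claim for page $\infty$. When $\alpha$ is not adapted to $S$ the filtration need not be bounded below, so the elementary convergence criterion does not apply directly. To repair this I would interpose the action filtration. Using the splitting of $ECC^{h_+}(N,\alpha)$ over $\Gamma \in H_1(Y)$ and Lemma \ref{Lemma: Holomorphic curves low the actions of the orbits} (whereby $\partial^{ECH}$ strictly decreases the action $\mathcal{A}$), for each $L > 0$ the action-truncated subcomplex $ECC^{h_+,\,<L}(N,\alpha,\Gamma)$ is finitely generated by non-degeneracy of $\alpha$ in $\mathrm{int}(N)$; on it the knot filtration is automatically bounded, so its knot-filtration spectral sequence converges to the corresponding truncated homology. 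The inclusions as $L$ grows preserve both filtrations, and since $ECH^{h_+}(N,\alpha) = \varinjlim_L H_*(ECC^{h_+,\,<L},\partial^{ECH})$ with direct limits exact — hence commuting with homology and with passage to the associated graded — the colimit of these spectral sequences is the spectral sequence of the full filtered complex, with page $1$ equal to $\widehat{ECK}$ and page $\infty$ the associated graded of $ECH^{h_+}(N,\alpha)$.

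Finally I would observe that none of the above used more than adaptedness to $K$: the hypothesis that $\alpha$ is adapted to $S$ entered the original argument at exactly one point, through Theorem \ref{Theorem: ECH of Y iso to the relative versions}, which furnishes $ECH^{h_+}(N,\alpha) \cong \widehat{ECH}(Y,\alpha)$. Dropping it therefore leaves every step intact save that single identification, which is precisely the content of the Observation and is postponed to Lemma \ref{Lemma: No need of alpha adapted to S}.
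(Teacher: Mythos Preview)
Your proposal is correct and in fact considerably more thorough than the paper's own treatment. In the paper this statement is an \emph{Observation}, i.e.\ an informal remark rather than a result with a proof: the author simply notes that the construction of the filtration and the identification of page~$1$ go through verbatim because Proposition~\ref{Proposition: Holomorphic curves preserve the filtration in general} never used adaptedness to $S$, and that the only place this hypothesis enters is via Theorem~\ref{Theorem: ECH of Y iso to the relative versions}. The paper asserts the page-$\infty$ identification without further comment.

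What you add beyond the paper is the convergence argument via action truncation. This is a genuine contribution: once the knot filtration is unbounded below, convergence of the spectral sequence to $ECH^{h_+}(N,\alpha)$ is not automatic, and the paper glosses over this. Your idea --- restrict to the finitely generated action-truncated subcomplexes (where the knot filtration is bounded and convergence is standard), then pass to the direct limit using exactness of filtered colimits --- is the natural way to justify the claim and is correct. One small caveat worth making explicit is that the Morse--Bott setup means the complex is really defined as a direct limit over action levels in any case (cf.\ \cite[Section 4]{CGH2}), so your truncation is compatible with the way $ECC^{h_+}(N,\alpha)$ is built in the first place; this strengthens rather than weakens your argument.
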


This homology comes naturally with a further relative degree, inherited by the filtered degree: if
$\widehat{ECK}_{*,d}(K,Y,\alpha) := H_*\left(ECC_d^{h_+}(N,\alpha),\partial_d^{ECK}\right)$ then
$$\widehat{ECK}_{*}(K,Y,\alpha) = \bigoplus_d \widehat{ECK}_{*,d}(K,Y,\alpha).$$
Sometimes, in analogy with Heegaard Floer, we will call this degree the \emph{Alexander degree}.
\begin{Ex} Suppose that $(K,S,\phi)$ is an open book decomposition of $Y$ and that $\alpha$ is an adapted contact form. Since any non-empty
 Reeb orbit in $Y \setminus K$ has strictly positive intersection number with $S$,
 $$\widehat{ECK}_{*,0}(K,Y,\alpha) \cong \langle [\emptyset] \rangle_{\Z/2}.$$
 This is the $ECH$-analogue of the fact that if $K$ is fibered, then 
 $$\widehat{HFK}_{*,-g}(K,Y) \cong \langle[c]\rangle_{\Z/2},$$
 where $g$ is the genus of $K$ and $c$ is the associated contact element (see \cite{OS4}).
\end{Ex}

\begin{Obs}\label{Observation: The relative Alexander degree is well defined}
 We remark that the Alexander degree can be considered as an absolute degree only once a relative homology class in $H_2(Y,K)$ for
 $S$ has been fixed, since the function $\langle \cdot,S \rangle$ defined on $H_1(Y \setminus K)$ changes if $[S]$ varies.
 
 On the other
 hand, suppose that $[\gamma] = [\delta] \in H_1(Y \setminus K)$ and let $F \subset Y$ be a surface such that $\partial F = \gamma - \delta$.
 Computations analogue to that in the proof of Proposition \ref{Proposition: Holomorphic curves preserve the filtration in general} imply that
 \begin{equation} \label{Equation: The relative Alexander degree is well defined}
  \langle \gamma , S \rangle - \langle \delta , S \rangle = \langle F , K \rangle,
 \end{equation}
 and the Alexander degree, considered as a relative degree, does not depend on the choice of a homology class for $S$. 
 
 Obviously if $H_2(Y) = 0$, the Alexander degree can be lifted to an absolute degree.
\end{Obs}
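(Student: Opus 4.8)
The plan is to run the argument of Proposition \ref{Proposition: Holomorphic curves preserve the filtration in general} one dimension lower, with the surface $F\subset Y$ playing the role of the holomorphic curve $u\subset\R\times Y$ and with $Y$ itself replacing the symplectization. The key difference is that $F$ is not required to be pseudo-holomorphic, so there is no positivity of intersection to invoke and hence no inequality: the identity will follow purely from the fact that a compact oriented $1$–manifold has vanishing signed boundary. The statement is homological in nature, so everything reduces to a general-position computation of intersection numbers.

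First I would put everything in general position, keeping the boundaries fixed: perturb $S$ and $F$ so that $F$ is transverse to $S$, $\partial F$ is transverse to $S$, and $F$ is transverse to $K=\partial S$. Since $\partial F=\gamma-\delta$ is contained in $Y\setminus K$, it is automatically disjoint from $\partial S$. Then $F\cap S$ is a compact oriented $1$–manifold whose boundary is
$$\partial(F\cap S)=(\partial F\cap S)\ \sqcup\ (F\cap\partial S)=(\partial F\cap S)\ \sqcup\ (F\cap K),$$
each point counted with the sign determined by the orientations of $Y$, $F$ and $S$. Because the signed count of the boundary of a compact oriented $1$–manifold is zero, with the standard conventions this reads
$$\langle\gamma,S\rangle-\langle\delta,S\rangle=\langle\partial F,S\rangle=\langle F,K\rangle,$$
which is exactly Equation \ref{Equation: The relative Alexander degree is well defined}. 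An equivalent route, even closer to the cited proof, is to excise a small meridian disk of $K$ from $F$ about each point of $F\cap K$: the resulting surface $F_0$ lies in $Y\setminus K$, whence $[\gamma-\delta]=\langle F,K\rangle\,[m]$ in $H_1(Y\setminus K)$ with $[m]$ the class of a positively oriented meridian, and pairing with $S$ (using $\langle m,S\rangle=1$) gives the same identity. If one assumes $[\gamma]=[\delta]\in H_1(Y\setminus K)$ as in the statement, both sides in fact vanish; the useful content is the identity itself, valid for any $F$ with $\partial F=\gamma-\delta$, whose right-hand side involves only $F$ and $K$ and not $S$.

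From this I would read off the conclusion of the Observation: since $\langle F,K\rangle$ does not refer to the Seifert surface, the difference of the Alexander degrees of two homologous orbit sets is independent of the chosen relative homology class of $S$, so the Alexander grading is well defined as a relative grading, and it lifts to an absolute grading precisely when $H_2(Y)=0$. The only step requiring any care — and the main, if modest, obstacle — is the bookkeeping of orientation signs needed to make the final equality come out with a $+$ sign; this is handled exactly as in the proof of Proposition \ref{Proposition: Holomorphic curves preserve the filtration in general}, where the closed surface $L=L_{-1}\cup([-1,1]\times K)\cup L_{1}$ was used to pin down the signs, the analogous device here being the oriented $1$–manifold $F\cap S$.
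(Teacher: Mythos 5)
Your proof is correct and is essentially the argument the paper intends: the Observation only gestures at ``computations analogous to'' Proposition \ref{Proposition: Holomorphic curves preserve the filtration in general}, and your signed boundary count of the oriented $1$--manifold $F\cap S$ (equivalently, the meridian-excision argument) is the standard way to carry that out one dimension down, with the null-homologous closed surface $L$ of the Proposition replaced by the vanishing of the signed boundary of a compact oriented $1$--manifold. Your side remark that both sides of Equation \ref{Equation: The relative Alexander degree is well defined} vanish under the literal hypothesis $[\gamma]=[\delta]\in H_1(Y\setminus K)$ (the relevant hypothesis being $[\gamma]=[\delta]\in H_1(Y)$) is also accurate and does not affect the argument.
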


In \cite{CGHH} the authors conjectured that their sutured embedded contact homology is isomorphic to sutured 
Heegaard-Floer homology. Both the hat version of embedded contact knot homology and of Heegaard Floer knot homology can be defined in terms
of sutures. In this case their conjecture becomes
\begin{Cnj} \label{Conjecture: ECK iso to HFK hat}  For any knot $K$ in $Y$:
 $$\widehat{ECK}(K,Y,\alpha) \cong \widehat{HFK}(-K,-Y),$$
 where $\alpha$ is a contact form on $Y$ adapted to $K$.
\end{Cnj}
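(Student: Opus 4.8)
\noindent\emph{A proof strategy for Conjecture \ref{Conjecture: ECK iso to HFK hat}.}
Since this is the conjectural knot-level refinement of Theorem \ref{Theorem: HF and ECH in intro}, I outline the program I would carry out rather than a finished argument. The natural plan is to recognise both sides as instances of \emph{sutured} theories applied to the knot complement, and then to upgrade the closed-manifold equivalence $\widehat{HF}(-Y)\cong\widehat{ECH}(Y,\alpha)$ to the sutured setting by adapting the Colin--Ghiggini--Honda program \cite{CGH1}-\cite{CGH5}. Concretely: by its definition in \cite{CGHH}, $\widehat{ECK}(K,Y,\alpha)$ is the sutured embedded contact homology of the knot complement $M:=Y\setminus\mathcal N(K)$ with a pair of meridional sutures $\Gamma_\mu$ (equivalently, the complex $ECC^{h_+}(N,\alpha)$ with the differential $\partial^{ECK}$ of Equation \ref{Equation: The hat ECK differential with all the orbits}); on the Heegaard Floer side, Juh\'asz's sutured Floer homology of $(M,\Gamma_\mu)$ computes $\widehat{HFK}(-K,-Y)$. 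Thus the statement is the special case, for $(M,\Gamma_\mu)$, of the general isomorphism between sutured ECH and sutured Floer homology conjectured in \cite{CGHH}, and it suffices to establish that isomorphism for knot complements with meridional sutures.

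The first concrete step is to fix an open-book model. Using the Giroux correspondence (Theorem \ref{Theorem: Construction of an open book adapted to a contact structure}) together with Lemma \ref{Lemma: There exists alpha copatible with K and S}, choose an open book $(B,S,\phi)$ of $Y$ adapted to $\xi=\ker\alpha$ in which $K$ appears as a Reeb orbit carried by the pages (e.g.\ as a closed braid transverse to the pages, or, in the fibred case, simply $B=K$), together with a Seifert surface $S_K$ for $K$ to which $\alpha$ is adapted. Then, by Proposition \ref{Proposition: Holomorphic curves preserve the filtration in general}, the knot filtration on $ECC^{h_+}(N,\alpha)$ is the one induced by $\langle\,\cdot\,,S_K\rangle$, and by Observation \ref{Observation: The only lowing-degree curve is the disk from h_+ in the hat case} its page-zero differential $\partial^{ECK}$ counts exactly the Morse--Bott buildings in $\R\times N$ that avoid a thin neighbourhood of $K$. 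From the same open book one builds, in the Honda--Kazez--Mati\'c style, a doubly pointed Heegaard diagram $(\Sigma,\boldsymbol\alpha,\boldsymbol\beta,w,z)$ for $(-Y,-K)$: $\Sigma$ is the double of a page, the $\boldsymbol\alpha/\boldsymbol\beta$ curves are pushed-off properly embedded arcs and their $\phi$-images, and the second basepoint $z$ is placed on $S_K$ so that the resulting Alexander filtration on $\widehat{CFK}$ is again $\langle\,\cdot\,,S_K\rangle$.

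The heart of the proof is the comparison of these two filtered complexes, and this is where I expect the real difficulty to lie. Following \cite{CGH1}-\cite{CGH5}, one stretches the neck along the pages so that ECH curves in $\R\times N$ break into pieces localised over the interpolating regions; after the Morse--Bott perturbation of the tori $T_1$, $T_2$ into the pairs $(e,h)$, $(e_+,h_+)$ one shows that the index-one buildings (modulo the $\R$-action and the $S^1$-reparametrisations) that do not cross $\mathcal N(K)$ are in bijection with explicit combinatorial data --- "admissible" tuples of intersection points together with gradient flow lines --- and that the very same data count the holomorphic polygons in $\mathrm{Sym}^g(\Sigma)$ for the diagram above. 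Matching generators (orbit sets in the mapping torus $\leftrightarrow$ $g$-tuples in $\boldsymbol\alpha\cap\boldsymbol\beta$), matching differentials, and verifying that $\langle\,\cdot\,,S_K\rangle$ records the Alexander degree on both sides would then yield a filtered chain isomorphism, hence an isomorphism of the first pages of the associated spectral sequences, which are precisely $\widehat{ECK}(K,Y,\alpha)$ and $\widehat{HFK}(-K,-Y)$. The hard part is exactly this identification: it requires redoing the analytic core of the Colin--Ghiggini--Honda argument --- ECH-index compactness, transversality and gluing for the Morse--Bott buildings near $T_1$ and $T_2$ (with the relations of Equation \ref{Equation: ECH boundary near the binding}), and the translation of broken ECH curves into Floer polygons --- all while keeping track of the knot filtration. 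The subtlest point, which does not follow formally from the closed case, is the behaviour near the binding: one must check that passing from $\partial^{ECH}$ to $\partial^{ECK}$ by deleting the single holomorphic disc crossing $K$ corresponds exactly to the basepoint and suture bookkeeping on the $\widehat{HFK}$ side.
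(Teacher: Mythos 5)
The statement you are asked to prove is stated in the paper as a \emph{conjecture}, and the paper gives no proof of it: the only evidence it supplies is Theorem \ref{Theorem: Euler characteristic of ECK}, which verifies the isomorphism at the level of graded Euler characteristics for links in $S^3$ by identifying $\chi(ECK)$ with the Alexander polynomial via Fried's dynamical zeta function. So there is no ``paper's own proof'' to compare yours against. Your framing of the statement --- as the special case, for the knot complement with meridional sutures, of the sutured $ECH \cong SFH$ conjecture of \cite{CGHH} --- is exactly how the paper itself presents it, and your outline (open-book model, neck-stretching, matching Morse--Bott buildings against Heegaard Floer polygons while tracking the filtration $\langle\,\cdot\,,S_K\rangle$, and checking that deleting the disc through $K$ matches the basepoint bookkeeping) is the strategy one would expect to follow.

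That said, what you have written is a program, not a proof, and you say so yourself. The entire mathematical content of the conjecture is concentrated in the step you defer --- ``redoing the analytic core of the Colin--Ghiggini--Honda argument'' in the sutured setting while keeping track of the knot filtration --- and nothing in the paper (nor in your outline) supplies that: the bijection between index-one $ECH$ buildings avoiding $\mathcal{N}(K)$ and holomorphic polygons for a doubly pointed diagram of $(-Y,-K)$, with compatible gradings and filtrations, is precisely what remains open. One smaller caution: the paper deliberately works with contact forms adapted to $K$ without assuming compatibility with a Seifert surface (condition $\spadesuit$, cf.\ Lemma \ref{Lemma: No need of alpha adapted to S} and Observation \ref{Observation: Alpha compatible or not with S to define ECK}), whereas your setup assumes $\alpha$ adapted to $S_K$ from the outset; a complete argument would either need that hypothesis removed or an invariance statement for $\widehat{ECK}$ under change of contact form, which is itself not established in the paper. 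So the proposal is a reasonable roadmap consistent with the paper's point of view, but it does not prove the statement, and neither does the paper.
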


\section{Generalizations of $\widehat{ECK}$}
\label{Section: Generalizations of ECK}

Let $K$ be a homologically trivial knot in a contact three-manifold $(Y,\alpha)$. As recalled in Subsection
\ref{Subsection: widehat ECH for knots}, if $\alpha$ is adapted to $K$, a choice of a Seifert surface $S$ for $K$ induces a filtration on the
chain complex $\left({ECC}^{h_+}(N,\alpha),\partial^{ECH}\right)$, where $int(N)$ is homeomorphic to $Y \setminus K$. Moreover 
if $\alpha$ is also adapted to $S$, the homology of $\left({ECC}^{h_+}(N,\alpha),\partial^{ECH}\right)$ is isomorphic to
$\widehat{ECH}(Y,\alpha)$, and the first page of the spectral sequence associated to the filtration is the hat version of
embedded contact knot homology $\widehat{ECK}(K,Y,\alpha)$.

In this section we generalise the knot filtration in two natural ways.

In Subsection \ref{Subsection: The full ECK} we extend the filtration induced by $K$ on the chain
complex $\left({ECC}^{h_+,e_+}(N,\alpha),\partial^{ECH}\right)$.
This filtration is defined in a way completely analogue to the hat case. We define the \emph{full version of embedded contact knot homology}
of $(K,Y,\alpha)$ to be the first page $ECK(K,Y,\alpha)$ of the associated spectral sequence. Moreover we remove the condition that
$\alpha$ must be compatible with $S$, in order to consider a wider class of contact forms: the knot spectral sequence is still well defined, but at the 
price of renouncing to a proof of the existence of an isomorphism between $ECH(Y,\alpha)$ and the page $\infty$ of the spectral sequence.

In Subsection \ref{Subsection: ECK for links} we generalise the knot filtration to $n$-components links $L$.
The resulting homologies, defined in a way analogue to the case of knots, are the \emph{full and hat versions of embedded contact knot
homologies of $(L,Y,\alpha)$}, which will be still denoted $ECK(L,Y,\alpha)$ and, respectively, $\widehat{ECK}(L,Y,\alpha)$.
Similarly to Heegaard-Floer link homology, these homologies come endowed with an \emph{Alexander (relative) $\Z^n$-degree}.

\subsection{The full $ECK$} \label{Subsection: The full ECK}

Let $K$ be a homologically trivial knot in a contact three-manifold $(Y,\alpha)$ and suppose that $\alpha$ is adapted to $K$ in the sense of
Subsection \ref{Subsection: ECH for manifolds with boundary}. Recall in particular that there
exist two concentric neighborhoods $V(K) \subset \mathcal{N}(K)$ of $K$ whose boundaries are M-B tori $T_1 = \partial \mathcal{N}(K)$ and
$T_2 = \partial V(K)$ foliated by orbits of $R_{\alpha}$ in the homology
class of meridians for $K$. These two families of orbits are modified into the two couples of orbits $\{e,h\}$ and, respectively,
$\{e_+,h_+\}$. Let moreover $N = Y \setminus int(\mathcal{N}(K))$.

Consider the chain complex $\left(ECC^{e_+,h_+}(N,\alpha), \partial^{ECH}\right)$ where, recall, the chain group is freely generated on
$\Z/2$ by the orbit sets $\gamma$ in $\mathcal{O}(N) \sqcup \{h_+,e_+\}$ and $\partial^{ECH}$ is the $ECH$-differential (obtained by restricting
the differential on $ECC(Y,\alpha)$) given by  
Equation \ref{Equation: The complete ECH differential with all the orbits}. 

A Seifert surface $S$ for $K$ induces an \emph{Alexander degree} $\langle \cdot,S \rangle$ on the generators of ${ECC}^{h_+,e_+}(N,\alpha)$
exactly like in the case of ${ECC}^{h_+}(N,\alpha)$.
Let ${ECC}_{d}^{h_+,e_+}(N,\alpha)$ be the submodule of ${ECC}^{h_+,e_+}(N,\alpha)$ generated by the $\gamma \in \mathcal{O}(N) \sqcup \{h_+,e_+\}$
with $\langle \gamma,S \rangle = d$. If
$${ECC}_{\leq d}^{h_+,e_+}(N,\alpha) := \bigoplus_{j \leq d} {ECC}_{j}^{h_+,e_+}(N,\alpha),$$
we have the exhaustive filtration
$$\ldots \subseteq {ECC}_{\leq d-1}^{h_+,e_+}(N,\alpha) \subseteq {ECC}_{\leq d}^{h_+,e_+}(N,\alpha) \subseteq {ECC}_{\leq d+1}^{h_+,e_+}(N,\alpha) \subseteq \ldots $$
of ${ECC}^{h_+,e_+}(N,\alpha)$. Proposition \ref{Proposition: Holomorphic curves preserve the filtration in general} again implies that
$\partial^{ECH}$ preserves the filtration. Let
$$\partial_d^{ECK} : {ECC}_{d}^{h_+,e_+}(N,\alpha) \longrightarrow {ECC}_{d}^{h_+,e_+}(N,\alpha) $$
be the part of $\partial^{ECH}$ that strictly preserves the filtration degree $d$, that is, the differential
induced by $\partial^{ECH}|_{{ECC}_{\leq d}^{h_+,e_+}(N,\alpha)}$ on the quotient
$$\frac{{ECC}_{\leq d}^{h_+,e_+}(N,\alpha)}{{ECC}_{\leq d-1}^{h_+,e_+}(N,\alpha)} = {ECC}_{d}^{h_+,e_+}(N,\alpha).$$
Set
$$\partial^{ECK}:=\bigoplus_d \partial_d^{ECK} \colon ECC^{e_+,h_+}(N,\alpha) \longrightarrow ECC^{e_+,h_+}(N,\alpha).$$
\begin{Def}
We define the \emph{full embedded contact knot homology of $(K,Y,\alpha)$} by
 $$ECK(K,Y,\alpha) := H_*\left(ECC^{e_+,h_+}(N,\alpha), \partial^{ECK}\right). $$
\end{Def}

Note that, as in the hat case, the only holomorphic curves counted by $\partial^{ECH}$ that do not strictly respect the filtration degree
are the curves that
contain the plain from $h_+$ to $\emptyset$ (see Observation \ref{Observation: The only lowing-degree curve is the disk from h_+ in the hat case}).
Recalling the expression of $\partial^{ECH}$ given in Equation
\ref{Equation: The complete ECH differential with all the orbits}, it follows that $\partial^{ECK}$ is given by  
\begin{equation} \label{Equation: The complete ECK differential with all the orbits}
 \partial^{ECK}(e_+^a h_+^b \gamma) =  e_+^{a-1} h_+^b h \gamma +  e_+^a h_+^{b-1} e\gamma + e_+^a h_+^b \partial \gamma,
\end{equation}
where $\gamma \in \mathcal{O}(N)$ and any term is meant to be $0$ if it contains an orbit with total multiplicity that is negative or not in
$\{0,1\}$ if the orbit is hyperbolic.

Again the homology comes with an \emph{Alexander degree}, which is well defined once the an homology class for $S$ is fixed.
In fact we have the natural splitting:
\begin{equation} \label{Equation: ECK for knots splits in direct sum}
 ECK_*(K,Y,\alpha) \cong  \bigoplus_{d \in \Z} ECK_{*,d}(K,Y,\alpha)
\end{equation}
where
$$ECK_{*,d}(K,Y,\alpha) := H_*(ECC_d^{h_+,e_+}(N,\alpha),\partial_d^{ECK}).$$

Recalling that $Y \setminus \mathcal{N}(K)$ is homeomorphic to $Y \setminus K$, it is interesting to state the following:
\begin{Lemma} \label{Lemma: ECK for knots full iso to ECH Y minus N(K)}
 If $\mathcal{N}(K)$ is a neighborhood of $K$ as above then
 $$ECK(K,Y,\alpha) \cong ECH(Y \setminus \mathcal{N}(K),\alpha).$$
\end{Lemma}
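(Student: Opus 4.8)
The plan is to identify both sides with the homology of the same chain complex, namely the genuine $ECH$ complex of the contact manifold with boundary $N = Y \setminus \mathrm{int}(\mathcal{N}(K))$, and then invoke the fact that $\partial^{ECK}$ is, by construction (see Observation \ref{Observation: The only lowing-degree curve is the disk from h_+ in the hat case}), precisely the differential that counts only holomorphic curves that do not cross a thin neighborhood of $K$. First I would recall that $ECC^{e_+,h_+}(N,\alpha)$ is freely generated over $\Z/2$ by orbit sets in $\mathcal{O}(N)\sqcup\{e_+,h_+\}$, and that $T_2 = \partial V(K)$ is a positive Morse–Bott torus whose perturbation produces exactly the pair $\{e_+,h_+\}$. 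Thus $N$, viewed as a manifold with torus boundary carrying a Reeb vector field tangent to $\partial N$ with rational slope (meridional), is precisely the kind of manifold to which the Morse–Bott construction of Subsection \ref{Subsection: ECH for manifolds with boundary} applies: its $ECH$ is computed by the complex generated by orbit sets in $N$ together with the two boundary orbits $e_+$ and $h_+$ coming from the M-B perturbation of $\partial N = T_2$.

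Next I would check that the two differentials agree. On one side, $ECH(N,\alpha)$ is computed by counting M-B buildings in $\R \times N$; since these buildings live entirely in $\R \times N$ (which does not contain $K$), their count is governed by Equation \ref{Equation: The complete ECH differential with all the orbits} but \emph{without} the extra plane from $h_+$ to $\emptyset$, because that plane necessarily crosses $K$ and hence is not a curve in $\R \times N$. On the other side, $\partial^{ECK}$ was defined precisely as the part of $\partial^{ECH}$ that strictly preserves the Alexander (filtration) degree, and Observation \ref{Observation: The only lowing-degree curve is the disk from h_+ in the hat case} identifies the curves that fail to preserve the degree as exactly those that intersect $K$ — again, just the plane from $h_+$ to $\emptyset$. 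So both $\partial^{ECK}$ and the $ECH$-differential of $N$ are obtained from $\partial^{ECH}$ by deleting the same single holomorphic curve; comparing with Equation \ref{Equation: The complete ECK differential with all the orbits}, they coincide term by term. Here I would also remark that, strictly speaking, $\mathcal{N}(K)$ and $V(K)$ both give tubular neighborhoods of $K$ and the homology $ECH(Y \setminus \mathcal{N}(K),\alpha)$ is insensitive (up to the obvious isomorphism) to which concentric neighborhood one removes, since the region $T^2\times(1,2)$ between them contains no orbits of small action and no index-$1$ M-B buildings beyond those already accounted for.

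Having matched chain groups and differentials, the isomorphism $ECK(K,Y,\alpha) = H_*(ECC^{e_+,h_+}(N,\alpha),\partial^{ECK}) \cong ECH(Y \setminus \mathcal{N}(K),\alpha)$ follows immediately, without even passing through a spectral sequence. The main obstacle, and the point needing the most care, is the verification that the $ECH$-differential of the \emph{manifold with boundary} $N$ — as defined via the Morse–Bott machinery of \cite{CGH2} with $\partial N$ a rational M-B torus — genuinely counts the same M-B buildings as the restriction of $\partial^{ECH}$ from $Y$, minus only the curves crossing $K$; this is essentially a careful reading of the compactness and gluing analysis of \cite[Sections 7--9]{CGH2}, together with the Blocking and Trapping Lemmas (Lemmas \ref{Lemma: Blocking lemma} and \ref{Lemma: Trapping lemma}), which guarantee that no M-B building with an end on $T_1$ or escaping into $V(K)$ contributes. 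Once that identification is in place everything else is formal, and I would phrase the proof as: (i) recall $ECH(N,\alpha)$ is computed by $ECC^{e_+,h_+}(N,\alpha)$; (ii) recall $\partial^{ECK}$ deletes exactly the $h_+\to\emptyset$ plane; (iii) observe that plane is the unique relevant curve crossing $K$; (iv) conclude the two complexes are literally equal.
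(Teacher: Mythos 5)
There is a genuine gap, and it originates in a confusion between the two tori $T_1$ and $T_2$. You write that the $ECH$ of the manifold with boundary $N$ ``is computed by the complex generated by orbit sets in $N$ together with the two boundary orbits $e_+$ and $h_+$ coming from the M-B perturbation of $\partial N = T_2$''. But $\partial N = T_1$, not $T_2$: the pair $\{e_+,h_+\}$ lives on $T_2 = \partial V(K)$, which sits inside $\mathcal{N}(K)$ and hence \emph{outside} $N$, while the M-B perturbation of $\partial N = T_1$ produces the other pair $\{e,h\}$. Consequently the two chain complexes you want to declare ``literally equal'' have different generating sets: $ECC^{e_+,h_+}(N,\alpha)$ is generated by orbit sets built from $\mathcal{P}(int(N)) \cup \{e,h\} \cup \{e_+,h_+\}$, whereas the complex computing $ECH(Y\setminus\mathcal{N}(K),\alpha) = ECH(int(N),\alpha)$ contains none of $e$, $h$, $e_+$, $h_+$. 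Your step (iv) therefore fails, and no matching of differentials can repair it, because the underlying modules do not agree.

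The actual content of the lemma is precisely the algebraic step you skip: one must cancel the extra orbits. From Equation \ref{Equation: The complete ECK differential with all the orbits} one reads off $\partial^{ECK}(e_+\gamma) = h\gamma + \ldots$ and $\partial^{ECK}(h_+\gamma) = e\gamma + \ldots$ (the $\emptyset$ term appearing in \ref{Equation: ECH boundary near the binding} being deleted in $\partial^{ECK}$), so the pairs $(e_+,h)$ and $(h_+,e)$ generate acyclic pieces that can be removed by the same long-exact-sequence argument as in Lemma \ref{Lemma: How kill h+ and get the quotient e gamma sim gamma}. This is exactly how the paper proceeds, passing from $ECC^{e_+,h_+}(N,\alpha)$ to $ECC^{e,h_+}(int(N),\alpha)$ to $ECC(int(N),\alpha)$, on which $\partial^{ECK}=\partial^{ECH}$, whence the homology is $ECH(int(N),\alpha)$. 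Note that because the $\emptyset$ term is absent from $\partial^{ECK}$, this cancellation is clean and, unlike in Lemma \ref{Lemma: How kill h+ and get the quotient e gamma sim gamma}, produces no quotient by $[e\gamma]\sim[\gamma]$. Your observation that $\partial^{ECK}$ counts exactly the curves avoiding $K$ is correct and is an ingredient, but by itself it does not identify the two homologies.
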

\proof
 By arguments similar to those in the proof of Lemma \ref{Lemma: How kill h+ and get the quotient e gamma sim gamma} it is easy to prove that:
 \begin{equation*}
  \begin{array}{ccl}
   ECK(K,Y,\alpha) & \cong & H_*\left(ECC^{e_+,h_+}(N,\alpha), \partial^{ECK}\right)\\
                   & \cong & H_*\left(ECC^{e,h_+}(int(N),\alpha), \partial^{ECK}\right)\\
                   & \cong & H_*\left(ECC(int(N),\alpha), \partial^{ECK}\right)\\
                   & \cong & ECH(int(N),\alpha),
  \end{array}
 \end{equation*}
 where the last comes from the fact that $\partial^{ECK}(\gamma) = \partial^{ECH}(\gamma)$ for any $\gamma \in \mathcal{O}(N)$.
\endproof

%\proof
% Exactly like in the closed case, the properties of $\alpha$ in $\mathcal{N}(K)$ and the direct limit arguments in \cite{CGH2} allow us to
% discard the orbits contained in  $int(\mathcal{N}(K)) \setminus V(K)$ (called the ``no man's land'' by Colin-Ghiggini-Honda). 
% There is then a natural identification between the two chain groups.
 
% By definition, the $ECH$ differential of $ECK(K,Y,\alpha)$ counts all the ECH-index $1$ M-B buildings in $Y \setminus \mathring{V}(K)$ and
% the only ones counted more by $\partial^{ECH}$ on $ECC^{h_+,e_+}(N,\alpha)$ are those not contained in $Y \setminus \mathring{V}(K)$.
% But these are exactly the M-B buildings containing the holomorphic disk from $h_+$ to $\emptyset$. The result then follows from the
% last observation.
%\endproof

 Note that so far we only assumed that $\alpha$ is compatible with $K$, while we did not suppose the condition
 \begin{itemize}
  \item[($\spadesuit$)] $\alpha$ is compatible with a Seifert surface $S$ for $K$.
 \end{itemize}
 As remarked in Observation \ref{Observation: Alpha compatible or not with S to define ECK}, without $\spadesuit$ we can not apply Theorem
 \ref{Theorem: ECH of Y iso to the relative versions}, and so we do not know if the spectral sequence whose $0$-page is the $ECK$-chain complex
 limits to $ECH(Y,\alpha)$. On the other hand we have the following
 
 \begin{Lemma} \label{Lemma: No need of alpha adapted to S}
  Theorem \ref{Theorem: ECH of Y iso to the relative versions} holds even without assuming condition $\spadesuit$.
 \end{Lemma}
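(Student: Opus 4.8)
The plan is to show that the isomorphisms $ECH(Y,\alpha) \cong ECH^{e_+,h_+}(N,\alpha)$ and $\widehat{ECH}(Y,\alpha) \cong ECH^{h_+}(N,\alpha)$ of Theorem \ref{Theorem: ECH of Y iso to the relative versions} are purely local statements near $K$, and hence do not use condition $\spadesuit$ anywhere. The only place the Seifert surface $S$ entered the statement of that theorem was through the definition of $ECH^{e_+,h_+}(N,\alpha)$ and through the computations of Sections 8--9 of \cite{CGH2}; but $\spadesuit$ was never used in those computations: it is used only later, when one wants to conclude that the knot filtration degree $\langle\cdot,S\rangle$ is non-negative so that $ECC^{h_+}_{\leq d}$ is an honest finite-length filtration converging to $\widehat{ECH}(Y,\alpha)$. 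So the first step is simply to \emph{locate} where $\spadesuit$ is invoked in the proof of Theorem \ref{Theorem: ECH of Y iso to the relative versions} and observe it is not needed for the isomorphism itself.

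Concretely, I would argue as follows. The form $\alpha$ being adapted to $K$ (Subsection \ref{Subsection: ECH for manifolds with boundary}) already pins down the Reeb dynamics on all of $\mathcal{N}(K)$: the orbits $e,h,e_+,h_+$, the meridional Morse-Bott tori $T_1,T_2$, and — crucially, by the Blocking and Trapping Lemmas (Lemmas \ref{Lemma: Blocking lemma} and \ref{Lemma: Trapping lemma}) — the fact that the only index-$1$ holomorphic curves meeting $\mathcal{N}(K)$ are the ones depicted in Figure \ref{Figure: Dynamic near binding in W}, giving relations \ref{Equation: ECH boundary near the binding} and, after including multiplicities, Equation \ref{Equation: The complete ECH differential with all the orbits}. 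None of this needs a Seifert surface. The isomorphism $ECH(Y,\alpha)\cong ECH^{e_+,h_+}(N,\alpha)$ is then obtained by the algebraic argument already recorded: the chain map $ECC(Y,\alpha)\to ECC^{e_+,h_+}(N,\alpha)$ that forgets the orbits in $\mathrm{int}(V(K))\cup (T^2\times(1,2))$ is a quasi-isomorphism because those orbits all have large action and are organized into acyclic subcomplexes (a Morse-Bott / action-filtration argument on the neighborhood of $K$, identical to the one in \cite[Sections 8--9]{CGH2}). The same forgetful construction, followed by Lemma \ref{Lemma: How kill h+ and get the quotient e gamma sim gamma}, yields $\widehat{ECH}(Y,\alpha)\cong ECH^{h_+}(N,\alpha)$. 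I would therefore just reproduce the \cite{CGH2} argument verbatim, remarking at each step that $S$ never appears.

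So the proof is essentially: \emph{inspect the proof of Theorem \ref{Theorem: ECH of Y iso to the relative versions} in \cite[Sections 8--9]{CGH2} and check that condition $\spadesuit$ is used nowhere in it.} The one genuine point requiring care — the main obstacle — is making sure that dropping $\spadesuit$ does not silently break any finiteness or convergence hypothesis used in those sections: without $\spadesuit$ the function $\langle\cdot,S\rangle$ can take negative values and the filtration $ECC^{h_+}_{\leq d}$ is no longer bounded below, but this filtration plays no role in the statement $ECH^{h_+}(N,\alpha)\cong\widehat{ECH}(Y,\alpha)$ itself — it is only used afterwards to extract $\widehat{ECK}$ as a page of a spectral sequence. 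The relevant action/Morse-Bott filtration on the neighborhood of $K$, which \emph{is} used in the proof, depends only on $\alpha$ being adapted to $K$ and on the choice of the large constant $L$ in the Morse-Bott perturbation, not on $S$. Hence the argument goes through unchanged, and I would close by restating that the only consequence of omitting $\spadesuit$ is the loss of the \emph{a priori} boundedness of the knot filtration, which is irrelevant to the isomorphism.
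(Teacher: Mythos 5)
Your overall strategy --- reread the proof of Theorem \ref{Theorem: ECH of Y iso to the relative versions} in \cite{CGH2} and isolate where $\spadesuit$ enters --- is the right one, and it is the strategy the paper follows. But your conclusion that $\spadesuit$ ``is used nowhere'' in that proof is not correct, and this is where your argument has a genuine gap. According to the paper, $\spadesuit$ \emph{is} invoked in one specific place: in Section 9.7 of \cite{CGH2}, to prove that a certain map $\sigma_k$ appearing in the direct-limit/homological-algebra part of the argument is nilpotent. Nilpotence of $\sigma_k$ is a step in the proof of the isomorphism itself, not merely in the later extraction of $\widehat{ECK}$ as a page of the knot spectral sequence, so you cannot dismiss the role of $S$ by saying that the knot filtration is only used ``afterwards.'' You correctly flag the main obstacle (that without $\spadesuit$ the Alexander degree is unbounded below) but then wave it away instead of resolving it.

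What is missing is the substitute argument for nilpotence. The paper supplies it as follows: the relevant differential $\partial'_N$ strictly decreases the Alexander degree $\langle\cdot,S\rangle$. If $(\partial'_N)^i \neq 0$ for all $i$, then for $j$ arbitrarily large $(\partial'_N)^j\Gamma$ would contain an orbit set of arbitrarily large negative Alexander degree. Such orbit sets must have arbitrarily large action, which is impossible because, by Lemma \ref{Lemma: Holomorphic curves low the actions of the orbits}, everything appearing in $(\partial'_N)^j\Gamma$ has action bounded above by $\mathcal{A}(\Gamma)$. So nilpotence holds even though the filtration is not bounded below, but for a dynamical reason (the action bound), not for the structural reason you give. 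Your proposal would be complete if you replaced the claim ``$S$ never appears'' with this action-versus-degree argument at the one point where it does appear.
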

 \proof
  Reading carefully the proof of Theorem \ref{Theorem: ECH of Y iso to the relative versions} given in \cite{CGH2} one can see that $\spadesuit$ 
  is not really necessary. It is explicitly used only in Section 9.7 to prove that the map $\sigma_k$ is nilpotent, but this fact can be proved
  also without assuming $\spadesuit$. Indeed $\partial'_N$ strictly decreases the Alexander degree. Then, if $(\partial'_N)^i \neq 0$
  for every $i \in \N$, for $j$ arbitrarily large $(\partial'_N)^j \Gamma$ would contain (as factor) an orbit set with arbitrarily large negative
  Alexander degree, and so also with arbitrarily large action, which is not possible by Lemma \ref{Lemma: Holomorphic curves low the actions of the orbits}.
 \endproof

% On the other hand, this spectral sequence is in any case well defined, as well as $ECK(K,Y,\alpha)$.
%
% Even if, in light of Lemma \ref{Lemma: There exists alpha copatible with K and S} we could assume $\spadesuit$ here without restrictions on $K$,
% we prefer to avoid it in the general definition of $ECK(K,Y,\alpha)$ in order to consider a wider class of contact forms. 
 
% We remark that, reading carefully \cite{CGH2}, we feel that the requirement $\spadesuit$ could be not really necessary to prove Theorem
% \ref{Theorem: ECH of Y iso to the relative versions}.
% We try to roughly motivate our feeling.
 
 \begin{Obs}
 A rough explanation of last lemma is the following. 
 By direct limit arguments the orbits in the no man's land
 $int(\mathcal{N}(K)) \setminus V(K)$ can be avoided also if $\spadesuit$ is not assumed, so that we can still write
 $$ECC(Y,\alpha) \cong ECC(V,\alpha) \otimes ECC(N,\alpha)$$
 (up to the restriction on the action of the orbits made in \cite[Section 9]{CGH2}).
 The computations for $ECH(V,\alpha)$ in \cite[Section 8]{CGH2} do not use $\spadesuit$, and in fact here the hypothesis
 is not even assumed. Similarly, $ECH(N,\alpha)$ is still well defined as in \cite[Subsection 7.1]{CGH2} and does not depend on the choice
 of $S$. Moreover the Blocking Lemma still implies that holomorphic curves with positive limit in $N$ can not
 cross $\partial N$, so that $ECC(N,\alpha)$ is again a subcomplex of $ECC(Y,\alpha)$. This suggests that what happens in
 $N$ should not influence the direct limits computations in $V$.

% An even more basic motivation behind our perception comes from the intuitive approach to Theorem 1.1.1 presented in Subsection 9.1 of \cite{CGH2}:
% this argument is evidently local near $K$ and $\spadesuit$ is not used.
\end{Obs}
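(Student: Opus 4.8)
The plan is to follow, essentially verbatim, the proof of Theorem~\ref{Theorem: ECH of Y iso to the relative versions} carried out by Colin, Ghiggini and Honda in \cite{CGH2}, and to locate precisely the one point where the hypothesis $(\spadesuit)$ (that $\alpha$ be adapted to a Seifert surface $S$ for $K$) is actually used. First I would recall the skeleton of that argument. After restricting attention to orbit sets of action below a large but fixed threshold $L$ — which is harmless, since by Lemma~\ref{Lemma: Holomorphic curves low the actions of the orbits} the $ECH$-differential never increases action — the complex $ECC(Y,\alpha)$ splits, up to this truncation, as a tensor product of a ``$V$-part'' built from the orbits inside the solid torus $V(K)$ (including $e,h,e_+,h_+$) and an ``$N$-part'' $ECC(N,\alpha)$. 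The local computation of $ECH(V,\alpha)$ in \cite[Section~8]{CGH2} is purely local near $K$ and does not invoke $(\spadesuit)$; $ECH(N,\alpha)$ is defined as in \cite[Subsection~7.1]{CGH2} independently of the choice of $S$; and the Blocking Lemma~\ref{Lemma: Blocking lemma} keeps $ECC(N,\alpha)$ a subcomplex of $ECC(Y,\alpha)$ regardless of $S$. So the whole gluing scheme, and the orbit sets in the no man's land $int(\mathcal{N}(K))\setminus V(K)$ being discarded by direct-limit arguments, are all available without $(\spadesuit)$.

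The single place where $(\spadesuit)$ enters is \cite[Section~9.7]{CGH2}, where it is used to prove that a certain auxiliary map $\sigma_k$ — equivalently, the component $\partial'_N$ of the differential that \emph{strictly} decreases the Alexander degree $\langle\cdot,S\rangle$ — is nilpotent, so that an inverse of geometric-series type $1+\sigma_k+\sigma_k^2+\cdots$ is a finite sum. My replacement for this step is a direct nilpotency argument using only that $\alpha$ is adapted to $K$. Since $\partial'_N$ strictly lowers $\langle\cdot,S\rangle$, if $(\partial'_N)^i\neq 0$ for every $i\in\N$ then, applying it to a generator $\Gamma$ and iterating, for $j$ arbitrarily large $(\partial'_N)^j\Gamma$ would have a summand containing an orbit set of arbitrarily large negative Alexander degree. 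The crucial geometric input is that an orbit lying near $K$ with very negative intersection number with $S$ is forced to wind many times around $K$ — in the explicit model of Subsection~\ref{Subsection: Open Books}, orbits in $int(\mathcal{N}(K))\setminus V(K)$ sit on the tori $T_y$ with first return $(y,\vartheta)\mapsto(y,\vartheta+f_\delta(y))$, so a degree of size $\approx f_\delta(y)\,q$ is realized only by an orbit of period $\approx q$, hence of action growing with $q$ — and therefore has arbitrarily large action. This contradicts the fact that $\partial^{ECH}$, being a count of holomorphic curves, does not increase action (Lemma~\ref{Lemma: Holomorphic curves low the actions of the orbits}). Hence $\sigma_k$ is nilpotent, and the remainder of the argument of \cite{CGH2} goes through unchanged, yielding $ECH(Y,\alpha)\cong ECH^{e_+,h_+}(N,\alpha)$ and $\widehat{ECH}(Y,\alpha)\cong ECH^{h_+}(N,\alpha)$.

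The main obstacle is bookkeeping rather than a single hard idea: one must go through \cite[Sections~8--9]{CGH2} line by line to be confident that $(\spadesuit)$ is used nowhere else — in particular that every truncation-by-action step, the identification of the local $ECH$ of the solid torus, and every compatibility of the tensor splitting with the differentials survive. The only genuinely mathematical point to pin down is the claim in the nilpotency argument, namely that within the fixed finite-action window only finitely many orbit sets occur (standard from non-degeneracy of $\alpha$ in $int(N)$, compactness, and the Morse--Bott perturbation at $\partial N$) and that negative Alexander degree is incompatible with bounded action for orbits near $K$; both follow from the explicit Reeb dynamics in $\mathcal{N}(K)$ recalled earlier. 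Once this is in place, the lemma follows.
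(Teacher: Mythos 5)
Your proposal is correct and follows essentially the same route as the paper: the observation's content (the tensor splitting $ECC(Y,\alpha)\cong ECC(V,\alpha)\otimes ECC(N,\alpha)$, the locality of the Section~8 computation, the Blocking Lemma keeping $ECC(N,\alpha)$ a subcomplex) together with the accompanying lemma's proof, which likewise isolates the nilpotency of $\sigma_k$ in \cite[Section~9.7]{CGH2} as the only use of $(\spadesuit)$ and replaces it by the argument that $\partial'_N$ strictly decreases the Alexander degree, so non-nilpotency would produce orbit sets of arbitrarily negative degree and hence arbitrarily large action, contradicting Lemma~\ref{Lemma: Holomorphic curves low the actions of the orbits}. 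Your extra justification that very negative intersection number with $S$ forces many windings around $K$ and hence large action is a welcome elaboration of a step the paper leaves implicit.
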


\vspace{0.3 cm}

%\begin{Prop}
% Theorem \ref{Theorem: ECH of Y iso to the relative versions} applies also if $\spadesuit$ is not assumed.
%\end{Prop}
%\proof
% The only proof of the proposition that we are able to provide coincides with the proof of Theorem 1.1.1 in \cite{CGH2} and we do not rewrite it
% here: indeed, even if $\spadesuit$ is assumed, one can check that this is never used in the proof of the theorem. 
% The only 
%\endproof

%The intuitive idea of the proposition is the following.

In analogy with Conjecture \ref{Conjecture: ECK iso to HFK hat} we state the following:
\begin{Cnj} \label{Conjecture: ECK iso to HFK full} For any knot $K$ in $Y$:
 $$ECK(K,Y,\alpha) \cong HFK^+(-K,-Y),$$
 where $\alpha$ is any contact form on $Y$ adapted to $K$.
\end{Cnj}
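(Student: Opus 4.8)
The plan is to deduce the claimed isomorphism from the Colin--Ghiggini--Honda equivalence of Theorem~\ref{Theorem: HF and ECH in intro}, upgraded from an equivalence of homologies to an equivalence of \emph{filtered} complexes. First I would use Lemma~\ref{Lemma: ECK for knots full iso to ECH Y minus N(K)} to reinterpret the left-hand side intrinsically as $ECH(Y\setminus\mathcal{N}(K),\alpha)$, so that the auxiliary orbits $e_+,h_+$ disappear and one is left with genuinely the embedded contact homology of the knot complement. On the Heegaard Floer side, $HFK^+(-K,-Y)$ is by construction the first page of the spectral sequence attached to the knot filtration on a chain complex computing $HF^+(-Y)$. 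Since a filtered quasi-isomorphism induces isomorphisms on every page of the associated spectral sequences, in particular on the first pages, the statement follows once one knows that the chain-level equivalence $ECC(Y,\alpha)\simeq CF^+(-Y)$ underlying Theorem~\ref{Theorem: HF and ECH in intro} can be chosen to intertwine the two knot filtrations.

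To make this precise I would represent $(Y,\alpha)$ by an open book $(K',S,\phi)$ compatible with $K$ --- for instance with $K$ a component of the binding, so that (as in Subsection~\ref{Subsection: widehat ECH for knots}) the intersection pairing $\langle\,\cdot\,,S_K\rangle$ with a Seifert surface $S_K$ for $K$ realizes the $ECH$-side knot filtration, while the very same geometric data equip the adapted (doubly-pointed) Heegaard diagram with Ozsv\'ath--Szab\'o's knot filtration. One would then revisit the chain of quasi-isomorphisms of \cite{CGH1}-\cite{CGH5} step by step --- Morse--Bott perturbations near the M--B tori $T_1,T_2$, the neck-stretching/open-book bifurcation analysis, the identifications with $ECH$ of the sutured pieces --- and check that each elementary move carries filtered complexes to filtered complexes and filtered maps to filtered maps, using Proposition~\ref{Proposition: Holomorphic curves preserve the filtration in general} on the $ECH$ side to control how holomorphic curves interact with $K$.

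The main obstacle is exactly this filtered refinement. The Colin--Ghiggini--Honda isomorphism is not a single explicit map but a long composition of quasi-isomorphisms passing through sutured Floer homology, and showing that \emph{all} of them can be arranged to respect \emph{strictly} the knot filtration --- rather than merely preserving the total homology --- is genuinely delicate; this is precisely why the analogous hat statement (Conjecture~\ref{Conjecture: ECK iso to HFK hat}, equivalently Conjecture~\ref{ConjectureIntro: ECK iso to HFK hat}) is also still open. A partial way around it would be to stay inside the sutured category: express both sides as invariants of the sutured manifold obtained from $Y\setminus\mathcal{N}(K)$ with two meridional sutures --- $ECK(K,Y,\alpha)$ via a plus-flavoured sutured contact homology and $HFK^+(-K,-Y)$ via sutured Heegaard Floer homology with the appropriate completion --- and then invoke a plus version of the conjectural sutured $ECH\cong$ sutured $HF$ equivalence of \cite{CGHH}. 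Either route, however, seems to require essentially the full strength of a filtered (or sutured) enhancement of \cite{CGH1}-\cite{CGH5}, which is why the statement is recorded here only as a conjecture; the results of the present paper establish it at the level of graded Euler characteristics (Theorem~\ref{TheoremIntro: Euler characteristic of ECK} together with \eqref{EquationIntro: Euler characteristics of HFL^-}).
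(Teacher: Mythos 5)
The statement you were asked to prove is stated in the paper only as a conjecture (Conjecture \ref{Conjecture: ECK iso to HFK full}); the paper contains no proof of it, so there is nothing to compare your argument against line by line. What the paper does establish is the Euler-characteristic shadow of the conjecture: Theorem \ref{TheoremIntro: Euler characteristic of ECK} combined with Equation \ref{EquationIntro: Euler characteristics of HFL^-} gives the corollary that the conjectures hold in $S^3$ at the level of graded Euler characteristics, and nothing more.

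Your proposal is, appropriately, not a proof but a strategy sketch, and it is an accurate one. The reduction of the left-hand side via Lemma \ref{Lemma: ECK for knots full iso to ECH Y minus N(K)}, the observation that $HFK^+$ arises as the first page of the knot-filtration spectral sequence on a complex computing $HF^+(-Y)$, and the idea that one would need to upgrade the Colin--Ghiggini--Honda equivalence of Theorem \ref{Theorem: HF and ECH in intro} to a \emph{filtered} quasi-isomorphism intertwining the two knot filtrations --- all of this is consistent with how the paper frames the problem (cf.\ the role of Proposition \ref{Proposition: Holomorphic curves preserve the filtration in general} and the sutured formulation of \cite{CGHH}). You correctly identify the genuine obstruction: the CGH isomorphism is a long composition of quasi-isomorphisms, and arranging every link in that chain to respect the Alexander filtration (or, alternatively, proving a plus-flavoured sutured $ECH \cong$ sutured $HF$ equivalence) is an open problem of essentially the same difficulty as the conjecture itself. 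Since you flag this honestly and do not claim to have closed the gap, your write-up should be read as a plausibility argument and a statement of evidence (the Euler-characteristic agreement), which is exactly the status the paper assigns to this claim.
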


\subsection{The generalization to links}
\label{Subsection: ECK for links}

%\begin{Prop} Let $z$ be a point in $T^2 \times [1,2]$ not belonging to the projection of the holomorphic curves counted by $\partial^{ECH}$. The map $U_z$ is
% then defined by
% $$U_z(e_+^n \gamma) = e e_+^{n-1} \gamma$$
% and 
% $$\widehat{ECH}(Y \setminus V,\alpha) \cong H_*(ECC(N,\alpha) \otimes \langle h_+ \rangle, \partial^{ECK}) $$
%\end{Prop}

In this subsection we extend the definitions of $ECK$ and $\widehat{ECK}$ to the case of homologically trivial links with more than one component.
For us a (\emph{strongly}) \emph{homologically trivial $n$-link} in $Y$ is a disjoint union of $n$ knots, each of which is homologically trivial in $Y$. 

%\begin{Obs} \label{Observation: The direct limit arguments are local}
% We end this section by remarking that the arguments used in \cite{CGH2} to prove the relations above are local near $K$. This implies that
% could apply similar reasoning also to manifolds with boundary homeomorphic to disjoint unions of several tori. This will be useful later
% to define embedded contact homology of link complements.
%\end{Obs}

Suppose that 
$$L = K_1 \sqcup \ldots \sqcup K_n$$
is a homologically trivial $n$-link in $Y$.
We say that a contact form $\alpha$ on $Y$ is \emph{adapted to $L$} if it is adapted to $K_i$ for each $i$.
\begin{Lemma} For any link $L$ and contact structure $\xi$ on $Y$ there exists a contact form compatible with $\xi$ which is adapted to $L$. 
\end{Lemma}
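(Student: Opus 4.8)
The plan is to reduce this to the single-knot statement, Lemma \ref{Lemma: There exists alpha copatible with K and S}, component by component, and then observe that the compatibility conditions for distinct components involve disjoint neighborhoods, so no conflict arises. First I would recall that, by definition, $\alpha$ is adapted to $L$ precisely when it is adapted to $K_i$ for every $i$, and that being adapted to $K_i$ is a condition supported in an arbitrarily thin tubular neighborhood $\mathcal{N}(K_i)$ of $K_i$: the contact form must take the model form of Subsection \ref{Subsection: ECH for manifolds with boundary} on $\mathcal{N}(K_i)$ (concentric M-B tori $T_1^i, T_2^i$, the core $K_i$ a Reeb orbit, the prescribed foliations by meridians), while outside $\bigcup_i \mathcal{N}(K_i)$ nothing is required beyond non-degeneracy. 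Since $L$ is a disjoint union of embedded knots, we may choose the $\mathcal{N}(K_i)$ pairwise disjoint.

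The key steps, in order, are: (1) start from any contact form $\alpha_0$ for $\xi$; by a contact isotopy of each $K_i$ (the $K_i$ being disjoint, these can be performed simultaneously in disjoint neighborhoods) arrange that each $K_i$ is transverse to $\xi$, hence after a further small isotopy of the form is a Reeb orbit of $\alpha_0$; (2) apply the Darboux--Weinstein neighborhood theorem near each $K_i$ separately — exactly as in the proof of Lemma \ref{Lemma: There exists alpha copatible with K and S}, part 1 — to modify $\alpha_0$ inside $\mathcal{N}(K_i)$ so that the model compatibility with $K_i$ holds there, while leaving $\alpha_0$ unchanged outside $\mathcal{N}(K_i)$; (3) since the neighborhoods are disjoint, these modifications are performed independently and the resulting form $\alpha$ is simultaneously adapted to all the $K_i$, hence adapted to $L$, and is still a contact form for $\xi$ (the modifications are supported in Darboux balls and produce contactomorphic forms). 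A final remark: if one also wants non-degeneracy in the complement of $\bigcup_i \mathcal{N}(K_i)$, a further $C^\infty$-small perturbation of $\alpha$ supported away from the $\mathcal{N}(K_i)$ achieves this without disturbing the model regions.

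The only mild subtlety — and the point I expect to require the most care — is ensuring that the isotopies and the Darboux--Weinstein modifications near the different components genuinely do not interfere. This is handled purely by the disjointness of the $K_i$: choose the $\mathcal{N}(K_i)$ thin enough to be pairwise disjoint, carry out each local construction inside its own $\mathcal{N}(K_i)$, and glue with the unchanged $\alpha_0$ on $Y \setminus \bigcup_i \mathcal{N}(K_i)$. Thus the lemma follows immediately from the knot case applied $n$ times in parallel, and no new analytic input is needed beyond what is already contained in Lemma \ref{Lemma: There exists alpha copatible with K and S}.
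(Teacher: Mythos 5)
Your proposal is correct and follows exactly the paper's argument: the paper's proof consists of the single observation that part 1) of Lemma \ref{Lemma: There exists alpha copatible with K and S} is local near the knot and can therefore be applied recursively to each component $K_i$. Your write-up simply makes explicit the disjointness of the neighborhoods and the gluing, which is the same idea.
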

\proof
 The proof of part 1) of Lemma \ref{Lemma: There exists alpha copatible with K and S} is local near the knot $K$ and can then be applied
 recursively to each $K_i$.
\endproof

Fix $L = K_1 \sqcup \ldots \sqcup K_n$ homologically trivial and $\alpha$ an adapted contact form. Since $\alpha$
is adapted to each $K_i$, there exist pairwise disjoint tubular neighborhoods 
$$V(K_i) \subset \mathcal{N}(K_i)$$
of $K_i$ where $\alpha$ behaves exactly like in 
the neighborhoods $V(K) \subset \mathcal{N}(K)$ in Subsection \ref{Subsection: ECH for manifolds with boundary}. 

In particular, for each $i$, the tori $T_{i,1} := \partial \mathcal{N}(K_i)$ and $T_{i,2} := \partial V(K_i)$ are
M-B and foliated by families of orbits of $R_{\alpha}$ in the homology class of a meridian of $K_i$. We will consider these two families
as perturbed into two pairs $\{e_{i},h_{i}\}$ and $\{e^+_{i},h^+_{i}\}$ in the usual way.

Let
$$V(L) := \bigsqcup_i V(K_i)\; \mbox{ and }\; \mathcal{N}(L) := \bigsqcup_i \mathcal{N}(K_i)$$
and set
$$N := Y \setminus int(\mathcal{N}(L)).$$
Define moreover $\bar{e} := \bigsqcup_i e_i$ and let $\bar{h},\ \bar{e}_+$ and $\bar{h}_+$ be similarly defined.

\vspace{0.3 cm}

Consider now $ECC^{\bar{e}_+,\bar{h}_+}\left(N,\alpha \right)$ endowed with the restriction
$\partial^{ECH}$ of the $ECH$ differential of $(Y,\alpha)$ and let $ECH^{\bar{e}_+,\bar{h}_+}\left(N,\alpha \right)$ be the associated
homology.

\begin{Lemma} $ECH^{\bar{e}_+,\bar{h}_+}\left(N,\alpha \right)$ is well defined and the curves counted by $\partial^{ECH}$ inside each
 $\mathcal{N}(K_i)$ are given by expressions analogue to those in \ref{Equation: ECH boundary near the binding}.
\end{Lemma}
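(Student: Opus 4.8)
The plan is to reduce the statement to the single-knot case already treated in \cite{CGH2}, exploiting the fact that the neighborhoods $\mathcal{N}(K_i)$ are pairwise disjoint. First I would observe that, since $\alpha$ is adapted to each $K_i$, the Reeb dynamics inside each $\mathcal{N}(K_i)$ is exactly the local model described in Subsection \ref{Subsection: ECH for manifolds with boundary}: the only relevant simple orbits there are $e_i, h_i, e^+_i, h^+_i$ (together with $K_i$ itself and the direct-limit-avoidable orbits in $\mathrm{int}(V(K_i))$ and $T_{i,2}\times(1,2)$). Thus every generator of $ECC^{\bar e_+,\bar h_+}(N,\alpha)$ is a product of an orbit set in $\mathcal{O}(N)$ with a monomial in the commuting variables $e^+_1,h^+_1,\ldots,e^+_n,h^+_n$ (each $h^+_i$ of multiplicity at most $1$), so that, up to the action restriction of \cite[Section 9]{CGH2}, one has a tensor-type splitting
$$ECC^{\bar e_+,\bar h_+}(N,\alpha)\ \cong\ ECC(N,\alpha)\otimes\bigotimes_{i=1}^n ECC(\mathcal{N}(K_i),\alpha),$$
where $ECC(\mathcal{N}(K_i),\alpha)$ is the local complex generated by $\{1,e^+_i,h^+_i\}$ (after the $e_i$-quotient, as in Lemma \ref{Lemma: How kill h+ and get the quotient e gamma sim gamma}).

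Next I would verify that the $ECH$-differential of $(Y,\alpha)$, restricted to this subcomplex, decomposes as a sum of the differential on $ECC(N,\alpha)$ and the local differentials near each $K_i$, with no mixed terms linking distinct $\mathcal{N}(K_i)$'s. This is where the Blocking and Trapping Lemmas (Lemmas \ref{Lemma: Blocking lemma} and \ref{Lemma: Trapping lemma}) do the work: any holomorphic curve counted by $\partial^{ECH}$ with a positive end in $N$ cannot cross $\partial N=\bigsqcup_i T_{i,1}$, so it stays in $\R\times N$; and any curve with a positive end on some $T_{i,1}$ (i.e.\ an end equal to $e_i$ or $h_i$) must, again by Blocking/Trapping, be confined to $\R\times\mathcal{N}(K_i)$ for that single $i$, because to reach $\mathcal{N}(K_j)$ with $j\neq i$ it would have to cross $T_{i,1}$ or $T_{j,1}$. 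Consequently the curves inside each $\mathcal{N}(K_i)$ are literally the curves of Figure \ref{Figure: Dynamic near binding in W} for the knot $K_i$, yielding the relations $\partial e_i=\partial h_i=0$, $\partial h^+_i=e_i+\emptyset$, $\partial e^+_i=h_i$, i.e.\ the exact analogue of \ref{Equation: ECH boundary near the binding}. Summing up, the restricted differential has the closed form
$$\partial^{ECH}\Big(\textstyle\prod_i (e^+_i)^{a_i}(h^+_i)^{b_i}\,\gamma\Big)\ =\ \sum_i (e^+_i)^{a_i-1}(h^+_i)^{b_i}h_i\gamma\ +\ \sum_i (e^+_i)^{a_i}(h^+_i)^{b_i-1}(1+e_i)\gamma\ +\ \Big(\textstyle\prod_i (e^+_i)^{a_i}(h^+_i)^{b_i}\Big)\partial\gamma,$$
with the usual convention that terms with illegal multiplicities vanish, which is exactly the $n$-fold analogue of Equation \ref{Equation: The complete ECH differential with all the orbits}.

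It remains to check that $\partial^{ECH}$ so restricted is genuinely a differential, i.e.\ $(\partial^{ECH})^2=0$, and that the resulting homology is independent of the auxiliary choices. For the first point I would argue that $\partial^{ECH}$ on the subcomplex is the sum of pairwise (graded-)commuting pieces — the ``internal'' differential $\partial$ on $ECC(N,\alpha)$ (which squares to zero because it is the genuine $ECH$-differential of the manifold with torus boundary $N$, well defined by \cite[Section 7]{CGH2} and independent of $S$, exactly as recalled in the last Observation of Subsection \ref{Subsection: The full ECK}) and the $n$ local pieces, each of which squares to zero by the single-knot computation; commutativity of the pieces, which follows from the disjointness of the $\mathcal{N}(K_i)$ and the fact that the local relations involve only the variables attached to a single $K_i$, then gives $(\partial^{ECH})^2=0$. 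The main obstacle, as usual in this circle of ideas, is the rigorous justification that the curve counts genuinely localize — i.e.\ that no index-$1$ $ECH$ curve (or M-B building, once $T_{i,1},T_{i,2}$ are perturbed) can have ends distributed among several $\mathcal{N}(K_i)$ or ``bridge'' between an $\mathcal{N}(K_i)$ and $N$ in a way not accounted for above. This is handled by the same positivity-of-intersection arguments underpinning the Blocking and Trapping Lemmas applied simultaneously to all the tori $T_{i,1},T_{i,2}$, together with the action filtration (Lemma \ref{Lemma: Holomorphic curves low the actions of the orbits}) to control the perturbed M-B orbits; but making this airtight is the only nontrivial input, the rest being a componentwise repetition of \cite{CGH2}.
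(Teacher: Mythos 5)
Your proposal is correct and follows essentially the same route as the paper: the paper's proof likewise observes that the Blocking and Trapping Lemmas apply locally near each component of $\partial N$, so that the single-knot arguments of \cite{CGH2} (its Lemmas 7.1.1, 7.1.2, 9.5.1 and 9.5.3) go through componentwise, yielding the well-definedness of the homology and the local relations of \ref{Equation: ECH boundary near the binding} inside each $\mathcal{N}(K_i)$. Your write-up simply spells out in more detail (tensor splitting, explicit $n$-fold differential, $(\partial^{ECH})^2=0$) what the paper delegates to those citations.
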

\proof
 The Blocking and Trapping lemmas can be applied locally near each component of $\partial N$ and the proofs of lemmas 7.1.1 and 7.1.2 in \cite{CGH2} work
 immediately in this context too. This imply that the homology of $\left(ECC(N,\alpha), \partial^{ECH} \right)$ is well defined.
 
 Again the Blocking and Trapping lemmas together with the local homological arguments in lemmas 9.5.1 and 9.5.3 in \cite{CGH2}, imply that 
 the only holomorphic curves counted by $\partial^{ECH}$ inside each $\mathcal{N}(K_i)$ are as required (see Figure \ref{Figure: Dynamic near binding in W}), and so that 
 $ECH^{\bar{e}_+,\bar{h}_+}\left(N,\alpha \right)$ is well defined.
\endproof

An explicit formula for $\partial^{ECH}$ can be obtained by generalizing Equation \ref{Equation: The complete ECH differential with all the orbits}
in the obvious way.

%\begin{Lemma}
% \begin{equation*}
%  \begin{array}{ccccc}
%   ECH^{\bar{e}_+,\bar{h}_+}\left(N,\alpha \right) & = & H_*(ECC^{\bar{e}_+,\bar{h}_+}\left(N,\alpha \right),\partial^{ECH}) & \cong & ECH(Y,\alpha);\\
%   ECH^{\bar{h}_+}\left(N,\alpha \right) & = & H_*(ECC^{\bar{h}_+}\left(N,\alpha \right),\partial^{ECH}) & \cong & \widehat{ECH}(Y,\alpha).
%  \end{array}
% \end{equation*}
%\end{Lemma}
%\proof
% The arguments that prove Equations \ref{Equation: ECH Y in terms of N e+ and h+} and \ref{Equation: widehat ECH Y in terms of N and h+}
% can be iteratively applied to each $\mathcal{N}(K_i)$ to get the isomorphisms.
%\endproof

For each $i \in \{1,\ldots,n\}$, fix now a (homology class for a) Seifert surface $S_i$ for $K_i$.
These surfaces are not necessarily pairwise disjoint and it is even possible that $S_i \cap K_j \neq \emptyset$ for some $i \neq j$.

Consider then the \emph{Alexander $\Z^n$-degree} on $ECC^{\bar{e}_+,\bar{h}_+}\left(N,\alpha \right)$ given by the function
\begin{equation} \label{Equation: Multiple Alexander filtration degree}
 \begin{array}{ccc}
  ECC^{\bar{e}_+,\bar{h}_+}\left(N,\alpha \right) & \longrightarrow & \Z^n\\
          \gamma                          & \longmapsto     &  (\langle \gamma ,S_1 \rangle,\ldots,\langle \gamma ,S_n \rangle).
 \end{array}
\end{equation}
%\begin{Obs}
% Last expression is in fact the generalization to any manifold of Equation \ref{Equation: Isomorphism of H_1 Y minus L to H_1 Y times Z^n} for $S^3$
% and inducing the Alexander filtration in Heegaard-Floer.
%\end{Obs}

Define the partial ordering on $\Z^n$ given by
$$(a_1,\ldots,a_n) \leq (b_1,\ldots,b_n) \Longleftrightarrow a_i \leq b_i\ \forall\ i.$$ 
Proposition \ref{Proposition: Holomorphic curves preserve the filtration in general} applied to each $K_i$ implies that if
$\gamma$ and $\delta$ are two orbit sets in $\mathcal{O}(N \sqcup \{\bar{e}_+,\bar{h}_+\})$, then for any $k$
$$\frac{\mathcal{M}_k(\gamma, \delta)}{\R} \neq 0\ \Longrightarrow \left(\langle \delta ,S_1 \rangle,\ldots,\langle \delta ,S_n \rangle\right) \leq \left(\langle \gamma ,S_1 \rangle,\ldots,\langle \gamma ,S_n \rangle\right).$$
This implies that $\partial^{ECH}$ does not increase the Alexander degree, which induces than a $\Z^n$-filtration on
$\left(ECC^{\bar{e}_+,\bar{h}_+}(N,\alpha),\partial^{ECH}\right)$.

Reasoning as in the previous subsection, we are interested in the part of $\partial^{ECH}$ that strictly respects the
filtration degree. This can be defined again in terms of quotients as follows. 

Let $d \in \Z^n$ and let $ECC_d^{\bar{e}_+,\bar{h}_+}(N,\alpha)$
be the submodule of $ECC^{\bar{e}_+,\bar{h}_+}(N,\alpha)$ freely generated by orbit sets $\gamma \in \mathcal{O}(N \sqcup \{\bar{e}_+,\bar{h}_+\})$
such that
$$(\langle \gamma ,S_1 \rangle,\ldots,\langle \gamma ,S_n \rangle) = d.$$
Define
$$ECC_{\leq d}^{\bar{e}_+,\bar{h}_+}(N,\alpha) := \bigoplus_{j \leq d}ECC_d^{\bar{e}_+,\bar{h}_+}(N,\alpha)$$
and let $ECC_{< d}^{\bar{e}_+,\bar{h}_+}(N,\alpha)$ be similarly defined.

Define the \emph{full $ECK$-differential in degree $d$} to be the map
$$\partial^{ECK}_d \colon ECC_d^{\bar{e}_+,\bar{h}_+}(N,\alpha) \longrightarrow ECC_d^{\bar{e}_+,\bar{h}_+}(N,\alpha)$$
induced by $\partial^{ECH}|_{{ECC}_{\leq d}^{\bar{e}_+,\bar{h}_+}(N,\alpha)}$ on the quotient
$$\frac{{ECC}_{\leq d}^{\bar{e}_+,\bar{h}_+}(N,\alpha)}{{ECC}_{< d}^{\bar{e}_+,\bar{h}_+}(N,\alpha)} \cong {ECC}_{d}^{\bar{e}_+,\bar{h}_+}(N,\alpha).$$
Define then the \emph{full $ECK$-differential} by
$$\partial^{ECK}:=\bigoplus_d \partial_d^{ECK} \colon ECC^{\bar{e}_+,\bar{h}_+}(N,\alpha) \longrightarrow ECC^{\bar{e}_+,\bar{h}_+}(N,\alpha).$$

\begin{Obs} \label{Observation: The only lowing-degree curves are the disks from h_+ for links}
 Observing the form of $\partial^{ECH}$, it is easy again to see that the only holomorphic curves that are counted by $\partial^{ECH}$ and
 not by $\partial^{ECK}$ are the ones containing a holomorphic plane from some $h_i^+$ to $\emptyset$.
\end{Obs}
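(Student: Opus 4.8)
\proof
The plan is to read the statement straight off the definition of $\partial^{ECK}$ as the part of $\partial^{ECH}$ that \emph{strictly} preserves the Alexander $\Z^n$-degree. Thus a curve $u\in\mathcal{M}_1(\gamma,\delta)$ contributes to $\partial^{ECH}$ but not to $\partial^{ECK}$ exactly when the multidegree of $\gamma$ differs from that of $\delta$; since $\partial^{ECH}$ never increases the multidegree, this means $\langle\gamma,S_i\rangle>\langle\delta,S_i\rangle$ for at least one index $i$. What remains is to show that this happens precisely when $u$ contains a holomorphic plane from some $h_i^+$ to $\emptyset$.

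First I would apply the computation from the proof of Proposition \ref{Proposition: Holomorphic curves preserve the filtration in general} to each component $K_i$ separately. Writing $\widehat{u}$ for the compactification of $u$ in $[-1,1]\times Y$, that argument yields
$$\langle\gamma,S_i\rangle-\langle\delta,S_i\rangle=\langle\widehat{u},[-1,1]\times K_i\rangle\geq 0.$$
Since $K_i$ is a Reeb orbit, $\R\times K_i$ is a holomorphic curve, so by positivity of intersection (Theorem \ref{Theorem: Positivity of intersection in dimension 4}) the right-hand side is strictly positive if and only if $\Imm(u)$ actually meets $\R\times K_i$. Hence $u$ is counted by $\partial^{ECH}$ but not by $\partial^{ECK}$ if and only if $\Imm(u)$ crosses $\R\times K_i$ for some $i$.

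It then suffices to feed this into the local model near $\mathcal{N}(K_i)$. By the Lemma established just above, the curves counted by $\partial^{ECH}$ inside each $\mathcal{N}(K_i)$ satisfy the analogue of the relations \ref{Equation: ECH boundary near the binding}, and --- exactly as in the knot case recorded in Observation \ref{Observation: The only lowing-degree curve is the disk from h_+ in the hat case} --- the only connected $ECH$-index $1$ holomorphic curve whose image crosses $K_i$ is the plane asymptotic to $h_i^+$ at its positive end, namely the one whose compactified $Y$-projection is a disk capping off $K_i$ and which produces the $\emptyset$-term of $\partial^{ECH}(h_i^+)$. Since every element of $\mathcal{M}_1(\gamma,\delta)$ decomposes as a single connected $ECH$-index $1$ component together with covers of $\R$-invariant cylinders, and since no orbit set generating $ECC^{\bar{e}_+,\bar{h}_+}(N,\alpha)$ has $K_i$ as a factor (so the cylindrical components never meet $\R\times K_i$), any $u$ crossing $\R\times K_i$ must have that plane as its non-cylindrical component. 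Conversely, any curve containing such a plane strictly drops the $i$-th Alexander coordinate, by exactly $1$ since $\langle h_i^+,S_i\rangle=1$, and affects only that coordinate because two disjoint such planes would already have $ECH$-index $2$; hence it is not counted by $\partial^{ECK}$. This is the asserted characterization.

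The only delicate point, which I expect to be the main obstacle, is the structural input used at the end: that the index-$1$ curves entering $\partial^{ECH}$ split as one connected index-$1$ piece together with covers of trivial cylinders, and that the local classification near each $K_i$ --- the generalization to $L$ of the computations of \cite[Section 9]{CGH2}, already carried out in the preceding Lemma --- genuinely forbids any other index-$1$ component from touching $K_i$. Granting these, the statement follows from the chain of equivalences above.
\endproof
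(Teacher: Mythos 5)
Your proposal is correct and follows essentially the same route as the paper: the paper's (very terse) justification also rests on the identity $\langle\gamma,S_i\rangle-\langle\delta,S_i\rangle=\langle\widehat{u},[-1,1]\times K_i\rangle$ from Proposition \ref{Proposition: Holomorphic curves preserve the filtration in general} together with positivity of intersection, plus the local classification of curves in each $\mathcal{N}(K_i)$ (the relations \ref{Equation: ECH boundary near the binding} applied componentwise), exactly as in the knot case of Observation \ref{Observation: The only lowing-degree curve is the disk from h_+ in the hat case}. The structural facts you flag at the end (one non-trivial connected component plus covers of trivial cylinders, and the local model forbidding other index-$1$ components from touching $K_i$) are indeed the inputs the paper takes for granted from \cite[Section 9]{CGH2}, so your write-up is just a more explicit version of the paper's argument.
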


\begin{Def}
 The \emph{full embedded contact knot homology of $(L,Y,\alpha)$} is
 $$ECK(L,Y,\alpha) := H_*\left(ECC^{\bar{e}_+,\bar{h}_+}(N,\alpha), \partial^{ECK}\right). $$
\end{Def}
The fact that $ECK(L,Y,\alpha)$ is well defined is a direct consequence of the good definition of $ECH^{\bar{e}_+,\bar{h}_+}(N,\alpha)$
and the fact that $\partial^{ECH}$ respects the Alexander filtration.

Note that also for links we have a natural splitting
\begin{equation} \label{Equation: Split of ECK of L full}
 ECK_*(L,Y,\alpha) = \bigoplus_{d \in \Z^n} ECK_{*,d}(L,Y,\alpha)
\end{equation}
where
$$ECK_{*,d}(L,Y,\alpha) = H_*\left(ECC_d^{\bar{e}_+,\bar{h}_+}(N,\alpha), \partial_d^{ECK}\right). $$

The proof of the following lemma is the same of that of the analogous Lemma \ref{Lemma: ECK for knots full iso to ECH Y minus N(K)} for knots
applied to each component of $L$.
\begin{Lemma} \label{Lemma: ECK iso to ECH of int N}
 If $\mathcal{N}(L)$ is a neighborhood of $L$ as above then
 $$ECK(L,Y,\alpha) \cong ECH(Y \setminus \mathcal{N}(L),\alpha).$$
\end{Lemma}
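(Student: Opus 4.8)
The plan is to repeat, component by component, the chain of isomorphisms that proves Lemma \ref{Lemma: ECK for knots full iso to ECH Y minus N(K)}. Write $\mathcal{N}(K_1),\ldots,\mathcal{N}(K_n)$ for the pairwise disjoint neighborhoods, with $V(K_i)\subset\mathcal{N}(K_i)$, and recall that near each torus $T_{i,1}=\partial\mathcal{N}(K_i)$ and $T_{i,2}=\partial V(K_i)$ the Reeb dynamics and the holomorphic curves counted by $\partial^{ECK}$ look exactly as in the single-knot picture of Figure \ref{Figure: Dynamic near binding in W}: near the $i$-th component one has $\partial^{ECK}(e_i^+)=h_i$ and $\partial^{ECK}(h_i^+)=e_i$, the plane from $h_i^+$ to $\emptyset$ being precisely what is deleted in passing from $\partial^{ECH}$ to $\partial^{ECK}$ (cf.\ Observation \ref{Observation: The only lowing-degree curves are the disks from h_+ for links} and the local relations \ref{Equation: ECH boundary near the binding}). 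Since all of this data is supported in the disjoint pieces $\mathcal{N}(K_i)$, the algebraic reductions performed for a single knot apply independently for each index $i$.

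Concretely, I would first fix $i=1$ and leave the other components alone. The argument behind the isomorphism $ECH^{e_+}(N,\alpha)\cong ECH^{e}(int(N),\alpha)$ of \cite{CGH2} (Section 9) uses only the relation $\partial(e_1^+)=h_1$, which is unchanged for $\partial^{ECK}$; applied to the pair $(e_1,e_1^+)$ it replaces $e_1^+$ by $e_1$ and the neighborhood $\mathcal{N}(K_1)$ by its interior. Then the long exact sequence / mapping-cone computation of Lemma \ref{Lemma: How kill h+ and get the quotient e gamma sim gamma}, using that $h_1^+$ has multiplicity at most $1$ and that $\partial^{ECK}(h_1^+)=e_1$, eliminates $h_1^+$ from the list of generators. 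Running this for $i=1,\ldots,n$ in turn — each step being confined to a region disjoint from the others, so that the complex near $\mathcal{N}(K_j)$ is untouched while the $i$-th pair is processed — produces
\begin{equation*}
 ECK(L,Y,\alpha)=H_*\left(ECC^{\bar{e}_+,\bar{h}_+}(N,\alpha),\partial^{ECK}\right)\;\cong\;H_*\left(ECC(int(N),\alpha),\partial^{ECK}\right).
\end{equation*}
Finally $\partial^{ECK}$ and $\partial^{ECH}$ coincide on every orbit set $\gamma\in\mathcal{O}(N)$ — the curves distinguishing them are exactly the planes from the $h_j^+$, none of which contributes once the $\bar{h}_+$-generators are gone — so the right-hand side equals $ECH(int(N),\alpha)=ECH(Y\setminus\mathcal{N}(L),\alpha)$. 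Equivalently, one may observe directly (as in Observation \ref{Observation: The only lowing-degree curve is the disk from h_+ in the hat case} for the hat case) that $\partial^{ECK}$ on $ECC^{\bar{e}_+,\bar{h}_+}(N,\alpha)$ is nothing but the genuine $ECH$-differential of $Y\setminus int(V(L))$, whose embedded contact homology agrees with that of $Y\setminus\mathcal{N}(L)$.

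The step that will demand the most care is organizational rather than conceptual: at each stage of the $n$-fold iteration one must check that the restriction of $\partial^{ECK}$ to the current subquotient complex is still a differential, that the hypotheses of the mapping-cone argument (finiteness of the relevant curve counts, the multiplicity-one constraint on $h_i^+$, and the appropriate vanishing of a connecting map) hold simultaneously for the component being processed and vacuously for the others, and that the auxiliary relations $[e_i\gamma]\sim[\gamma]$ generated along the way are absorbed consistently. Each of these verifications is literally the one already made in the single-knot case, and since every orbit and every holomorphic curve entering the argument lives in one of the disjoint neighborhoods $\mathcal{N}(K_i)$, no interaction between components occurs and nothing genuinely new appears.
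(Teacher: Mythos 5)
Your proposal is correct and follows essentially the same route as the paper, whose proof of this lemma is literally the one-line remark that the argument of Lemma \ref{Lemma: ECK for knots full iso to ECH Y minus N(K)} applies to each component of $L$; you reproduce that chain of reductions (cancelling the pairs supplied by $\partial^{ECK}(e_i^+)=h_i$ and $\partial^{ECK}(h_i^+)=e_i$ in each disjoint neighborhood $\mathcal{N}(K_i)$, then noting $\partial^{ECK}=\partial^{ECH}$ on $\mathcal{O}(N)$) with the disjointness of the $\mathcal{N}(K_i)$ justifying the component-by-component iteration, exactly as intended.
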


\vspace{0.3 cm}

Consider now the submodule $ECC^{\bar{h}_+}(N,\alpha)$ of $ECC^{\bar{e}_+,\bar{h}_+}(N,\alpha)$ endowed with the restriction of
$\partial^{ECH}$. Again its homology $ECH^{\bar{h}_+}(N,\alpha)$ is well defined.

Proceeding exactly like above, the choice of a Seifert surface 
$S_i$ for each component $K_i$ of $L$ gives (up to small perturbations of $S$) an Alexander degree on the orbit sets defined by Equation
\ref{Equation: Multiple Alexander filtration degree}. This induces a $\Z^n$-filtration on the chain complex
$\left(ECC^{\bar{h}_+}(N,\alpha),\partial^{ECH}\right)$.

For any $d \in \Z^n$, define $ECC_d^{\bar{h}_+}(N,\alpha)$ and
$$\partial_d^{ECK} \colon ECC_d^{\bar{h}_+}(N,\alpha) \longrightarrow ECC_d^{\bar{h}_+}(N,\alpha)$$
exactly as above.
\begin{Def}
  The \emph{hat version of embedded contact knot homology of $(L,Y,\alpha)$} is
 $$\widehat{ECK}(L,Y,\alpha) := H_*\left(ECC^{\bar{h}_+}(N,\alpha), \partial^{ECK}\right). $$
\end{Def}

Observation \ref{Observation: The only lowing-degree curves are the disks from h_+ for links} and a splitting like the one in equation
\ref{Equation: Split of ECK of L full} hold also for $\widehat{ECK}(L,Y,\alpha)$. Moreover it is easy to see that if $L$ has only one connected
component we get the same theories of subsections \ref{Subsection: widehat ECH for knots} and \ref{Subsection: The full ECK}.

\vspace{0.3 cm}

We state the following
\begin{Cnj} \label{Conjecture: ECK iso to HFK for links full and hat}
 If $L$ is a link in $Y$:
 \begin{eqnarray*}
  ECK(L,Y,\alpha) & \cong & HFK^-(L,Y),\\
  \widehat{ECK}(L,Y,\alpha) & \cong & \widehat{HFK}(L,Y),
 \end{eqnarray*}
 where $\alpha$ is any contact form on $Y$ adapted to $L$.
\end{Cnj}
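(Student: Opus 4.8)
The strategy is to deduce both isomorphisms from the Colin--Ghiggini--Honda equivalence of Theorem \ref{Theorem: HF and ECH in intro} by realising that equivalence so that it intertwines the Alexander filtrations on the two sides, carrying this out along the open book route used to prove $ECH\cong HF$ in \cite{CGH1}-\cite{CGH5}. First I would choose an open book $(L\sqcup L',S,\phi)$ of $Y$ whose binding contains $L$ (any link is a sublink of the binding of some open book) and a contact form $\alpha$ adapted to it in the sense of Subsection \ref{Subsection: Open Books}; such an $\alpha$ is adapted to $L$, and by Lemma \ref{Lemma: ECK iso to ECH of int N} the group $ECK(L,Y,\alpha)$ is the ordinary $ECH$ of the complement $Y\setminus\mathcal{N}(L)$, whose Reeb dynamics away from $L'$ is governed by the mapping torus $N(S,\phi)$ with its Morse--Bott perturbed flow. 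On $N(S,\phi)$ the first return map of $R_\alpha$ is isotopic to $\phi$, so after Giroux stabilising $\phi$ into the normal form used in \cite{CGH2} one may, following the dictionary of \cite{CGH1}-\cite{CGH5}, identify orbit sets of $\alpha$ with generators of a Heegaard--Floer complex built from the page $S$ and the holomorphic curves counted by $\partial^{ECH}$ with the holomorphic disks counted by the Heegaard--Floer differential. The point is to carry this identification through in the presence of the extra binding components making up $L$.

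Second, I would match the two filtrations. On the $ECH$ side the Alexander $\Z^n$-degree of an orbit set $\gamma$ is $(\langle\gamma,S_1\rangle,\dots,\langle\gamma,S_n\rangle)$, which by the Example following Proposition \ref{Proposition: Holomorphic curves preserve the filtration in general} and Definition \ref{Definition: Degree of multi orbits} is the intersection degree of $\gamma$ with the extended pages capping off the components of $L$; on the Heegaard side, placing a basepoint pair on each component $K_i\subset L$ of the binding produces exactly the Alexander $\Z^n$-filtration defining $\widehat{HFK}$ and $HFK^-$. One then checks, using that both gradings are computed from the same relative homology classes in $H_1(Y\setminus L)$ via the isomorphism \eqref{Equation: Isomorphism of H_1 Y minus K to H_1 Y times Z}, that the chain-level identification of generators is filtered, and that the curves failing to preserve the filtration --- by Observation \ref{Observation: The only lowing-degree curves are the disks from h_+ for links}, exactly the planes from the $h_i^+$ to $\emptyset$ --- go over to the disks crossing the basepoints on $L$ in the multiply-pointed diagram. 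Passing to the first page of the two spectral sequences then gives $ECK(L,Y,\alpha)\cong HFK^-(L,Y)$, and restricting to the $\bar{h}_+$-subcomplex (equivalently, killing the $e_i$-classes as in Lemma \ref{Lemma: How kill h+ and get the quotient e gamma sim gamma}) gives $\widehat{ECK}(L,Y,\alpha)\cong\widehat{HFK}(L,Y)$; the orientation reversals appearing in Conjectures \ref{ConjectureIntro: ECK iso to HFK hat} and \ref{ConjectureIntro: ECK iso to HFK} come only from the conventions relating the symplectization of $(Y,\alpha)$ to a diagram for $-Y$, and must be tracked with care.

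A second, more structural route would avoid open books: take the original sutured description of $\widehat{ECK}$ from \cite{CGHH} as the sutured contact homology of $Y\setminus\mathcal{N}(L)$ with two meridional sutures on each boundary torus, prove a sutured analogue of Theorem \ref{Theorem: HF and ECH in intro} identifying it with the sutured Floer homology of the same sutured manifold, and invoke Juh\'{a}sz's computation that the latter is $\widehat{HFK}(L,Y)$ up to orientation; the full statement would then need the analogous result for a ``plus'' flavour of sutured homology together with the duality $HFK^+(-L,-Y)\cong HFK^-(L,Y)$, which also reconciles Conjectures \ref{Conjecture: ECK iso to HFK full} and \ref{Conjecture: ECK iso to HFK for links full and hat}. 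Either way the genuine obstacle --- and the reason this is stated as a conjecture --- is the step common to both routes: establishing, at the filtered chain level, a bijection between the holomorphic curves in the symplectization counted by the $ECH$ differential and the holomorphic disks in the symmetric product counted by the link Heegaard--Floer differential, compatibly with the $ECH$ index and the Seifert-surface filtration. This is the filtered refinement of the hardest part of the \cite{CGH1}-\cite{CGH5} program; I do not expect it to follow formally from the closed case, since one must control curves near the Morse--Bott tori $T_{i,1},T_{i,2}$ and their perturbations and prove that no curve other than the $h_i^+$-planes violates the filtration.
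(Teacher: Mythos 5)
The statement you are addressing is stated in the paper as a conjecture (Conjecture \ref{Conjecture: ECK iso to HFK for links full and hat}); the paper contains no proof of it. The only supporting evidence the paper supplies is Theorem \ref{Theorem: Euler characteristic of ECK}, which verifies the two isomorphisms at the level of graded Euler characteristics for links in $S^3$, by identifying $\chi(ECK(L,S^3,\alpha))$ with the Alexander quotient through Fried's dynamical zeta function and comparing with the Ozsv\'{a}th--Szab\'{o} formulas for $\chi(HFL^-)$ and $\chi(\widehat{HFL})$. So there is no proof in the paper against which to measure yours, and your proposal does not close the gap either: as you yourself say in your final paragraph, the essential step --- a filtered chain-level correspondence between the holomorphic curves counted by $\partial^{ECH}$ in the symplectization and the holomorphic disks counted by the link Floer differential, compatible with the Alexander $\Z^n$-degree and with control of curves near the Morse--Bott tori $T_{i,1}$, $T_{i,2}$ --- is precisely the open content of the conjecture, and neither of your two routes supplies it. Your outline is a sensible research plan, and your honest identification of the obstruction is correct, but it is not a proof.

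Two smaller remarks. First, watch the orientation and flavour conventions: the paper's own conjectures pair $\widehat{ECK}(K,Y,\alpha)$ with $\widehat{HFK}(-K,-Y)$ and $ECK(K,Y,\alpha)$ with $HFK^+(-K,-Y)$ in the knot case, while the link-version conjecture uses $HFK^-(L,Y)$; the paper explains the plus/minus switch via $\Z/2$ duality, but any actual proof must track these reversals precisely rather than deferring them, since Theorem \ref{Theorem: HF and ECH in intro} already carries an orientation reversal. Second, your parenthetical appeal to Lemma \ref{Lemma: How kill h+ and get the quotient e gamma sim gamma} to pass from the full to the hat statement is misplaced: that lemma relates $ECH^{e_+,h_+}(N,\alpha)$ to a quotient of $ECH^{e_+}(N,\alpha)$, whereas $\widehat{ECC}(L,Y,\alpha)=ECC^{\bar h_+}(N,\alpha)$ is the subcomplex of $ECC^{\bar e_+,\bar h_+}(N,\alpha)$ spanned by orbit sets without $e_i^+$ factors; the passage from $ECK$ to $\widehat{ECK}$ is not a formal sub/quotient consequence of the full isomorphism and would have to be argued separately (in Heegaard Floer it corresponds to setting the $U_i$ actions to zero, and one would need to identify those actions on the $ECH$ side with multiplication by the $e_i^+$).
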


\begin{Obs}
 Note that the analogous conjectures stated before, as well as Theorem \ref{Theorem: HF and ECH in intro}, suggest that we should use the plus
 version of $HFL$ and not the minus one. The problem is that in \cite{OS5} the authors define Heegaard-Floer homology for links only
 in the hat and minus versions.
 
 On the other hand this switch is not really significant. Indeed one could define 
 \emph{Heegaard-Floer cohomology} groups by taking the duals, with coefficients in $\Z/2$, of the chain groups
 $\widehat{CF}_*(Y)$, $CF^+_*(Y)$ and $CF^-_*(Y)$ in the usual way
 and get cohomology groups (for the three-manifold $Y$)
 \begin{equation*}
  \begin{array}{lcr}
   \widehat{HF}^*(Y),\ & HF_+^*(Y),\ & HF_-^*(Y). 
  \end{array}
 \end{equation*}
 Since we are working in $\Z/2$ we have that each of this cohomology group is isomorphic to its respective homology group.
 
 On the other hand one can prove also that (see Proposition 2.5 in \cite{OS2}):
 \begin{eqnarray*}
  \widehat{HF}^*(Y) \cong \widehat{HF}_*(Y) & \mbox{and} & HF_{\pm}^*(Y) \cong HF^{\mp}_*(Y).
 \end{eqnarray*}
 Analogous formulae hold also for knots. The conjecture above is then consistent with those stated in the previous subsections.
\end{Obs}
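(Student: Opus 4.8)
The plan is to prove the consistency assertion of this observation: that Conjecture~\ref{Conjecture: ECK iso to HFK for links full and hat} is equivalent to (the link analogue of) Conjectures~\ref{Conjecture: ECK iso to HFK hat} and~\ref{Conjecture: ECK iso to HFK full}, and in particular matches the form suggested by Theorem~\ref{Theorem: HF and ECH in intro}. On the $ECK$ side there is nothing to do: the definitions in this section make $\widehat{ECK}$ and $ECK$ of a link restrict, for $n=1$, to the knot invariants of Subsections~\ref{Subsection: widehat ECH for knots} and~\ref{Subsection: The full ECK}, so the only content is on the Heegaard Floer side, namely to establish
\[
 HFK^-(L,Y)\;\cong\;HFK^+(-L,-Y)\qquad\text{and}\qquad \widehat{HFK}(L,Y)\;\cong\;\widehat{HFK}(-L,-Y)
\]
up to the overall grading shift hidden in the symbol ``$\doteq$'' used throughout; the $n=1$ case is the ``analogous formulae for knots'' referred to in the statement.

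First I would set up Heegaard Floer cohomology over $\Z/2$ exactly as the observation indicates: dualize the chain complexes $\widehat{CF}_*(Y)$ and $CF^{\pm}_*(Y)$ to get $\widehat{HF}^*(Y)$ and $HF^*_{\pm}(Y)$, and note that, because $\Z/2$ is a field, the universal coefficient theorem gives $\widehat{HF}^*(Y)\cong\widehat{HF}_*(Y)$ and $HF^*_{\pm}(Y)\cong HF^{\pm}_*(Y)$, the cohomological grading being read as the negative of the homological one. The same construction applies verbatim to the knot/link-filtered complexes of \cite{OS3},\cite{OS5}; since dualizing a filtered complex dualizes its associated graded, one obtains $\widehat{HFK}^*(L,Y)\cong\widehat{HFK}_*(L,Y)$ and $HFK^*_{\pm}(L,Y)\cong HFK^{\pm}_*(L,Y)$ compatibly with the Alexander $\Z^n$-grading.

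Next I would invoke the orientation-reversal duality of Ozsv\'ath--Szab\'o (Proposition~2.5 of \cite{OS2}), which identifies $\widehat{HF}^*(Y)$ with $\widehat{HF}_*(-Y)$ and $HF^*_{\pm}(Y)$ with $HF^{\mp}_*(-Y)$, together with its refinement to the knot and link filtrations: the filtration on $CF(-Y)$ induced by $-L$ is, up to an Alexander grading reversal, dual to the one on $CF(Y)$ induced by $L$ (cf.\ \cite{OS3},\cite{OS5}), whence $\widehat{HFK}^*(L,Y)\cong\widehat{HFK}_*(-L,-Y)$ and $HFK^*_{\pm}(L,Y)\cong HFK^{\mp}_*(-L,-Y)$. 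Composing these with the $\Z/2$ self-duality of the previous paragraph yields precisely $\widehat{HFK}(L,Y)\cong\widehat{HFK}(-L,-Y)$ and $HFK^-(L,Y)\cong HFK^+(-L,-Y)$; substituting into Conjecture~\ref{Conjecture: ECK iso to HFK for links full and hat} turns it into the link analogue of Conjectures~\ref{Conjecture: ECK iso to HFK hat} and~\ref{Conjecture: ECK iso to HFK full}, and restricting to $n=1$ recovers those two verbatim, which is the claimed consistency.

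The routine-but-delicate part, where essentially all the work sits, is the grading bookkeeping: one must track the Maslov (homological) grading, the Alexander $\Z^n$-grading, and the $\mathrm{Spin}^c$ decomposition through the two dualizations. Orientation reversal conjugates $\mathrm{Spin}^c$ structures and negates the relative Alexander grading, dualizing over $\Z/2$ negates the Maslov grading, and one has to verify that these combine into the single overall shift that makes the identifications grading-preserving in the chosen normalization. Since the observation asserts consistency only up to such an overall shift (the ``$\doteq$'' convention), this computation can be deferred and does not affect the conclusion; I would therefore carry out the argument at the level of ``up to grading'' and flag the precise shift as a separate purely algebraic verification.
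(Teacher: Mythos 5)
Your argument is correct and follows essentially the same route as the paper's own observation: identify cohomology with homology over the field $\Z/2$, then apply the Ozsv\'ath--Szab\'o duality (Proposition 2.5 of \cite{OS2}) together with its refinement to the knot/link filtration to trade $HFK^-(L,Y)$ for $HFK^+(-L,-Y)$ and $\widehat{HFK}(L,Y)$ for $\widehat{HFK}(-L,-Y)$, deferring the overall grading shift to the $\doteq$ convention. If anything, your statement of the duality, $HF^*_{\pm}(Y)\cong HF^{\mp}_*(-Y)$, is the form actually needed (and the correct one), whereas the paper's rendition omits the orientation reversal on the right-hand side.
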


\begin{Obs}
 As in the definition of $\widehat{ECK}(K,Y,\alpha)$ and ${ECK}(K,Y,\alpha)$ also here we used the hypothesis that $\alpha$ is adapted to $L$,
 while, in view of Lemma \ref{Lemma: No need of alpha adapted to S}, we dropped condition $\spadesuit$ of last subsection.
 One could wonder if it is possible to further relax the assumptions and get still a good definition of
 the $ECK$ homology groups.
 
 The conjectures above suggest indeed that
 $ECK(L,Y,\alpha)$ (as well as the other homologies) would be independent from $\alpha$ and so, in particular, that we could be able
 to define it simply as the $ECH$ homology
 of the complement of (any neighborhood of) $L$, provided that $L$ is a disjoint union of Reeb orbits of $\alpha$. Indeed,
 even if we could not have an easy description of the curves counted by $\partial^{ECH}$ that cross $L$, Proposition 
 \ref{Proposition: Holomorphic curves preserve the filtration in general} still holds in this more general case.
 
 %In effect,
 %looking at the form of $\partial^{ECK}$, it is easy to see that we could define also
 %$$\widehat{ECK}(L,Y,\alpha) = ECH(int(N),\alpha).$$ 
 
 On the other hand, technical aspects about contact flows and holomorphic curves suggest that the components of $L$ should be at least
 elliptic orbits. This property will be necessary even in computing Euler characteristics in next section, where we will need a circularity
 property of $R_{\alpha}$ near $L$ that cannot be assumed in an evident way if a component of $L$ is hyperbolic.
\end{Obs}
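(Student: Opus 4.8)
The plan is to separate, within this heuristic discussion, the one assertion that admits a genuine argument --- that Proposition \ref{Proposition: Holomorphic curves preserve the filtration in general} survives when the only hypothesis on $\alpha$ is that each component of $L$ is a Reeb orbit --- from the more delicate claims about independence from $\alpha$ and the necessity of ellipticity. First I would revisit the proof of Proposition \ref{Proposition: Holomorphic curves preserve the filtration in general}. That proof rests on only two ingredients: each component $K_i$ of $L$ is a Reeb orbit, so $\R \times K_i$ is holomorphic and positivity of intersection forces $\langle \widehat{u},[-1,1]\times K_i \rangle \geq 0$; and a purely homological computation with the Seifert surfaces $S_i$ identifying $\langle \gamma, S_i \rangle - \langle \delta, S_i \rangle$ with this intersection number. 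Neither ingredient uses the Morse-Bott neighborhood structure or condition $(\spadesuit)$. Hence the Alexander $\Z^n$-degree remains non-increasing along $\partial^{ECH}$, the knot filtration is still well defined, and its spectral sequence still has a well-defined page $1$; this justifies the central sentence of the Observation.

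Next I would confront the proposed definition of $ECK(L,Y,\alpha)$ directly as the $ECH$ homology of $Y \setminus \mathcal{N}(L)$, bypassing the explicit count near $L$. The difficulty is that, once the adapted neighborhood is dropped, we lose the local relations \ref{Equation: ECH boundary near the binding} and hence the identification of $\partial^{ECK}$ with the genuine $ECH$-differential of the complement; what remains is only that the filtration is preserved. To make the proposed definition rigorous one would need an invariance statement --- a continuation or cobordism argument showing the homology of the complement is insensitive to the choice of $\alpha$ among forms realizing $L$ as a union of Reeb orbits. No such argument is available here, and the only present evidence is the expected agreement with $HFK^-$ and $\widehat{HFK}$ via Conjecture \ref{Conjecture: ECK iso to HFK for links full and hat}. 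I would therefore record the independence claim as conjectural, motivated by these comparisons rather than established.

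The hard part, and the reason for restricting to elliptic components, is the ``circularity'' requirement that the Euler-characteristic computation of the next section will impose on $R_\alpha$ near $L$. I would make this precise using the local dynamical model of Subsection \ref{Subsection: Open Books}, drawn in Figure \ref{Figure: Dynamic of the M-B perturbation near K}: near an elliptic component the neighborhood $V(K_i)$ is foliated by concentric tori, each linearly foliated by Reeb orbits whose slopes vary monotonically and which meet a meridian disk positively, so the first return map to such a disk is a genuine rotation. This is exactly the structure that lets one model the Reeb flow on $Y \setminus L$ as a \emph{circular} flow and invoke Fried's dynamical reformulation of $\mathrm{ALEX}$. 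For a positive or negative hyperbolic component the linearized return map has real eigenvalues with stable and unstable directions, the nearby invariant tori cease to be uniformly foliated by closed orbits, and no evident meridian cross-section with rotation return map survives. I expect this to be the genuine obstacle: isolating the minimal circularity hypothesis on $R_\alpha$ near $L$ under which the count in each Alexander degree stays finite and the Euler characteristic is computable, and verifying that a hyperbolic component violates it, is the step that forces the components of $L$ to be at least elliptic.
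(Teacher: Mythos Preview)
This statement is an \emph{Observation}, i.e.\ a remark rather than a theorem, and the paper provides no separate proof for it: the three paragraphs of the Observation are themselves the entire content. Your proposal is therefore not so much a competing proof as an expansion and justification of the claims made in the remark, and as such it is accurate. In particular you correctly isolate the one verifiable assertion --- that the argument of Proposition~\ref{Proposition: Holomorphic curves preserve the filtration in general} uses only that each $K_i$ is a Reeb orbit (so $\R\times K_i$ is $J$-holomorphic) together with a homological identity, and hence survives without the adapted-neighborhood structure --- and you correctly flag the independence-from-$\alpha$ statement as conjectural, supported only by Conjecture~\ref{Conjecture: ECK iso to HFK for links full and hat}. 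Your discussion of why ellipticity is tied to circularity near $L$ (via the foliation of $V(K_i)$ by invariant tori transverse to a meridian disk, versus the saddle dynamics near a hyperbolic orbit) is a reasonable unpacking of the paper's final sentence, which the paper itself leaves at the level of a forward reference to Section~\ref{Section: Euler characteristic in S^3}.
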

% This should be indeed possible in principle. On the other hand observe
% that, in order to have a chance for the $ECH$-differential to respect the knot filtration, the condition that $K$ must be a Reeb orbit 
% seems to be quite essential. Other conditions are moreover necessary to guarantee that if we remove $K$ from $ECC(Y,\alpha)$ we get a true
% chain complex if endowed with the $ECH$-differential. This, together with the hope to apply some ``blocking-lemma'' kind argument, suggests
% that $K$ would be taken elliptic.
 
% Nevertheless, assuming $\alpha$ and $K$ compatible, if $V' \subset V(K)$ is any other tubular neighborhood of $K$ whose boundary is foliated
% by Reeb orbits, $ECH(Y \setminus \mathring{V}',\alpha)$ is still well defined. Thinking as in \cite[Section]{CGH2}, one could be able to prove
% that 

\begin{Notations}
 In order to simplify the notation, in the rest of the paper we will indicate the $ECH$ chain groups for
 the knot embedded contact homology groups of links and knots by:
 \begin{eqnarray*}
  ECC(L,Y,\alpha) & := & ECC^{\bar{e}_+,\bar{h}_+}(N,\alpha),\\
  \widehat{ECC}(L,Y,\alpha) & := &  ECC^{\bar{h}_+}(N,\alpha),
 \end{eqnarray*}
 where $N$ and $\alpha$ are as above. In particular, if not stated otherwise, we will always assume
 that the contact form $\alpha$ is adapted to $L$. These groups will implicitly come endowed with the differential $\partial^{ECK}$.
\end{Notations}

\vspace{0.3 cm}

We end this section by saying some word about a further generalization of $ECK$ to \emph{weakly homologically trivial} links. We say that
$L \subset Y$ is a weakly homologically trivial (or simply \emph{weakly trivial}) $n$-component link if there exist surfaces
with boundary $S_1,\ldots, S_m \subset Y$, with
$m \leq n$ and such that $\partial S_i \cap \partial S_j = \emptyset$ if $i \neq j$ and $ \bigsqcup_{i=1}^m \partial S_i = L$.
Also here we do not require that $S_i$ or even $\partial S_i$ is disjoint from $S_j$ for $j \neq i$.

Clearly $L$ is a strongly trivial link if and only if it is weakly trivial with $m = n$.

In this case we cannot in general define a homology with a filtered $n$-degree. If $L$ is a weakly trivial link
with $m \lneq n$ and $\alpha$ is an adapted contact form, then there exists $S \in \{S_1,\ldots,S_m\}$ such that $\partial S$ has more then 
one connected component. Suppose for instance that $\partial S = K_1 \sqcup K_2$. The arguments of proposition
\ref{Proposition: Holomorphic curves preserve the filtration in general} say then that if $u : (F,j) \rightarrow (\R \times Y, J)$ is a
holomorphic curve from $\gamma$ to $\delta$, then
$$\langle \gamma , S \rangle - \langle \delta , S \rangle = \langle \Imm(u), \R \times (K_1 \sqcup K_2) \rangle \geq 0.$$ 
So in this case we can still apply the arguments above and get well defined $ECH$ invariants for $L$. However this time they will come
only with a filtered (relative) $\Z^m$-degree on the generators $\gamma$ of an $ECH$ complex of $Y$, which is given by the
$m$-tuple $(\langle \gamma, S_1 \rangle,\ldots,\langle \gamma, S_m \rangle)$.

\begin{Ex} \label{Example: ECK for open books}
 Let $(L,S,\phi)$ be an open book decomposition of $Y$ with, possibly, disconnected boundary. Using a (connected) page of $(L,S,\phi)$ to compute
 the Alexander degree and, with the notations of Subsection \ref{Subsubsection: ECH from open books}, we get
 $$ECK_d(L,Y,\alpha) \cong ECH_d(int(N),\alpha)$$
 for any $d \in \Z$. 
\end{Ex}

\section{Euler characteristics} \label{Section: Euler characteristic in S^3}

In this section we compute the graded Euler characteristics of the embedded contact homology groups for knots and links in homology three
spheres $Y$ with respect to suitable contact forms. The computations will be done in terms of the Lefschetz zeta function of the flow
of the Reeb vector field.

Before proceeding we briefly recall what the graded Euler characteristic is. Given a collection of chain complexes 
$$(C,\partial) = \{(C_{*,(i_1,\ldots,i_n)},\partial_{(i_1,\ldots,i_n)})\}_{(i_1,\ldots,i_n) \in \mathbb{Z}^n},$$ where
$*$ denotes a relative homological degree, its \emph{graded Euler characteristic} is 
$$\chi(C) = \sum_{i_1,\ldots,i_n} \chi \left(C_{*,(i_1,\ldots,i_n)}\right) t_1^{i_1} \cdots t_n^{i_n} \in \Z[t_1^{\pm 1},\ldots,t_n^{\pm 1}]$$
where $\chi \left(C_{*,(i_1,\ldots,i_n)}\right)$ is the standard Euler characteristic of $C_{*,(i_1,\ldots,i_n)}$ and the $t_j$'s are formal
variables. By definition, $\chi(C)$ is a Laurent polynomial and the properties of the standard Euler characteristic imply
$$\chi(C) = \chi\left(H(C,\partial)\right).$$
In this case the homology $H(C,\partial)$ is a \textit{categorification} of the polynomial $\chi(C)$.

When we want to highlight the variables of these polynomials we will indicate them
as subscripts of the symbol $\chi$. For example if $L$ is an $n$-link and we want to express its Euler characteristic by a polynomial in the
$n$ variables $t_1,\ldots,t_n$, we will write $\chi\left(ECK(L,Y,\alpha)\right) = \chi_{t_1,\ldots,t_n}\left(ECK(L,Y,\alpha)\right)$.

\vspace{0.3 cm}

The most important result of this section relates the Euler characteristic of $ECK$ homologies of a link in $S^3$ with the multivariable
Alexander polynomial $\Delta_L$.
\begin{Thm} \label{Theorem: Euler characteristic of ECK}
 Let $L$ be any $n$-link in $S^3$. Then there exists a contact form $\alpha$ adapted to $L$ such that:
 \begin{eqnarray} \label{Equation: Euler characteristics of ECK full}
  \chi\left(ECK(L,S^3,\alpha)\right) \doteq \left\{ \begin{array}{cc}
                                                \Delta_L(t_1,\ldots,t_n) & \mbox{if } n > 1\\
                                                 & \\
                                                \Delta_L(t)/({1-t})  & \mbox{if } n = 1
                                               \end{array}\right.
 \end{eqnarray}
 and 
 \begin{eqnarray} \label{Equation: Euler characteristics of ECK hat}
  \chi\left(\widehat{ECK}(L,S^3,\alpha)\right) \doteq \left\{ \begin{array}{cc}
                                                \Delta_L \cdot \prod_{i=1}^n(1 - t_i) & \mbox{if } n > 1\\
                                                 & \\
                                                \Delta_L(t)  & \mbox{if } n = 1.
                                               \end{array}\right. 
 \end{eqnarray}
\end{Thm}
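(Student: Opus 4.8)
The plan is to compute these Euler characteristics as a twisted Lefschetz zeta function of a Reeb flow and then identify that zeta function with the Alexander polynomial via Fried's theorem. First I would pass from $ECK$ to $ECH$ of the complement: by Lemma~\ref{Lemma: ECK iso to ECH of int N} we have $ECK(L,Y,\alpha)\cong ECH(int(N),\alpha)$ with $int(N)\cong Y\setminus L$, and under the canonical identification $H_1(S^3\setminus L)\cong\Z^n$ (a class goes to its tuple of linking numbers with the $K_i$, equivalently to $(\langle \cdot,S_1\rangle,\dots,\langle \cdot,S_n\rangle)$) the Alexander $\Z^n$-grading becomes the grading by homology class. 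Since the graded Euler characteristic only sees the underlying graded module, I would drop the differential; by the index-parity formula~\ref{Equation: Index parity formula of the ECH index} the canonical $\Z/2$-grading is the Lefschetz sign $\epsilon(\gamma)=\prod_i\epsilon(\gamma_i)^{k_i}$, so
\[
  \chi\bigl(ECK(L,S^3,\alpha)\bigr)\;=\!\!\sum_{\gamma\in\mathcal O(int(N))}\!\!\epsilon(\gamma)\;t_1^{\langle\gamma,S_1\rangle}\cdots t_n^{\langle\gamma,S_n\rangle},
\]
which, grouping orbit sets by their simple factors (elliptic orbits, of sign $+1$, occurring with any multiplicity, hyperbolic ones with multiplicity $0$ or $1$), equals
\[
  \prod_{\gamma_i\ \mathrm{ell.}}\frac{1}{1-t^{[\gamma_i]}}\ \cdot\ \prod_{\gamma_i\ \mathrm{pos.hyp.}}\bigl(1-t^{[\gamma_i]}\bigr)\ \cdot\ \prod_{\gamma_i\ \mathrm{neg.hyp.}}\bigl(1+t^{[\gamma_i]}\bigr).
\]
A Lefschetz fixed-point computation — which in the fibered case collapses the product to $\det(I-t\,\phi_*|_{H_1})/\det(I-t\,\phi_*|_{H_0})$ — shows that this is precisely the twisted Lefschetz zeta function $\zeta_{R_\alpha}$ of the Reeb flow on $S^3\setminus L$, valued in $\Z[t_1^{\pm1},\dots,t_n^{\pm1}]$.

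Next I would choose $\alpha$ so that Fried's theorem applies. Presenting $L$ as the closure of a braid, $L$ together with the braid axis is the binding of an open book of $S^3$; a Thurston--Winkelnkemper form adapted to it (Section~\ref{Subsection: Open Books}) is adapted to $L$, and its Reeb vector field on $N$ is transverse to the truncated pages away from the axis and has the standard model of Figure~\ref{Figure: Dynamic near binding in W} near each $K_i$ — so the flow is nonsingular and circular, and (after checking it has no closed orbit null-homologous in $S^3\setminus L$) $\zeta_{R_\alpha}$ is a well-defined rational function. Fried's dynamical reformulation of Reidemeister--Turaev torsion then gives $\zeta_{R_\alpha}\doteq\tau(S^3\setminus L)$, and by the theorems of Milnor and Turaev $\tau(S^3\setminus L)\doteq\mathrm{ALEX}(S^3\setminus L)$; with the previous paragraph this is the $Y=S^3$ case of Theorem~\ref{TheoremIntro: Euler characteristics of ECK and ALEX}. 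Since $\mathrm{ALEX}(S^3\setminus L)\doteq\Delta_L(t_1,\dots,t_n)$ for $n\ge2$ and $\mathrm{ALEX}(S^3\setminus K)\doteq\Delta_K(t)/(1-t)$ for $n=1$, the first asserted formula follows. For the hat version, the complex $\widehat{ECC}(L,Y,\alpha)=ECC^{\bar h_+}(N,\alpha)$ is obtained from $ECC(L,Y,\alpha)=ECC^{\bar e_+,\bar h_+}(N,\alpha)$ by forbidding the elliptic orbits $e_1^+,\dots,e_n^+$; as $e_i^+$ is homologous to a meridian of $K_i$ (so $\langle e_i^+,S_j\rangle=\delta_{ij}$, $\epsilon(e_i^+)=+1$, unrestricted multiplicity) it contributes the factor $(1-t_i)^{-1}$ to the generating function, and deleting all of them multiplies $\chi$ by $\prod_{i=1}^n(1-t_i)$. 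Hence $\chi(\widehat{ECK}(L,S^3,\alpha))\doteq\prod_{i=1}^n(1-t_i)\cdot\chi(ECK(L,S^3,\alpha))$, which is $\Delta_L\cdot\prod_i(1-t_i)$ for $n\ge2$ and $\Delta_K(t)$ for $n=1$.

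The main obstacle is the choice of contact form and the verification that its Reeb flow on the (generally non-fibered) complement $S^3\setminus L$ genuinely satisfies the hypotheses of Fried's theorem — nonsingular, circular, and without null-homologous closed orbits, so that $\zeta_{R_\alpha}$ is defined — while controlling the contributions of the auxiliary orbits $e_i,h_i,e_i^+,h_i^+$ near $\partial N$ and excluding the orbits $K_i$ themselves, so that the zeta function computes $\tau(S^3\setminus L)$ and not some other torsion, and tracking the indeterminacy hidden in $\doteq$ (monomial units and sign, and for $n=1$ the fact that $\chi$ is a rational function, not a Laurent polynomial). The remaining ingredients — the $ECK\cong ECH$ reduction, Fried's theorem, and the Milnor--Turaev identification of torsion with the Alexander polynomial — are quoted.
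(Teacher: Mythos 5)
Your overall strategy is the paper's: compute $\chi$ of the $ECK$ chain group as a product of local zeta functions over the simple orbits (elliptic $\mapsto (1-t^{[\gamma]})^{-1}$, positive/negative hyperbolic $\mapsto 1\mp t^{[\gamma]}$, justified by the index parity formula), identify the result with the twisted Lefschetz zeta function of the Reeb flow, invoke Fried's theorem to equate that with $\mathrm{ALEX}(S^3\setminus L)$, use the classical relation between $\mathrm{ALEX}$ and $\Delta_L$, and obtain the hat version by stripping off the factors $(1-t_i)^{-1}$ contributed by the elliptic orbits $e_i^+$. The one genuinely different ingredient is your choice of auxiliary fibered link: you take $L$ union a braid axis, whereas the paper (Proposition \ref{Prop: Any L is part o a fibered L'}) embeds $L$ in the binding of an open book produced from a Giroux cell decomposition. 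In $S^3$ your choice is legitimate and more economical (one extra component instead of possibly many), at the cost of not controlling the supported contact structure — which the statement does not require.

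There is, however, a genuine gap: you never actually dispose of the auxiliary component. The claim that the Reeb flow of the Thurston--Winkelnkemper form for $(L\cup A,S,\phi)$ is ``nonsingular and circular'' on $N=S^3\setminus\mathring{\mathcal N}(L)$ is not justified and, as stated, false: the axis $A$ is itself a closed Reeb orbit lying inside $N$, the open-book fibration $\theta$ degenerates along $A$, and the solid torus $V(A)$ is foliated by tori carrying further (generally degenerate, non-isolated) closed orbits, so the hypotheses of Theorem \ref{Theorem: ALEX equals the twisted zeta function} (circularity, non-degeneracy, transversality to the boundary) all fail there. Consequently $\zeta_{R_\alpha}$ over $S^3\setminus\mathcal N(L)$ cannot be matched directly with $\tau(S^3\setminus L)$; a priori it computes a torsion of the complement of $L\cup A$ with one variable suppressed, times the local contribution of the orbits in $V(A)$. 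The paper resolves exactly this in Subsection \ref{Subsection: The general case}: one first arranges $\alpha$ so that the tori inside $V'(K_i)$, $i>n$, are foliated with fixed irrational slope, making $K_i$ the unique (elliptic) closed orbit there; then $\zeta$ factors as $\zeta(\phi_R|_{Y\setminus U(L')})\cdot\prod_{i>n}\zeta_{K_i}(\rho_L([K_i]))$, Fried's theorem applies to the first factor for the cover $\rho_{L'}$ evaluated at $t_{n+1}=\dots=t_m=1$, and the Torres formula (Theorem \ref{Theorem: Torres formula}) is applied repeatedly to cancel the factors $\bigl(1-\prod_j t_j^{\langle K_i,S_j\rangle}\bigr)^{-1}$ and convert $\mathrm{ALEX}_{t_1,\dots,t_n,1,\dots,1}(Y\setminus L')$ into $\mathrm{ALEX}_{t_1,\dots,t_n}(Y\setminus L)$. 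You correctly flag this as ``the main obstacle,'' but flagging it is not the same as closing it; this step is the substantive content of the general (non-fibered) case and is missing from your argument.
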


Last theorem imply that \emph{the homology $ECK$ categorifies the Alexander polynomial of knots and links
in $S^3$}. This is the third known categorification
of this kind, after the ones in Heegaard-Floer homology and in Seiberg-Witten-Floer homology (see \cite{KM1} and \cite{KM2}).

\vspace{0.3 cm}

An immediate consequence of Theorem \ref{Theorem: Euler characteristic of ECK} and Equations \ref{EquationIntro: Euler characteristics of HFL^-}
and \ref{EquationIntro: Euler characteristics of widehat HFL} is:
\begin{Cor}
 For any link $L$ in $S^3$ there exists a contact form $\alpha$ such that:
 \begin{eqnarray} 
  \chi({ECK}(L,S^3,\alpha)) &\doteq & \chi(HFL^-(L,S^3)), \nonumber\\
  \chi(\widehat{ECK}(L,S^3,\alpha)) &\doteq &\chi(\widehat{HFL}(L,S^3)). \nonumber
 \end{eqnarray}
\end{Cor}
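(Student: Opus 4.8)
The plan is to obtain the corollary by directly comparing two closed-form descriptions of the same Euler characteristics, both written through the multivariable Alexander polynomial $\Delta_L$, and to reconcile them using that $\doteq$ is an equivalence relation. First I would fix, via Theorem \ref{Theorem: Euler characteristic of ECK}, a contact form $\alpha$ adapted to $L$ for which $\chi(ECK(L,S^3,\alpha))$ and $\chi(\widehat{ECK}(L,S^3,\alpha))$ are given explicitly; simultaneously I would recall Equations \ref{EquationIntro: Euler characteristics of HFL^-} and \ref{EquationIntro: Euler characteristics of widehat HFL}, which express $\chi(HFL^-(L,S^3))$ and $\chi(\widehat{HFL}(L,S^3))$ in terms of the same polynomial. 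Since $\doteq$ is transitive, it then suffices to match the right-hand sides in each case.

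For the full versions the identification is immediate. In both regimes $n>1$ and $n=1$, the expression supplied by Theorem \ref{Theorem: Euler characteristic of ECK} for $\chi(ECK)$ coincides verbatim with the one supplied by Equation \ref{EquationIntro: Euler characteristics of HFL^-} for $\chi(HFL^-)$: it is $\Delta_L(t_1,\ldots,t_n)$ when $n>1$ and $\Delta_L(t)/(1-t)$ when $n=1$. Hence $\chi(ECK(L,S^3,\alpha)) \doteq \chi(HFL^-(L,S^3))$ with nothing further to check.

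For the hat versions the two formulas already agree when $n=1$ (both are $\Delta_L(t)$), so only $n>1$ needs attention. There $\chi(\widehat{ECK})$ carries the factor $\prod_{i=1}^n(1-t_i)$ while $\chi(\widehat{HFL})$ carries $\prod_{i=1}^n(t_i^{1/2}-t_i^{-1/2})$, and I would reconcile these through the elementary identity
\begin{equation*}
 t_i^{1/2}-t_i^{-1/2} = -\,t_i^{-1/2}\,(1-t_i),
\end{equation*}
which upon taking the product yields
\begin{equation*}
 \prod_{i=1}^n\bigl(t_i^{1/2}-t_i^{-1/2}\bigr) = (-1)^n\Bigl(\prod_{i=1}^n t_i^{-1/2}\Bigr)\prod_{i=1}^n(1-t_i).
\end{equation*}
Thus the two hat boundary factors differ exactly by the sign $(-1)^n$ and the monomial $\prod_{i=1}^n t_i^{-1/2}$; multiplying through by $\Delta_L$, the two Euler characteristics differ by the same sign and monomial, so $\chi(\widehat{ECK}(L,S^3,\alpha)) \doteq \chi(\widehat{HFL}(L,S^3))$.

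The sole point requiring care --- and therefore the ``main obstacle,'' modest as it is --- is the bookkeeping with $\doteq$ in the hat case: one must confirm that the half-integer monomial $\prod_{i=1}^n t_i^{-1/2}$ produced above counts as an admissible unit, i.e. that $\Delta_L$ is handled up to factors $\pm\prod_i t_i^{k_i/2}$. As $\Delta_L$ is in any event only defined up to precisely such units, this is automatic, and no input beyond Theorem \ref{Theorem: Euler characteristic of ECK} and Equations \ref{EquationIntro: Euler characteristics of HFL^-} and \ref{EquationIntro: Euler characteristics of widehat HFL} is needed.
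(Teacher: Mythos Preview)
Your proposal is correct and follows exactly the approach the paper itself takes: the paper states this corollary as an ``immediate consequence'' of Theorem \ref{Theorem: Euler characteristic of ECK} together with Equations \ref{EquationIntro: Euler characteristics of HFL^-} and \ref{EquationIntro: Euler characteristics of widehat HFL}, and you have simply spelled out the straightforward comparison of right-hand sides, including the elementary monomial bookkeeping for the hat case when $n>1$.
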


The last corollary implies that conjecture \ref{Conjecture: ECK iso to HFK for links full and hat} (which generalizes conjectures
\ref{Conjecture: ECK iso to HFK hat} and \ref{Conjecture: ECK iso to HFK full}) holds for links in $S^3$ at least at the level of Euler
characteristic.

\vspace{0.3 cm}

%We begin in Section \ref{Section: A first computation: fibered knots} with a ``by hands'' calculation of
%$\chi(ECK(K,S^3,\alpha))$ and $\chi(\widehat{ECK}(K,Y,\alpha))$ where $K$ is a fibered knot and $\alpha$ is a contact
%form adapted to the associated open book decomposition of $S^3$. Even if the expressions we find here can be recovered from the most
%general results that we find later, this direct computation is useful to understand the philosophy of the arguments we will use in the following sections.

A key ingredient to prove Theorem \ref{Theorem: Euler characteristic of ECK} is the dynamical formulation of the Alexander quotient given by
Fried in \cite{Fri}.

\subsection{A dynamical formulation of the Alexander polynomial}
\label{Subsection: The dynamical definition of Delta}
%\sectionmark{The dynamical definition of $\Delta(t)$}

Given any link $L=K_1 \sqcup \ldots \sqcup K_n$ in $S^3$ we can associate to it its \emph{multivariable Alexander polynomial}
$$\Delta_L(t_1,\ldots,t_n) \in \frac{\Z[t_1^{\pm 1},\ldots,t_n^{\pm 1}]}{\pm t_1^{a_1}\cdots t_n^{a_n}}.$$
with $a_i \in \Z$. The quotient means that the Alexander polynomial is well defined only up to multiplication by monomials of the form 
$\pm t_1^{a_1}\cdots t_n^{a_n}$.

A slightly simplified version is the \textit{(classical) Alexander polynomial} $\Delta_L(t)$, defined by setting
$t_1 = \ldots = t_n = t$, i.e.:
$$\Delta_L(t) := \Delta_L(t,\ldots,t).$$
If $L$ is a knot the two notions obviously coincide.

There are many possible definitions of the Alexander polynomial $\Delta_L$. In this section we give a formulation of
$\Delta_L$ in terms of the dynamics of suitable vector fields in $S^3 \setminus L$. The details about the proof of the statements can be found
in the references.

The fact that the Alexander polynomial
is related to dynamical properties of its complement in $S^3$ origins with the study of fibrations of $S^3$. For example in \cite{Ac} A'Campo
studied the \textit{twisted Lefschetz zeta function} of the monodromy of an open book decomposition $(S,\phi)$ of $S^3$ associated to a Milnor
fibration of a complex algebraic singularity.
More in general, if $(K,S,\phi)$ is any open book decomposition of $S^3$, one can easily prove (see for example \cite{Ro}) that
\begin{equation*} 
 \Delta_K(t) \doteq \mathrm{det}(\mathbbm{1} - t \phi_*^1),
\end{equation*}
where $\mathbbm{1}$ and $\phi_*^1$ are the identity map and, respectively, the application induced by $\phi$, on $H_1(S,\Z)$. The basic idea in this
context is to express the right-hand side of equation above in terms of traces of iterations of $\phi_*^1$; then to apply the Lefschetz
fixed point theorem to get expressions in terms of periodic points, (i.e. periodic orbits) for the flow of some vector field in $S^3 \setminus K$
whose first return on a page is $\phi$.

%In Section \ref{Section: A first computation: fibered knots} we will develop in detail this argument
%nd we will get a somehow ``direct'' proof of Theorem \ref{Theorem: Euler characteristic of ECK for fibered knots in introduction} in this
%pecial case.

Suppose now that $L$ is not a fibered link, so that its complement is not globally fibered over $S^1$ and let $R$ be a vector field in
$S^3 \setminus L$.
If one wants to apply arguments like above, it is necessary to decompose $S^3 \setminus L$ in ``fibered-like'' pieces with respect to $R$, 
in which it is possible to define at least a local first return map of the flow $\phi_R$ of $R$. Obviously some condition on $R$ is required.
For example, in his beautiful paper \cite{Fra}, Franks consider \textit{Smale vector fields}, that is, vector fields whose chain recurrent set is
one-dimensional and hyperbolic (cf. \cite{Sm}). 

Here we are more interested in the approach used by Fried in \cite{Fri}. Consider a three-dimensional manifold $X$.
Any \emph{abelian cover}
$\widetilde{X} \stackrel{\pi}{\rightarrow} X$ with deck transformations group isomorphic to a fixed abelian group $G$ is uniquely determined
by the choice of a class $\rho = \rho(\pi) \in H^1(X,G) \cong \mathrm{Hom}\left( H_1(X,\Z), G \right)$. Here $\rho$ is determined by the 
following property: for any $[\gamma] \in H_1(X)$, if $\widetilde{\gamma} : [0,1] \rightarrow \widetilde{X}$ is any lifting of the loop
$\gamma : [0,1] \rightarrow X$, then $\rho([\gamma])$ is determined by $\rho([\gamma])(\widetilde{\gamma}(0)) = \widetilde{\gamma}(1)$.

Since the correspondence between Abelian covers and cohomology classes is bijective, with abuse of notation sometimes we will refer to an
abelian cover directly by identifying it with the corresponding $\rho$.

\begin{Ex}
 The \emph{universal abelian cover} of $X$ is the abelian cover with deck transformation group $G = H_1(X,\Z)$ and corresponding to $\rho = id$.
 
\end{Ex}
\begin{Ex} \label{Example: Abelian cover determined by a link}
 Let $L= K_1 \sqcup \ldots \sqcup K_n$ be an $n$-components link in a three manifold $Y$ such that $K_i$ is homologically trivial for any $i$
 and fix a Seifert surface $S_i$ for $K_i$. Let moreover $\mu_i$ be a positive meridian for $K_i$. If $i: Y \setminus L \hookrightarrow Y$ is
 the inclusion, the isomorphism 
 \begin{equation} \label{Equation: H_1 complement of the knots iso to the summands with surfaces}
  \begin{array}{ccc}
   H_1(Y \setminus L) & \longrightarrow & H_1(Y) \oplus \Z_{[\mu_1]} \oplus \ldots \oplus \Z_{[\mu_n]}\\
           \ [\gamma]       & \longmapsto     & \left( i_*([\gamma]), \langle \gamma, S_1\rangle, \ldots, \langle \gamma, S_n \rangle \right)
  \end{array}
 \end{equation}
 gives rise naturally to the abelian cover
 $$\rho_L \in \mathrm{Hom}\left( H_1(Y \setminus L,\Z), \Z^n \right)$$
 of $Y \setminus L$ defined by 
 $$\rho_L([\gamma]) = \left(\langle \gamma, S_1\rangle, \ldots, \langle \gamma, S_n \rangle\right).$$
 Setting $t_i =[\mu_i] \in H_1(Y \setminus L,\Z)$, we can regard $\rho_L([\gamma])$ as a
 monomial in the variables $t_i$:
 $$\rho_L([\gamma]) = t_1^{\langle \gamma, S_1\rangle}\cdots t_n^{\langle \gamma, S_n\rangle}.$$
 In the rest of the paper we will often use this notation.
 
 Note finally that if $Y$ is a homology three-sphere, $\rho_L$ coincides with the universal 
 abelian cover of $Y \setminus L$.
\end{Ex}

If $R$ is a vector field on $X$ satisfying some compatibility condition with $\rho$ (and with $\partial X$ if this is
non-empty), the author relates the Reidemeister-Franz torsion of $(X,\partial X)$ with the \emph{twisted Lefschetz zeta function} of
the flow $\phi_R$. 

\subsubsection{Twisted Lefschetz zeta function of flows}

Let $R$ be a vector field on $X$ and $\gamma$ a closed isolated orbit of $\phi_R$. Pick any point $x \in \gamma$ and let $D$ be a small disk transverse to
$\gamma$ such that $D \cap \gamma = \{x\}$. With this data it is possible to define the Lefschetz sign of $\gamma$ exactly like we did in
Section \ref{Subsection: Contact geometry} for orbits of Reeb vector fields associated to a contact structure $\xi$, but using now $T_xD$ instead of
$\xi_x$. Indeed it is possible to prove that the Lefschetz sign of $\gamma$ does not depend on the choice of $x$ and $D$ and it is an invariant
$\epsilon(\gamma) \in \{-1,1\}$ of $\phi_R$ near $\gamma$.
\begin{Def} \label{Definition: Local zeta function of an orbit}
 The \emph{local Lefschetz zeta function} of $\phi_R$ near $\gamma$ is the formal power series
 ${\zeta}_{\gamma}(t) \in \Z[[t]]$ defined by
 $${\zeta}_{\gamma}(t) := \exp \left(\sum_{i \geq 1} \epsilon(\gamma^i) \frac{t^i}{i}\right).$$
\end{Def}

Let now $\widetilde{X} \stackrel{\pi}{\rightarrow} X$ be an abelian cover with deck transformation group $G$ and let
$\rho = \rho(\pi) \in H^1(X,G)$. Suppose that all the periodic orbits of $\phi_R$ are isolated.
\begin{Def}
 We define the \emph{$\rho$-twisted Lefschetz zeta function of $\phi_R$} by
 $${\zeta}_{\rho}(\phi_R) := \prod_{\gamma}{\zeta}_{\gamma}\left( \rho([\gamma]) \right),$$
 where the product is taken over the set of simple periodic orbits of $\phi_R$.
\end{Def}
When $\rho$ is understood we will write directly $\zeta(\phi_R)$ and we will call it twisted Lefschetz zeta function of $\phi_R$.

We remark that in \cite{Fri} the author defines ${\zeta}_{\rho}(\phi_R)$ in a slightly different way and then he proves (Theorem 2) that,
under some assumptions that we will state in the next subsection, the two definitions coincide.
\begin{Notation}
 Suppose that $\rho \in H^1(X,\Z^n)$ is an abelian cover of $X$ and chose a generator
 $(t_1,\ldots,t_n)$ of $\Z^n$. Then, with a similar notation to that of Example \ref{Example: Abelian cover determined by a link}, 
 we will often identify ${\zeta}_{\rho}(\phi_R)$ with an element of $\Z[[t_1^{\pm 1},\ldots,t_n^{\pm 1}]]$.  
\end{Notation}

\subsubsection{Torsion and flows}
In \cite{Fri} Fried relates the Reidemeister torsion of an abelian cover $\rho$ of a (non necessarily closed) three-manifold $X$
with the twisted Lefschetz zeta
function of certain flows. In particular in Section 5 he considers a kind of torsion that he calls \emph{Alexander quotient} and denotes by
$\mathrm{ALEX}_{\rho}(X)$: the reason for the ``quotient'' comes from the fact that Fried uses a definition of the Reidemeister torsion only
up to the choice of a sign (this is the ``refined Reidemeister torsion'' of \cite{Tu}), while $\mathrm{ALEX}_{\rho}(X)$ is defined up to an
element in the Abelian group of deck transformations of $\rho$ (see also \cite{Coh}).

In fact one can check that $\mathrm{ALEX}_{\rho}(X)$ is exactly the Reidemister-Franz torsion $\tau$ considered in \cite{OS5}.
In particular, when $X$ is the complement of an $n$-component link $L$ in $S^3$ and $\rho$ is the universal abelian cover of $X$, then
\begin{equation} \label{Equation: Relation between ALEX and Delta}
   \mathrm{ALEX}(S^3 \setminus L) \doteq \left\{ \begin{array}{cc}
                                                \Delta_L(t_1,\ldots,t_n) & \mbox{if } n > 1\\
                                                 & \\
                                                \Delta_L(t)/({1-t})  & \mbox{if } n = 1
                                               \end{array}\right. .
\end{equation}
where we removed $\rho = id_{H_1(S^3 \setminus L,\Z)}$ from the notation (see \cite[Section 8]{Fri} and \cite{Tu}). 

Since the notation ``$\tau$'' is ambiguous, we follow \cite{Fri} and we refer to the Reidemeister-Franz as the Alexander quotient, that will be
indicated $\mathrm{ALEX}_{\rho}(X)$.

\vspace{0.3 cm}

In order to relate $\mathrm{ALEX}_{\rho}(X)$ to the twisted Lefschetz zeta function of the flow $\phi_R$ of a vector field $R$,
Fried assumes some hypothesis on $R$.

The first condition that $R$ must satisfy is the \emph{circularity}. 
\begin{Def}
 A vector field $R$ on $X$ is \emph{circular} if there exists a $C^1$ map $\theta : X \rightarrow S^1$ such that $d \theta (R) > 0$.
\end{Def}
If $\partial X = \emptyset$ this is equivalent to say that $R$ admits a global cross section. Intuitively, the circularity condition on $R$ 
allows to define a kind of first return map of $\phi_R$. 

Suppose $R$ circular and consider $S^1 \cong \frac{\R}{\Z}$ with $\R$-coordinate $t$. The cohomology class
$$u_{\theta} := \theta^*([dt]) \in H^1(X,\Z)$$
is then well defined.
\begin{Def}
 Given an abelian cover $\widetilde{X} \stackrel{\pi}{\rightarrow} X$ with deck transformations group $G$, let $\rho = \rho(\pi) \in H^1(X,G)$ be
 the corresponding cohomology class. A circular vector field $R$ on $X$ 
 is \emph{compatible} with $\rho$ if there exists a homomorphism $v : G \rightarrow \R$ such that $v \circ \rho = u_{\theta}$, where $\theta$
 and $u_{\theta}$ are as above.
\end{Def}
\begin{Ex} \label{Example: The universal Abelian cover is always compatible}
 The universal abelian cover corresponds to $\rho = id : H_1(X,\Z) \rightarrow H_1(X,\Z)$, so it is automatically
 compatible with any circular vector field on $X$. 
\end{Ex}

The following theorem is not the most general result in \cite{Fri} but it will be enough for our purposes:

\begin{Thm}[Theorem 7, \cite{Fri}] \label{Theorem: ALEX equals the twisted zeta function} Let $X$ be a three manifold and $\rho \in H^1(X,G)$ an
 abelian cover. Let $R$ be a non-singular, circular and non degenerate vector field on $X$ compatible with $\rho$. Suppose moreover that, if
 $\partial X \neq \emptyset$, then $R$ is transverse to $\partial X$ and pointing out of $X$. Then
 $$\mathrm{ALEX}_{\rho}(X) \doteq {\zeta}_{\rho}(\phi_R),$$
 where the symbol $\doteq$ denotes the equivalence up to multiplication for an element $\pm g$, $g \in G$.
\end{Thm}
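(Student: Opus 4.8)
Since this is Theorem~7 of~\cite{Fri}, the honest proposal is to invoke it directly; but let me sketch the mechanism behind it, which is the classical torsion--zeta dictionary of Milnor and Fried.

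The plan is to first use circularity to present $X$ as a (possibly open) mapping torus. The map $\theta\colon X\to S^1$ with $d\theta(R)>0$ makes $\Sigma:=\theta^{-1}(\mathrm{pt})$ a compact global cross-section transverse to the flow; because $R$ is transverse to $\partial X$ and points outward, no periodic orbit can meet $\partial X$, and $X$ is recovered from $\Sigma$ together with the first return map $P$ of $\phi_R$ --- a diffeomorphism of $\Sigma$ when $\partial X=\emptyset$, and a partial self-map $P$ (defined on the sub-surface of points whose forward orbit returns before escaping) when $\partial X\neq\emptyset$. The compatibility hypothesis says $u_\theta=\theta^*[dt]$ factors as $v\circ\rho$, so the abelian cover $\widetilde X$ has $H_*(\widetilde X)$ a $\mathbb Z[G]$-module in which the flow direction is a unit, and $P$ lifts to a $\mathbb Z[G]$-equivariant endomorphism $\widetilde P_*$ of $H_*(\widetilde\Sigma)$ for the induced cover $\widetilde\Sigma$ of $\Sigma$.

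Next I would compute $\mathrm{ALEX}_{\rho}(X)$ by a Wang-type (mapping-cone) description of its Reidemeister torsion in terms of $\widetilde P_*$ --- Milnor's classical mapping-torus formula in the closed case, and its relative analogue for $(X,\partial X)$ otherwise --- obtaining, up to units in $\mathbb Z[G]$,
\[
 \mathrm{ALEX}_{\rho}(X)\doteq\prod_i\det\!\big(\mathbbm{1}-\widetilde P_*\mid H_i(\widetilde\Sigma)\big)^{(-1)^{i+1}}.
\]
Expanding each factor via the formal identity $\det(\mathbbm{1}-A)^{-1}=\exp\!\big(\sum_{n\ge1}\tfrac1n\,\mathrm{tr}(A^n)\big)$ and collecting signs rewrites the right-hand side as $\exp\!\big(\sum_{n\ge1}\tfrac1n\,\ell_n\big)$, where $\ell_n:=\sum_i(-1)^i\,\mathrm{tr}\big((\widetilde P_*)^{n}\mid H_i(\widetilde\Sigma)\big)$. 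Since $R$ is non-degenerate every fixed point of $P^n$ is transverse, so the twisted Lefschetz fixed point theorem identifies $\ell_n$ with the sum, over fixed points $x$ of $P^n$, of $\sign\det(\mathbbm{1}-d_xP^n)$ weighted by the $\rho$-class of the corresponding closed orbit. Fixed points of $P^n$ are exactly the period-$n$ periodic points of $\phi_R$; their local signs are the Lefschetz signs $\epsilon$; and regrouping the sum over periodic points according to the underlying \emph{simple} orbit converts the exponential into $\prod_\gamma\zeta_\gamma(\rho([\gamma]))=\zeta_{\rho}(\phi_R)$. The infinite product lies in a completion of $\mathbb Z[[t_1^{\pm1},\dots,t_n^{\pm1}]]$ because circularity forces $v(\rho([\gamma]))>0$ for every periodic orbit, while compactness of $X$ and non-degeneracy bound the number of simple orbits below any fixed total degree.

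The delicate point is the boundary case: making the open-mapping-torus picture precise, setting up the relative torsion identity for $(X,\partial X)$, and verifying that outward-transversality of $R$ along $\partial X$ simultaneously keeps the iterates $P^n$ free of fixed points near the boundary and makes the underlying homological algebra (the long exact sequence of the pair, and the nilpotence of the relevant ``return'' operators) behave as in the closed case. When $\partial X=\emptyset$ the argument is just Milnor's mapping-torus theorem combined with the standard Lefschetz--zeta identity, and essentially nothing remains to check.
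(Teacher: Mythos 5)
The paper does not prove this statement at all: it is imported verbatim as Theorem~7 of Fried's paper \cite{Fri}, so your ``honest proposal'' to invoke it directly is exactly what the paper does. Your accompanying sketch of the mechanism (cross-section from circularity, Milnor's mapping-torus formula for the torsion, the identity $\det(\mathbbm{1}-A)^{-1}=\exp\sum_n \tfrac1n\mathrm{tr}(A^n)$, and the Lefschetz fixed point theorem regrouped over simple orbits) is a faithful outline of the standard Milnor--Fried argument and correctly isolates the boundary/partial-return-map issue as the only delicate point, so there is nothing to object to.
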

An immediate consequence is the following
\begin{Cor} 
 If $L$ is any $n$-component link in $S^3$, let $\mathcal{N}(L)$ be a tubular neighborhood of $L$ and pose $N = S^3 \setminus\mathcal{N}(L)$.
 Let $R$ be a non-singular circular vector field on $N$, transverse to $\partial N$ and pointing out of $N$. Then
 \begin{equation} \label{Equation: Relation between zeta and Delta}
   {\zeta}(\phi_R) \doteq \left\{ \begin{array}{cc}
                                                \Delta_L(t_1,\ldots,t_n) & \mbox{if } n > 1\\
                                                 & \\
                                                \Delta_L(t)/({1-t})  & \mbox{if } n = 1
                                               \end{array}\right. .
\end{equation}
\end{Cor}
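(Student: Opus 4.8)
The plan is to obtain the Corollary by combining Theorem~\ref{Theorem: ALEX equals the twisted zeta function} (Fried) with Equation~\ref{Equation: Relation between ALEX and Delta}. First I would set $X := N$, a compact three-manifold whose boundary $\partial N$ is a disjoint union of $n$ tori, and take $\rho$ to be the class of the universal abelian cover of $N$. Because $S^3$ is a homology sphere, $H_1(N)$ is free of rank $n$, generated by the meridians $[\mu_1],\ldots,[\mu_n]$ of the components of $L$ (this is the isomorphism~\ref{Equation: H_1 complement of the knots iso to the summands with surfaces} specialised to $Y=S^3$), and under the identification $t_i=[\mu_i]$ this universal abelian cover is precisely the cover $\rho_L$ of Example~\ref{Example: Abelian cover determined by a link}. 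By Example~\ref{Example: The universal Abelian cover is always compatible}, $\rho$ is automatically compatible with every circular vector field on $N$, so that hypothesis of Theorem~\ref{Theorem: ALEX equals the twisted zeta function} is free.

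Next I would verify the remaining hypotheses of Theorem~\ref{Theorem: ALEX equals the twisted zeta function} for the given $R$. By assumption $R$ is non-singular, circular (say via $\theta:N\to S^1$ with $d\theta(R)>0$), and transverse to $\partial N$ pointing outward, so the only missing ingredient is non-degeneracy of the closed orbits, which is needed already for $\zeta(\phi_R)$ to be defined as a product over isolated orbits. If $R$ is not already non-degenerate I would replace it by a non-degenerate vector field $R'$ obtained from a $C^\infty$-small perturbation supported in $\mathrm{int}(N)$: such an $R'$ is still non-singular, still transverse to $\partial N$ pointing outward, and, being $C^0$-close to $R$, still satisfies $d\theta(R')>0$, hence is circular for the \emph{same} $\theta$. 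One then invokes the fact (part of Fried's analysis in \cite{Fri}) that $\zeta_\rho$ depends only on the relevant homotopy class of the flow, so that $\zeta_\rho(\phi_{R'})=\zeta_\rho(\phi_R)$. Applying Theorem~\ref{Theorem: ALEX equals the twisted zeta function} to $R'$ then gives $\mathrm{ALEX}_\rho(N)\doteq \zeta_\rho(\phi_{R'})=\zeta(\phi_R)$.

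Finally, since $N$ is a compact core of $S^3\setminus L$ (the inclusion $N\hookrightarrow S^3\setminus L$ is a deformation retract, in particular a simple homotopy equivalence), the Alexander quotient is unchanged, $\mathrm{ALEX}_\rho(N)\doteq \mathrm{ALEX}(S^3\setminus L)$, and Equation~\ref{Equation: Relation between ALEX and Delta} identifies the right-hand side with $\Delta_L(t_1,\ldots,t_n)$ for $n>1$ and with $\Delta_L(t)/(1-t)$ for $n=1$. Concatenating the displayed equivalences yields the claim. The step I expect to be the main obstacle is the middle one: one must make sure both that the non-degeneracy perturbation can be carried out inside the class of circular, non-singular, outward-pointing vector fields, and that it leaves $\zeta_\rho$ invariant; the rest is a bookkeeping identification of covers, generators, and variables.
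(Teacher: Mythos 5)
Your proof is correct and follows exactly the route the paper intends: the paper presents this Corollary as an immediate consequence of Theorem \ref{Theorem: ALEX equals the twisted zeta function} applied with $\rho$ the universal abelian cover (automatically compatible by Example \ref{Example: The universal Abelian cover is always compatible}) together with Equation \ref{Equation: Relation between ALEX and Delta}, and gives no further argument. Your additional care about the non-degeneracy hypothesis (which the Corollary's statement omits but which is needed even to define $\zeta(\phi_R)$) is a sensible patch rather than a divergence from the paper's approach.
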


\vspace{0.5 cm}

\subsection{Results}

In the next subsections we prove Theorem \ref{Theorem: Euler characteristic of ECK}, that will be obtained as a consequence of the following
more general result.
Recall that an $n$-link $L \subset Y$ determines the abelian cover $\rho_L \in H^1(Y\setminus L, \Z^n)$ of $Y\setminus L$ 
given in Example \ref{Example: Abelian cover determined by a link}. When $Y$ is a homology three-sphere, we have
$$\rho_L \equiv \mathbbm{1} \colon H_1(Y\setminus L)\longrightarrow H_1(Y\setminus L) \cong\Z^n.$$
In order to simplify the notations, we remove $\rho_L$ from the notations of the Alexander quotient and of the twisted Lefschetz zeta function:
\begin{eqnarray*}
 \mathrm{ALEX}(Y\setminus L) & := & \mathrm{ALEX}_{\mathbbm{1}}(Y\setminus L);\\
 \zeta(\phi) & := & \zeta_{\mathbbm{1}}(\phi).
\end{eqnarray*}

Let $(t_1,\ldots,t_n)$ be a basis for $H_1(Y\setminus L)$, where $[\mu_i] = t_i$ for $\mu_i$ positively oriented meridian of $K_i$.
\begin{Thm} \label{Theorem: Euler characteristics of ECK and ALEX}
 Let $L$ be an $n$-link in a homology three-sphere $Y$.
 %Let $\rho_L \in H^1(Y\setminus L,\Z^n)$ be the (universal) abelian cover determined by $L$.
 Then there exists a contact form $\alpha$ such that
 \begin{equation*}
  \chi_{t_1,\ldots,t_n}(ECK(L,Y,\alpha)) \doteq \mathrm{ALEX}(Y \setminus L).
 \end{equation*}
\end{Thm}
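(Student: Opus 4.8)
The plan is to identify the graded Euler characteristic of $ECK(L,Y,\alpha)$, for a well chosen $\alpha$, with a twisted Lefschetz zeta function of the Reeb flow, and then to apply Fried's Theorem \ref{Theorem: ALEX equals the twisted zeta function}. First I would pass to the link exterior: by Lemma \ref{Lemma: ECK iso to ECH of int N} there is an isomorphism $ECK(L,Y,\alpha)\cong ECH(Y\setminus\mathcal N(L),\alpha)$ respecting the Alexander $\Z^n$-splitting \ref{Equation: Split of ECK of L full}, and since the graded Euler characteristic of a chain complex equals that of its homology, it is enough to compute the graded Euler characteristic of the $ECH$-chain group of $Y\setminus\mathcal N(L)$. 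In that group a generator $\gamma=\prod_j\gamma_j^{k_j}$ is weighted by the monomial $\rho_L([\gamma])=\prod_i t_i^{\langle\gamma,S_i\rangle}$, encoding its Alexander multidegree, and by the sign $\epsilon(\gamma)=\prod_j\epsilon(\gamma_j)^{k_j}$, which by the index parity formula \ref{Equation: Index parity formula of the ECH index} is exactly the $\Z/2$-homological degree of $\gamma$.

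Next I would use that both $\epsilon$ and $\rho_L([\,\cdot\,])$ are multiplicative over simple orbits, together with the fact that an orbit set is an arbitrary assignment of a multiplicity to each simple orbit (any nonnegative integer for elliptic orbits, but only $0$ or $1$ for hyperbolic ones). The graded Euler characteristic then factors as a product over simple Reeb orbits $\gamma_j$ of the local factors $\sum_k\epsilon(\gamma_j)^k\rho_L([\gamma_j])^k$, which equal $(1-\rho_L([\gamma_j]))^{-1}$, $1-\rho_L([\gamma_j])$, and $1+\rho_L([\gamma_j])$ according as $\gamma_j$ is elliptic, positive hyperbolic, or negative hyperbolic. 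Comparing with Definition \ref{Definition: Local zeta function of an orbit}, and using that an iterate $\gamma_j^i$ has the same type as $\gamma_j$ in the first two cases while a negative hyperbolic orbit has positive hyperbolic even iterates, one checks that each local factor is precisely the local Lefschetz zeta function $\zeta_{\gamma_j}(\rho_L([\gamma_j]))$. Hence $\chi_{t_1,\dots,t_n}(ECK(L,Y,\alpha))=\zeta_{\rho_L}(\phi_{R_\alpha})$.

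To apply Fried's theorem I would choose $\alpha$ with more care: adapted to $L$ as in Lemma \ref{Lemma: There exists alpha copatible with K and S} and its link version, but also compatible with an open book of $Y$ whose binding contains $L$, so that $R_\alpha$, being transverse to the pages, is circular on the link exterior. After a generic perturbation making $R_\alpha$ nondegenerate and after pushing it transversally outward in a collar of $\partial\mathcal N(L)$, one obtains a nonsingular, circular, nondegenerate vector field $R'$ on $Y\setminus\mathrm{int}\,\mathcal N(L)$ pointing out of the boundary. The orbits that appear or disappear in passing between the various $ECH$-type chain complexes and in these modifications always come in pairs $\{e,h\}$ lying on one torus, with $e$ elliptic, $h$ positive hyperbolic, and $\rho_L([e])=\rho_L([h])$, so that each contributes $\zeta_e\zeta_h=(1-\rho_L([e]))^{-1}(1-\rho_L([h]))=1$; in particular the meridional pairs $\{e_i,h_i\}$ and $\{e_{i,+},h_{i,+}\}$ drop out harmlessly, whence $\zeta_{\rho_L}(\phi_{R'})=\chi_{t_1,\dots,t_n}(ECK(L,Y,\alpha))$. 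Because $Y$ is a homology sphere, $\rho_L$ is the universal abelian cover of $Y\setminus L$, hence automatically compatible with $R'$ (Example \ref{Example: The universal Abelian cover is always compatible}); so Theorem \ref{Theorem: ALEX equals the twisted zeta function} gives $\zeta_{\rho_L}(\phi_{R'})\doteq\mathrm{ALEX}(Y\setminus L)$, which with the previous identity proves the theorem. (The first case of Theorem \ref{Theorem: Euler characteristic of ECK} then follows by inserting \ref{Equation: Relation between ALEX and Delta}.)

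I expect the main obstacle to be this third step: producing one contact form adapted to $L$ whose Reeb flow is simultaneously circular on the link exterior, so as to satisfy Fried's hypotheses, and carries near $L$ the standard Morse--Bott normal form that identifies its periodic orbits with the generators of the $ECK$-complex; and then checking that the collar modification, the nondegeneracy perturbation and the comparisons between $N$, $\mathrm{int}\,N$ and $Y\setminus L$ change the zeta function only through the trivial pair cancellations above. (When $L$ is not fibered one cannot take $L$ itself as the binding, and one may have to fall back on the more general form of Fried's theorem alluded to in \cite{Fri}, valid for flows that are not globally circular.) Everything else---the reduction through Lemma \ref{Lemma: ECK iso to ECH of int N}, the elementary evaluation of the local zeta functions, and the classical identity \ref{Equation: Relation between ALEX and Delta}---is routine.
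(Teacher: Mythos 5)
Your first two steps --- identifying $\chi_{t_1,\dots,t_n}(ECK(L,Y,\alpha))$ with a product of local zeta functions over simple Reeb orbits, and matching those factors with Definition \ref{Definition: Local zeta function of an orbit} --- coincide with the paper's argument (Lemma \ref{Lemma: Zeta functions of the orbits} and Equation \ref{Equation: Product formula of ECC in terms of zeta functions}), and your treatment of the boundary orbits $\bar e,\bar h,\bar e_+,\bar h_+$ and of the perturbation $R\rightsquigarrow R'$ is an acceptable repackaging of Corollary \ref{Corollary: Zeta functions of R and R' are equals}. For \emph{fibered} $L$ this is essentially the paper's proof.

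The gap is in the general case. When $L$ is not fibered you take an open book whose binding $L'=L\sqcup K_{n+1}\sqcup\ldots\sqcup K_m$ strictly contains $L$ (this is Proposition \ref{Prop: Any L is part o a fibered L'}, which itself needs proof), but then your claim that $R_\alpha$ is ``circular on the link exterior'' is false: the page fibration $\theta$ is defined only on the complement of the \emph{whole} binding $L'$, and it does not extend over the extra components $K_{n+1},\dots,K_m$, which are closed Reeb orbits sitting inside $Y\setminus\mathcal N(L)$ (indeed $u_\theta([K_i])$ need not be positive there). So Fried's Theorem \ref{Theorem: ALEX equals the twisted zeta function} cannot be applied to $Y\setminus\mathcal N(L)$, and your fallback --- ``the more general form of Fried's theorem'' --- is never carried out. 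The paper resolves this concretely: it arranges $\alpha$ so that each $K_i$, $i>n$, is the unique (elliptic) closed orbit in a small solid torus $V'(K_i)$, applies Fried's theorem to the exterior of all of $L'$ to get $\mathrm{ALEX}_{t_1,\dots,t_n,1,\dots,1}(Y\setminus L')$, and then uses the Torres formula (Theorem \ref{Theorem: Torres formula}) repeatedly: the factors $\bigl(1-\prod_j t_j^{\langle K_i,S_j\rangle}\bigr)$ coming from Torres cancel exactly against the local zeta functions $\zeta_{K_i}(\rho_L([K_i]))$ of the extra elliptic binding orbits, yielding $\mathrm{ALEX}_{t_1,\dots,t_n}(Y\setminus L)$. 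This Torres-formula step, together with the specific choice of $\alpha$ making the $K_i$ isolated elliptic orbits in $V'(K_i)$, is the essential content of the non-fibered case and is missing from your proposal.
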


The proofs of theorems \ref{Theorem: Euler characteristic of ECK} and \ref{Theorem: Euler characteristics of ECK and ALEX} will be carried
on in two main steps: in Subsection \ref{Subsection: Fibered links} we will prove the theorems in the case of fibered links, while the general
case will be treated in Subsection \ref{Subsection: The general case}.

\subsection{Fibered links} \label{Subsection: Fibered links}

In this subsection we prove theorems \ref{Theorem: Euler characteristic of ECK} and  \ref{Theorem: Euler characteristics of ECK and ALEX} for
fibered links. Let $(L,S,\phi)$ be an open book decomposition of a homology three-sphere $Y$ and let $\alpha$ be an adapted
contact form on $Y$. In particular, with our definition, $\alpha$ is also adapted to $L$.

In order to prove the theorems above we want to express the Euler characteristic $\chi_{t_1,\ldots,t_n}(ECK(L,Y,\alpha))$ in terms of the
twisted Lefschetz zeta function of
the Reeb flow $\phi_{R}$ of $R=R_{\alpha}$ and then apply Theorem \ref{Theorem: ALEX equals the twisted zeta function}. The first thing that one should
do is then to check if $\phi_{R}$ and $\rho_L$ satisfy the hypothesis of that theorem. Unfortunately this is not the case.
The needed properties are in fact the following:
\begin{enumerate}
 \item $R$ is non-singular and circular;
 \item $R$ is compatible with $\rho_L$;
 \item $R$ is non-degenerate;
 \item $R$ is transverse to $\partial V(L)$ and pointing out of $Y \setminus \mathring{V}(L)$,
\end{enumerate}
where $\mathring{V}(L) = int(V(L))$.

In our situation only properties 1 and 2 are satisfied. Indeed, by the definition of open book decomposition, there is a natural fibration
$\theta : Y \setminus \mathring{V}(L) \rightarrow S^1 \cong \frac{\R}{\Z}$ such that the surfaces $\theta^{-1}(t)$ are the pages of the open book.
The fact that $\alpha$ is adapted to $(L,S,\phi)$ implies that $R$ is always positively transverse to the pages. This evidently implies that
$d\theta(R) >0$ so that $R$ is circular.

%To verify that $R$ is compatible with $\rho$ it suffices to define $v = u_{\theta} \circ i$, where
%$i :\Z^n \hookrightarrow H_1(Y) \times \Z^n = H_1(Y \setminus \mathring{V}(L))$ is the inclusion onto the second factor, so that
%$v \circ \rho_L = u_{\theta}$.

The fact that $R$ is compatible with $\rho_L$ (that coincides with the universal abelian cover of $Y \setminus \mathring{V}(L)$) comes from
Example \ref{Example: The universal Abelian cover is always compatible}.

However properties 3 and 4 above are not satisfied. Indeed, after the M-B perturbation of $T_2$, $R$ is tangent to $\partial V(L)$ on
$\bar{e}_+$ and $\bar{h}_+$. Moreover, as observed in Subsection \ref{Subsection: Morse-Bott Theory}, the M-B perturbations near the two tori $T_1$ and
$T_2$ may create degenerate orbits. 

What we will do is then to perturb $R$ to get a new vector field $R'$. This vector field will be defined in $Y \setminus V'(L)$, where
$V'(L) \subset \mathring{V}(L)$ is an open tubular neighborhood of $L$ defined by $V'(L) = V'(K_1) \sqcup \ldots \sqcup V'(K_n)$,
where, using the coordinates of Subsection \ref{Subsection: Open Books}, $\partial(V'(K_i)) = \{y = 2.5\}$.

\begin{Lemma}
 There exists a (non-contact) vector field $R'$ such that:
\begin{enumerate}
 \item[(i)]   $R'$ coincides with $R$ outside a neighborhood of $\mathcal{N}(L)$;
 \item[(ii)]  $R'$ satisfies properties 1-4 above with $V(L)$ replaced by $V'(L)$; 
 \item[(iii)] the only periodic orbits of $R'$ in $\mathcal{N}(V) \setminus V'(L)$ are the four sets of non-degenerate orbits
 $\bar{e},\bar{h},\bar{e}_+,\bar{h}_+$.
\end{enumerate}
\end{Lemma}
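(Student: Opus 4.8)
The plan is to leave $R$ untouched outside a neighborhood $U$ of $\mathcal N(L)$ and to repair it inside $U$ in two separate pieces. Since $\alpha$ is adapted to $(L,S,\phi)$, the pair $\bigl(R,\mathcal N(K_i)\bigr)$ has the same normal form near each binding component, so it suffices to describe the construction near one $K=K_i$ and to repeat it verbatim near the others. Recall also that, $Y$ being a homology sphere, $\rho_L$ is the universal abelian cover of $Y\setminus L$, so by Example \ref{Example: The universal Abelian cover is always compatible} every circular vector field is automatically compatible with $\rho_L$; hence property (2) will follow from property (1), and the real content is (i), (iii) together with properties (1), (3), (4). Using the coordinates $(\vartheta,t,y)$ of Subsection \ref{Subsection: Open Books} near $K$, so that $T_1=\{y=1\}$, $T_2=\{y=2\}$, $K=\{y=3\}$ and $\partial V'(K)=\{y=2.5\}$, the repair will be supported in $\{y\le 2.5\}$ near each $K_i$ (together with the collars of $\partial S$ inside $N$), which we take as $U$; off $U$ we set $R'=R$, and after a preliminary $C^{\infty}$-small perturbation among adapted contact forms we may assume that every closed orbit of $R$ off $U$ is non-degenerate. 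Strengthening the Morse--Bott perturbations of $T_1$ and $T_2$ of Subsection \ref{Subsection: Morse-Bott Theory} by a further $C^{\infty}$-small perturbation supported near these tori, we may also assume that in a neighborhood of $T_1\cup T_2$ the only closed orbits of $R$ are the non-degenerate orbits $e,h,e_+,h_+$. What remains to be fixed are then two ``bulk'' pieces: the region $\mathcal B\subset V(K)$ where $2+\delta\le y\le 2.5$, on which $R$ is tangent to the tori $T_y$ it foliates by orbits of slope in $(C,+\infty)$, and in particular to $\partial V'(K)$; and the region $\mathcal A$ formed by the collar of $\partial S$ and by $\mathcal N(K)\setminus V(K)$, with small neighborhoods of $T_1$ and $T_2$ deleted, on which $R$ is everywhere positively transverse to the extended pages but the tori $T_y$ with rational first-return map carry circles of degenerate orbits.

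On $\mathcal B$ I would tilt the flow outward: set $R':=R+b(y)\,\partial_y$ on $\{2\le y\le 3\}$, with $b$ smooth, $b\equiv 0$ for $y\le 2+\delta$ and $b>0$ for $y\in(2+\delta,2.5]$. Since $R$ is tangent to the $T_y$ there, $dy(R')=b$, so $R'$ is transverse to $\partial V'(K)=\{y=2.5\}$ and points in the direction of increasing $y$, that is, out of $Y\setminus\mathring V'(L)$: this is property (4). Moreover $y$ strictly increases along the flow of $R'$ on all of $(2+\delta,2.5]$, so $R'$ has no closed orbit there, while on $\{y\le 2+\delta\}$ it still equals $R$ and keeps $e_+,h_+$ with their types. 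Finally $\partial_y$ is tangent to the extended pages (along which the open-book fibration $\theta$ extends over $\{y\le 2.5\}$), so $d\theta(R')=d\theta(R)>0$ and circularity is unaffected.

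On $\mathcal A$ the vector field $R$ is the suspension, over the fibration $\theta$, of a first-return diffeomorphism $\psi$ of an extended page $\Sigma=\theta^{-1}(0)$: near the punctures of $\Sigma$ cut out by $e,h,e_+,h_+$ the map $\psi$ is the prescribed Morse--Bott-perturbed local model, and in the bulk it is an integrable twist whose periodic points come in circles. I would replace $\psi$, by a $C^{\infty}$-small isotopy supported away from those punctures, by a non-degenerate diffeomorphism $\psi'$ still isotopic to $\psi$ and having no periodic point in the bulk (for instance one that strictly moves each point off the periodic levels of the integrable twist). Re-suspending $\psi'$ over the fibres of $\theta$ gives a non-singular vector field on $\mathcal A$ that agrees with $R$ near $T_1$, near $T_2$, and along the deep end of the $\partial S$-collar; gluing it to the field of the previous step, which it overlaps only on a set where both equal $R$, yields $R'$ on all of $Y\setminus\mathring V'(L)$. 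Being, wherever modified, a suspension over $\theta$ (and equal to $R$ elsewhere), $R'$ is non-singular and positively transverse to the fibres of $\theta$, hence circular, hence compatible with $\rho_L$; by construction its closed orbits in $\mathcal N(L)\setminus V'(L)$ are exactly $\bar e,\bar h,\bar e_+,\bar h_+$, all non-degenerate, and, together with the preliminary perturbation, this makes $R'$ non-degenerate everywhere. Thus (i), (iii) and (1)--(4) all hold.

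The one genuinely delicate step is the replacement of $\psi$ by $\psi'$. What makes it tractable is that circularity is free here: the suspension of \emph{any} diffeomorphism of $\Sigma$ is automatically positively transverse to the pages, so the whole question reduces to a controlled surgery on a surface diffeomorphism --- produce, by an isotopy equal to the identity near the four distinguished punctures (so that $e,h,e_+,h_+$ and their Conley--Zehnder and Lefschetz data are untouched), a diffeomorphism isotopic to $\psi$ with no periodic point off those punctures. Arranging that a single such perturbation simultaneously destroys every circle of periodic points of the integrable twists and any degenerate orbits left over from the Morse--Bott perturbations near $T_1$ and $T_2$, \emph{without} creating new periodic points and \emph{without} disturbing the normal form near the binding, is where the real work lies; the remaining verifications are routine bookkeeping with the model of $\alpha$ near $L$.
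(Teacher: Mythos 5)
Your overall strategy is the same as the paper's: leave $R$ untouched away from $\mathcal{N}(L)$, view the modification as a surgery on the first-return map of the open book (which automatically preserves positive transversality to the pages, hence circularity, hence compatibility with $\rho_L$ since this is the universal abelian cover), push the return map in the positive $y$-direction to destroy the circles of periodic points of the integrable twists, and tilt the field outward at $\partial V'(L)$ to get the transversality of property 4. The paper's proof is exactly this, phrased in terms of the $\phi$-invariant lines in a page and their perturbation into $\phi'$-invariant lines (Figure \ref{Figure: Modification of the dynamics non contact}); your $R+b(y)\partial_y$ on $\mathcal{B}$ and your ``push off the periodic levels'' on $\mathcal{A}$ are the same moves.

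The one place where your write-up is not merely terse but problematic is the claim that, ``by a further $C^{\infty}$-small perturbation supported near these tori, we may also assume that in a neighborhood of $T_1\cup T_2$ the only closed orbits of $R$ are the non-degenerate orbits $e,h,e_+,h_+$.'' If this is meant as a perturbation of the contact form, it is false: the nearby tori $T_y$ of rational slope always carry circles of closed Reeb orbits, and the Morse--Bott perturbation (Observation \ref{Observation: A M-B perturbation creates non isolated and degenerate orbits} and the quoted proposition of Bourgeois) only pushes the surviving orbits to large action, it does not remove them. So this step must itself be part of the non-contact modification, and it is precisely where the care is needed: the push must vanish at $e,h,e_+,h_+$, and one must check that no new periodic orbits or recurrence are created near the separatrices of $h$ and $h_+$. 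The paper handles this by keeping the singular invariant leaves $a_1\ni h$ and $a_2\ni h_+$ pointwise fixed and making every other invariant line spiral into $e$ or $e_+$, run from $a_1$ to $a_2$, or run from $a_2$ out through $\partial V'(K)$, while also checking that the linearizations at the four orbits (hence their Lefschetz signs) are preserved. That explicit construction, together with the replacement $\psi\mapsto\psi'$ in the bulk that you defer, is the actual content of the lemma; as written, your proposal correctly reduces the lemma to it rather than proving it.
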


Observe that Property (i) implies that the twisted Lefschetz zeta functions of the restrictions of the flows $\phi_{R}$ and $\phi_{R'}$ to
$Y \setminus \mathcal{N}(K)$ coincide, while Property (ii) allows to apply Theorem \ref{Theorem: ALEX equals the twisted zeta function} to
$\phi_{R'}$. 

\proof
A perturbation of $R$ into an $R'$ satisfying the conditions (i)-(iii) can be obtained in more than one way. An example is pictured in Figure
\ref{Figure: Modification of the dynamics non contact} (cf. also Figure \ref{Figure: Dynamic of the M-B perturbation near K}).
We briefly explain how it is obtained. Since the modification of $R$ is non trivial only
inside disjoint neighborhoods of each $K_i$, we will describe it only for a fixed component $K$ of $L$. The characterization of the
perturbation will be presented in terms of perturbation of the lines in a page $S$ of $(L,S,\phi)$ that are invariant under the
first return map $\phi$ of $\phi_R$: we will refer to these curves as to \emph{$\phi$-invariant lines on $S$}.
Note that these curves are naturally oriented by the flow.

Outside a neighborhood of $\partial V'$ one
can see this perturbation in terms of a perturbation of $\phi$ into another monodromy $\phi'$, 
and $R'$ is the vector field $\partial_t$ in $Y \setminus V'(L) \cong \frac{S \times [0,1]}{(x,1) \sim (\phi'(x),0)}$, where $t$ is the 
coordinate of $[0,1]$.

\begin{figure} [h] 
  \begin{center}
   \includegraphics[scale = .251]{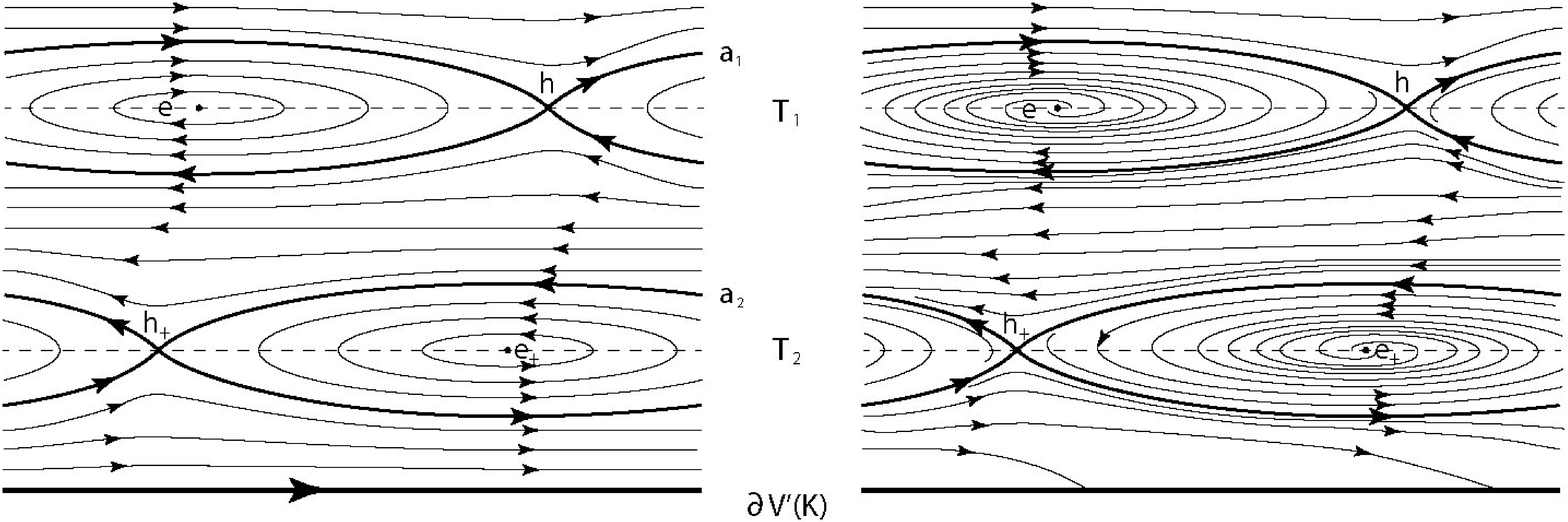}
  \end{center}
  \caption{The dynamics of the vector fields $R$ and $R'$ near $\mathcal{N}(V) \setminus V'(L)$. Each oriented line represents an invariant subset
  of a page of $(L,S,\phi)$ under the first return map $\phi$ at the left and $\phi'$ at the right (the invariant lines $a_1$ and $a_2$ are stressed).
  The situation at the left is the same depicted in Figure \ref{Figure: Dynamic of the M-B perturbation near K}.}
  \label{Figure: Modification of the dynamics non contact}
\end{figure}

Observe first that the only periodic orbit in the (singular) $\phi$-invariant line $a_1$ containing $h$ (in correspondence to the
singularity) is exactly $h$. Similarly, the only periodic orbit in the $\phi$-invariant singular flow line $a_2$ containing $h_+$
is precisely $h_+$. Denote $A_i \subset Y$ the mapping torus of $(a_i,\phi|_{a_i})$, $i = 1,2$. We modify $R$ separately inside the regions
of $(Y \setminus V'(K)) \setminus (A_1 \sqcup A_2)$ as follows.

In the region containing $e$ (and with boundary $A_1$), the set of $\phi$-invariant lines (the elliptic lines in the picture at left)
is perturbed in a set of $\phi'$-invariant spiral-kind lines (at right), each of which is
negatively asymptotic to $a_1$ and positively asymptotic to $e$. It is easy to see that after the perturbation the only
periodic orbit in the interior of this region is $e$. Moreover, we can arrange the perturbation in a way
that the differential $\mathfrak{L}^{R'}_{e}$ of the first
return map on $S$ of $\phi_{R'}$ along $e$, coincides, up
to a positive factor smaller then $1$, with $\mathfrak{L}^{R}_{e}$, so that the Lefschetz sign $\epsilon(e)$ of $e$ is still $+1$.

A similar perturbation is done in the region of $(Y \setminus V'(K)) \setminus (A_1 \sqcup A_2)$ containing $e_+$, in a way that $e_+$ is the only
periodic orbit of the perturbed vector field $R'$, with still $\epsilon(e_+) = +1$.

The perturbation in the region between $A_1$ and $A_2$ is done by slightly pushing the monodromy
in the positive $y$-direction in a way that the set of $\phi$-invariant lines is perturbed into a set of $\phi'$-invariant lines, 
each of which is negatively asymptotic to $a_1$ and positively asymptotic to $a_2$
(and so in particular there can not exist periodic orbits in this region).

A similar perturbation is done also inside the region between $A_2$ and $\partial V'(K)$, but in this case each $\phi'$-invariant
line is negatively asymptotic to $a_2$ and intersects $\partial V'(K)$ pointing out of the three-manifold. 

Finally we leave $R' = R$ in the rest of the manifold, where $R$ was supposed having only isolated and non degenerate periodic orbits.

Note that the two basis of eigenvectors of $\mathfrak{L}^{R}_{h}$ and $\mathfrak{L}^{R}_{h_+}$ are contained in the tangent spaces of the
curves $a_1$ and, respectively, $a_2$. Since on these curves $\phi_{R} = \phi_{R'}$, the Lefschetz signs of the two orbits are not
changed by the perturbation.

It is easy to convince ourselves that $R'$ satisfies the properties i-iii above.
\endproof

Call $\zeta = \zeta_{\mathbbm{1}}$. Since the Lefschetz
zeta function of a flow depends only on its periodic orbits and their signs, we have the following:
\begin{Cor} \label{Corollary: Zeta functions of R and R' are equals}
 If $R'$ is obtained from $R$ as above, then 
 \begin{eqnarray*}
   \zeta(\phi_{R'}) & = & \zeta(\phi_{R'}|_{(Y \setminus \mathcal{N}(K) \sqcup \{\bar{e},\bar{h},\bar{e}_+,\bar{h}_+\})}) =\\
        & = &\zeta(\phi_{R|_{(Y \setminus \mathcal{N}(K)}})\cdot \prod_{\gamma \in \{\bar{e},\bar{h},\bar{e}_+,\bar{h}_+\}} \zeta_{\gamma}([\gamma])).
 \end{eqnarray*}
 where $[\gamma]$ is the homology class of $\gamma$ in $H_1(Y \setminus \mathcal{N}(K))$.
\end{Cor}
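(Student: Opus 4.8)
The plan is to unwind the definition of the twisted Lefschetz zeta function and feed it the conclusions of the previous Lemma. Recall that, for a flow all of whose periodic orbits are isolated, $\zeta_{\mathbbm{1}}(\phi)=\prod_{\gamma}\zeta_{\gamma}([\gamma])$, the product running over the \emph{simple} periodic orbits $\gamma$ of $\phi$; and that each local factor $\zeta_{\gamma}([\gamma])$ depends only on the homology class $[\gamma]$ and on the Lefschetz signs $\epsilon(\gamma^{i})$, hence only on the germ of the flow near $\gamma$. In particular the expression is multiplicative over the orbits: if the set of simple periodic orbits of $\phi$ splits as a disjoint union $\mathcal{A}\sqcup\mathcal{B}$, then $\zeta_{\mathbbm{1}}(\phi)=\bigl(\prod_{\gamma\in\mathcal{A}}\zeta_{\gamma}([\gamma])\bigr)\cdot\bigl(\prod_{\gamma\in\mathcal{B}}\zeta_{\gamma}([\gamma])\bigr)$, and the first factor is unchanged if $\phi$ is replaced by a flow agreeing with it near every orbit of $\mathcal{A}$.

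First I would read off the simple periodic orbits of $\phi_{R'}$ from the Lemma. By Property (i), $R'$ coincides with $R$ on $Y\setminus\mathcal{N}(K)$, where the pages of the open book are global cross-sections; hence the simple periodic orbits of $\phi_{R'}$ contained in $Y\setminus\mathcal{N}(K)$ are exactly those of $\phi_{R}$ contained in $Y\setminus\mathcal{N}(K)$, with unchanged germs. By Property (iii), in $\mathcal{N}(K)\setminus V'(L)$ the only simple periodic orbits of $\phi_{R'}$ are the ones in the four non-degenerate families $\bar{e},\bar{h},\bar{e}_+,\bar{h}_+$, while $\phi_{R'}$ is undefined on $V'(L)$. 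So the set of simple periodic orbits of $\phi_{R'}$ is the disjoint union of the simple periodic orbits of $\phi_{R}|_{Y\setminus\mathcal{N}(K)}$ and of $\{\bar{e},\bar{h},\bar{e}_+,\bar{h}_+\}$; in particular all of them already lie in $(Y\setminus\mathcal{N}(K))\sqcup\{\bar{e},\bar{h},\bar{e}_+,\bar{h}_+\}$, which is the content of the first displayed equality. Then I would apply the multiplicativity recalled above to this partition: the contribution of the orbits in $Y\setminus\mathcal{N}(K)$ is, since the germs agree there, precisely $\zeta(\phi_{R|_{Y\setminus\mathcal{N}(K)}})$, and the contribution of the remaining orbits is $\prod_{\gamma\in\{\bar{e},\bar{h},\bar{e}_+,\bar{h}_+\}}\zeta_{\gamma}([\gamma])$; this yields the second equality.

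I do not expect a genuine obstacle: granting the Lemma, the corollary is bookkeeping with the definition of $\zeta$. The one point meriting a line of care is that $\zeta(\phi_{R|_{Y\setminus\mathcal{N}(K)}})$ is well defined, i.e. that $R$ has only isolated periodic orbits on $Y\setminus\mathcal{N}(K)$; this holds because $\alpha$ is non-degenerate in $\mathrm{int}(N)$, the degeneracies of $R$ introduced by the Morse--Bott perturbations being confined to a collar of $T_1$ and $T_2$ --- exactly the region where the Lemma has already replaced $R$ by the well-behaved $R'$. The substantive work, namely the construction of $R'$ with properties (i)--(iii), is the Lemma itself.
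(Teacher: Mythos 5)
Your proposal is correct and follows essentially the same route as the paper, which simply observes that the twisted Lefschetz zeta function depends only on the periodic orbits and their signs and then reads off the product decomposition from properties (i)--(iii) of the preceding lemma. Your extra remark on the isolatedness of the orbits of $R$ in $Y\setminus\mathcal{N}(K)$ is a reasonable point of care but not a substantive departure.
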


Now we want to compute more explicitly the twisted Lefschetz zeta function $\zeta(\phi_{R'})$. Let us begin with the local
Lefschetz zeta function of the simple orbits (see Definition \ref{Definition: Local zeta function of an orbit}). 
\begin{Lemma} \label{Lemma: Zeta functions of the orbits}
 Let $\gamma$ be an orbit of $R$ or $R'$. Then:
 \begin{equation}
 {\zeta}_{\gamma}(t) =\left\{\begin{array}{cll}
                                          (1-t)^{-1} & = 1+t+t^2+\ldots & \mbox{if } \gamma \mbox{ elliptic;} \\
                                          1-t &  & \mbox{if } \gamma \mbox{ positive hyperbolic;} \\
                                          1+t &  & \mbox{if } \gamma \mbox{ negative hyperbolic;}
                                           \end{array} \right.
 \end{equation}
\end{Lemma}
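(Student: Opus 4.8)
\proof
The plan is to read $\zeta_\gamma(t)$ off directly from its definition
$$\zeta_\gamma(t)=\exp\Bigl(\sum_{i\ge 1}\epsilon(\gamma^i)\,\frac{t^i}{i}\Bigr),$$
so that everything reduces to (a) identifying the sequence of Lefschetz signs $\epsilon(\gamma^i)$, $i\ge 1$, and (b) recognising the resulting power series.

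First I would record how the Lefschetz sign behaves under iteration. Writing $\mathfrak{L}_\gamma$ for the differential of the first return map of $\phi_R$ (or $\phi_{R'}$) on a disk transverse to $\gamma$, one has $\mathfrak{L}_{\gamma^i}=(\mathfrak{L}_\gamma)^i$, whose eigenvalues $\mu_j^i$ are the $i$-th powers of those $\mu_j$ of $\mathfrak{L}_\gamma$, so that $\det(\mathbbm{1}-\mathfrak{L}_{\gamma^i})=\prod_j(1-\mu_j^i)$. If $\gamma$ is elliptic the $\mu_j$ form a conjugate pair, hence $\det(\mathbbm{1}-\mathfrak{L}_{\gamma^i})=|1-\mu_1^i|^2>0$; if $\gamma$ is positive hyperbolic the $\mu_j$ are real and positive, hence so are the $\mu_j^i$; if $\gamma$ is negative hyperbolic the $\mu_j$ are real and negative, hence $\mu_j^i>0$ for $i$ even and $\mu_j^i<0$ for $i$ odd. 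For orbits of $R'$ I would note, using property (i) of the preceding lemma, that every orbit either lies outside $\mathcal{N}(L)$, where $R'=R$ is the Reeb field and the above applies verbatim, or is one of $\bar{e},\bar{h},\bar{e}_+,\bar{h}_+$, whose types and Lefschetz signs are unaffected by the perturbation by construction.

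Combining this with the Observation recalled in Section \ref{Subsection: Contact geometry} ($\epsilon(\delta)=+1$ for $\delta$ elliptic or negative hyperbolic, $\epsilon(\delta)=-1$ for $\delta$ positive hyperbolic), I get $\epsilon(\gamma^i)=1$ for all $i$ if $\gamma$ is elliptic, $\epsilon(\gamma^i)=-1$ for all $i$ if $\gamma$ is positive hyperbolic, and $\epsilon(\gamma^i)=(-1)^{i+1}$ if $\gamma$ is negative hyperbolic. Substituting into the exponential and using $\sum_{i\ge1}t^i/i=-\log(1-t)$ and $\sum_{i\ge1}(-1)^{i+1}t^i/i=\log(1+t)$ gives, respectively, $\zeta_\gamma(t)=(1-t)^{-1}$, $\zeta_\gamma(t)=1-t$, and $\zeta_\gamma(t)=1+t$, which is the assertion.

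The computation is routine; the only point to watch is that $\epsilon(\gamma^i)$ must be defined for every $i$, i.e.\ every iterate $\gamma^i$ must be non-degenerate. This is automatic for hyperbolic $\gamma$ as long as $\mu_1\neq\pm 1$, and for the spiralling orbits produced by $R'$ it is automatic since their return-map eigenvalues have modulus $\neq 1$; for the genuinely elliptic Reeb orbits one invokes the standard fact that the contact form and the Morse-Bott perturbations can be arranged so that all rotation numbers are irrational, so that $\mu_1^i\neq 1$ for every $i$. The one bit of sign bookkeeping worth checking carefully is the negative hyperbolic case with $i$ even, where one uses $\mu_2=\mu_1^{-1}$ (valid since such orbits are Reeb orbits of $R$) to see that exactly one of $1-\mu_1^i$, $1-\mu_1^{-i}$ is negative, so that indeed $\epsilon(\gamma^i)=-1$.
\endproof
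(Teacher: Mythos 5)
Your proposal is correct and follows essentially the same route as the paper: determine the sign sequence $\epsilon(\gamma^i)$ for each orbit type and recognise the exponential of the resulting series as $(1-t)^{-1}$, $1-t$, or $1+t$. The paper only writes out the positive hyperbolic case explicitly and leaves the other two to the reader; your additional checks on iterates, non-degeneracy, and the orbits of $R'$ are consistent with what the paper takes for granted.
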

\proof Remember that the Lefschetz number of $\gamma$ is $\varepsilon(\gamma) = +1$ if $\gamma$ is elliptic or negative hyperbolic and
 $\varepsilon(\gamma) = - 1$ if $\gamma$ is positive hyperbolic. We prove here only the case of $\gamma$ positive hyperbolic, leaving to the
 reader the other similar computations.
%\begin{description}
\begin{comment}
 If $\gamma$ is elliptic then all the iterated are elliptic, so that $\epsilon(\gamma^i) = +1$ for every $i >0$. Then:
  \begin{equation*}
 \begin{array}{cclc}
   {\zeta}_{\gamma}(t) & = & \exp \left(\sum_{i \geq 1} \dfrac{t^i}{i}\right) & = \\
                       & = & \exp \left( - \sum_{i \geq 1} (-1)^{i+1} \dfrac{(-t)^i}{i}\right) & = \\ 
                               & = & \exp \left( - \log(1-t) \right) & = \\
                               & = & (1-t)^{-1} .    &
 \end{array}
 \end{equation*}

 \item[$\boldsymbol{\gamma}$ positive hyperbolic:]
   \end{comment}
 
 If $\gamma$ is positive hyperbolic then all the iterated are also positive hyperbolic and $\epsilon(\gamma^i) = -1$ for every $i >0$.
  Then:
 \begin{equation*}
 \begin{array}{cclc}
   {\zeta}_{\gamma}(t) & = & \exp \left(\sum_{i \geq 1} - \dfrac{t^i}{i}\right) & = \\
                       & = & \exp \left( \sum_{i \geq 1} (-1)^{i+1} \dfrac{(-t)^i}{i}\right) & = \\ 
                               & = & \exp \left( \log(1-t) \right) & = \\
                               & = & 1-t .    &
 \end{array}
 \end{equation*}
 
 \begin{comment}
 \item[$\boldsymbol{\gamma}$ negative hyperbolic:] the odd iterated are negative hyperbolic while the even iterated are positive hyperbolic,
  so that $\epsilon(\gamma^i) = (-1)^{i+1}$ for any $i >0$. Then:
  \begin{equation*}
 \begin{array}{cclc}
   {\zeta}_{\gamma}(t) & = & \exp \left(\sum_{i \geq 1} (-1)^{i+1} \dfrac{t^i}{i}\right) & = \\
                               & = & \exp \left( \log(1+t) \right) & = \\
                               & = & 1+t       &
   \end{array}
  \end{equation*}
\end{description}

 \end{comment}

\endproof
\begin{Obs}
 Note that the equations above are exactly the generating functions given by Hutchings in \cite[Section 2]{Hu3}.
\end{Obs}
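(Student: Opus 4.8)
The plan is to prove the Observation by a direct coefficient comparison: I would expand both the local Lefschetz zeta functions ${\zeta}_\gamma(t)$ of Lemma \ref{Lemma: Zeta functions of the orbits} and Hutchings' generating functions as formal power series in $t$ and check that they agree term by term. Recall first what the generating functions of \cite[Section 2]{Hu3} encode: for a fixed simple orbit $\gamma$ they record the signed contribution of $\gamma$ to the $ECH$ Euler characteristic as the total multiplicity $k$ of $\gamma$ ranges over the values permitted by Definition \ref{Definition: Orbit sets}, the coefficient of $t^k$ being the sign $\epsilon(\gamma)^k$ with which $\gamma^k$ enters (recall from Subsection \ref{Subsection: ECH for three-manifolds} that $\epsilon(\gamma^k)=\epsilon(\gamma)^k$), subject to $k\geq 0$ arbitrary when $\gamma$ is elliptic and $k\in\{0,1\}$ when $\gamma$ is hyperbolic. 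Hence Hutchings' generating function is $\sum_{k\geq 0} t^k = (1-t)^{-1}$ in the elliptic case, $1-t$ for positive hyperbolic, and $1+t$ for negative hyperbolic.

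First I would expand ${\zeta}_\gamma(t)=\exp\bigl(\sum_{i\geq 1}\epsilon(\gamma^i)\,t^i/i\bigr)$ from Definition \ref{Definition: Local zeta function of an orbit} exactly as in the proof of Lemma \ref{Lemma: Zeta functions of the orbits}. For $\gamma$ elliptic all iterates are elliptic, so $\epsilon(\gamma^i)=+1$ and ${\zeta}_\gamma(t)=\exp(-\log(1-t))=(1-t)^{-1}=\sum_{k\geq 0}t^k$. For $\gamma$ positive hyperbolic all iterates are positive hyperbolic, so $\epsilon(\gamma^i)=-1$ and ${\zeta}_\gamma(t)=\exp(\log(1-t))=1-t$. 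For $\gamma$ negative hyperbolic the odd iterates are negative hyperbolic and the even ones positive hyperbolic, so $\epsilon(\gamma^i)=(-1)^{i+1}$ and ${\zeta}_\gamma(t)=\exp(\log(1+t))=1+t$. In each case the Taylor coefficient of $t^k$ equals the signed multiplicity weight $\epsilon(\gamma)^k$ over precisely the multiplicities $k$ that an $ECH$ orbit set admits.

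Comparing the two computations then yields the Observation immediately. The one conceptual point I would emphasise, and the only place where any care is needed, is the truncation phenomenon for hyperbolic orbits: the a priori infinite series $\exp\bigl(\sum_i\epsilon(\gamma^i)\,t^i/i\bigr)$ collapses to the degree-one polynomial $1-t$ or $1+t$ precisely because the constant sign pattern $\epsilon(\gamma^i)=-1$ (positive hyperbolic) resums to $\log(1-t)$ and the alternating pattern $\epsilon(\gamma^i)=(-1)^{i+1}$ (negative hyperbolic) resums to $\log(1+t)$. This vanishing of all coefficients of $t^k$ for $k\geq 2$ is exactly the analytic shadow of the combinatorial rule that a hyperbolic orbit occurs with multiplicity at most $1$, so no genuine obstacle arises; the verification reduces to aligning the conventions, namely that the formal variable $t$ here records the degree (equivalently the monomial $\rho_L([\gamma])$ of Example \ref{Example: Abelian cover determined by a link}) while Hutchings' variable records multiplicity, which agree after the substitution $t\mapsto\rho_L([\gamma])$.
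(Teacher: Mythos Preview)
Your verification is correct, and in fact it provides more detail than the paper itself: in the paper this Observation is stated as a one-line remark with no proof, simply pointing out that the formulae of Lemma \ref{Lemma: Zeta functions of the orbits} coincide with Hutchings' generating functions. Your approach---writing out Hutchings' functions as the signed multiplicity sums $\sum_k \epsilon(\gamma)^k t^k$ over the admissible $k$ and checking they match the exponential expressions---is the natural (and essentially unique) way to make the remark precise, and it is carried out accurately. One small caution on notation: you use $\gamma^k$ both for the orbit set of multiplicity $k$ (where $\epsilon(\gamma^k)=\epsilon(\gamma)^k$ by definition) and for the $k$-fold iterate as a Reeb orbit (where $\epsilon(\gamma^i)$ is computed from the linearised return map and need \emph{not} equal $\epsilon(\gamma)^i$, e.g.\ for positive hyperbolic $\gamma$); you handle both correctly, but it would help the reader to flag the distinction explicitly.
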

Let $\mu_i$ be a positive meridian of $K_i$ for $i \in \{1,\ldots,n\}$ and set $t_i = [\mu_i] \in H_1(Y \setminus K)$; fix moreover a Seifert
surface $S_i$ for each $K_i$. Recall that, for a given $X \subset Y$, $\mathcal{P}(X)$ denotes the set of simple Reeb orbits contained in $X$. 
\begin{Cor}
 The twisted Lefschetz zeta function of $\phi_{R|_{(Y \setminus \mathcal{N}(L))}}$ is
 \begin{equation*} \label{Equation: Euler characteristics of ECC in temrs of zeta function}
  \zeta(\phi_{R|_{(Y \setminus \mathcal{N}(L))}}) = \prod_{\gamma \in \mathcal{P}(Y \setminus \mathcal{N}(L))} \zeta_{\gamma}([\gamma]),
 \end{equation*}
 where $\zeta_{\gamma}([\gamma])$ is determined as follows:
 \begin{itemize}
  \item[\textbullet]  if $\gamma$ is elliptic then:
   $$\zeta_{\gamma}(\rho_L(\gamma)) = \left(1-\prod_{i=1}^n t_i^{\langle\gamma,S_i \rangle}\right)^{-1} = \sum_{l=0}^{\infty}\left(\prod_{i=1}^n t_i^{\langle\gamma,S_i \rangle}\right)^l;$$
  \item[\textbullet] if $\gamma$ is positive hyperbolic then:
   $$\zeta_{\gamma}(\rho_L(\gamma)) = 1-\prod_{i=1}^n t_i^{\langle\gamma,S_i \rangle};$$
  \item[\textbullet] if $\gamma$ is negative hyperbolic then:
   $$\zeta_{\gamma}(\rho_L(\gamma)) = 1+\prod_{i=1}^n t_i^{\langle\gamma,S_i \rangle}.$$
 \end{itemize}
\end{Cor}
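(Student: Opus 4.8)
The plan is to obtain this corollary by unwinding the definition of the $\rho_L$-twisted Lefschetz zeta function and feeding in the two ingredients already established: the monomial description of $\rho_L$ from Example \ref{Example: Abelian cover determined by a link} and the one-variable local zeta functions of Lemma \ref{Lemma: Zeta functions of the orbits}. Since $Y$ is a homology three-sphere and $(L,S,\phi)$ is an open book, the inclusion $Y\setminus\mathcal{N}(L)\hookrightarrow Y\setminus L$ is a homotopy equivalence, so $H_1(Y\setminus\mathcal{N}(L))\cong\Z^n$ with basis the meridian classes $t_i=[\mu_i]$, and under this identification $\rho_L$ restricts to the universal abelian cover of $Y\setminus\mathcal{N}(L)$; by Example \ref{Example: Abelian cover determined by a link} one then has $\rho_L([\gamma])=\prod_{i=1}^n t_i^{\langle\gamma,S_i\rangle}$ for every closed curve $\gamma$ in the complement.

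First I would record that $\phi_{R|_{(Y\setminus\mathcal{N}(L))}}$ has only isolated periodic orbits — indeed all of them are non-degenerate — because the non-isolated and degenerate orbits produced by the Morse--Bott perturbations of $T_1$ and $T_2$ are confined to $\mathcal{N}(L)$; this is exactly what makes the product $\zeta_{\rho_L}(\phi_{R|_{(Y\setminus\mathcal{N}(L))}})=\prod_{\gamma\in\mathcal{P}(Y\setminus\mathcal{N}(L))}\zeta_\gamma(\rho_L([\gamma]))$ well defined, and I would be careful that the collection $\mathcal{P}(Y\setminus\mathcal{N}(L))$ here is the same set of simple orbits appearing in the splitting of Corollary \ref{Corollary: Zeta functions of R and R' are equals}. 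Then, for each such simple orbit $\gamma$, I would substitute the monomial $t=\rho_L([\gamma])=\prod_i t_i^{\langle\gamma,S_i\rangle}$ into the formulas of Lemma \ref{Lemma: Zeta functions of the orbits}: this turns $\zeta_\gamma(t)=(1-t)^{-1}$ into the first bullet, $\zeta_\gamma(t)=1-t$ into the second, and $\zeta_\gamma(t)=1+t$ into the third, according to whether $\gamma$ is elliptic, positive hyperbolic, or negative hyperbolic; in the elliptic case I would additionally expand $(1-t)^{-1}$ as the geometric series $\sum_{l\geq 0}t^l$ to exhibit it as an element of $\Z[[t_1^{\pm 1},\dots,t_n^{\pm 1}]]$.

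I do not expect any real obstacle here: the statement is a bookkeeping corollary that merely assembles the definition of the twisted zeta function, the monomial form of $\rho_L$, and Lemma \ref{Lemma: Zeta functions of the orbits}. The only points worth a sentence of justification are the well-definedness issue just mentioned (isolatedness of the orbits of the restricted Reeb flow) and making the substitution $t\mapsto\prod_i t_i^{\langle\gamma,S_i\rangle}$ explicit, so that it is transparent why the exponent vector of the monomial attached to $\gamma$ is precisely $(\langle\gamma,S_1\rangle,\dots,\langle\gamma,S_n\rangle)$.
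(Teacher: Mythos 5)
Your proposal is correct and follows essentially the same route as the paper, which simply substitutes the monomial representation of $\rho_L([\gamma])$ from Example \ref{Example: Abelian cover determined by a link} into the local zeta functions of Lemma \ref{Lemma: Zeta functions of the orbits}. Your extra remark on the isolatedness and non-degeneracy of the orbits outside $\mathcal{N}(L)$ is a reasonable (and harmless) addition that the paper leaves implicit.
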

\proof
 This is an easy computation. It suffices to substitute the monomial representation of $\rho_L([\gamma)] = [\gamma]$ given in Example
 \ref{Example: Abelian cover determined by a link} in the expression of the Lefschetz zeta function of Lemma
 \ref{Lemma: Zeta functions of the orbits}. 
\endproof

\begin{proof}[Proof. (of Theorem \ref{Theorem: Euler characteristics of ECK and ALEX} for fibered links)]
 To finish the proof it remains essentially to prove that 
 \begin{equation}\label{Equation: Product formula of ECC in terms of zeta functions}
   \chi_{t_1,\ldots,t_n}\left(ECC(L,Y,\alpha)\right) = \zeta(\phi_{R|_{(Y \setminus \mathcal{N}(L)}})\cdot \prod_{\gamma \in \{\bar{e},\bar{h},\bar{e}_+,\bar{h}_+\}} \zeta_{\gamma}([\gamma])).
 \end{equation}
 This is easy to verify recursively on the set of simple orbits. Suppose
 $\delta = \prod_j \delta_j^{k_j}$ is an orbit set and let $\gamma$ be an orbit such that $\gamma \neq \delta_j$ for any $j$.
 Then the set of all multiorbits that we can build using $\delta$ and $\gamma$ can be expressed via the product formulae:
 \begin{equation} \label{Equation: Recurrence definition of the orbit sets}
  \begin{array}{lcl}
   \delta \cdot \{\emptyset,\gamma,\gamma^2,\ldots\} & & \mbox{ if } \gamma \mbox{ is elliptic};\\
   \delta \cdot \{\emptyset,\gamma\} & & \mbox{ if } \gamma \mbox{ is hyperbolic}.
  \end{array}
 \end{equation}
 As remarked in Subsection \ref{Subsection: ECH for three-manifolds}, the index parity formula
 \ref{Equation: Index parity formula of the ECH index} implies that the Lefschetz sign endows
 the $ECH$-chain complex with an absolute degree and it coincides with the parity of the $ECH$-index. 
 Then the contribution to the graded Euler characteristic of $\delta \cdot \gamma^l$, for any $l$ ($l \in \mathbb{N}$ if $\gamma$ is elliptic
 and $l \in \{0,1\}$ if $\gamma$ is hyperbolic) is:
 $$\epsilon(\delta) \prod_{i=1}^n t_i^{\langle\delta,S_i \rangle} \cdot \left(\epsilon(\gamma) \prod_{i=1}^n t_i^{\langle\gamma,S_i \rangle}\right)^l.$$

 Substituting the last formula in Expressions \ref{Equation: Recurrence definition of the orbit sets}, the total contribution of the product
 formulae to the Euler characteristic are:
 \begin{itemize}
  \item[\textbullet] $\epsilon(\delta) \prod_{i=1}^n t_i^{\langle\delta,S_i \rangle} \cdot \sum_{l=0}^{\infty}\left(\prod_{i=1}^n t_i^{\langle\gamma,S_i \rangle}\right)^l$ if $\gamma$ is elliptic,
  \item[\textbullet] $\epsilon(\delta) \prod_{i=1}^n t_i^{\langle\delta,S_i \rangle} \cdot \left(1 - \prod_{i=1}^n t_i^{\langle\gamma,S_i \rangle}\right)$ if $\gamma$ is positive hyperbolic,
  \item[\textbullet] $\epsilon(\delta) \prod_{i=1}^n t_i^{\langle\delta,S_i \rangle} \cdot \left(1 + \prod_{i=1}^n t_i^{\langle\gamma,S_i \rangle}\right)$ if $\gamma$ is negative hyperbolic,
 \end{itemize}
 that is
 $$\epsilon(\delta) \prod_{i=1}^n t_i^{\langle\delta,S_i \rangle} \cdot \zeta_{\gamma}([\gamma]).$$
 Starting from $\delta = \emptyset$, Equation \ref{Equation: Product formula of ECC in terms of zeta functions} follows by induction on the set
 of $\gamma \in \mathcal{P}\left((Y \setminus \mathcal{N}(L)) \sqcup \{\bar{e},\bar{h},\bar{e}_+,\bar{h}_+\}\right)$.

 The theorem follows then by applying Corollary \ref{Corollary: Zeta functions of R and R' are equals} and Theorem
 \ref{Theorem: ALEX equals the twisted zeta function} to the flow of $R'$.
\end{proof}

%Note that non cambia niente se togliamo le quattro orbite, infatti possiamo definire ECK solo guardando l'interno
%di $Y \setminus \mathcal{N}(L)$. Solo che ci serve per il cappuccio. Infatti..
\vspace{0.2 cm}

\proof[Proof. (of Theorem \ref{Theorem: Euler characteristic of ECK} for fibered links)]
 Theorem \ref{Theorem: Euler characteristics of ECK and ALEX} and Equation \ref{Equation: Relation between ALEX and Delta}
 immediately imply Equation \ref{Equation: Euler characteristics of ECK full}.

 To prove the result in the hat version we reason again at the level of chain complexes. Recall that, if $N := Y \setminus \mathring{\mathcal{N}}(L)$,
 by the definition of the $ECK$-chain complexes:
 \begin{eqnarray*}
  ECC(L,Y,\alpha) & = & ECC^{\bar{e}_+,\bar{h}_+}(N,\alpha) = \\
                  & = & ECC^{\bar{h}_+}(N,\alpha) \bigotimes_{i = 1}^n \langle \emptyset, e_i^+, (e_i^+)^2,\ldots \rangle = \\
                  & = & \widehat{ECC}(L,Y,\alpha) \bigotimes_{i = 1}^n \langle \emptyset, e_i^+, (e_i^+)^2,\ldots \rangle
 \end{eqnarray*}
 where the second line comes from the product formula \ref{Equation: Recurrence definition of the orbit sets} and the fact that $e_i^+$ is 
 elliptic for any $i$. Taking the graded Euler characteristics as above we have:
 \begin{eqnarray*}
  \chi(ECC(L,Y,\alpha)) & = & \chi(\widehat{ECC}(L,Y,\alpha)) \cdot \prod_{i= 1}^n \zeta_{e_i^+}([e_i^+]) =\\
                        & = & \chi(\widehat{ECC}(L,Y,\alpha)) \cdot \prod_{i= 1}^n \frac{1}{1-t_i},
 \end{eqnarray*}
 where the last equality comes from the fact that $[e_i^+] = [\mu_i] = t_i \in H_1(Y \setminus L)$. If $Y= S^3$, last equation and
 Equation \ref{Equation: Euler characteristics of ECK full} evidently imply Equation \ref{Equation: Euler characteristics of ECK hat}.
\endproof

\subsubsection{Hamiltonian Floer homology}

If $(L,S,\phi)$ is an open book decomposition of $Y$, one can think of $ECK(L,Y,\alpha)$ and $\widehat{ECK}(L,Y,\alpha)$ as
invariants of the pair $(S,\phi)$ and the adapted $\alpha$. It is interesting to note that the Euler characteristic
of $ECK_1(L,Y,\alpha)$ with respect to the surface $S$ (see Example \ref{Example: ECK for open books})
coincides with the sum of the Lefschetz signs of the Reeb orbits of period $1$ in the interior of $S$, i.e. the Lefschetz number
$\Lambda(\phi)$ of $\phi$.

In fact, given $Y$ (not necessarily an homology three-sphere) we can say even more about this fact by relating $ECK_1(L,Y,\alpha)$ to the
\emph{symplectic Floer homology $SH(S,\phi)$} of $(S,\phi)$, whose Euler characteristic is
precisely $\Lambda(\phi)$. Here we are considering the version of $SH(S,\phi)$ for surfaces with boundary that is slightly
rotated by $\phi$ in the positive direction, with respect to the orientation induced by $S$ on $\partial S$  (see for example \cite{Cott} and \cite{Ga}).
\begin{Prop} \label{Proposition: ECK_1 iso to SH for open books}
 Let $(L,S,\phi)$ be an open book decomposition of a three-manifold $Y$ and let $\alpha$ be an adapted contact form. Then
 $$ECK_1(L,Y,\alpha) \cong SH(S,\phi),$$
 where the degree of $ECK(L,Y,\alpha)$ is computed using a page of the open book.
\end{Prop}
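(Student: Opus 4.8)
The plan is to identify both sides with the homology of the same chain complex, up to a degree shift. First I would recall that $ECK_1(L,Y,\alpha)$ is, by Example \ref{Example: ECK for open books}, isomorphic to $ECH_1(int(N),\alpha)$, the degree-$1$ part of embedded contact homology of the mapping torus $N = N(S,\phi)$ computed with the surface $S$. An orbit set of degree $1$ with respect to a page is, by the properties of a Reeb vector field adapted to an open book (it is transverse to the pages with first return map isotopic to $\phi$), precisely a single simple Reeb orbit whose underlying periodic point is a fixed point of the first return map $\phi$ crossing each page exactly once; multiorbits of total degree $1$ cannot occur since every non-empty orbit in $int(N)$ has strictly positive intersection with a page. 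Thus the generators of $ECC_1(int(N),\alpha)$ are in natural bijection with the fixed points of $\phi$ in $int(S)$, which are exactly the generators of the symplectic Floer complex $CF(S,\phi)$ (for the boundary-twisted version).

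Next I would match the differentials. On the $ECH$ side, $\partial^{ECK}$ restricted to degree $1$ counts $ECH$-index $1$ holomorphic cylinders in $\R\times int(N)$ from one degree-$1$ orbit to another (the Blocking Lemma guarantees such curves stay in $N$, and degree considerations force the curve to be a cylinder with one positive and one negative end, each a simple once-crossing orbit). On the symplectic Floer side, the differential counts index-$1$ $J$-holomorphic strips in the symplectization interpolating between fixed points, equivalently pseudo-holomorphic sections of the mapping torus over $\R\times S^1$ asymptotic to the two orbits. The key step is to invoke the standard correspondence (as in the fibered-three-manifold computations of Colin–Ghiggini–Honda, and in the original $PFH$/$ECH$ arguments of Hutchings) between $ECH$-index and Fredholm index in degree $1$, where the writhe and linking terms vanish because the relevant braids are trivial strands, so that $ECH$-index $1$ cylinders are exactly Fredholm-index $1$ somewhere-injective curves; these project to honest $\phi$-holomorphic cylinders in $\R\times N$, i.e. to the strips counted by $SH(S,\phi)$. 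A transversality argument (generic $J$ adapted to $\alpha$ versus generic almost complex structure on $S$, matched via the fibered structure) identifies the two moduli spaces and their signed counts over $\Z/2$.

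I would then conclude that the chain complexes $\bigl(ECC_1(int(N),\alpha),\partial^{ECK}\bigr)$ and $\bigl(CF(S,\phi),\partial^{SH}\bigr)$ agree, hence so do their homologies, giving $ECK_1(L,Y,\alpha)\cong SH(S,\phi)$; the bookkeeping of the homological grading is the reason for the phrase ``the degree of $ECK(L,Y,\alpha)$ is computed using a page of the open book,'' and I would check that the $\Z/2$ absolute grading given by the Lefschetz sign on the $ECH$ side matches the Lefschetz-sign grading on the Floer side, so that in particular $\chi(ECK_1(L,Y,\alpha)) = \Lambda(\phi)$, consistent with the discussion preceding the proposition.

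The main obstacle I expect is the precise comparison of moduli spaces near $\partial N$: one must ensure that the degree-$1$ holomorphic cylinders counted by $\partial^{ECK}$ really do avoid a neighborhood of $L$ and the tori $T_{i,1},T_{i,2}$ (so that they descend to the mapping torus and match Floer strips for the boundary-twisted $\phi$), and that no Morse–Bott building contributions or curves through the extended-page region interfere. This is handled by the Blocking and Trapping Lemmas together with the explicit description of the Reeb dynamics near the binding, but making the identification of orientations/signs and the index correspondence fully rigorous in this boundary setting is the delicate part; everything else is a matter of unwinding definitions.
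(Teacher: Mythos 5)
Your proposal is correct in outline and follows the same skeleton as the paper, but the paper's actual proof is a short citation chain that you partly re-derive by hand. The paper argues: $ECK_1(L,Y,\alpha)\cong ECH_1(int(N),\alpha)$ by Lemma \ref{Lemma: ECK iso to ECH of int N}; then $ECH_1(int(N),\alpha)\cong PFH_1(N(S,\phi))$ by Theorem \ref{Theorem: PFH_i iso to ECH_i fo open books} (Theorem 3.6.1 of \cite{CGH3}, after observing that its proof works for disconnected $\partial S$); and finally $PFH_1(N(S,\phi))\cong SH(S,\phi)$ by Proposition \ref{Proposition: PFH_1 iso to SH}, citing \cite{HS}. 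Your first step and your identification of degree-$1$ generators with $\mathrm{Fix}(\phi)$ are exactly the content of the first and third links of this chain. Where you diverge is the middle link: you propose to match the $ECH$ moduli spaces for the adapted contact form directly with the Floer strips for $(S,\phi)$ via ``a transversality argument matched via the fibered structure.'' This is not a routine unwinding of definitions: the Reeb vector field of $\alpha$ is only isotopic to the suspension flow of $\phi$ (and is genuinely different near the binding, where the Morse--Bott tori and the orbits $\bar e,\bar h,\bar e_+,\bar h_+$ live), and the adapted $J$ is not the fibered almost complex structure, so the comparison of differentials is precisely the nontrivial content of the $ECH\cong PFH$ theorem of \cite{CGH3}, which interpolates through a stable Hamiltonian structure. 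You correctly flag this as ``the delicate part,'' but it should be delegated to that citation rather than attempted from scratch; with that substitution your argument coincides with the paper's. The one genuinely necessary observation you should add is the paper's remark that the theorem of \cite{CGH3} is stated for connected binding and one must check its proof goes through when $\partial S$ is disconnected.
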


The proof of last proposition passes through another Floer homology theory closely related to $ECH$, which is the \emph{periodic Floer homology},
denoted by $PFH$, and defined by Hutchings (see \cite{Hu1}). 
Given a symplectic surface $(S,\omega)$ (here with possibly empty boundary) and a symplectomorphism $\phi :S \rightarrow S$, consider
the mapping torus
$$N(S,\phi) = \frac{S \times [0,2]}{(x,2) \sim (\phi(x),0)}.$$
Then $PFH(N(S,\phi))$ is defined in an analogous way than $ECH$ for an open book but replacing the Reeb vector field with a stable 
Hamiltonian vector field $R$ parallel to $\partial_t$, where $t$ is the coordinate of $[0,2]$: we refer the reader to \cite{Hu1} or
\cite{HS} for the details. 

The chain group $PFC(N(S,\phi))$ is the free $\mathbb{Z}_2$ module generated by orbit sets of $R$ and the boundary map counts index
1 holomorphic curves in the symplectization; then, under some condition on $\phi$, the associated homology $PFH(N(S,\phi))$ is well
defined. Homology groups $PFH_i(Y(S,\phi))$ associated to the chain groups $PFC_i(N(S,\phi))$ generated
by degree-$i$ multiorbits are also well defined.

If $(S,\phi)$ is an open book as in the subsections above, $\partial S$ is connected and $N$ is the associated mapping torus,
in \cite{CGH3} the following is proved:

\begin{Thm}[\cite{CGH3}, Theorem 3.6.1] \label{Theorem: PFH_i iso to ECH_i fo open books}
 If $\alpha$ is a contact form adapted to $(S,\phi)$ then there exists a stable Hamiltonian structure such that
 for any $i \geq 0$, 
\begin{equation}
 \begin{array}{ccc}
  PFH_i(N) & \cong & ECH_i(N,\alpha) \\
  %\widehat{PFH}_i(N) & \cong & \widehat{ECH}_i(N)
 \end{array}
\end{equation}
(here we are using a simplified notation which is different from that in \cite{CGH3}). 
\end{Thm}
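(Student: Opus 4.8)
The plan is to realize both $PFH_*(N)$ and $ECH_*(N,\alpha)$ as the homology of chain complexes built from orbit sets of a single, suitably chosen vector field on the mapping torus, and then to match generators and differentials degree by degree. The starting observation is that the adapted contact form $\alpha$ has Reeb field $R_\alpha$ positively transverse to every page $S\times\{t\}$, with first return map isotopic to the monodromy $\phi$; this is exactly the dynamical feature that $PFH$ demands of a stable Hamiltonian vector field parallel to $\partial_t$. So the first step is to produce, from $(\alpha,d\alpha)$, a stable Hamiltonian structure $(\lambda,\omega)$ on $N$ whose associated vector field $R$ has the same closed-orbit dynamics as $R_\alpha$: concretely, one interpolates within the class of stable Hamiltonian structures between $(\alpha,d\alpha)$ and a mapping-torus structure with $R=\partial_t$, leaving the data unchanged on the neighborhood of $\partial N$ where the Morse-Bott tori and the distinguished orbits $e,h,e_+,h_+$ sit.

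First I would match the generators. The degree of an orbit set, $\Deg(\gamma)=\langle\gamma,S'\times\{0\}\rangle$ of Definition~\ref{Definition: Degree of multi orbits}, depends only on the homology class of $\gamma$, hence is preserved by any deformation keeping orbit sets in fixed homology classes. Since $R_\alpha$ is transverse to the pages with return map isotopic to $\phi$, its simple orbits of degree $i$ are in canonical bijection with the periodic points generating $PFC_i(N)$; thus the underlying $\Z_2$-modules $PFC_i(N)$ and $ECC_i(N,\alpha)$ are identified degree by degree.

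Next I would match the differentials. Both count $\R$-families of index-$1$ holomorphic curves in the symplectization $\R\times N$, so the task is to show that the moduli spaces $\mathcal{M}_1(\gamma,\delta)/\R$ coincide, or at least are joined by a cobordism realizing a chain homotopy equivalence. The tools are an almost complex structure common to both geometric structures and a continuation argument along the interpolating family $(\lambda_s,\omega_s)$; the grading by $\Deg$ is respected because, by Proposition~\ref{Proposition: Holomorphic curves preserve the filtration in general} (equivalently Lemma~\ref{Lemma: Holomorphic curves low the actions of the orbits}), holomorphic curves never raise the degree, and the curves counted in $ECH_i$ preserve it exactly. Compactness at the ends is controlled by the Blocking and Trapping Lemmas (Lemmas~\ref{Lemma: Blocking lemma} and~\ref{Lemma: Trapping lemma}), which prevent curves from escaping through the binding neighborhood and keep the count confined to $N$.

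The main obstacle is the analytic comparison itself: establishing simultaneous transversality and compactness for the interpolating family of stable Hamiltonian structures and, most delicately, identifying the $ECH$ index with the $PFH$ index so that \emph{index $1$} means the same on both sides. A secondary difficulty is the cohomological mismatch between the exact two-form $d\alpha$ of the contact structure and the generally non-exact two-form $\omega$ used for $PFH$; this is handled by choosing $(\lambda,\omega)$ in the correct cohomology class and checking that the relative homology classes of the relevant curves, and hence their indices and the differential, are unaffected by the interpolation. Once these points are in place, the chain-level identification yields $PFH_i(N)\cong ECH_i(N,\alpha)$ for every $i\geq 0$.
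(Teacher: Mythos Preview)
The paper does not prove this theorem at all: it is stated with an explicit citation to \cite{CGH3}, Theorem 3.6.1, and used as a black box in the proof of Proposition~\ref{Proposition: ECK_1 iso to SH for open books}. There is therefore no ``paper's own proof'' to compare your proposal against.

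That said, your sketch is a reasonable outline of the strategy actually carried out in \cite{CGH3}: one does interpolate between the contact data $(\alpha,d\alpha)$ and a stable Hamiltonian structure whose Reeb-type vector field is parallel to $\partial_t$, and one does argue that generators and index-$1$ moduli spaces match. Two points deserve caution. First, the phrase ``simple orbits of degree $i$ are in canonical bijection with the periodic points generating $PFC_i(N)$'' is not quite right for $i>1$: the generators in degree $i$ on both sides are \emph{orbit sets} of total degree $i$, not simple orbits, and the identification is at the level of orbit sets. Second, the genuine work in \cite{CGH3} is not the formal continuation but the Gromov--Hofer compactness along the family and the verification that no index-$1$ curves are created or destroyed; your proposal correctly flags this as the main obstacle but gives no indication of how it is overcome. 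In short: your outline is in the right direction, but the substance lies precisely in the ``analytic comparison'' you defer, and the present paper makes no attempt to supply it.
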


%We will denote $PFC(N,\partial N)$ the chain complex $\frac{PFC(N)}{e\gamma \sim \gamma}$ endowed with differential induced by $\partial^{PFH}$ on the quotient.
%Defining
%$$\widehat{PFH}(N,\partial N) = \frac{PFH_{\leq 2g}(N)}{[e\gamma] \sim [\gamma]},$$ 
%then 
%$$\widehat{PFH}(N,\partial N) \cong \widehat{ECH}(N,\partial N,\alpha),$$
%where contact form and stable Hamiltonian structure are as in the last theorem.

$PFC_1(N(S,\phi))$ is generated by orbits of period $1$, which are in bijective correspondence with the set 
$\mathrm{Fix}(\phi)$ of the fixed points of $\phi$ via the map
\begin{equation}
  \begin{array}{ccc}
   \mathcal{O}_1(int(N)) & \longrightarrow & \mathrm{Fix}(\phi)\\
    \gamma               & \longmapsto     & \gamma \cap S ,
  \end{array}
 \end{equation}
which moreover evidently respects the Lefschetz signs. Then this correspondence induces an isomorphism between $PFC_1(N(S,\phi))$ 
and a chain complex of $SH(S,\phi)$. Indeed the following holds (see for example \cite{HS}):
\begin{Prop} \label{Proposition: PFH_1 iso to SH}
 The correspondence above induces an isomorphism
 $$PFH_1(N(S,\phi)) \cong SH(S,\phi).$$
\end{Prop}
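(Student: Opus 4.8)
The plan is to show that the bijection $\gamma\mapsto\gamma\cap S$ already identifies the two underlying $\Z/2$-modules and then to match their differentials via the Gromov trick relating holomorphic sections of a surface bundle to solutions of Floer's equation. First I would recall the setup: after a $C^0$-small perturbation making $\phi$ nondegenerate, and with the standing convention that $\phi$ slightly rotates a collar of $\partial S$ in the positive direction (so no fixed point escapes to the boundary), the symplectic Floer complex computing $SH(S,\phi)$ is freely generated over $\Z/2$ by $\mathrm{Fix}(\phi)$, with differential counting index-$1$ solutions of the Floer equation for $\phi$ and a generic (domain-dependent) almost complex structure. On the $PFH$ side, $PFC_1(N(S,\phi))$ is generated by $\mathcal{O}_1(int(N))$; since the degree is additive under products and strictly positive on every nonempty orbit, a degree-$1$ multiorbit is necessarily a single simple period-$1$ Reeb orbit of the stable Hamiltonian vector field $R\parallel\partial_t$, and these are exactly the orbits through the fixed points of the first return map $\phi$. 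Thus the displayed correspondence is already a bijection of generating sets, and since it respects Lefschetz signs it respects the mod-$2$ grading, which on the $PFH$ side is the parity of the $PFH$/$ECH$ index by the index parity formula \ref{Equation: Index parity formula of the ECH index}.

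Next I would check that the differentials agree. A curve counted by the $PFH$-differential from one degree-$1$ orbit to another lies in a relative homology class of degree $1$; decomposing it into components, all degrees are nonnegative and sum to $1$, and a degree-$0$ component would have only degree-$0$ (hence empty) ends, so it would be closed and therefore cannot occur in a symplectization. Hence the curve is a single connected component with one positive and one negative end, meeting each fiber $S\times\{t\}$ of $\R\times N(S,\phi)\to\R\times S^1$ exactly once. By positivity of intersection with the (holomorphic) fibers and $J$-compatibility of the fibration supplied by Theorem \ref{Theorem: PFH_i iso to ECH_i fo open books}, such a curve is a genuine holomorphic section of this bundle. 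The Gromov trick then identifies holomorphic sections of the mapping-torus bundle with solutions of the Floer equation for $\phi$, asymptotic at $\pm\infty$ to the corresponding fixed points (this is the identification already used for $PFH_1$ versus $SH$; see \cite{HS}). Under it the $PFH$/$ECH$ index of the section equals the Floer index of the trajectory — for an embedded cylinder the writhe and adjunction corrections in the index formula vanish, so $I=\mathrm{ind}$ — and the compactified moduli spaces of index-$1$ objects on the two sides coincide, with no boundary degenerations thanks to the positive rotation along $\partial S$. Counting modulo $2$ yields $\partial^{PFH}|_{PFC_1}=\partial^{SH}$ under the bijection, hence $PFH_1(N(S,\phi))\cong SH(S,\phi)$.

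The main obstacle is the index and transversality bookkeeping near the boundary: one must arrange the stable Hamiltonian structure and the almost complex structure so that the degree-$1$ holomorphic sections stay in $int(N)$ (away from the Morse–Bott tori $T_1,T_2$), so that they are cut out transversally both as $PFH$-curves and as Floer strips, and so that the identification $I=\mathrm{ind}$ holds for the chosen trivializations. All of this is by now standard — it is the degree-$1$ case of the Hutchings–Sullivan analysis in \cite{HS} together with the mapping-torus computations of \cite{CGH3} — but it is the only place where genuine analytic work is needed; once it is in place, the identification of the complexes, and hence of their homologies, is immediate.
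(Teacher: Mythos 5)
Your argument is correct and is essentially the standard one: the paper itself gives no proof of this proposition, deferring entirely to the Hutchings--Sullivan reference \cite{HS}, whose content (degree-$1$ multiorbits are single period-$1$ orbits matching $\mathrm{Fix}(\phi)$, degree-$1$ holomorphic curves are sections of the mapping-torus bundle, and the Gromov trick matches these with Floer trajectories with agreeing indices) is exactly what you reproduce. So you have filled in the details of the same route rather than found a different one.
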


\proof[Proof of Proposition \ref{Proposition: ECK_1 iso to SH for open books}]
 This is an easy consequence of the definitions and the results above.
 By Lemma \ref{Lemma: ECK iso to ECH of int N} we have
 $$ECK_1(L,Y,\alpha) \cong ECH_1(int(N),\alpha).$$
 Observing that the proof of Theorem \ref{Theorem: PFH_i iso to ECH_i fo open books} given in \cite{CGH3} works also if
 $\partial S$ is disconnected we get
 $$ECK_1(L,Y,\alpha) \cong PFH_1(N(S,\phi)).$$
 The result then follows applying Proposition \ref{Proposition: PFH_1 iso to SH}.
\endproof
 
We get an interesting consequence of this fact when also the Alexander degree of Heegaard-Floer knot homology of a fibered knot is computed
with respect to (the homology class of) a  page of the associated open book. Indeed, using the symmetrized degree adopted by
Ozsv\'{a}th and Szab\'{o}, we know that $HFK^-_{-g}(K,Y)$ is isomorphic to a copy of $\Z/2$ generated by the class of the corresponding
contact element. Moreover, whenever $\chi(ECK(K,Y,\alpha)) = \chi(HFK^-(K,Y))$, we have also that \emph{$HFK^-_{-g+1}(K,Y)$ categorifies
$\Lambda(\phi)$}. 

Obviously, if the conjectures stated before hold, then
$HFK^-_{-g+1}(K,Y) \cong SH(S,\phi)$.

 %\begin{Cor} 
 %Let $S$ be a surface with boundary and $\phi: S \rightarrow S$ a diffeomorphism. Let $Y$ be a three manifold and $L \subset Y$ be a link
 %such that $(L,S,\phi)$ is an open book decomposition of $Y$. Then
 %$$\chi_t(HFL^-(L,Y)) = \Lambda(\phi)$$
 %where $\Lambda(\phi) = \sum_{x \in Fix(\phi)}\epsilon(x)$ is the Lefschetz number of $\phi$.
%\end{Cor}
%\proof
 
%\endproof

\subsection{The general case} \label{Subsection: The general case}
In this subsection we prove theorems \ref{Theorem: Euler characteristic of ECK} and  \ref{Theorem: Euler characteristics of ECK and ALEX} in the 
general case. 

The first approach that one could attempt to apply Theorem \ref{Theorem: ALEX equals the twisted zeta function} to a general link $L \subset Y$ is to
look for a contact form on $Y$ that is compatible with $L$ and whose Reeb vector field is circular outside a neighborhood of $L$.
Unfortunately we will not be able to find such a contact form.
The basic idea to solve the problem consists in two steps:
\begin{enumerate}
 \item[Step $1$.] find a contact form $\alpha$ on $Y$ which is compatible with $L$ and for which
  there exists a finite decomposition $Y \setminus L = \bigsqcup_i X_i$ for which $R=R_{\alpha}$ is circular in each $X_i$;
 \item[Step $2$.] apply Theorem \ref{Theorem: ALEX equals the twisted zeta function} separately in each $X_i$ to get the result: this can be done
  using the (more general results) in Sections 6 of \cite{Fri}.
\end{enumerate}
On the other hand the special decomposition of $Y \setminus L$ that we find in Step 1 will allow us to follow an easier way and we will substitute
Step 2 by:
\begin{enumerate}
 \item[Step $2'$.] apply repeatedly the \emph{Torres formula} for links to get the result.
\end{enumerate}
Torres formula, first proved in \cite{Tor}, is a classical result about Alexander polynomial, which essentially explains how,
starting from the Alexander polynomial of a given link $L$, to compute the Alexander polynomials of any sub-link of $L$ .

\subsubsection{Preliminary}

The key ingredient to solve the Step 1 of our strategy is the following:
\begin{Prop} \label{Prop: Any L is part o a fibered L'}
 Let $L = K_1 \sqcup \ldots \sqcup K_n \subset Y$ be an $n$-components link and let $\xi$ be any fixed contact structure on $Y$. Then there exists
 an $m$-components link $L' \subset Y$ with $m \geq n$ and such that:
 \begin{enumerate}
  \item $L' = L \sqcup K_{n+1} \sqcup \ldots \sqcup K_m$;
  \item $L'$ is fibered and the associated open book decomposition of $Y$ supports $\xi$.
 \end{enumerate}
\end{Prop}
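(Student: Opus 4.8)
The plan is to start from a Giroux open book $(B, S, \phi)$ for $(Y,\xi)$ (Theorem \ref{Theorem: Construction of an open book adapted to a contact structure}) and then to modify it by Giroux stabilizations so that the new binding contains a link isotopic to $L$. The point is that stabilizations do not change the supported contact structure (Theorem \ref{Theorem: Giroux correspondance}), so whatever open book we reach will still support $\xi$, and its binding $L'$ will be fibered by construction; setting $L' = L \sqcup K_{n+1} \sqcup \dots \sqcup K_m$ with the extra components being the ``old'' binding pieces plus whatever is created during the stabilizations gives conditions (1) and (2).

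First I would recall the classical fact (essentially due to Alexander, and in the contact setting to Giroux--Honda--Rudolph type arguments) that, given an open book $(B_0,S_0,\phi_0)$ supporting $\xi$ and any link $L$ in $Y$, one can isotope $L$ so that it lies on a page of the open book and is transverse, in a controlled way, to the pages nearby; more precisely, after a sequence of stabilizations one can arrange that each component $K_i$ of $L$ sits inside a page $S\times\{t_0\}$ as a properly embedded or embedded curve which is non-separating and can be ``pushed to the boundary.'' The mechanism for the last step is the standard Stallings/Harer move realized by Giroux stabilization: stabilizing along a properly embedded arc $\gamma$ in the page that is dual to $K_i$ (i.e.\ meeting $K_i$ once) has the effect, after the positive Dehn twist along $\bar\gamma$, of merging $K_i$ into the binding of the new open book. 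Iterating this over $i = 1,\dots,n$, using disjoint arcs supported near disjoint pages, turns all of $L$ into part of the binding. The resulting binding $L'$ contains $L$ and, possibly, additional circles $K_{n+1},\dots,K_m$ coming from the original binding $B_0$ and from the handle cores introduced by the stabilizations; since $L'$ is a binding, $Y\setminus L'$ fibers over $S^1$ and $L'$ is fibered, and the associated open book supports $\xi$ because stabilization preserves the Giroux correspondence.

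The main obstacle, and the step that needs the most care, is making the ``push $K_i$ into the binding'' move precise and simultaneously compatible for all components: one must choose the dual arcs $\gamma_i$ so that the successive stabilizations do not interfere with the components of $L$ already placed on the page, do not accidentally isotope $L$ off itself, and leave $L$ (up to isotopy in $Y$) unchanged in the final open book. A clean way to organize this is to first isotope $L$ to lie on a single page, thicken a neighborhood of $L$ in that page, and then perform the stabilizations one at a time in small disjoint neighborhoods, checking at each stage that the monodromy modification $\phi \mapsto \tau_{\bar\gamma}\circ\phi$ and the re-identification $Y\setminus N(S',\phi')=\bigsqcup(\text{solid tori})$ carry the piece of $L$ under consideration onto a core circle of one of those solid tori, hence onto a component of $L'$. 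Once this bookkeeping is set up, conditions (1) and (2) follow immediately, and one then chooses an adapted contact form $\alpha$ for the open book $(L',S',\phi')$ as in Subsection \ref{Subsection: Open Books}, which by the Example after Lemma \ref{Lemma: There exists alpha copatible with K and S} is automatically adapted to each component of $L'$, in particular to $L$; this is exactly what Step 1 of the strategy for the general case requires.
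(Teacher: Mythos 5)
There is a genuine gap at the central step of your argument. The move you invoke --- ``stabilize along a properly embedded arc $\gamma$ dual to $K_i$, and $K_i$ is merged into the binding'' --- is not justified and is in fact false in general. Concretely, take the Hopf band open book of $S^3$ (annulus page $A$, monodromy a positive Dehn twist along the core $c$), let $K=c$ (an unknot), and let $\gamma$ be the essential arc of $A$, which meets $K$ once. The Giroux stabilization along $\gamma$ produces the open book with page $\Sigma_{1,1}$ and monodromy $\tau_{\bar\gamma}\circ\tau_c$ with $\bar\gamma\cdot c=1$, whose binding is a trefoil; the unknot $K$ is not isotopic to any component of it. So duality of $K_i$ to the stabilizing arc does not make $K_i$ boundary-parallel, and the bookkeeping you describe in your last paragraph cannot repair this. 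A secondary problem is the preprocessing step: a component of $L$ that sits on a page as a separating curve (for instance one bounding a disk in the page) admits no dual arc at all, and your assertion that one can always re-place each $K_i$ on a page as a non-separating curve is itself a nontrivial claim that you do not prove; likewise ``isotope $L$ onto a page'' already requires machinery comparable to what the full proof needs.

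For comparison, the paper's mechanism is different and avoids this issue: one isotopes $L$ into the $1$-skeleton of a triangulation compatible with $\xi$ (simplicial approximation plus refinement), so that in Giroux's page $S$ the link already runs along the strips of $S\cap(\mathcal{N}(\mathcal{D}^1)\setminus\mathcal{N}(\mathcal{D}^0))$ and can be pushed into $\partial S$ there; Giroux stabilization is used only locally near the vertices, to connect two different components of $\partial S\cap B$ so that the push-off can be completed, and the preservation of the isotopy class of $L$ is checked by showing that the stabilizing curve $\bar\gamma=\gamma\cup c$ bounds a disk in $Y\setminus L$. If you want to salvage a proof along your lines, the correct classical substitute for your move is to braid $L$ about the open book (make it positively transverse to all pages) and take the punctured pages as new fibers; but that is a different construction and would need its own justification that the resulting open book still supports $\xi$.
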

This result has been proved in the case of knots by Guyard in his Ph.D. thesis (in preparation, \cite{Gu}). Using part of his arguments,
we give here a proof for the case of links. 

%Before give a proof of the proposition, we prove an easy lemma about Heegaard diagrams associated to open book decomposition.
%\begin{Lemma}
% Let $(L,S,\phi)$ be an open book decomposition of $Y$ and $\mathbb{a} = \{a_1,\ldots,a_2g\}$ a base of arcs of $S_{\frac{1}{2}}$. Then for any $i$,
% $a_i \cup \phi(a_i)$ bounds a disk in $Y$.
%\end{Lemma}
%\proof
% 
%\endproof
\proof
 As recalled in Subsection \ref{Subsection: Open Books}, given a contact structure $\xi$ on $Y$, in \cite{Gi} Giroux explicitly constructs an open
 book decomposition of $Y$ that supports a contact form $\alpha$ such that $\ker(\alpha) = \xi$. In the proof 
 of Theorem \ref{Theorem: Giroux correspondance} we saw that such an open book decomposition is built starting from a cellular
 decomposition $\mathcal{D}$ of $Y$ that is compatible with $\xi$. Moreover we recalled that, up to taking a refinement, any cellular
 decomposition of $Y$ can be made compatible with $\xi$ by an isotopy.
 
 Using the simplicial approximation theorem, it is possible to choose a triangulation $\mathcal{D}$ of $Y$ in a way that, up to isotopy, $L$ is
 contained in the $1$-skeleton $\mathcal{D}^1$ of $\mathcal{D}$. Up to take a refinement, we can suppose moreover that $\mathcal{D}$ is adapted
 to $\xi$.
 
 Let $S$ be the $0$-page of the associated open book built via Theorem \ref{Theorem: Construction of an open book adapted to a contact structure}:
 properties 1 and 2 of $S$ reminded during the proof of that theorem, imply that $L \subset \mathrm{int}(S)$ and that,
 if $\mathcal{N}(\mathcal{D}^0)$ is a suitable neighborhood of
 $\mathcal{D}^0$, then it is possible to push $L \setminus \mathcal{N}(\mathcal{D}^0)$ inside $S$ to make it contained in $\partial S$. Note
 that in each strip composing $S \setminus \mathcal{N}(\mathcal{D}^0)$ we have only one possible choice for the direction in which to push
 $L \setminus \mathcal{N}(\mathcal{D}^0)$ to $\partial S$ in a way that the orientation of $L$ coincides with that of $\partial S$.
 
 We would like to extend this isotopy also to $L \cap \mathcal{N}(\mathcal{D}^0)$ to make the whole $L$ contained in $\partial S$. Suppose
 that $B$ is a connected component (homeomorphic to a ball) of $\mathcal{N}(\mathcal{D}^0)$. In particular we suppose that $B \cap S$ is
 connected. Then $L \cap \partial B$  consists of two points $Q_1$ and $Q_2$. The extension is done differently in the following two cases
 (see figure \ref{Figure: Making L contained in the binding}):
 
 \begin{figure}[h]
    \begin{center}
     \includegraphics[width=.46\linewidth]{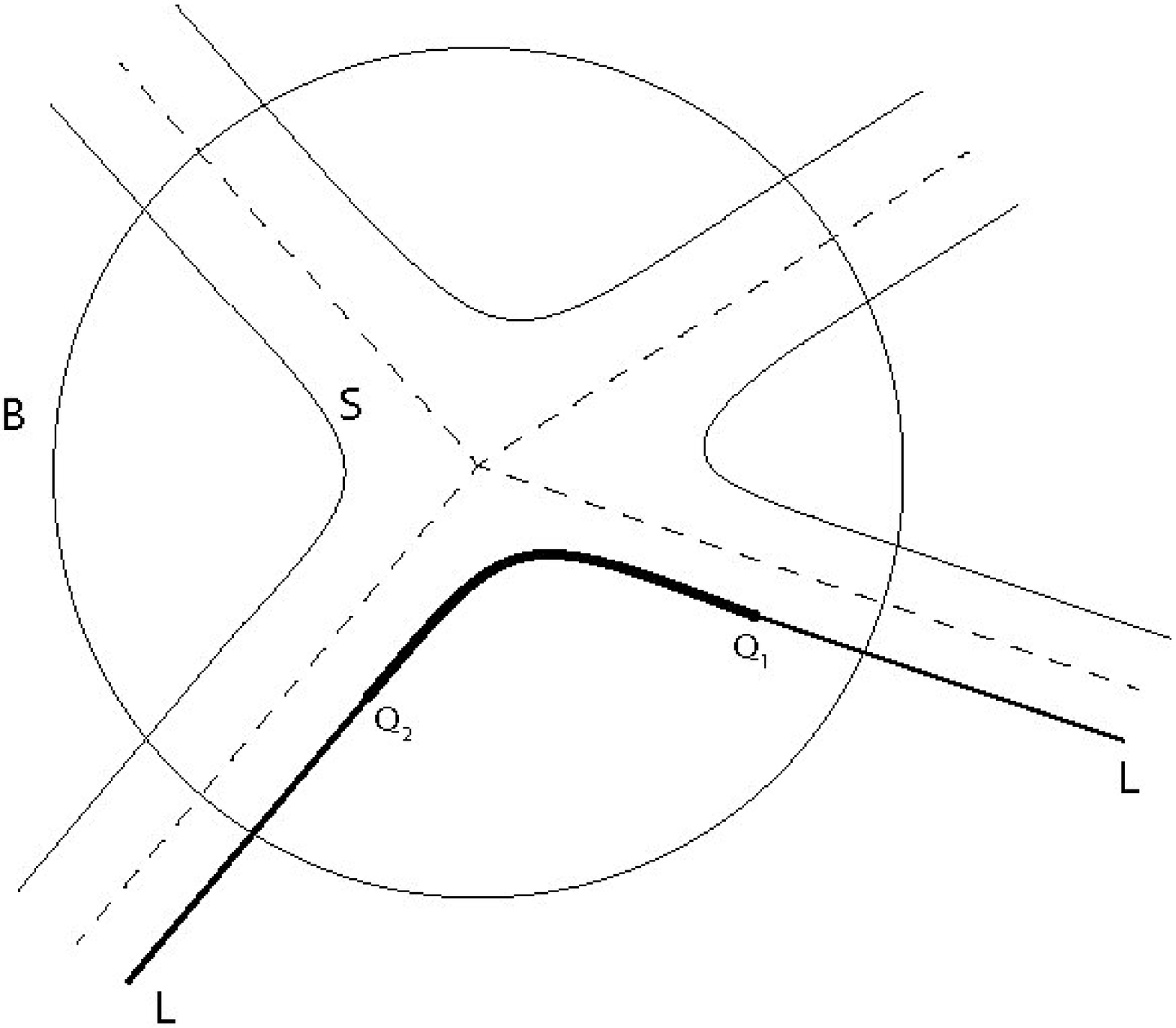}\phantom{dddd}
     \includegraphics[width=.46\linewidth]{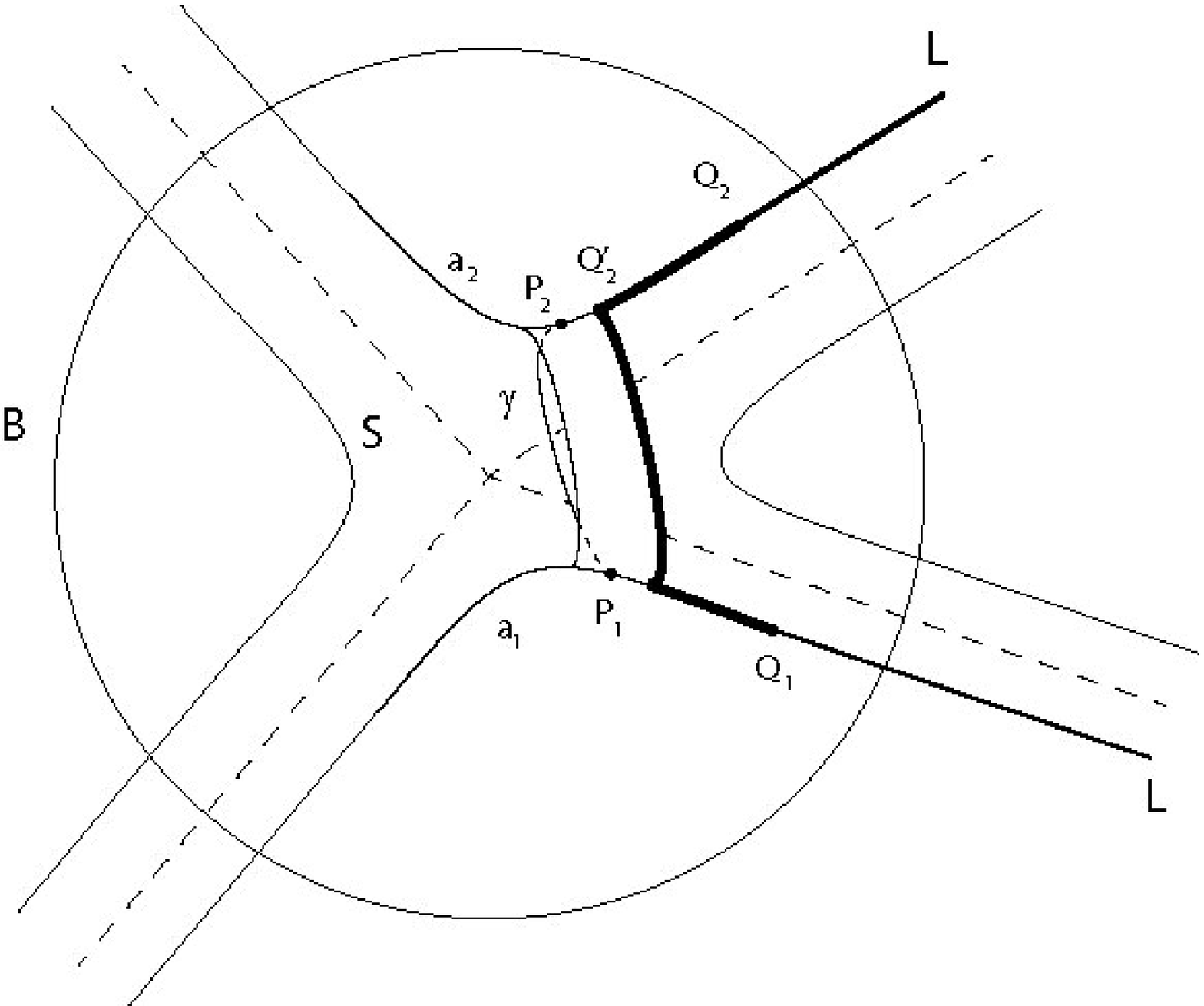}
    \end{center}
    \caption{Making L contained in $\partial S$ in $\mathcal{N}(\mathcal{D}^0)$: easy case at the left and general case at right.
    The dotted lines are $1$-simplexes in $\mathcal{D}^1$, while the bold segments from $Q_1$ to $Q_2$ represent the push-offs of $L$ in
    $\mathcal{N}(\mathcal{D}^0)$.}
    \label{Figure: Making L contained in the binding}
   \end{figure}

 \begin{itemize}
  \item[1.] \emph{Easy case}: this is when $Q_1$ and $Q_2$ belong to the same connected component of $\partial S \cap B$. The isotopy is then
   extended to $B$ by pushing $L \cap B$ to $\partial S \cap B$ inside $S \cap B$ (figure at left);
  \item[2.] \emph{General case}: if $Q_1$ and $Q_2$ belong to (the boundary of) different connected components $a_1$ and $a_2$ of
   $\partial S \cap B$ we proceed as follows.
   
   Let $P_i$ be a point in the interior of $a_i$, $i = 1,2$. Let $\gamma$ be a simple arc in $S \cap B$ from $P_1$ to
   $P_2$ (there exists only one choice for $\gamma$ up to isotopy). Let $S'$ be obtained by positive Giroux stabilization of $S$ along
   $\gamma$ (see figure at the right).
   
   %One can see (using for example Heegaard
   %decompositions like the ones built in Section \ref{Section: HF for open books}) that ${\gamma}$ is isotopic in $Y \setminus L$, relatively
   %to $(P_1,P_2)$, to the core curve $c$ of the handle.
   
   Now we can connect $Q_1$ with $a_2$ by an arc in $\partial S'$ crossing once the belt sphere of the $1$-handle of the stabilization;
   let $Q_2'$ be the end point of this arc. Since a Giroux stabilization is compatible with the orientation of $\partial S$,
   $Q_2'$ and $Q_2$ are in the same connected component of $a \setminus \{P_2\}$, so that we can connect them inside
   $\partial S \cap B$ and we are done.
   
   %If $Q_2'$ and $Q_2$ are in different connected components of $a \setminus \{P_2\}$, we can modify $L$ by an isotopy in $Y$ as follows.   
   %Let $U$ (resp. $V$) be the point in $\mathcal{D}^1 \cap \partial B$ contained in the strip whose boundary contains $Q_2$ (resp. $Q_2'$).
   %Let $F$ be the unique $2$-simplex of $\mathcal{D}$ whose boundary contains both $U$ and $V$ (and obviously the $0$-simplex in the center
   %of $B$). Let $u,v,z$ be three $1$-simplexes that constitute $\partial F$ and suppose that $\{U\} \in u$ and $\{V\} \in v$. Up to take a refinement of
   %$\mathcal{D}$, we can suppose that $L \cap F = u$. Pose $L' = (L \setminus u) \cup (v \cup z)$. Obviously $L'$ is isotopic to $L$ in $Y$.
   %Push now $L' \cap (v \cup z)$ in the direction of $int(F)$ to make it contained in $\partial S'$ (note that the situation near the
   %$0$-simplex in $v\cap z$ is an ``easy case''). we are now in the situation above
   %and pictured in Figure \ref{Figure: Making L contained in the binding} at right. 
  \end{itemize}

  Pushing $L$ to $\partial S$ (and changing $L$ and $S$ as before where necessary) gives a link $\overline{L}$ that is contained in
  $\partial S$. To see that $\overline{L}$ is isotopic to $L$ we have to prove that, for any $B$ as before, the two kinds of push-offs we use
  do no change the isotopy class of $L$.
  
  Clearly the isotopy class of $L$ is preserved in the easy case. For the general case, it suffices to show that substituting the arc
  $L \cap S \cap B$ from $Q_1$ to $Q_2$ with an arc crossing once the belt sphere of the handle does not change the isotopy
  class of $L$. This is
  equivalent to proving that, if $\gamma$ is the path of the Giroux stabilization and $\bar{\gamma} = \gamma \cup c$, where $c$ is the core curve
  of the handle, then $\bar{\gamma}$ bounds a disk in $Y \setminus L$. This can be proved for example by using the particular kind of
  Heegaard diagrams used in \cite{CGH3}. Observe that, if $b$ is the co-core of the handle, then
  $\bar{\gamma}$ is isotopic in $S$ to $b \cup \phi'(b)$, where $\phi'$ is the monodromy on $S'$ given by the Giroux stabilization. We
  finish by observing that, up to a small perturbation near $\partial S$, $b \cup \phi'(b)$ is isotopic to an attaching curve of a Heegaard
  diagram of $Y$.
\endproof

\vspace{0.3 cm}

We recall now the Torres formula that we will use in the second step of our proof of Theorem \ref{Theorem: Euler characteristics of ECK and ALEX}.
Since we need to consider the Alexander quotient as a polynomial, we will use the same convention adopted for the graded Euler characteristic
and we will express the variables as subscripts of the symbol $\mathrm{ALEX}$.

\begin{Thm}[Torres formula] \label{Theorem: Torres formula}
 Let $L= K_1 \sqcup \ldots \sqcup K_n$ be an $n$-link in a homology three-sphere $Y$, $K_{n+1}$ a knot in $Y \setminus L$ and
 $L' = L \sqcup K_{n+1}$. Let $S_i$ be a Seifert surface for $K_i$, $i \in \{1,\ldots,n+1\}$. Then
 \begin{eqnarray*}
  \mathrm{ALEX}_{t_1,\ldots,t_n,1}(Y \setminus L') \doteq \mathrm{ALEX}_{t_1,\ldots,t_n}(Y \setminus L) \cdot \left(1 - \prod_{i=1}^n t_i^{\langle K_{n+1} , S_i \rangle} \right),                                                
 \end{eqnarray*}
 where $\mathrm{ALEX}_{t_1,\ldots,t_n,1}(Y \setminus L')$ indicates the polynomial $\mathrm{ALEX}_{t_1,\ldots,t_{n+1}}(Y \setminus L')$ evaluated
 in $t_{n+1} = 1$.
\end{Thm}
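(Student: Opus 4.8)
The plan is to derive the Torres formula from the multiplicativity of Reidemeister--Franz torsion under gluing along a torus, using the identification $\mathrm{ALEX}_\rho(X)\doteq\tau$ recalled above. First I would fix pairwise disjoint tubular neighbourhoods and Seifert surfaces for the components of $L'$, and decompose the exterior of $L$ as
$$Y \setminus \mathrm{int}\,\mathcal{N}(L) \;=\; E' \cup_T W,$$
where $W = \mathcal{N}(K_{n+1})$ is a solid-torus neighbourhood of $K_{n+1}$, $E' = Y \setminus \mathrm{int}\,\mathcal{N}(L')$ is the exterior of $L'$, and $T = \partial W$ is a $2$-torus. The relevant coefficient system on the left is the universal abelian cover $\rho_L \colon H_1(Y\setminus L) \to \Z^n = \langle t_1,\ldots,t_n\rangle$. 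Since the meridian $\mu_{n+1}$ of $K_{n+1}$ bounds a disc in $W$, the restriction of $\rho_L$ to $E'\simeq Y\setminus L'$ is precisely $\rho_{L'}$ followed by the ring homomorphism $t_{n+1}\mapsto 1$, while on $W$ the core circle --- isotopic to $K_{n+1}$ --- is sent to the monomial $t := \prod_{i=1}^n t_i^{\langle K_{n+1}, S_i\rangle}$, and on $T$ the meridian goes to $1$ and the longitude to $t$.

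Next I would invoke Milnor's Mayer--Vietoris formula for torsion: when the chain complexes of $E'$, $W$, $T$ and $Y\setminus L$ with coefficients in the quotient field $\mathbb{Q}(t_1,\ldots,t_n)$ are all acyclic, one has
$$\tau(Y\setminus L)\cdot\tau(T) \;\doteq\; \tau(E')\cdot\tau(W).$$
Here $\tau(W)\doteq(1-t)^{-1}$ is the torsion of a solid torus whose core maps to $t$, and $\tau(T)\doteq 1$ is the torsion of the $2$-torus, whose coefficient system is nontrivial as soon as $t\neq 1$. For the remaining factor I would use naturality of torsion under the specialisation $t_{n+1}\mapsto 1$: since $E'$ is homotopy equivalent to $Y\setminus L'$ and the specialised complex is still acyclic, $\tau(E') \doteq \mathrm{ALEX}_{t_1,\ldots,t_n,1}(Y\setminus L')$. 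Substituting, together with $\mathrm{ALEX}(Y\setminus L)\doteq\tau(Y\setminus L)$, yields
$$\mathrm{ALEX}_{t_1,\ldots,t_n,1}(Y\setminus L') \;\doteq\; \mathrm{ALEX}_{t_1,\ldots,t_n}(Y\setminus L)\cdot\Big(1-\prod_{i=1}^n t_i^{\langle K_{n+1},S_i\rangle}\Big),$$
which is the assertion.

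The main obstacle is controlling the acyclicity/non-degeneracy hypotheses that make the Mayer--Vietoris step and the specialisation $t_{n+1}\mapsto 1$ legitimate, and in particular the degenerate case in which $K_{n+1}$ is null-homologous in $Y\setminus L$. Then $t=1$, the coefficient systems on $W$ and $T$ become trivial, these pieces fail to be $\mathbb{Q}(t_1,\ldots,t_n)$-acyclic, and both $\tau(W)$ and $\tau(T)$ must be replaced by the torsion of the acyclic complex built from the Mayer--Vietoris long exact sequence; one also has to check that $\mathrm{ALEX}(Y\setminus L')$ does not acquire a spurious pole or zero at $t_{n+1}=1$. Since the statement is classical and enters the sequel only as a black box, I would in practice simply cite \cite{Tor} (and \cite{Tu} for the torsion formulation over homology spheres; the original route in \cite{Tor} is via a Fox-calculus presentation of $\pi_1(Y\setminus L')$), presenting the torsion argument above as the conceptual reason it holds.
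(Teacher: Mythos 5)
Your proposal is correct in outline, but it follows a genuinely different route from the paper's. The paper proves Torres' formula dynamically: it applies Theorem \ref{Theorem: ALEX equals the twisted zeta function} to $Y \setminus L$ using a flow for which $K_{n+1}$ is the unique periodic orbit in a neighbourhood of $K_{n+1}$ and is elliptic; the twisted zeta function then factors as the zeta function of the restricted flow on $Y\setminus L'$ (which, since $\rho_L|_{Y\setminus L'}$ is $\rho_{L'}$ followed by $t_{n+1}\mapsto 1$, gives $\mathrm{ALEX}_{t_1,\ldots,t_n,1}(Y\setminus L')$) times the single local factor $\zeta_{K_{n+1}}(\rho_L([K_{n+1}])) = \bigl(1-\prod_i t_i^{\langle K_{n+1},S_i\rangle}\bigr)^{-1}$ from Lemma \ref{Lemma: Zeta functions of the orbits}. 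You instead run the classical torsion argument: Milnor's multiplicativity $\tau(Y\setminus L)\cdot\tau(T)\doteq\tau(E')\cdot\tau(W)$ for the splitting along $T=\partial\mathcal{N}(K_{n+1})$, with $\tau(W)\doteq(1-t)^{-1}$ and $\tau(T)\doteq 1$. The two computations are of course two faces of the same factor --- the torsion of a solid torus whose core maps to $t$ equals the local zeta function of an elliptic core orbit --- but the mechanisms differ. Your version is self-contained modulo standard torsion theory and is essentially the proof in Turaev; its cost is exactly the acyclicity bookkeeping you flag (in particular the degenerate case where $K_{n+1}$ is null-homologous in $Y\setminus L$, where $t=1$, the pieces $W$ and $T$ are no longer acyclic, and the statement degenerates to $\mathrm{ALEX}_{t_1,\ldots,t_n,1}(Y\setminus L')\doteq 0$, which needs a separate word). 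The paper's version buys a one-line derivation from machinery already in place and stays within the dynamical framework used throughout Section \ref{Subsection: The general case}, at the cost of quietly assuming a flow with the stated properties exists and satisfies Fried's hypotheses. Either way the statement is classical and citing \cite{Tor} or \cite{Tu}, as you suggest, is legitimate.
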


We refer the reader to \cite{Tor} for the original proof. See also \cite{Fra} for a proof making use of techniques of dynamics.
We also mention that in \cite{Cim} a proof of this theorem is provided making use only of elementary techniques about Seifert surfaces;
moreover a generalization of the formula to links in any three-manifold is given in \cite{Tu}.
\begin{proof}[Sketch of the proof.]
 Apply Theorem \ref{Theorem: ALEX equals the twisted zeta function} to $\mathrm{ALEX}(Y \setminus L)$ using a flow $\phi$ for which
 \begin{enumerate}
  \item $K_{n+1}$ is the only periodic orbit of $\phi$ contained in a neighborhood of $K_{n+1}$;
  \item $K_{n+1}$ is elliptic.
 \end{enumerate}
  The factor
 \begin{equation}
  1 - \prod_{i=1}^n t_i^{\langle K_{n+1} , S_i \rangle} = \left(\zeta_{K_{n+1}}(\rho_L(K_{n+1}))\right)^{-1}
 \end{equation}
 expresses then the fact that $K_{n+1}$ is the only orbit counted in $\mathrm{ALEX}(Y \setminus L)$ and not in $\mathrm{ALEX}(Y \setminus L')$.\\
 The condition $t_{n+1} = 1$ comes from the fact that, if $\mu_{n+1}$ is a meridian for $K_{n+1}$, so that $t_{n+1} = [\mu_{n+1}]$, then
 $\zeta_{\mu_{n+1}}(\rho_L([\mu_{n+1}])) = 1$.
\end{proof}
\begin{Obs} \label{Observation: Condition t_n = 1}
 One can see the condition $t_{n+1} = 1$ also from a purely topological point of view. Image to take the manifold $Y \setminus L'$ and then
 to glue back $K_{n+1}$. The effect on $H_1(Y \setminus L')$ is that the generator $[\mu_{n+1}]$ is killed and now the homology
 class of a loop $\gamma \subset Y \setminus L'$ is determined only by the numbers $\langle \gamma , S_i \rangle$, $S_i \in \{1,\ldots,n\}$
 (i.e. by $\rho_L(\gamma)$).
\end{Obs}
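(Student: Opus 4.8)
The plan is to realise ``gluing back $K_{n+1}$'' as the trivial Dehn filling that reattaches a closed tubular neighborhood $\mathcal{N}(K_{n+1}) \cong D^2 \times S^1$ of $K_{n+1}$ to $Y \setminus L'$ along its boundary torus, recovering $Y \setminus L$ (this makes sense because $K_{n+1}$ is disjoint from $L$, so for a small enough neighborhood $\mathcal{N}(K_{n+1}) \subset Y \setminus L$). The only new relation introduced by this filling is that the meridian $\mu_{n+1} = \partial D^2 \times \{\mathrm{pt}\}$ now bounds the meridian disk $D^2 \times \{\mathrm{pt}\}$, hence becomes null-homologous. So the entire content of the observation reduces to checking that the inclusion-induced map $H_1(Y \setminus L') \to H_1(Y \setminus L)$ is surjective with kernel exactly the subgroup generated by $[\mu_{n+1}]$.

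First I would set up the Mayer--Vietoris sequence for the decomposition $Y \setminus L = (Y \setminus L') \cup \mathcal{N}(K_{n+1})$, whose intersection $\mathcal{N}(K_{n+1}) \setminus K_{n+1}$ deformation retracts onto the boundary torus $T = \partial \mathcal{N}(K_{n+1})$. Writing $H_1(T) = \Z\langle \mu_{n+1}\rangle \oplus \Z\langle \lambda_{n+1}\rangle$, the inclusion into $\mathcal{N}(K_{n+1})$ sends $\mu_{n+1} \mapsto 0$ and $\lambda_{n+1}$ to a generator of $H_1(\mathcal{N}(K_{n+1})) = \Z$, while the inclusion into $Y \setminus L'$ sends $\mu_{n+1}$ and $\lambda_{n+1}$ to their respective classes there. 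Since the map $H_0(T) \to H_0(Y \setminus L') \oplus H_0(\mathcal{N}(K_{n+1}))$ is injective, the connecting map vanishes and $H_1(Y \setminus L)$ is the cokernel of $H_1(T) \to H_1(Y \setminus L') \oplus H_1(\mathcal{N}(K_{n+1}))$. A short diagram chase, using the relation coming from $\lambda_{n+1}$ to absorb the $\Z$ summand of the solid torus, shows that the only surviving relation is $[\mu_{n+1}] = 0$, i.e.
\begin{equation*}
 H_1(Y \setminus L) \cong \frac{H_1(Y \setminus L')}{\langle [\mu_{n+1}] \rangle}.
\end{equation*}

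Next I would feed this into the splitting of Example \ref{Example: Abelian cover determined by a link}. Applying the isomorphism \ref{Equation: H_1 complement of the knots iso to the summands with surfaces} to $L'$ gives $H_1(Y \setminus L') \cong H_1(Y) \oplus \bigoplus_{i=1}^{n+1}\Z_{[\mu_i]}$, under which $[\mu_{n+1}]$ is precisely the last free summand. Quotienting by it therefore yields $H_1(Y) \oplus \bigoplus_{i=1}^{n}\Z_{[\mu_i]}$, which is exactly the splitting of $H_1(Y \setminus L)$; under this identification the class of a loop $\gamma \subset Y \setminus L' \subset Y \setminus L$ is recorded by $i_*[\gamma] \in H_1(Y)$ together with the intersection numbers $\langle \gamma, S_i\rangle$ for $i = 1, \dots, n$. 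Since $Y$ is a homology three-sphere we have $H_1(Y) = 0$, so the class of $\gamma$ in $H_1(Y \setminus L)$ is determined solely by the tuple $(\langle \gamma, S_1\rangle, \dots, \langle \gamma, S_n\rangle) = \rho_L(\gamma)$, as asserted.

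There is no genuine obstacle here: the argument is a one-line Dehn-filling computation in homology. The only point deserving a moment's care is verifying that the Mayer--Vietoris kernel is generated by $[\mu_{n+1}]$ alone and not by some combination involving the longitude; this follows at once from the fact that $\lambda_{n+1}$ maps to a generator of $H_1$ of the refilled solid torus while $\mu_{n+1}$ dies there, so the longitude relation merely identifies the core class and contributes nothing to the quotient of $H_1(Y \setminus L')$.
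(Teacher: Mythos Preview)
Your argument is correct. The paper does not give a separate proof of this observation: it is stated in the \texttt{remark} theorem style, and the text of the observation itself is the entire justification the paper offers --- namely the bare assertion that refilling $K_{n+1}$ kills the generator $[\mu_{n+1}]$ of $H_1(Y\setminus L')$. Your Mayer--Vietoris computation is a rigorous elaboration of exactly this claim, so the approaches agree and differ only in level of detail.
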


\subsubsection{Proof of the result in the general case}
%We are now ready to prove theorems \ref{Theorem: Euler characteristic of ECK} and \ref{Theorem: Euler characteristics of ECK and ALEX}.
\begin{proof}[Proof of Theorem \ref{Theorem: Euler characteristics of ECK and ALEX}.]

Let $L = K_1 \sqcup \ldots \sqcup K_n$ be a given link in $Y$. Proposition \ref{Prop: Any L is part o a fibered L'} implies that there exists
an open book decomposition $(L',S,\phi)$ of $Y$ with binding
$$L' = L \sqcup K_{n+1} \sqcup \ldots \sqcup K_m$$
for some $m \geq n$.
%Since $L'$ is fibered we can applying the theorem and get
%\begin{equation}
% \chi_{t_1,\ldots,t_m}(ECC(L',Y,\alpha)) \doteq \mathrm{ALEX}_{t_1,\ldots,t_m}(Y \setminus L'),
%\end{equation}
Let $\alpha$ be a contact form on $Y$ adapted to $(L',S,\phi)$. Let $R =R_{\alpha}$ be its Reeb vector field.
As remarked in Subsection \ref{Subsection: Fibered links}, and using the same notations, $R$ is circular in
$Y \setminus \mathring{V'}(L')$ where, recall, $V'(L)$ is an union of tubular neighborhoods $V'(K_i) \subsetneq V(K_i)$, $i \in \{1,\ldots,m\}$
of $L$.

Since $\alpha$ is also adapted to $L'$, then each $\mathring{V}(K_i)$ is, by definition, foliated by concentric tori, which in turn are
linearly foliated by Reeb orbits that intersect positively a meridian disk for $K_i$ in $V(K_i)$. Now, we can choose $\alpha$ in a way that
for each $i \in \{n+1,\ldots,m\}$ the tori contained in $V'(K_i)$ are foliated by orbits of $R$ with fixed irrational slope.
This condition can be achieved by applying the Darboux-Weinstein theorem in $V(K_i)$ to make $\alpha|_{V'(K_i)}$ like in Example 6.2.3 of
\cite{CGH2}. 

This implies that, for each $i \in \{n+1,\ldots,m\}$, the only closed orbit of $R$ in $V'(K_i)$ is $K_i$. Define
$U(L') = \bigsqcup_{i=1}^m U(K_i)$, where
$$U(K_i) = \left\{\begin{array}{cll}
                   V(K_i) & \mbox{ if } & i \in \{1,\ldots,n\};\\
                   V'(K_i) & \mbox{ if} & i \in \{n+1,\ldots,m\};
                  \end{array} \right.$$
%Let $\overline{R}$ be the restriction of $R$ to $$Y \setminus \left( V(L) \bigsqcup_{i=n+1}^m V'(K_i) \right)$$
We have:
\begin{eqnarray*}
 \chi(ECC(L,Y,\alpha)) & = & \zeta_{\rho_L}(\phi_R|_{Y \setminus V(L)})\\
                       & = & \zeta_{\rho_L}(\phi_R|_{Y \setminus U(L')})) \cdot \prod_{i = n+1}^m \prod_{\gamma \in \mathcal{P}(V'(K_i))} \zeta_{\gamma}(\rho_L([\gamma]))\\
                       & = & \zeta_{\rho_L}(\phi_R|_{Y \setminus U(L')})) \cdot \prod_{i = n+1}^m  \zeta_{K_i}(\rho_L([K_i]))\\
                       & = & \zeta_{\rho_{L'}}(\phi_R|_{Y \setminus U(L')}))|_{t_1,\ldots,t_n,1,\ldots,1} \cdot \prod_{i = n+1}^m  \zeta_{K_i}(\rho_L([K_i]))\\
                       & \doteq & \mathrm{ALEX}_{t_1,\ldots,t_n,1,\ldots,1}(Y \setminus L') \cdot \prod_{i = n+1}^m \zeta_{K_i}(\rho_L([K_i]))\\
                       & = & \mathrm{ALEX}_{t_1,\ldots,t_n,1,\ldots,1}(Y \setminus L') \cdot \prod_{i = n+1}^m \left(1 - \prod_{j=1}^n t_j^{\langle K_{i} , S_j \rangle} \right)^{-1}\\
                       & = & \mathrm{ALEX}_{t_1,\ldots,t_n}(Y \setminus L),
\end{eqnarray*}
%Applying the product formulae \ref{Equation: Recurrence definition of the orbit sets} % as in Section \ref{Section: Fibered links} to the obvious decomposition
%\begin{eqnarray} \label{Equation: Decomposition of Y circular in each piece}
% Y \setminus \left(\bigsqcup_{i = 1}^n V(K_i) \right) & = & \left(Y \setminus \left(\bigsqcup_{i = 1}^m V'(K_i) \right) \right)\cup \left(\bigsqcup_{i = n+1}^m V'(K_i)\right) 
%\end{eqnarray}
%we have: 
%$$\chi_{t_1,\ldots,t_n}(ECC(L,Y,\alpha)) = \zeta_{\rho_L}(\phi_R) \cdot \prod_{i = n+1}^m \prod_{\gamma \in \mathcal{O}(V'(K_i))} \zeta_{\gamma}([\gamma]) $$
%\begin{eqnarray*}
%  \chi(ECC(L,Y,\alpha)) & = & \chi_{t_1,\ldots,t_n}(ECC(L,Y,\alpha))\\
%                        & = & \chi_{t_1,\ldots,t_n,1,\ldots,1}(ECC(L',Y,\alpha)) \cdot \prod_{i = n+1}^m \prod_{\gamma \in \mathcal{O}(V'(K_i))} \zeta_{\gamma}([\gamma]) \\
%                        & = & \mathrm{ALEX}_{t_1,\ldots,t_n,1,\ldots,1}(Y \setminus L') \cdot \prod_{i = n+1}^m \zeta_{K_i}([K_i]) \\
%                        & = & \mathrm{ALEX}_{t_1,\ldots,t_n,1,\ldots,1}(Y \setminus L') \cdot \prod_{i = n+1}^m \left(1 - \prod_{j=1}^n t_j^{\langle K_{i} , S_j \rangle} \right)^{-1}\\
%                        & = & \mathrm{ALEX}_{t_1,\ldots,t_n}(Y \setminus L),
%\end{eqnarray*}
where:
\begin{itemize}
 \item[line 2] follow reasoning like in the proof of Equation \ref{Equation: Product formula of ECC in terms of zeta functions};
 \item[line 3] hold since $K_i$, for $i \in \{n+1,\ldots,m\}$, is the only Reeb orbit of $\alpha$ in $V'(K_i)$;
 \item[line 4] comes from the idea in Observation \ref{Observation: Condition t_n = 1}: $\rho_L$ and $\rho_{L'}$ coincide
  on the generators $t_i$ of $H_1(Y \setminus L)$ for $i \in \{1,\ldots,n\}$ and $t_i = [\mu_i] = 1 \in H_1(Y \setminus L)$ for
  $i \in \{n+1,\ldots,m\}$;
 \item[line 5] $\rho_{L'}$ and $R|_{Y \setminus U(L')}$ satisfy hypothesis of Theorem
  \ref{Theorem: ALEX equals the twisted zeta function}, up to slightly perturb $R$ near $\partial U(K_i)$, $i \in \{1,\ldots,n\}$, to make
  it non degenerate transverse to the boundary like in the proof in Subsection \ref{Subsection: Fibered links};  
 \item[line 6] is due to the fact that the $K_i$'s are elliptic;
 \item[line 7] is obtained by applying repeatedly the Torres formula on the components $K_{n+1},\ldots,K_{m}$.
\end{itemize}
\end{proof}

\begin{Obs}
 As mentioned at the beginning of this subsection, we could also apply the more general results in \cite{Fri} using the fact that the Reeb vector field
 $R$ used is circular in each $U(K_i)$, since here $R$ is positively transverse to any meridian disk of $K_i$ in $\mathring{V}(K_i)$.
\end{Obs}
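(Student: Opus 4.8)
The plan is to reduce the computation of $\chi_{t_1,\ldots,t_n}(ECK(L,Y,\alpha))$ to Fried's dynamical expression for the Alexander quotient (Theorem \ref{Theorem: ALEX equals the twisted zeta function}), bridging the gap between ``the Reeb flow'' and ``the flow Fried's theorem wants'' by a perturbation together with the Torres formula. First I would observe that the graded Euler characteristic of a filtered complex is insensitive to the differential, so $\chi(ECK(L,Y,\alpha))$ equals the generating function of the generators of $ECC^{\bar e_+,\bar h_+}(N,\alpha)$, that is, of the orbit sets in $\mathcal{O}(N\sqcup\{\bar e_+,\bar h_+\})$, each weighted by its $\mathbb{Z}/2$-grading (the Lefschetz sign $\epsilon$, which agrees with the $ECH$ index parity by \eqref{Equation: Index parity formula of the ECH index}) and by the monomial $\prod_i t_i^{\langle\cdot,S_i\rangle}$. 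Since a multiorbit is assembled by choosing an arbitrary multiplicity at each elliptic simple orbit and multiplicity $0$ or $1$ at each hyperbolic one, and since $\epsilon$ and the Alexander degree are both multiplicative over simple orbits, this generating function factors as a product over simple orbits $\gamma$ of the local series $(1-\rho_L(\gamma))^{-1}$ if $\gamma$ is elliptic, $1-\rho_L(\gamma)$ if $\gamma$ is positive hyperbolic, and $1+\rho_L(\gamma)$ if $\gamma$ is negative hyperbolic. These are exactly the local Lefschetz zeta functions $\zeta_\gamma$ evaluated at $\rho_L([\gamma])$, so $\chi(ECC(L,Y,\alpha))$ is the twisted Lefschetz zeta function $\zeta_{\rho_L}$ of the Reeb flow on $Y\setminus V(L)$, together with the four extra orbits $\bar e,\bar h,\bar e_+,\bar h_+$.

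Next I would treat the fibered case: assume $(L,S,\phi)$ is an open book with adapted $\alpha$. Then $R=R_\alpha$ is circular (transverse to the pages) and automatically compatible with $\rho_L$ (the universal abelian cover of the complement), but it is tangent to $\partial V(L)$ along $\bar e_+,\bar h_+$, and the Morse--Bott perturbations near $T_1,T_2$ create degenerate orbits, so Fried's non-degeneracy and outward-transversality hypotheses fail. I would repair this by perturbing $R$ to a \emph{non-contact} vector field $R'$, supported near $\mathcal{N}(L)$, that is non-degenerate, transverse to and pointing out of a slightly smaller neighborhood $V'(L)$, and whose only periodic orbits in the collar $\mathcal{N}(V)\setminus V'(L)$ remain $\bar e,\bar h,\bar e_+,\bar h_+$ with unchanged Lefschetz signs (concretely, spiralling the $\phi$-invariant lines on a page toward the surviving orbits and pushing the rest outward through $\partial V'(L)$). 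Because the zeta function depends only on periodic orbits and their signs, $\zeta(\phi_{R'})=\zeta(\phi_R|_{Y\setminus\mathcal N(L)})\cdot\prod_{\gamma\in\{\bar e,\bar h,\bar e_+,\bar h_+\}}\zeta_\gamma([\gamma])$, which by the first step equals $\chi(ECC(L,Y,\alpha))$; and $R'$ now satisfies all of Fried's hypotheses, so $\zeta(\phi_{R'})\doteq\mathrm{ALEX}(Y\setminus L)$, giving the theorem for fibered links. (The hat version, and hence Theorem \ref{Theorem: Euler characteristic of ECK} via \eqref{Equation: Relation between ALEX and Delta}, then follows from the tensor decomposition $ECC=\widehat{ECC}\otimes\bigotimes_i\langle\emptyset,e_i^+,(e_i^+)^2,\ldots\rangle$ and $[e_i^+]=t_i$.)

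For the general case I would complete $L$ to a fibered link $L'=L\sqcup K_{n+1}\sqcup\cdots\sqcup K_m$ whose open book supports the given contact structure, using Proposition \ref{Prop: Any L is part o a fibered L'}. Pick an adapted $\alpha$ and arrange, via Darboux--Weinstein inside each $V(K_i)$ for $i>n$, that the tori in $V'(K_i)$ are irrationally foliated, so $K_i$ is the unique closed orbit there. Running the fibered computation on $Y\setminus U(L')$, where $U(K_i)=V(K_i)$ for $i\le n$ and $U(K_i)=V'(K_i)$ for $i>n$, yields $\chi(ECC(L,Y,\alpha))=\zeta_{\rho_{L'}}(\phi_R|_{Y\setminus U(L')})\big|_{t_{n+1}=\cdots=t_m=1}\cdot\prod_{i=n+1}^m\zeta_{K_i}(\rho_L([K_i]))$, the substitution $t_i=1$ occurring because the meridian of $K_i$ is trivial in $H_1(Y\setminus L)$. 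Fried's theorem on $Y\setminus U(L')$ identifies the first factor with $\mathrm{ALEX}_{t_1,\ldots,t_n,1,\ldots,1}(Y\setminus L')$, and the remaining factors $\prod_{i>n}(1-\prod_j t_j^{\langle K_i,S_j\rangle})^{-1}$ are exactly what the Torres formula (Theorem \ref{Theorem: Torres formula}) cancels upon deleting $K_{n+1},\ldots,K_m$ one at a time, leaving $\mathrm{ALEX}_{t_1,\ldots,t_n}(Y\setminus L)$, as desired.

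I expect the main obstacle to be the construction and verification of the perturbed vector field $R'$ (and, in the general case, the fine-tuning of $\alpha$ on the $V'(K_i)$): one must simultaneously secure non-degeneracy, circularity, outward transversality at the boundary, and complete control of which periodic orbits survive and with which Lefschetz signs, while being certain no spurious orbits are born in the no-man's-land between the Morse--Bott tori. The sign bookkeeping through this perturbation, and the check that it matches the $\mathbb{Z}/2$-grading on $ECC$ induced by the $ECH$ index, is where the real care lies; the rest is formal manipulation of generating functions and invocations of Fried's and Torres's theorems.
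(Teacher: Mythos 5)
Your write-up is a faithful reconstruction of the paper's proof of Theorem \ref{Theorem: Euler characteristics of ECK and ALEX}: the factorization of $\chi(ECC(L,Y,\alpha))$ into local zeta functions of simple orbits, the perturbation of $R$ into a non-degenerate vector field $R'$ pointing out of $V'(L)$ in the fibered case, and, in general, the completion of $L$ to a fibered link $L'$ followed by an application of Theorem \ref{Theorem: ALEX equals the twisted zeta function} on $Y\setminus U(L')$ and repeated use of the Torres formula. That is precisely the route the paper itself follows (``Step 1'' together with ``Step $2'$'').

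The statement you were asked to justify, however, is the concluding Observation, whose content is the \emph{alternative} route (``Step 2''): one could dispense with the Torres formula altogether and instead invoke the more general results of \cite[Section 6]{Fri}, which allow the torsion--zeta correspondence to be applied separately on each piece of a decomposition $Y\setminus L=\bigsqcup_i X_i$, the new pieces here being the solid tori $U(K_i)$. Your proposal never engages with this claim. In particular it does not verify the one substantive assertion of the Observation, namely that $R$ is circular on each $U(K_i)$: this holds because in $\mathring{V}(K_i)$ the Reeb field intersects every meridian disk of $K_i$ positively, so the meridional angle function $\vartheta_i\colon U(K_i)\to S^1$, whose level sets are those disks, satisfies $d\vartheta_i(R)>0$ --- a genuinely local statement, since the global angle function of the open book fibration does not extend across the binding components. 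Nor does your proposal indicate how Fried's piecewise-circular results would replace the Torres step. As an argument for the theorem your proposal is sound and matches the paper's; as a justification of the Observation it is off target.
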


The proof of Theorem \ref{Theorem: Euler characteristic of ECK} works exactly like in the fibered case.

\end{document}